
\documentclass[11pt]{article}
\usepackage{amssymb}
\usepackage{amsthm}
\usepackage{amsmath}
\usepackage{fullpage,hyperref,algorithmic}
\usepackage{graphicx,url,amsmath,amsthm,amsfonts,amssymb,subfigure,bbm}

\newtheorem{theorem}{Theorem}[section]
\newtheorem{lemma}[theorem]{Lemma}

\newtheorem{corollary}[theorem]{Corollary}

\newtheorem{observation}[theorem]{Observation}

\newcommand{\mc}{\mathcal}
\newcommand{\Var}{\mathrm{Var}}
\newcommand{\pr}{\mathbb{P}}

\newcommand{\jnote}[1]{}

\newcommand{\E}{{\mathbb E}}

\newcommand{\Lip}{\mathrm{Lip}}

\newcommand{\dist}{\mathsf{dist}}

\newcommand{\remove}[1]{}

\newcommand{\len}{\mathsf{len}}

\newcommand{\e}{\varepsilon}
\newcommand{\eps}{\varepsilon}

\newcommand{\col}{{\chi}}

\newcommand{\f}{{f}}

\newcommand{\g}{{g}}

\newcommand{\ceiling }[1]{{\left\lceil{#1}\right\rceil}}

\begin{document}

\date{}

\title{{\bf Dimension reduction for finite trees in $\ell_1$}}
\author{James R. Lee\footnote{Partially
supported by NSF grants CCF-0644037, CCF-0915251, and a Sloan Research Fellowship.  A significant
portion of this work was completed during a visit
of the authors to the Institut Henri Poincar\'e.} \\ University of Washington \and Arnaud de Mesmay\footnotemark[1] \\
Ecole Normale Sup\'erieure\and Mohammad Moharrami\footnotemark[1] \\ University of Washington}

\maketitle

\begin{abstract}
We show that every $n$-point tree metric admits
a $(1+\varepsilon)$-embedding into $\ell_1^{C(\varepsilon) \log n}$,
for every $\varepsilon > 0$, where $C(\varepsilon) \leq O\left((\frac{1}{\varepsilon})^4 \log \frac{1}{\varepsilon})\right)$.
This matches the natural volume lower bound up to a factor depending only on $\e$.
Previously, it was unknown whether
even complete binary trees on $n$ nodes could be embedded in $\ell_1^{O(\log n)}$ with $O(1)$ distortion.
For complete $d$-ary trees, our construction achieves $C(\varepsilon) \leq O\left(\frac{1}{\varepsilon^2}\right)$.
\end{abstract}

\setcounter{tocdepth}{2} \tableofcontents

\newpage

\section{Introduction}

Let $T=(V,E)$ be a finite, connected, undirected tree, equipped with a length function on edges, $\len : E \to [0,\infty)$.
This induces a shortest-path pseudometric\footnote{This is a pseudometric because we may have $d(u,v)=0$ even for distinct $u,v \in V$.},
$$
d_T(u,v) = \textrm{length of the shortest $u$-$v$ path in $T$}.
$$
Such a metric space $(V,d_T)$ is called a {\em finite tree metric.}

Given two metric spaces $(X,d_X)$ and $(Y,d_Y)$, and a mapping $f : X \to Y$, we define
the {\em Lipschitz constant of $f$} by,
$$
\|f\|_{\Lip} = \sup_{x \neq y \in X} \frac{d_Y(f(x),f(y))}{d_X(x,y)}.
$$
An {\em $L$-Lipschitz} map is one for which $\|f\|_{\Lip} \leq L$.
One defines the {\em distortion} of the mapping $f$ to be
$\dist(f) = \|f\|_{\Lip} \cdot \|f^{-1}\|_{\Lip}$, where
the distortion is understood to be infinite when $f$ is not injective.
We say that $(X,d_X)$ $D$-embeds into $(Y,d_Y)$ if there is a mapping $f : X \to Y$
with $\dist(f) \leq D$.

Using the notation $\ell_1^k$ for the space $\mathbb R^k$ equipped
with the $\|\cdot\|_1$ norm, we study the following question:
How large must $k=k(n,\varepsilon)$ be so that every $n$-point tree metric
$(1+\varepsilon)$-embeds into $\ell_1^k$?

\subsection{Dimension reduction in $\ell_1$}

A seminal result of Johnson and Lindenstrauss \cite{JL84}
implies that for every $\e > 0$, every $n$-point subset $X \subseteq \ell_2$
admits a $(1+\e)$-distortion embedding into $\ell_2^k$, with $k = O(\frac{\log n}{\e^2})$.
On the other hand, the known upper bounds for $\ell_1$ are much weaker.
Talagrand \cite{Tal90}, following earlier results of Bourgain-Lindenstrauss-Milman \cite{BLM89}
and Schechtman \cite{Sch87}, showed that every $n$-dimensional subspace $X \subseteq \ell_1$
(and, in particular, every $n$-point subset)
admits a $(1+\e)$-embedding into $\ell_1^k$, with $k=O(\frac{n \log n}{\e^2})$.
For $n$-point subsets, this was very recently improved to $k=O(n/\e^2)$ by Newman and Rabinovich \cite{NR10},
using the spectral sparsification techniques of Batson, Spielman, and Srivastava \cite{BSS09}.

On the other hand, Brinkman and Charikar \cite{BC05} showed that there exist
$n$-point subsets $X \subseteq \ell_1$ such that any $D$-embedding of $X$ into $\ell_1^k$
requires $k \geq n^{\Omega(1/D^2)}$ (see also \cite{LN-diamond} for a simpler proof).
Thus the exponential
dimension reduction achievable in the $\ell_2$ case cannot be matched
for the $\ell_1$ norm.
More recently, it has been show by Andoni, Charikar, Neiman, and Nguyen \cite{ACNN11} that
there exist $n$-point subsets such that any $(1+\e)$-embedding requires dimension
at least $n^{1-O(1/\log(\e^{-1}))}$.
Regev \cite{Regev11} has given an elegant proof of both
these lower bounds based on information theoretic arguments.

One can still ask about the possibility of more substantial dimension reduction
for certain finite subsets of $\ell_1$.  Such a study was undertaken by
Charikar and Sahai \cite{CS02}.
In particular, it is an elementary exercise to verify that every finite tree metric
embeds isometrically into $\ell_1$, thus the $\ell_1$ dimension reduction question
for trees becomes a prominent example of this type.
It was shown\footnote{The original
bound proved in \cite{CS02} grew like $\log^3 n$, but this was improved using an observation of A. Gupta.} \cite{CS02} that for every $\e > 0$,
every $n$-point tree metric $(1+\e)$-embeds into $\ell_1^k$ with $k = O(\frac{\log^2 n}{\e^2})$.
It is quite natural to ask whether the dependence on $n$ can be reduced to the natural volume
lower bound of $\Omega(\log n)$.  Indeed, it is Question 3.6 in the list ``Open problems
on embeddings of finite metric spaces'' maintained by J. Matou{\v{s}}ek \cite{MatOpen},
asked by Gupta, Lee, and Talwar\footnote{Asked at the DIMACS Workshop on Discrete Metric spaces and their Algorithmic Applications (2003).
The question
was certainly known to others before 2003, and was asked to the first-named author by Assaf Naor earlier that year.}.
As noted there, the question was, surprisingly, even open for
the complete binary tree on $n$ vertices.
The present paper resolves this question, achieving the volume lower bound for all finite trees.

\begin{theorem}\label{thm:main}
For every $\e > 0$ and $n \in \mathbb N$, the following holds.
Every $n$-point tree metric admits a $(1+\e)$-embedding
into $\ell_1^k$ with $k = O((\frac{1}{\e})^4 \log \frac{1}{\e} \log n)$.
\end{theorem}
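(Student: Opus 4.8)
The plan is to reduce the general tree to the complete binary tree (or, more precisely, to a hierarchically laid-out caterpillar-free core) and then exploit self-similarity. First I would preprocess the tree metric: by standard truncation and rescaling arguments one may assume all edge lengths are powers of $2$ within a bounded aspect ratio window, at the cost of a $(1+\e)$ factor and splitting the tree into $O(\log n / \log(1/\e))$ ``scales.'' On each scale the tree looks like a forest of bounded-depth trees with roughly uniform edge lengths, and the contributions from different scales can be combined additively in $\ell_1$ with only an $O(\log(1/\e))$-blowup in dimension if one is careful about which scales are ``active'' for a given pair of points. So the real content is: embed a complete (or arbitrary) bounded-depth tree with unit-ish edge lengths into $\ell_1^{O((1/\e)^{?} \cdot (\text{depth}))}$ with distortion $1+\e$, where ultimately the depth is $O(\log n)$.

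The key idea for a single scale is a randomized ``fractional partition into paths'' (a Steiner-point/caterpillar decomposition). Root the tree; at each vertex, the subtree hangs down as a union of root-to-leaf paths, and a classical fact is that any tree with $m$ leaves decomposes into $m$ such paths, each of which embeds isometrically on a line. A single such decomposition gives an embedding into $\ell_1^{m}$ isometrically — too many coordinates. To compress, I would use a random decomposition: at each branching vertex, assign each child subtree to the ``heavy path'' with a probability proportional to something like its number of leaves (or use a random shift à la Fakcharoenphol–Rao–Talwar / the CS02 scheme), producing a random line-embedding $f$ such that for every pair $u,v$, $\E\, d_{\mathbb R}(f(u),f(v))$ is within $(1\pm\e)$ of $d_T(u,v)$ when averaged appropriately, and is always at most $d_T(u,v)$ (non-expansive). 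Then the main engine is a dimension-reduction/concentration step: take $k$ independent copies of the randomized line embedding, scale by $1/k$, and concatenate. Expansion never exceeds $1$ by construction. For contraction, fix a pair $u,v$; the random variable $d_{\mathbb R}(f(u),f(v))/d_T(u,v) \in [0,1]$ has expectation $\ge 1-\e$, so by a Chernoff/Hoeffding bound the average over $k = O(\frac{\log(\text{something})}{\e^2})$ copies is $\ge 1 - 2\e$ except with small probability. Union-bounding over all $\binom{n}{2}$ pairs would cost $k = O(\frac{\log n}{\e^2})$ per scale and $O(\frac{\log^2 n}{\e^2})$ overall — exactly the CS02 bound, which is what we must beat.

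The main obstacle, therefore, is removing the union bound over pairs. The trick I expect to need is that one does not union-bound over all pairs independently: within a single scale, the relevant pairs have a bounded-depth ``hierarchical'' structure, and a single random line embedding already gets a constant fraction of the leaves' pairwise distances right simultaneously with constant probability — the failures form a sparse, structured set (they concentrate near a few branching decisions high in the tree). By recursing on the failure set, or by using a net / chaining argument over the $O(\log n)$ levels rather than over $n^2$ pairs, one should be able to get away with $k$ depending only on $\e$ and on the number of levels polynomially, so that $O((1/\e)^4 \log(1/\e))$ copies suffice per scale and the $\log n$ enters only through the number of scales (once) rather than squared. Concretely I would try to prove: with $k = O(\frac{1}{\e^2}\log\frac1\e)$ independent line embeddings, the concatenation $(1+O(\e))$-embeds any fixed bounded-depth tree with failure probability $o(1)$, using that the ``bad event for $(u,v)$'' only depends on $O(\text{depth})$ independent branching coins and that these can be discretized into $(1/\e)^{O(\text{depth})}$-many ``types,'' combined with a more clever accounting that turns the $\log n$ depth into an additive rather than multiplicative loss. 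Assembling the $O(\log n/\log(1/\e))$ scales then yields the claimed $k = O((1/\e)^4 \log\frac1\e \log n)$, with the improvement to $O(1/\e^2)$ for complete $d$-ary trees coming from the extra regularity (all subtrees at a level are isomorphic, so one random decomposition works for all of them at once and the per-scale cost drops).
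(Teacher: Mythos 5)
Your proposal stalls exactly at the point where the real work of the theorem lies. The first problem is the ``main engine'' itself: there is no random non-expansive line embedding (or caterpillar-path merging into a single coordinate) whose expected distance is at least $(1-\e)\,d_T(u,v)$ for \emph{every} pair. Already for a star with three unit-length leaves $u,v,w$, any non-expansive map $f$ to the line places the leaves in an interval of length $2$, so the three line distances sum to at most $4$ while the tree distances sum to $6$; hence some pair has expected ratio at most $2/3$ under any distribution over such maps. So the ``average $k$ independent copies and apply Chernoff'' scheme does not even have the right expectation per copy unless each copy already uses many coordinates, at which point you are back to the Charikar--Sahai situation and the union bound over $\binom{n}{2}$ pairs costs the extra $\log n$ factor you acknowledge you must avoid. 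Your proposed remedies for that union bound --- ``recursing on the failure set,'' a ``net/chaining argument over levels,'' discretizing bad events into ``types'' --- are stated only as hopes (``one should be able to,'' ``I would try to prove''), and no mechanism is given for why the number of pairs competing at a given scale is compatible with the achievable concentration; that tradeoff is precisely the crux, and for non-uniform trees it cannot be resolved by choosing a per-edge coordinate budget depending only on local subtree sizes (the paper gives an explicit obstruction to that).

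For comparison, the paper's resolution is quite different from anything you sketch: for complete $k$-ary trees the union bound is replaced by the Lov\'asz Local Lemma (exploiting that the event for a pair at distance $j$ conflicts with few others while its failure probability is $k^{-\Omega(j)}$), and for general trees even the LLL is insufficient, so the embedding is built inductively over monotone color classes with \emph{re-randomization}: each subtree's already-constructed embedding is composed with an independent random isometry (coordinate permutations within rows), and a Bernstein-type concentration bound is applied only to a small set of ``representatives'' per color, with failure probabilities summed against the exponential count of colors having small scale-selector value $\varphi$. Two further ingredients you do not anticipate are needed to make this go through: global scale selectors $\tau_i$ (the coordinate budget at each scale must encode volume growth along the whole root path, not just locally), and a preliminary ``folding'' of the tree into $O(1/\e)$ trees to control the unavoidable loss term $\rho_\chi$ coming from vertices in the middle of color classes. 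Your scale-combination step (reusing coordinates across scales separated by a factor $\mathrm{poly}(1/\e)$) does correspond to a real lemma in the paper, but by itself it does not bound depth: restricting to one length scale does not make a tree shallow, and the topological depth bound must come from the monotone coloring, which then creates the mid-color-class problem above. As it stands, the proposal reproduces the known $O(\log^2 n/\e^2)$ argument plus an unproven claim that the $\log n$ from the union bound can be removed, so it does not constitute a proof of the theorem.
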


The proof is presented in Section \ref{sec:colors}.
We remark that the proof also yields a randomized polynomial-time algorithm to construct the embedding.

\subsection{Notation}

For a graph $G=(V,E)$, we use the notations $V(G)$ and $E(G)$ to denote
the vertex and edge sets of $G$, respectively.
For a connected, rooted tree $T=(V,E)$ and $x,y \in V$, we use the notation $P_{xy}$
for the unique path between $x$ and $y$ in $T$, and $P_{x}$ for $P_{rx}$, where $r$
is the root of $T$.

For $k \in \mathbb N$, we write $[k] = \{1,2,\ldots,k\}$.
We also use the asymptotic notation $A \lesssim B$ to denote that $A = O(B)$,
and $A \asymp B$ to denote the conjunction of $A \lesssim B$ and $B \lesssim A$.

\subsection{Proof outline and related work}

\remove{First, our argument is inductive and probabilistic;
the structure of this induction is partially inspired
by the construction of Schulman for tree codes \cite{Schulman96}.
Secondly, we use the notion of caterpillar partitions
to control the topological complexity of a general finite tree.
These were also used by Matou{\v{s}}ek \cite{Matousek99}, and by \cite{GKL03,LNP09} (where
they are called ``monotone colorings''), to
embed trees into Hilbert spaces, and in \cite{CS02} for previous
low-dimensional embeddings of trees into $\ell_1$.
}

We first discuss the form that all our embeddings
will take.  Let $T=(V,E)$ be a finite, connected tree,
and fix a root $r \in V$.  For each $v \in V$, recall that
$P_v$ denotes the unique simple path from $r$ to $v$.
Given a labeling of edges by vectors $\lambda : E \to \mathbb R^k$,
we can define $\varphi : V \to \mathbb R^k$ by,
\begin{equation}\label{eq:canonical}
\varphi(x) = \sum_{e \in E(P_v)} \lambda(e).
\end{equation}
The difficulty now lies in choosing an appropriate labeling $\lambda$.
An easy observation is that if we have $\|\lambda(e)\|_1 = \len(e)$ for all $e \in E$
and the set $\{\lambda(e)\}_{e \in E}$ is orthogonal, then $\varphi$ is an isometry.
Of course, our goal is to use many fewer than $|E|$ dimensions for the embedding.
We next illustrate a major probabilistic technique employed in our approach.

\medskip
\noindent
{\bf Re-randomization.}  Consider an unweighted, complete binary tree of height $h$.
Denote the tree by $T_h = (V_h, E_h)$, let $n=2^{h+1}-1$ be the number of vertices, and let $r$
denote the root of the tree.
Let $\kappa \in \mathbb N$ be some constant which we will choose momentarily.
If we assign to every edge $e \in E_h$, a label $\lambda(e) \in \mathbb R^{\kappa}$, then there is
a natural mapping $\tau_{\lambda} : V_h \to \{0,1\}^{\kappa h}$ given by
\begin{equation}\label{eq:CBT}
\tau_{\lambda}(v) = (\lambda(e_1), \lambda(e_2), \ldots, \lambda(e_k), 0, 0, \ldots, 0),
\end{equation}
where $E(P_v) = \{e_1, e_2, \ldots, e_k\},$ and the edges are labeled
in order from the root to $v$.
Note that the preceding definition
falls into the framework of \eqref{eq:canonical},
by extending each $\lambda(e)$ to a $(\kappa h)$-dimensional vector padded with zeros,
but the specification here will be easier to work with presently.

If we choose the label map $\lambda : E_h \to \{0,1\}^{\kappa}$ uniformly at random, the probability for the embedding $\tau_{\lambda}$ specified in \eqref{eq:CBT} to have $O(1)$ distortion is at most exponentially small in $n$.
In fact, the probability for $\tau_{\lambda}$ to be injective is already this small.
This is because for two nodes $u,v \in V_h$ which are the children of the same node
$w$, there is $\Omega(1)$ probability that $\tau_{\lambda}(u)=\tau_{\lambda}(v)$, and there are $\Omega(n)$
such independent events.  In
Section \ref{sec:warmup}, we show that a judicious application of
the Lov\'asz Local Lemma \cite{EL75} can be
used to show that $\tau_{\lambda}$ has $O(1)$ distortion with non-zero probability.
In fact, we show that this approach can handle arbitrary $k$-ary
complete trees, with distortion $1+\e$.
Unknown to us at the time of discovery, a closely related
construction occurs in the context of tree codes for interactive communication \cite{Schulman96}.

\medskip

Unfortunately, the use of the Local Lemma does not extend well to the 
more difficult setting of arbitrary trees.
For the general case,
we employ an idea of Schulman \cite{Schulman96} based on {\em re-randomization}.
To see the idea in our simple setting,
consider $T_h$ to be composed of a root $r$, under which lie two copies
of $T_{h-1}$, which we call $A$ and $B$, having roots $r_A$ and $r_B$, respectively.

The idea is to assume that, inductively,
we already have a
labeling $\lambda_{h-1} : E_{h-1} \to \{0,1\}^{\kappa (h-1)}$ such that
the corresponding map $\tau_{\lambda_{h-1}}$ has $O(1)$ distortion on $T_{h-1}$.
We will then construct a random labeling $\lambda_h : E_h \to \{0,1\}^{\kappa}$
by using $\lambda_{h-1}$ on the $A$-side, and $\pi(\lambda_{h-1})$ on the $B$-side,
where $\pi$ randomly alters the labeling in such a way that
$\tau_{\pi(\lambda_{h-1})}$ is simply $\tau_{\lambda_{h-1}}$ composed
with a random isometry of $\ell_1^{\kappa(h-1)}$.  We will then argue that
with positive probability (over the choice of $\pi$), $\tau_{\lambda_h}$ has $O(1)$ distortion,

\medskip

Let $\pi_1, \pi_2, \ldots, \pi_{h-1} : \{0,1\}^{\kappa} \to \{0,1\}^{\kappa}$ be
i.i.d. random mappings, where the distribution
of $\pi_1$ is specified by
$$
\pi_1(x_1, x_2, \ldots, x_{\kappa}) = \left(\rho_1(x_1), \rho_2(x_2), \ldots, \rho_{\kappa}(x_{\kappa})\right),
$$
where each $\rho_i$ is an independent uniformly random involution $\{0,1\} \mapsto \{0,1\}$.
To every edge $e \in E_{h-1}$, we can assign a height $\alpha(e) \in \{1,2,\ldots,h-1\}$ which
is its distance to the root.  From a labeling $\lambda : E_{h-1} \to \{0,1\}^{\kappa}$, we define a random labeling $\pi(\lambda) : E_{h-1} \to \{0,1\}^{\kappa}$ by,
$$
\pi(\lambda)(e) = \pi_{\alpha(e)} \circ \lambda\,.
$$
By a mild abuse of notation, we will consider $\pi(\lambda) : E(B) \to \{0,1\}^{\kappa}$.

Finally, given a labeling $\lambda_{h-1} : E_{h-1} \to \{0,1\}^{\kappa}$, we construct
a random labeling $\lambda_h : E_h \to \{0,1\}^{\kappa}$ as follows,
$$
\lambda_h(e) =
\begin{cases}
(0,0,\ldots,0) & e=(r,r_A) \\
(1,1,\ldots,1) & e=(r,r_B) \\
\lambda_{h-1}(e) & e \in E(A) \\
\pi(\lambda_{h-1})(e) & e \in E(B)\,.
\end{cases}
$$

By construction, the mappings $\tau_{\lambda_h}|_{V(A) \cup \{r\}}$ and $\tau_{\lambda_h}|_{V(B) \cup \{r\}}$
have the same distortion as $\tau_{\lambda_{h-1}}$.
In particular, it is easy to check that $\tau_{\pi(\lambda_{h-1})}$ is simply $\tau_{\lambda_{h-1}}$
composed with an isometry of $\{0,1\}^{\kappa (h-1)}$.

Now consider some pair $x \in V(A)$ and $y \in V(B)$.
It is simple to argue that it suffices
to bound the distortion for pairs with
$m=d_{T_h}(r,x)=d_{T_h}(r,y)$, for $m \in \{1,2,\ldots,h\}$,
so we will assume that $x,y$ have the same height in $T_h$.

Observe that $\tau_{\lambda_h}(x)$ is fixed with respect to the randomness in $\pi$,
thus if we write $v = \tau_{\lambda_h}(x) - \tau_{\lambda_h}(y)$, where subtraction
is taken coordinate-wise, modulo 2, then $v$ has the form
$$
v \equiv \left(\underbrace{1,1,\ldots,1}_{\kappa}, b_1, b_2, \ldots, b_{\kappa(m-1)}\right)\,
$$
where the $\{b_i\}$ are i.i.d. uniform over $\{0,1\}$.
It is thus an easy consequence of Chernoff bounds
that, with probability at least $1- e^{-m\kappa/8}$, we have
$$
\|\tau_{\lambda_h}(x)-\tau_{\lambda_h}(y)\|_1 = \|v\|_1 \geq \frac{\kappa \cdot d_{T_h}(x,y)}{4}\,.
$$
Also, clearly $\|\tau_{\lambda_h}\|_{\Lip} \leq \kappa$.

On the other hand, the number of pairs $x \in V(A), y \in V(B)$ with $m=d_{T_h}(r,x)=d_{T_h}(r,y)$ is
$2^{2(m-1)}$, thus taking a union bound, we have
$$
\pr\left(\dist(\tau_{\lambda_h}) > \max\{4, \dist(\tau_{\lambda_{h-1}})\}\right)
\leq \sum_{m=1}^{h} 2^{2(m-1)} e^{-m\kappa/8},
$$
and the latter bound is strictly less than 1 for some $\kappa = O(1)$,
showing the existence of a good map $\tau_{\lambda_h}$.

This illustrates how re-randomization (applying a distribution over random isometries to one side of a tree)
can be used to achieve $O(1)$ distortion for embedding $T_h$ into $\ell_1^{O(h)}$.  Unfortunately,
the arguments become significantly more delicate when we handle less uniform trees.
The full-blown re-randomization argument occurs in Section \ref{sec:embedding}.

\medskip
\noindent
{\bf Scale selection.}
The first step beyond complete binary trees would be in passing to complete $d$-ary trees for $d \geq 3$.
The same construction as above works, but now one has to choose $\kappa \asymp \log d$.
Unfortunately, if the degrees of our tree are not uniform, we have to adopt a significantly
more delicate
strategy.  It is natural to choose a single number $\kappa(e) \in \mathbb N$ for every edge $e \in E$,
and then put $\lambda(e) \in \frac{1}{\kappa(e)} \{0,1\}^{\kappa(e)}$ (this ensures that
the analogue of the embedding $\tau_{\lambda}$ specified in \eqref{eq:CBT} is 1-Lipschitz).

Observing the case of $d$-ary trees, one might be tempted to put
$$
\kappa(e) = \left\lceil \log \frac{|T_u|}{|T_v|}\right\rceil,
$$
where $e=(u,v)$ is directed away from the root, and we use $T_v$ to denote the subtree
rooted at $v$.  If one simply takes a complete binary tree on $2^h$ nodes, and then connects
a star of degree $2^h$ to every vertex, we have $\kappa(e) \asymp h$ for every edge,
and thus the dimension becomes $O(h^2)$ instead of $O(h)$ as desired.

In fact, there are examples which show that it is impossible to choose $\kappa(u,v)$ to depend only on
the geometry of the subtree rooted at $u$.  These ``scale selector'' values have to look at the global
geometry, and in particular
have to encode the volume growth of the tree at many scales simultaneously.
  Our eventual scale selector is fairly sophisticated and impossible to describe
without delving significantly into the details of the proof.
For our purposes,
we need to consider more general embeddings of type \eqref{eq:canonical}.  In particular,
the coordinates of our labels $\lambda(e) \in \mathbb R^k$ will take a range of different values,
not simply a single value as for complete trees.

We do try to maintain one important, related invariant:
If $P_v$ is the sequence of edges from the root to some vertex $v$,
then ideally for every coordinate $i \in \{1,2,\ldots,k\}$ and every value $j \in \mathbb Z$,
there will be at most one $e \in P_v$ for which $\lambda(e)_i \in [2^j, 2^{j+1})$.
Thus instead of every coordinate being ``touched'' at most once on the path from the root to $v$,
every coordinate is touched at most once {\em at every scale} along every such path.
This ensures that various scales do not interact.
For technical reasons,
this property is not maintained exactly, but analogous concepts arise frequently in the proof.

The restricted class of embeddings we use, along with a discussion of the
invariants we maintain, are introduced in Section \ref{sec:overscales}.
The actual scale selectors are defined in Section \ref{sec:assignment}.

\medskip
\noindent
{\bf Controlling the topology.}
One of the properties that we used above for complete $d$-ary trees is
that the depth of such a tree is $O(\log_d n)$, where $n$ is the number of nodes in the tree.
This allowed us to concatenate vectors down a root-leaf path without exceeding
our desired $O(\log n)$ dimension bound.  Of course, for general trees,
no similar property need hold.
However, there is still a bound on the {\em topological} depth of any $n$-node tree.

To explain this, let $T=(V,E)$ be a tree with root $r$, and define a {\em monotone coloring of $T$}
to be a mapping $\chi : E \to \mathbb N$ such that for every $c \in \mathbb N$, the
color class $\chi^{-1}(c)$ is a connected subset of some root-leaf path.
Such colorings were used in previous works on
embedding trees into Hilbert spaces \cite{Matousek99,GKL03,LNP09}, as well
as for preivous low-dimensional embeddings into $\ell_1$ \cite{CS02}.
The following
lemma is well-known and elementary.

\begin{lemma}\label{lem:monotone}
Every connected $n$-vertex rooted tree $T$ admits a monotone coloring
such that every root-leaf path in $T$ contains at most $1+\log_2 n$ colors.
\end{lemma}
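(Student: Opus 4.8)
The plan is to construct the monotone coloring by a recursive \emph{heavy-path} (centroid-style) decomposition of $T$. First I would set up the recursion: given the rooted tree $T$ with $n$ vertices and root $r$, choose a root-leaf path $Q$ as follows. Start at $r$, and repeatedly move from the current vertex $u$ to whichever child $v$ of $u$ maximizes $|T_v|$ (the size of the subtree rooted at $v$), stopping when we reach a leaf. Assign a single fresh color to all edges of $Q$; since $Q$ is by construction a connected subset of a root-leaf path (indeed it \emph{is} a root-leaf path), this respects the monotonicity requirement. Then I would recurse on each connected component of $T \setminus Q$: removing the vertices of $Q$ (or equivalently the edges of $Q$ together with the hanging subtrees) leaves a collection of rooted subtrees, each hanging off some vertex of $Q$ via an edge not on $Q$, and we color each such subtree recursively with entirely new colors.

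The key step is the bound on the number of colors along any root-leaf path $P = (r = x_0, x_1, \ldots, x_\ell)$ of $T$. I would argue that each time the coloring procedure introduces a new color along $P$, the relevant subtree size drops by at least a factor of $2$. Concretely, fix a root-leaf path $P$ and track, as we walk down $P$, the sequence of recursive subproblems $T = S_0 \supseteq S_1 \supseteq S_2 \supseteq \cdots$ that $P$ passes through: $S_{i+1}$ is the component of $S_i \setminus Q_i$ that $P$ enters, where $Q_i$ is the heavy path chosen in $S_i$. The color of an edge of $P$ inside $S_i$ but not in any deeper subproblem is the single color of $Q_i$, so the number of distinct colors on $P$ is exactly the number of subproblems $S_i$ that $P$ meets (plus possibly one for the final segment). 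Now the crucial claim is $|S_{i+1}| \le |S_i|/2$: the path $P$ leaves the heavy path $Q_i$ at some vertex $u \in Q_i$ by taking a child $v$ with $v \notin Q_i$; because $Q_i$ was built greedily by always descending into the \emph{heaviest} child, the child of $u$ that lies on $Q_i$ has subtree size at least $|T_v|$, hence $|S_{i+1}| \le |T_v| \le \tfrac12(|T_v| + |T_{v'}|) \le \tfrac12 |S_i|$ where $v'$ is the heavy child and $T_v, T_{v'}$ are subtree sizes within $S_i$. Therefore the number of subproblems along $P$ is at most $\log_2 n$, giving at most $1 + \log_2 n$ colors on $P$ — the extra $1$ absorbs the first heavy path $Q_0$ itself.

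I expect the only mild subtlety — not really an obstacle — to be bookkeeping about what exactly ``$S_i \setminus Q_i$'' means and which vertex becomes the root of each child subproblem, and making sure the heavy-child tie-breaking is fixed arbitrarily so the construction is well-defined. One should also double-check the trivial base cases (single vertex, or a path) to confirm the bound $1 + \log_2 n$ is met with equality exactly on paths and holds with room to spare otherwise. Since each color class is, by construction, a sub-path of a single root-leaf path, monotonicity holds automatically, and the size-halving argument is elementary, so the lemma follows.
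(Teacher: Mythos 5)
Your proof is correct, and its overall shape mirrors the paper's: pick a root-leaf path, give it a single fresh color (which makes monotonicity automatic), recurse on the hanging subtrees with new colors, and show that each time a fixed root-leaf path $P$ crosses into a deeper subproblem a size parameter halves. The difference is in the decomposition and the potential function. You use the classical heavy-path rule (always descend into the child with the largest vertex count), and the halving claim $|S_{i+1}| \le |T_v| \le \tfrac12(|T_v|+|T_{v'}|) \le \tfrac12 |S_i|$ is sound, since the heavy child $v'$ dominates any sibling $v$ through which $P$ exits; together with the observation that within each subproblem $P$ coincides with the heavy path until it exits, this gives at most $1+\log_2 n$ colors on $P$, exactly as required. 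The paper instead halves the number of \emph{leaves}: it selects a ``median'' leaf $x$ (via a planar left-to-right ordering) so that every component of $T\setminus E(P_{rx})$ has at most half as many leaves below it, which yields the slightly stronger bound $1+\log_2(\ell(T))$ in terms of the leaf count $\ell(T)\le n$ — a refinement that is immaterial for the way the lemma is used. So your route is a genuine, standard alternative (heavy-path / subtree-size halving versus median-leaf / leaf-count halving), equally elementary, with the paper's choice buying a marginally sharper parameterization and yours arguably requiring less care in the selection step; the only bookkeeping you should nail down, as you note, is that each subproblem is a full subtree of $T$ (so its heavy path really is a subpath of a root-leaf path of $T$) and that the final subproblem contributes only one color, so the count is exactly the number of subproblems met.
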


\begin{proof}
For an edge $e \in E(T)$, let $\ell(e)$ denote the number of leaves beneath
$e$ in $T$ (including, possibly, an endpoint of $e$).
Letting $\ell(T) = \max_{e \in E} \ell(e)$, we will prove
that for $\ell(T) \geq 1$,
there exists a monotone coloring with at most $1+\log_2 (\ell(T)) \leq 1+\log_2 n$ colors
on any root-leaf path.

Suppose that $r$ is the root of $T$.  For an edge $e$, let $T_e$ be the subtree beneath $e$,
including the edge $e$ itself.  If $r$ is the endpoint of edges $e_1, e_2, \ldots, e_k$,
we may color the edges of
$T_{e_1}, T_{e_2}, \ldots, T_{e_k}$ separately, since
any monotone path is contained completely within exactly one of these subtrees.
Thus we may assume that $r$ is the endpoint
of only one edge $e_1$, and then $\ell(T)=\ell(e_1)$.

Choose a leaf $x$ in $T$ such that each connected component of $T'$ of $T \setminus E(P_{rx})$
has $\ell(T') \leq \ell(e_1)/2$ (this is easy to do by, e.g., ordering the leaves from left to right
in a planar drawing of $T$).
Color the edges $E(P_{rx})$ with color 1, and inductively
color each non-trivial connected component $T'$ with disjoint sets of
colors from $\mathbb N \setminus \{1\}$.
By induction, the maximum number of colors appearing on a root-leaf path in $T$
is at most $1+\log_2(\ell(e_1)/2) = 1+\log_2(\ell(T))$, completing the proof.
\end{proof}

Instead of dealing directly with edges in our actual embedding,
we will deal with color classes.  This poses a number of difficulties,
and one major difficulty involving vertices which occur in the middle
of such classes.  For dealing with these vertices,
we will first preprocess our tree by embedding it into a product
of a small number of new trees, each of which
admits colorings of a special type.  This is carried
out in Section \ref{sec:colors}.

\remove{
\section{Introduction}
\begin{theorem}[Main Theorem]\label{thm:main}
For any $T=(V,E)$, then there exists a map $f:V\to \ell_1^{O(C_\eps(\log n))}$  with distortion at most $1+\eps$, where $C_\eps=\mathsf{poly}(1/\eps)$.
\end{theorem}

$P_{uv}$ is the path from $u$ to $v$.
$T_v$ subtree under vertex $v$.

Trees are rooted at $r$.

$d_T$ is the distance in the tree.
}

\remove{
\subsection{General Approach}
We prove Theorem~\ref{thm:main} by proposing a randomized embedding for $T$ and then showing that this randomized embedding has constant distortion with non-zero probability.
The general idea of the randomized embedding is simple. First, we use \emph{Caterpillar Coloring} \cite{?} to color the edges of the graph. Then for each color $c$ in the tree $T$ we assign a set of \emph{scales} $\tau(e)$ which indicates how much can a coordinate change from for the edges that are colored $c$. Then, we choose the changes independently and construct an embedding for the whole tree. For details on how we choose the scale $\tau(e)$ for an edge, see Section~\ref{sec:assignment}.

To prove the bound on the distortion, we use an inductive proof. For a color $c$ with edges of color $c_1,\ldots c_k$ adjacent to it, we construct the map for the subtree under $v$, based on the maps for subtrees under $c_1,\ldots, c_k$. Our construction is randomized and we show that it succeeds with non-zero probability. We explain the details of our construction and its analysis in Section~\ref{sec:embedding}.
}

\section{Warm-up:  Embedding complete $k$-ary trees}
\label{sec:warmup}

We first prove our main result for the special case of complete $k$-ary trees, with an improved
dependence on $\e$.  The main novelty is our use of the Lov\'asz Local Lemma to analyze
a simple random embedding of such trees into $\ell_1$.
The proof illustrates the tradeoff being concentration and the
sizes of the sets $\{ \{u,v\} \subseteq V : d_T(u,v)=j \}$ for each $j=1,2,\ldots$.

\begin{theorem}\label{thm:kary}
Let $T_{k,h}$ be the unweighted, complete $k$-ary tree of height $h$.
For every $\e > 0$, there exists a $(1+\e)$-embedding of $T_{k,h}$ into $\ell_1^{O((h \log k)/\e^2)}$.
\end{theorem}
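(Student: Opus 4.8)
The plan is to exhibit one randomized embedding of the type \eqref{eq:canonical} and to show, via the Lov\'asz Local Lemma, that it has distortion $1+\e$ with positive probability. Write $T := T_{k,h}$, fix integers $t \asymp \frac{\log k}{\e}$ and $\kappa \asymp \frac{t}{\e} \asymp \frac{\log k}{\e^2}$, and work in $\ell_1^{h\kappa}$ viewed as $h$ consecutive blocks of $\kappa$ coordinates. To each edge $e$ at distance $\ell$ from the root assign the label $\lambda(e) = \frac1t\mathbf 1_{S_e}$, supported on the $\ell$-th block, where $S_e$ is a uniformly random $t$-element subset of that block's $\kappa$ coordinates and the $S_e$ are independent over $e$; let $\varphi$ be the resulting map \eqref{eq:canonical}. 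Since $\|\lambda(e)\|_1 = 1 = \len(e)$ for every edge, the triangle inequality gives $\|\varphi(x)-\varphi(y)\|_1 \le d_T(x,y)$ deterministically, so $\|\varphi\|_{\Lip}\le 1$ and it suffices to establish the lower bound $\|\varphi(x)-\varphi(y)\|_1 \ge (1-\e)\,d_T(x,y)$ for all pairs (running the argument with $\e/2$ in place of $\e$ then gives distortion $\le \frac1{1-\e/2}\le 1+\e$).

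I would first reduce to pairs $x,y$ at equal depth. If $\mathrm{depth}(x)=a\le b=\mathrm{depth}(y)$ and their common ancestor $w$ has depth $c$, then the blocks at depths $a+1,\dots,b$ contribute exactly $b-a$ to $\|\varphi(x)-\varphi(y)\|_1$, so the claim reduces to the equal-depth pair $(x,y')$, where $y'$ is the depth-$a$ ancestor of $y$. For an equal-depth pair with common ancestor at depth $m-j$, letting $e_\ell^x, e_\ell^y$ denote the edges of $P_x,P_y$ at depth $\ell$, one computes $\|\varphi(x)-\varphi(y)\|_1 = \sum_{\ell=m-j+1}^{m}\frac1t|S_{e_\ell^x}\triangle S_{e_\ell^y}| = 2j - \frac2t\sum_{\ell=1}^{j} O_\ell$, where $O_\ell := |S_{e_\ell^x}\cap S_{e_\ell^y}|$ and these $j$ overlaps are independent. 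Since $d_T(x,y)=2j$, the bad event is exactly $B_{xy}=\{\sum_{\ell} O_\ell > \e j t\}$. Now $\sum_\ell O_\ell$ is, conditionally on the sets $S_{e_\ell^y}$, a sum of $jt$ negatively associated indicators each of mean $t/\kappa \asymp \e$, so a multiplicative Chernoff bound gives $\pr[B_{xy}] \le e^{-c\,\e j t}$ for an absolute constant $c>0$, provided $\kappa$ is a sufficiently large constant multiple of $t/\e$. This is the tradeoff the theorem advertises: the tail decays like $e^{-\Theta(\e t)}$ per unit of ``length scale'' $j$, while the number of pairs grows with $j$.

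Next comes the Local Lemma. The event $B_{xy}$ depends only on the $\le 2j$ edge labels lying on the path segments from $w$ down to $x$ and to $y$, and two bad events are dependent only if their segments share an edge. A short count shows that any fixed edge lies on the relevant segment of at most $O(k^{2j'})$ pairs of value $j'$ (choose the depth of the common ancestor, then at most $k^{j'}$ descendants on each side, and sum the resulting geometric series), so $B_{xy}$ has at most $O(j\,k^{2j'})$ dependent events of each value $j'$. I would apply the asymmetric Local Lemma with weights $x_{B_{xy}} := (2k^2)^{-j}$. Because $\sum_{j'\ge 1} k^{2j'}(2k^2)^{-j'} = \sum_{j'\ge 1} 2^{-j'} = O(1)$, the product over dependent events satisfies $\prod_{B'\sim B_{xy}}(1-x_{B'}) \ge \prod_{j'\ge 1}\big(1-(2k^2)^{-j'}\big)^{O(j\,k^{2j'})} \ge e^{-O(j)}$, so the Local Lemma condition $\pr[B_{xy}]\le x_{B_{xy}}\prod_{B'\sim B_{xy}}(1-x_{B'})$ reduces to $e^{-c\,\e j t} \le (2k^2)^{-j}e^{-O(j)}$, i.e.\ to $\e t \gtrsim \log k$ — precisely our choice of $t$. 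Hence with positive probability no $B_{xy}$ occurs, yielding a $(1+\e)$-embedding into $\ell_1^{h\kappa}=\ell_1^{O((h\log k)/\e^2)}$.

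The crux — and the step I expect to be delicate — is getting this last balance exactly right. A plain union bound over the $\Theta(k^{h+j})$ value-$j$ pairs would force $\e t \gtrsim h\log k$ and hence dimension $\Theta((h^2\log k)/\e^2)$; even the symmetric Local Lemma, which ignores the geometric decay of the weights, loses the same extra factor of $h$. What makes the argument work is that the tail estimate $\pr[B_{xy}] = e^{-\Theta(\e j t)}$ and the local multiplicity $k^{\Theta(j)}$ balance as soon as $\e t \asymp \log k$; everything else — the equal-depth reduction, the edge-counting for the dependency graph, and the bookkeeping of constants — is routine.
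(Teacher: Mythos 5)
Your proposal is correct and follows essentially the same route as the paper: a $1$-Lipschitz random labeling placed in depth-indexed blocks, a per-pair Chernoff/Bernstein tail bound decaying exponentially in $d_T(x,y)$, and the asymmetric Lov\'asz Local Lemma with weights exponentially small in the distance combined with a per-edge count of dependent pairs. The differences (random $t$-subsets of each block rather than the paper's stacked random basis vectors, and weights $(2k^2)^{-j}$ rather than $k^{-3d_T(u,v)}$) are cosmetic.
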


In the next section, we introduce our random embedding and analyze the
success probability for a single pair of vertices based on their distance.
Then in Section \ref{sec:local}, we show that with non-zero probability,
the construction succeeds for all vertices.
In the coming sections and
later, in the proof of our main theorem,
we will employ the following concentration inequality \cite{McD98}.

\begin{theorem}\label{thm:azuma}
Let $M$ be a non-negative number, and $X_i~(1\leq i\leq n)$ be independent random variables satisfying $X_i\leq \E(X_i)+M$, for $1\leq i\leq n$.  Consider the sum $X=\sum_{i=1}^n X_i$ with expectation $\E(X)=\sum_{i=1}^n \E(X_i)$  and $\Var(X)=\sum_{i=1}^n \Var(X_i)$.
Then we have,
\begin{equation}\label{eq:azuma}
\pr(X- \mathbb{E}(X)\ge \lambda)\leq \exp\left({-\lambda^2\over 2(\Var(X)+M\lambda/3) }\right).
\end{equation}
\end{theorem}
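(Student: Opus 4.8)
The plan is to prove \eqref{eq:azuma} by the exponential-moment (Chernoff) method, which is the standard route to Bernstein/Bennett-type inequalities; independence will enter only through the factorization of the moment generating function. Fix $\lambda>0$ (for $\lambda\le 0$ the bound is trivial), set $Y_i=X_i-\E(X_i)$ so that $\E(Y_i)=0$, $Y_i\le M$, and $\E(Y_i^2)=\Var(X_i)$, and write $Y=X-\E(X)=\sum_i Y_i$. For every $s>0$, Markov's inequality applied to $e^{sY}$ together with independence gives
\[
\pr(Y\ge\lambda)\ \le\ e^{-s\lambda}\,\E\!\left(e^{sY}\right)\ =\ e^{-s\lambda}\prod_{i=1}^n \E\!\left(e^{sY_i}\right),
\]
so it remains to bound each factor $\E(e^{sY_i})$ in terms of $\Var(X_i)$ and $M$, and then to optimize over $s$.

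To bound a single factor I would use the elementary fact that $u\mapsto (e^{u}-1-u)/u^2$ (extended by $1/2$ at $0$) is nondecreasing on $\mathbb R$; applying it at $u=sY_i\le sM$ yields the pointwise inequality $e^{sY_i}\le 1+sY_i+(e^{sM}-1-sM)\,Y_i^2/M^2$, valid on $\{Y_i\le M\}$. Taking expectations and using $\E(Y_i)=0$ and $1+x\le e^{x}$,
\[
\E\!\left(e^{sY_i}\right)\ \le\ 1+\frac{e^{sM}-1-sM}{M^2}\,\Var(X_i)\ \le\ \exp\!\left(\frac{e^{sM}-1-sM}{M^2}\,\Var(X_i)\right).
\]
Multiplying over $i$ and substituting back gives $\pr(Y\ge\lambda)\le\exp\!\big(-s\lambda+(e^{sM}-1-sM)\,\Var(X)/M^2\big)$ for all $s>0$, which is already Bennett's inequality once $s$ is optimized.

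To reach the cleaner Bernstein form \eqref{eq:azuma} I would apply the standard estimate $e^{x}-1-x\le \tfrac{x^2/2}{1-x/3}$ for $0\le x<3$ (obtained by comparing $\sum_{k\ge 2}x^k/k!$ with $\tfrac{x^2}{2}\sum_{k\ge 0}(x/3)^k$) with $x=sM$, so that the exponent is at most $-s\lambda+\dfrac{s^2\Var(X)/2}{1-sM/3}$. Choosing $s=\dfrac{\lambda}{\Var(X)+M\lambda/3}$ (for which $sM/3<1$ automatically, so the estimate applies) and simplifying — note $1-sM/3=\Var(X)/(\Var(X)+M\lambda/3)$, hence $\dfrac{s^2\Var(X)/2}{1-sM/3}=\dfrac{s\lambda}{2}$ — collapses the exponent to $-s\lambda/2=-\dfrac{\lambda^2}{2(\Var(X)+M\lambda/3)}$, which is exactly \eqref{eq:azuma}. (If $M=0$ then $X\le\E(X)$ almost surely and the left side is $0$, consistent with reading the right side as its $M\downarrow 0$ limit.)

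I do not expect any real obstacle here: the argument is textbook, and the only points needing (routine) care are the two elementary inequalities — monotonicity of $(e^{u}-1-u)/u^2$ and the bound on $e^{x}-1-x$ — together with the bookkeeping in the final choice of $s$. The one modeling subtlety worth flagging is that the hypothesis controls only the upper tail of each $X_i$ (namely $X_i\le\E(X_i)+M$), which is precisely what the one‑sided Chernoff bound above uses and why \eqref{eq:azuma} is stated one‑sidedly.
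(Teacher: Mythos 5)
Your proof is correct. The paper itself gives no proof of this statement: Theorem \ref{thm:azuma} is quoted from McDiarmid's survey \cite{McD98} (it is the classical one-sided Bernstein inequality), so there is nothing internal to compare against; your Chernoff--Bennett--Bernstein argument --- factorizing the moment generating function by independence, bounding each factor via the monotonicity of $u\mapsto(e^{u}-1-u)/u^{2}$, then passing from the Bennett exponent to the Bernstein form with $e^{x}-1-x\le \frac{x^{2}/2}{1-x/3}$ and the choice $s=\lambda/(\Var(X)+M\lambda/3)$ --- is exactly the standard proof found in that reference, and all the steps check out (including the verification that $sM<3$ and the algebra collapsing the exponent to $-\lambda^{2}/\bigl(2(\Var(X)+M\lambda/3)\bigr)$). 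The only blemish is the parenthetical claim that the bound is ``trivial'' for $\lambda\le 0$: for strictly negative $\lambda$ the stated inequality can actually fail (e.g.\ when $\Var(X)+M\lambda/3$ is a small positive number), so one should simply read the theorem, as is standard and as the paper uses it, with $\lambda\ge 0$; this does not affect the substance of your argument.
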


\subsection{A single event}

First $k,h \in \mathbb N$ and $\e > 0$.  Write $T=(V,E)$ for the tree $T_{k,h}$ with root $r \in V$, and let $d_T$ be the unweighted
shortest-path metric on $T$.
Additionally, we define,
\begin{equation}\label{eq:def:kary:t}
t=\ceiling {1\over \eps},
\end{equation}
and
\begin{equation}\label{eq:def:kary:m}
m=t\ceiling{\log k}.
\end{equation}

Let $\{\vec v(1),\ldots, \vec v(t)\}$, be the standard basis for $\mathbb R^t$.
Let $b_1, b_2, \ldots, b_m$ be chosen i.i.d. uniformly over $\{1,2,\ldots,t\}$.
For the edges $e \in E$, we choose i.i.d. random labels $\lambda(e) \in \mathbb R^{m \times t}$,
each of which has the distribution of the random vector (represented in matrix notation),
\begin{equation}\label{eq:def:lambda}
{1\over m}
\left(
\begin{array}{c}
\vec v(b_1)\\
\vdots \\
\vec v(b_{m})
\end{array}
\right)\,.
\end{equation}

Note that for every $e\in E$, we have $\|\lambda(e)\|_1=1$.
We now define a random mapping $g:V\to \mathbb R^{m(h-1) \times t}$ as follows:
We put $g(r)=0$, and otherwise,
\begin{equation}\label{eq:def:g}
g(v)=\left(\begin{array}{c}
\lambda(e_1)\\
\vdots \\
 \lambda(e_{j})\\
 0\\
\vdots \\
0
\end{array}
\right),
\end{equation}
where $e_1, e_2, \ldots, e_{j}$ is the sequence
of edges encountered on the path from the root to $v$.
It is straightforward to check that $g$ is $1$-Lipschitz.
The next observation is also immediate from the definition of $g$.

\begin{observation}\label{lem:kary:rlpath}
For any $v\in V$ and $u\in V(P_v)$, we have
$d_T(u,v)= \|g(u)-g(v)\|_1$.
\end{observation}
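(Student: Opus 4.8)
The statement is a direct unwinding of the definition \eqref{eq:def:g} of $g$, so my plan is simply to track which coordinate blocks survive in the difference $g(u)-g(v)$ and then use that the labels have unit $\ell_1$ norm. First I would fix $v\in V$ and $u\in V(P_v)$, and observe that since $u$ lies on the root-to-$v$ path, the edge sequence $E(P_u)$ is an initial segment of the edge sequence $E(P_v)$. Writing $E(P_v)=(e_1,e_2,\ldots,e_j)$ in order from the root, there is some $i\le j$ with $E(P_u)=(e_1,\ldots,e_i)$, and $e_{i+1},\ldots,e_j$ are exactly the edges of the $u$-$v$ path.

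Next I would apply \eqref{eq:def:g} to both $u$ and $v$: the vector $g(u)$ has blocks $\lambda(e_1),\ldots,\lambda(e_i)$ followed by zero blocks, and $g(v)$ has blocks $\lambda(e_1),\ldots,\lambda(e_j)$ followed by zero blocks, where each block lives in a distinct set of $t$ coordinates. Subtracting coordinatewise, the first $i$ blocks cancel exactly, so $g(v)-g(u)$ is the vector whose blocks $i+1,\ldots,j$ equal $\lambda(e_{i+1}),\ldots,\lambda(e_j)$ and whose remaining blocks are zero. The key (and only) point to note is that because these blocks occupy pairwise disjoint coordinates, the $\ell_1$ norm splits as a sum over blocks:
\[
\|g(u)-g(v)\|_1 = \sum_{\ell=i+1}^{j}\|\lambda(e_\ell)\|_1 .
\]

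Finally I would invoke the already-recorded fact that $\|\lambda(e)\|_1=1$ for every $e\in E$, so the right-hand side equals $j-i$, which is precisely the number of edges on the $u$-$v$ path; since $T=T_{k,h}$ is unweighted, this number is $d_T(u,v)$, and the observation follows. There is no real obstacle here — the argument is a routine verification — the only thing to be careful about is the disjointness of the coordinate blocks, which is exactly what makes the $\ell_1$ norm additive along the path and is the whole reason the construction is $1$-Lipschitz in the first place.
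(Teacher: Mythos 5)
Your proof is correct and is exactly the verification the paper has in mind: the paper states this observation without proof, calling it immediate from the definition \eqref{eq:def:g}, and your unwinding (nested edge sequences, cancellation of the common prefix, additivity of the $\ell_1$ norm over the disjoint coordinate blocks, and $\|\lambda(e)\|_1=1$) is precisely that routine check. No issues.
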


For $m,n\in \mathbb N$, and $A\in \mathbb R^{m\times n}$, we use the notation $A[i] \in \mathbb R^n$ to refer to the $i$th row of $A$.
We now bound the probability that a given pair of vertices experiences a large contraction.

\begin{lemma}\label{lem:kary:con}
For $C\geq10$, and $x,y\in V$,
\begin{equation}\label{eq:kary:con:result}
\pr\left[\vphantom{\bigoplus} \|g(x)-g(y)\|_1\leq (1-C\eps)d_T(x,y)\right]\leq k^{-Cd_T(x,y)/2}\,.
\end{equation}
\end{lemma}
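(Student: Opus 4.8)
The plan is to reduce the statement to a concentration bound on the $\ell_1$-distance between $g(x)$ and $g(y)$, using the structure of the tree. First I would let $w$ be the least common ancestor of $x$ and $y$ in $T$, and write $P_{wx} = \{e_1,\ldots,e_a\}$ and $P_{wy}=\{f_1,\ldots,f_b\}$, so that $d_T(x,y) = a+b$ and these two edge sets are disjoint (they lie in different subtrees below $w$). By Observation~\ref{lem:kary:rlpath} and the fact that $g(x)-g(y)$ agrees with $g(x)-g(w)$ on the block of coordinates below level $\age(w)$ and with $g(w)-g(y)$ on... actually more simply: since $P_{wx}$ and $P_{wy}$ occupy disjoint coordinate blocks in the definition \eqref{eq:def:g} of $g$, we have $\|g(x)-g(y)\|_1 = \sum_{i=1}^{a}\|\lambda(e_i) - c_i\|_1 + \sum_{j=1}^{b}\|\lambda(f_j)-c'_j\|_1$ for fixed vectors $c_i, c'_j$ (the corresponding rows of the common ancestor portion), but in fact each $\lambda(e_i)$ sits in its own coordinate block, so we get a clean sum of $D := a+b = d_T(x,y)$ independent contributions, one per edge on the path $P_{xy}$.

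Next I would zoom in on a single edge contribution. For an edge $e$ on $P_{xy}$, the relevant quantity is $\|\lambda(e) - \mu\|_1$ where $\mu$ is a fixed vector which is itself a (scaled) label of the form \eqref{eq:def:lambda}, or zero. Writing $\lambda(e) = \frac1m\sum_{r=1}^m \vec v(b_r^{(e)})$ with the $b_r^{(e)}$ i.i.d.\ uniform on $[t]$, and similarly for $\mu$, the $\ell_1$-distance decomposes over the $m$ rows: $\|\lambda(e)-\mu\|_1 = \frac1m\sum_{r=1}^m \|\vec v(b_r^{(e)}) - \vec v(b_r^{\mu})\|_1 = \frac{2}{m}\cdot\#\{r : b_r^{(e)} \neq b_r^{\mu}\}$. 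Since each row disagrees with probability $1 - 1/t \geq 1 - \e$, we get $\E\|\lambda(e)-\mu\|_1 \geq 2(1-\e) \geq 2 - 2\e$, hmm — but the target length of each edge is $1$, so I should be careful: I want $\|\lambda(e)-\mu\|_1 \gtrsim 1$, and indeed $\E\|\lambda(e)-\mu\|_1 = 2(1-1/t)$ which is at least $1$ for $t \geq 2$; the point is just to show $\sum_e \|\lambda(e)-\mu_e\|_1 \geq (1-C\e)D$ with the stated failure probability. Summing over the $D$ edges on $P_{xy}$, the total $X = \|g(x)-g(y)\|_1$ is (up to the $\frac1m$ scaling) a sum of $mD$ independent Bernoulli-type variables, each bounded, with $\E X = \sum_e \E\|\lambda(e)-\mu_e\|_1$. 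I'd then apply the concentration inequality, Theorem~\ref{thm:azuma}, with the deviation $\lambda = c\e D$ for an appropriate constant, $M = O(1/m)$, and $\Var(X) = O(D/m)$, to obtain $\pr(X \leq \E X - c\e D) \leq \exp(-\Omega(\e^2 D m))$. Plugging in $m = t\ceiling{\log k} \asymp \ceiling{\log k}/\e$ makes the exponent $-\Omega(\e D \log k)$; choosing the constants so that $C\e$ absorbs both the gap between $\E X/D$ and $1$ and the deviation, and so that the exponent beats $k^{-CD/2}$, finishes the bound.

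The main obstacle is bookkeeping the constants carefully: I need to simultaneously (i) show that the expected per-edge contribution is at least $(1 - O(\e))$ times the true edge length $1$ — which forces $t = \ceiling{1/\e}$ rather than a smaller value — and (ii) control the rounding in $m = t\ceiling{\log k}$ and $\ceiling{\log k}$ so that the concentration exponent $\Omega(\e^2 D m) = \Omega(\e D \ceiling{\log k})$ genuinely dominates $\frac{C}{2} D \log k$ for all $C \geq 10$, including the degenerate case $k=1$ or $k=2$ where $\ceiling{\log k}$ behaves irregularly. A secondary subtlety is the case analysis for $\mu_e$: when $e$ is an edge whose "partner" coordinate block on the other path is all zero, the fixed vector $\mu_e$ is $0$ and $\|\lambda(e)-0\|_1 = 1$ deterministically, which only helps; when $e$ lies on the shared portion there is no such edge (those cancel), so really every edge on $P_{xy}$ either contributes deterministically $1$ or contributes a fresh random $\frac{2}{m}\mathrm{Bin}(m, 1-1/t)$ — and I should make sure the union of these two types is handled uniformly by Theorem~\ref{thm:azuma} by treating the deterministic ones as degenerate random variables with zero variance.
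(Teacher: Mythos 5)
There is a genuine structural gap: your decomposition of $\|g(x)-g(y)\|_1$ rests on a false claim about the coordinate layout of $g$. In \eqref{eq:def:g} the blocks of $m$ rows are indexed by \emph{depth} from the root, not by edge: writing $w$ for the least common ancestor of $x$ and $y$, the $i$-th edge of $P_{wx}$ and the $i$-th edge of $P_{wy}$ occupy the \emph{same} block, so the two branches do not sit in disjoint coordinate blocks, and there is no identity of the form $\|g(x)-g(y)\|_1=\sum_{e\in E(P_{xy})}\|\lambda(e)-\mu_e\|_1$ with one term per edge and fixed comparison vectors $\mu_e$. The correct identity pairs edges at equal depth below $w$: with $a=d_T(w,x)\le b=d_T(w,y)$ one has $\|g(x)-g(y)\|_1=\sum_{i=1}^{a}\|\lambda(e_i)-\lambda(f_i)\|_1+(b-a)$, where only the $a\le d_T(x,y)/2$ overlapping depths are random and, in each of their $m$ rows, the two independent labels collide with probability exactly $1/t$. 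Your picture instead assigns an independent contribution of mean $2(1-1/t)$ to every edge of $P_{xy}$, which would force $\E\|g(x)-g(y)\|_1\approx 2\,d_T(x,y)$ --- impossible, since $g$ is $1$-Lipschitz; this is precisely the mismatch you noticed (each edge has length $1$) and then set aside. The entire content of the lemma is to control the cancellation between the two branches at the shared depths; once the decomposition is repaired you are led to exactly the paper's argument, namely the collision variables $X_{ij}$ (nonzero with probability $1/t$, bounded by $2/m$, supported on at most $d_T(x,y)/2$ depths) summed via Theorem~\ref{thm:azuma}.

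There is also a quantitative slip that would persist even after fixing the decomposition: you state $\Var(X)=O(d_T(x,y)/m)$ and a resulting exponent $\Omega(\eps^2\, d_T(x,y)\, m)=\Omega(\eps\, d_T(x,y)\log k)$, which does \emph{not} dominate the required bound $k^{-C d_T(x,y)/2}=\exp(-\Omega(d_T(x,y)\log k))$ when $\eps$ is small. Each row indicator carries an extra factor $1/t$ in its variance, so the total variance is $O(d_T(x,y)/(tm))$, and the Bernstein exponent is then $\Omega(\eps^2\, d_T(x,y)\, tm)=\Omega(d_T(x,y)\log k)$, as in the paper; moreover the deviation must be measured from a mean that is within $O(\eps)\,d_T(x,y)$ of $d_T(x,y)$, which again only holds in the corrected, collision-counting formulation.
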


\begin{proof}
Fix $x,y \in V$, and let $r'$ denote their lowest common ancestor.
We define the family random variables $\{X_{ij}\}_{i\in [h-1], j \in [m]}$ by
setting $\ell_{ij} = (i-1)m + j$, and then
\begin{equation}\label{eq:Xij}
X_{ij}=\|g(x)[\ell_{ij}]-g(r')[\ell_{ij}]\|_1+\|g(y)[\ell_{ij}]-g(r')[\ell_{ij}]\|_1-\|g(x)[\ell_{ij}]-g(y)[\ell_{ij}]\|_1\,.
\end{equation}

Observe that if $i \leq d_T(r,r')$ then $X_{ij}=0$ for all $j \in [m]$ since all three terms in \eqref{eq:Xij} are zero.
Furthermore, if $i \geq \min(d_T(r,x), d_T(r,y))+1$, then again $X_{ij}=0$ for all $j \in [m]$, since in this case
one of the first two terms of \eqref{eq:Xij} is zero, and the other is equal to the last.  Thus if
$$R = [h-1] \cap [d_T(r,r')+1, \min(d_T(r,x),d_T(r,y))],$$ then
$i \notin R \implies X_{ij} = 0$ for all $j \in [m]$, and additionally we have
the estimate,
\begin{equation}\label{eq:sizeR}
|R| = \min(d_T(r,x),d_T(r,y))-d_T(r,r') \leq \frac{d_T(x,y)}{2}\,.
\end{equation}

Now, using the definition of $g$ \eqref{eq:def:g}, we can write
\begin{align*}
\|g(x)-g(y)\|_1&=\sum_{i \in [h-1], j \in [m]} \left(\|g(x)[\ell_{ij}]-g(r')[\ell_{ij}]\|_1+\|g(y)[\ell_{ij}]-g(r')[\ell_{ij}]\|_1-X_{ij}\right)\\
&=\|g(x)-g(r')\|_1+\|g(y)-g(r')\|_1-\sum_{i \in [h-1], j \in [m]} X_{ij}\\
&\overset{\eqref{lem:kary:rlpath}}{=}d_T(x,r')+d_T(y,r')-\sum_{i \in [h-1], j \in [m]} X_{ij}\\
&=d_T(x,y)-\sum_{i \in [h-1], j \in [m]} X_{ij}\,.
\end{align*}
We will prove the lemma by arguing that,
\[
\pr\left[\sum_{i \in [h-1], j \in [m]} X_{ij}\leq C\eps d_T(x,y)\right]\leq k^{-Cd_T(x,y)/2}.
\]

We start the proof by first bounding the maximum of the $X_{ij}$ variables. Since,
for every $\ell$, we have
$$\|g(x)[\ell]-g(r')[\ell]\|_1,\,\|g(y)[\ell]-g(r')[\ell]\|_1 \in \left\{0,\frac{1}{m}\right\},$$ we conclude that,
\begin{equation}\label{eq:kary:max}
\max\left\{\vphantom{\bigoplus} X_{ij}:i\in [h-1], j \in [m] \right\}\leq {2\over m}.
\end{equation}

For $i\in R$ and $j \in [m]$, using \eqref{eq:def:lambda} and \eqref{eq:def:g},
we see that
 $(g(x)[\ell_{ij}]-g(r')[\ell_{ij}])={1\over m}\vec v(\alpha)$ and $g(y)[\ell_{ij}]-g(r')[\ell_{ij}]={1\over m}\vec v(\beta)$, where $\alpha$ and $\beta$ are i.i.d. uniform over $\{1,\ldots, t\}$.
 Hence, for $i\in R$ and $j \in [m]$, we have $$\pr[X_{ij}\neq 0]={1\over t}\,.$$

 We can thus bound  the expected value and variance of $X_{ij}$ for $i\in R$ and $j \in [m]$ using \eqref{eq:kary:max},
\begin{equation}\label{eq:Xi:E}
\E[X_{ij}]\leq{2\over tm}\,,
\end{equation}
and
\begin{equation}\label{eq:Xi:Var}
\Var(X_{ij})\leq {4\over tm^2}\,.
\end{equation}

Using \eqref{eq:sizeR}, we have
\begin{align}\label{eq:Xsum:E}
\sum_{i=1}^{h-1} \sum_{j=1}^m \E[X_{ij}]&=\sum_{i\in R} \sum_{j \in [m]} \E[X_{ij}]
 \overset{\eqref{eq:Xi:E}}{\leq}\sum_{i\in R}{2\over t}
 \overset{\eqref{eq:sizeR}}{\leq} {d_T(x,y)\over t},
\end{align}
and
\begin{align}\label{eq:Xsum:Var}
\sum_{i=1}^{h-1} \sum_{j=1}^{m} \Var(X_{ij})&
=\sum_{i\in R} \sum_{j \in [m]} \Var(X_{ij})
 \overset{\eqref{eq:Xi:Var}}{\leq}\sum_{i\in R}{4\over tm}
 \overset{\eqref{eq:sizeR}}{\leq} {2\,d_T(x,y)\over tm}\,.
\end{align}
We now apply Theorem~\ref{thm:azuma} to complete the proof:
\begin{align*}
{\pr}\Bigg[\sum_{i \in [h-1],j \in [m]} X_{ij}&\geq C\left({d_T(x,y)\over t}\right)\Bigg]\\
&={\pr}\Bigg[\sum_{i \in [h-1], j \in [m]} X_{ij}-{d_T(x,y)\over t}\geq (C-1)\left({d_T(x,y)\over t}\right)\Bigg]\\
&\overset{\eqref{eq:Xsum:E}}{\leq}
{\pr}\left(\sum_{i \in [h-1], j \in [m]} {X_{ij} }-\mathbb E\left[\sum_{i \in [h-1], j\in [m]} X_{ij}\right] \geq (C-1)\left(d_T(x,y)\over t\right)\right)\\
&\leq\exp\left({-((C-1)d_T(x,y)/t)^2\over 2\left(\sum_{i \in [h-1], j \in [m]} \Var(X_{ij})+ (C-1)(d_T(x,y)/t) ({2\over m})/3\right)}\right)\\
&\overset{\eqref{eq:Xsum:Var}}\leq
\exp\left({-((C-1)d_T(x,y)/t)^2\over2\left({2\,d_T(x,y)/ (tm)}+ (C-1)(d_T(x,y)/t) ({2\over m})/3\right)}\right)\\
&=\exp\left({-(C-1)^2\over4\left(1+ (C-1)/3\right)}\cdot\frac{m}{t}\cdot d_T(x,y) \right).
\end{align*}
An elementary calculation shows that for $C\geq 10$,  we have
${(C-1)^2 \over {4(1+(C-1)/3)}}\geq {C\over 2}.$
Hence,
\begin{align*}
{\pr}\Bigg[\sum_{i \in [h-1],j \in [m]} X_{ij} \geq C\eps{d_T(x,y)}\Bigg]
&\overset{\eqref{eq:def:kary:t}}{\leq}{\pr}\Bigg[\sum_{i \in [h-1],j \in [m]} X_{ij} \geq C\left({d_T(x,y)\over t}\right)\Bigg]\\
&\leq\exp\left(-{Cm\over 2t}d_T(x,y)\right)\\
&\overset{\eqref{eq:def:kary:m}}{\leq} k^{-Cd_T(x,y)/2}\,
\end{align*}
completing the proof.
\end{proof}

\subsection{The Local Lemma argument}
\label{sec:local}

We first give the statement of the
Lov\'asz Local Lemma \cite{EL75} and then use it in conjunction with Lemma~\ref{lem:kary:con} to complete the proof
of Theorem \ref{thm:kary}.

\begin{theorem}
\label{thm:LLL}
Let $\mathcal A$ be a finite set of events in some probability space.
For $A \in \mathcal A$, let $\Gamma(A) \subseteq \mathcal A$ be such
that $A$ is independent from the collection of events $\mathcal A \setminus (\{A\} \cup \Gamma(A))$.
If there exists an assignment $x : \mathcal A \to (0,1)$ such that for all $A \in \mathcal A$, we have
$$
\pr(A) \leq x(A) \prod_{B \in \Gamma(A)} (1-x(B)),
$$
then the probability that none of the events in $\mathcal A$ occur is at least $\prod_{A \in \mathcal A} (1-x(A)) > 0$.
\end{theorem}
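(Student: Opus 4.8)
The plan is to prove the (asymmetric) Lov\'asz Local Lemma by the standard induction that establishes a \emph{conditional} upper bound on each event probability. Concretely, I would first isolate the following claim: for every $A \in \mathcal A$ and every subset $S \subseteq \mathcal A \setminus \{A\}$, one has $\pr\!\left(\bigcap_{B \in S} \bar B\right) > 0$ and
\[
\pr\!\left(A \,\middle|\, \textstyle\bigcap_{B \in S} \bar B\right) \leq x(A).
\]
The positivity is bundled into the statement precisely so that every conditional probability appearing below is well defined; it will come out of the argument for free, since by the chain rule $\pr\!\left(\bigcap_{B\in S}\bar B\right) = \prod_i \bigl(1-\pr(B_i \mid \bigcap_{j<i}\bar B_j)\bigr) \geq \prod_i(1-x(B_i)) > 0$ once the conditional bounds are known for the smaller sets involved. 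The claim is proved by induction on $|S|$. For $S = \emptyset$ the hypothesis gives $\pr(A) \leq x(A)\prod_{B \in \Gamma(A)}(1-x(B)) \leq x(A)$, as each factor lies in $(0,1)$.

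For the inductive step I would partition $S = S_1 \cup S_2$ with $S_1 = S \cap \Gamma(A)$ and $S_2 = S \setminus \Gamma(A)$, and write the conditional probability as a ratio via Bayes' rule:
\[
\pr\!\left(A \,\middle|\, \textstyle\bigcap_{B \in S}\bar B\right) = \frac{\pr\!\left(A \cap \bigcap_{B \in S_1}\bar B \,\middle|\, \bigcap_{C \in S_2}\bar C\right)}{\pr\!\left(\bigcap_{B \in S_1}\bar B \,\middle|\, \bigcap_{C \in S_2}\bar C\right)}.
\]
For the numerator I drop the events in $S_1$ and use that $A$ is independent of the collection $\mathcal A \setminus (\{A\}\cup\Gamma(A)) \supseteq S_2$, so it is at most $\pr\!\left(A \mid \bigcap_{C\in S_2}\bar C\right) = \pr(A) \leq x(A)\prod_{B\in\Gamma(A)}(1-x(B))$ (the conditioning event has positive probability by the induction applied to $S_2$). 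For the denominator, enumerate $S_1 = \{B_1,\dots,B_k\}$ and expand by the chain rule as $\prod_{i=1}^k \bigl(1 - \pr(B_i \mid \bigcap_{j<i}\bar B_j \cap \bigcap_{C\in S_2}\bar C)\bigr)$; each conditioning set here has size $(i-1)+|S_2| < |S_1|+|S_2| = |S|$, so the inductive hypothesis gives $\pr(B_i \mid \cdots) \leq x(B_i)$, whence the denominator is at least $\prod_{i=1}^k(1-x(B_i)) = \prod_{B\in S_1}(1-x(B)) \geq \prod_{B\in\Gamma(A)}(1-x(B)) > 0$, using $S_1 \subseteq \Gamma(A)$ and that the omitted factors lie in $(0,1)$. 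Dividing, the $\prod_{B\in\Gamma(A)}(1-x(B))$ terms cancel and we obtain $\pr\!\left(A \mid \bigcap_{B\in S}\bar B\right) \leq x(A)$; this in turn certifies $\pr\!\left(\bigcap_{B\in S\cup\{A\}}\bar B\right) > 0$ for the next level of the induction.

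With the claim established, the theorem follows by enumerating $\mathcal A = \{A_1,\dots,A_N\}$ and telescoping:
\[
\pr\!\left(\textstyle\bigcap_{i=1}^N \bar A_i\right) = \prod_{i=1}^N \pr\!\left(\bar A_i \,\middle|\, \textstyle\bigcap_{j<i}\bar A_j\right) \geq \prod_{i=1}^N (1 - x(A_i)) > 0,
\]
where the $i$-th factor is bounded using the claim with $A = A_i$ and $S = \{A_1,\dots,A_{i-1}\}$. I expect the only genuine subtlety to be the careful treatment of conditioning on possibly measure-zero events — which is exactly why the positivity assertion is carried through the induction in tandem with the probability bound; the rest is routine bookkeeping with Bayes' rule, the chain rule, and the hypothesis that $\Gamma(A)$ captures all the events on which $A$ can depend.
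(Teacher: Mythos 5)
Your proof is the standard inductive argument for the asymmetric Lov\'asz Local Lemma and it is correct: the conditional bound $\pr(A \mid \bigcap_{B\in S}\bar B)\le x(A)$ is established by induction on $|S|$ with the usual split $S=S_1\cup S_2$, positivity of the conditioning events is correctly carried along via the chain rule, and the hypothesis that $A$ is (mutually) independent of $\mathcal A\setminus(\{A\}\cup\Gamma(A))$ is used exactly where needed. The paper itself does not prove this statement---it simply quotes it from \cite{EL75}---so your argument supplies the classical proof and there is no divergence of approach to compare.
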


\begin{proof}[Proof of Theorem~\ref{thm:kary}]
We may assume that $k \geq 2$.
We will use Theorem~\ref{thm:LLL} and Lemma~\ref{lem:kary:con} to show that with non-zero probability the following inequality holds for all $u,v\in V$,
\[
\|g(u)-g(v)\|_1\leq (1-14\eps)\,d_T(u,v).
\]

For $u,v\in V$, let $\mathcal E_{uv}$, be the event $\left\{ \|g(u)-g(v)\|_1 \leq (1-{14\eps})\,d_T(u,v)\right\}$.
Now, for $u,v\in V$, define
$$x_{uv}=k^{-3 d_T(u,v)}\,.$$
Observe that for vertices $u,v \in V$ and a subset $V' \subseteq V$, the event $\mc E_{uv}$
is mutually independent of the family $\{ \mc E_{u'v'} : u',v' \in V' \}$ whenever the
induced subgraph of $T$ spanned by $V'$ contains no edges from $P_{uv}$.
Thus using Theorem~\ref{thm:LLL}, it is sufficient to show that for all $u,v\in V$,
\begin{equation}\label{eq:lll}
\pr(\mc E_{uv}) \leq x_{uv} \mathop{\prod_{s,t\in V :}}_{E(P_{st})\cap E(P_{uv})\neq \emptyset} (1-x_{st})\,.
\end{equation}
Indeed, this will complete the proof of Theorem \ref{thm:kary}.

To this end, fix $u,v \in V$.
For $e\in E$ and $i\in \mathbb N$, we define the set,
\[S_{e,i}=\{(u,v):\textrm{$u,v\in V$, $d_T(u,v)=i$, and $e \in E(P_{uv})$}\}.
\]
Since $T$ is a $k$-ary tree,
\begin{equation}\label{eq:s:size}
|S_{e,i}|\leq \sum_{j=1}^{i} k^{j-1}\cdot k^{i-j}= i\cdot k^{i-1}\leq k^{2i}.
\end{equation}
Thus we can write,
\begin{align*}
x_{uv}\mathop{\prod_{s,t\in V :}}_{E(P_{st})\cap E(P_{uv})\neq \emptyset} (1-x_{st})&=
x_{uv} \prod_{e \in E(P_{uv})}  \prod_{i\in \mathbb N}  \prod_{(s,t) \in S_{e,i}} \left(1- x_{st}\right)\\
&= k^{-3d_T(u,v)} \prod_{e \in E(P_{uv})}  \prod_{i\in \mathbb N}  \prod_{(s,t) \in S_{e,i}} \left(1- k^{- 3i}\right)\\
&\overset{\eqref{eq:s:size}}{\geq}  k^{-3d_T(u,v)} \prod_{e \in E(P_{uv})}  \prod_{i\in \mathbb N}  \left(1- k^{ - 3i}\right)^{k^{2i}}\\
&\geq k^{-3d_T(u,v)} \prod_{e \in E(P_{uv})}  \prod_{i\in \mathbb N}  \left(1- k^{2i}(k^{ - 3i})\right)\\
&= k^{-3d_T(u,v)} \prod_{e \in E(P_{uv})}  \prod_{i\in \mathbb N}  \left(1- \frac 1 {k^{i}}\right).
\end{align*}
For $x\in [0,\frac12]$, we have $e^{-2x}\leq 1-x$, and since $k \geq 2$, we have $k^{-i} \leq {1\over 2}$ for all $i\in \mathbb N$, hence
\begin{align*}
x_{uv}\mathop{\prod_{s,t\in V :}}_{E(P_{st})\cap E(P_{uv})\neq \emptyset} \left(1-x_{st}\right)
&\geq k^{-3d_T(u,v)} \prod_{e \in E(P_{uv})}  \prod_{i\in \mathbb N}  \exp\left({- 2\over k^{i}}\right)\\
&= k^{-3d_T(u,v)} \prod_{e \in E(P_{uv})}   \exp\left(- 2\sum_{i\in \mathbb N} {1\over k^i}\right)\\
&= k^{-3d_T(u,v)} \prod_{e \in E(P_{uv})}   \exp\left({- 2/k\over 1-1/k}\right)\\
&\geq k^{-3d_T(u,v)} \prod_{e \in E(P_{uv})}   \exp\left({- 4 \over k}\right)\\
&= k^{-3d_T(u,v)} \exp\left({- 4\,d_T(u,v) \over k}\right).
\end{align*}
Since $k\geq 2$, we conclude that,
\begin{equation*}
x_{uv}\mathop{\prod_{s,t\in V :}}_{E(P_{st})\cap E(P_{uv})\neq \emptyset} \left(1-x_{st}\right)
\geq k^{- 7d_T(u,v)}.
\end{equation*}
On the other hand, Lemma~\ref{lem:kary:con} applied with $C=14$ gives,
\[
\pr\left[\|g(u)-g(v)\|_1\leq (1-14\eps)d_T(u,v)\right]\leq k^{-7d_T(u,v)},
\]
yielding \eqref{eq:lll}, and completing the proof.

\end{proof}

\section{Colors and scales}

In the present section, we develop some tools for our eventual embedding.
The proof of our main theorem appears in the next section,
but relies on a key theorem which is only proved in Section \ref{sec:embedding}.

\subsection{Monotone colorings}
\label{sec:colors}

Let $T=(V,E)$ be a metric tree rooted at a vertex $r \in V$.
Recall that such a tree $T$ is equipped with a length $\len : E \to [0,\infty)$.
We extend this to subsets of edges $S \subseteq E$ via $\len(S) = \sum_{e \in S} \len(e)$.
We recall that a {\em monotone coloring} is a mapping $\chi : E \to \mathbb N$
such that each color class $\chi^{-1}(c) = \{ e \in E : \chi(e) = c \}$ is a connected subset
of some root-leaf path.
For a set of edges $S \subseteq E$, we write $\chi(S)$ for the
set of colors occurring in $S$.
We define the {\em multiplicity of $\chi$}
by
$$
M(\chi) = \max_{v \in V} |\chi(P_v)|\,.
$$

Given such a coloring $\chi$ and $c \in \mathbb N$,
we define, $$\len_{\chi}(c) = \len(\chi^{-1}(c)),$$
and $\len_{\chi}(S) = \sum_{c \in S} \len_{\chi}(c)$, if $S \subseteq \mathbb N$.

For every $\delta \in [0,1]$ and $x,y \in V$, we define
the set of colors
\begin{equation*}\label{eq:sigcolors}
C_{\chi}(x,y; \delta) = \left\{ c : \len(P_{xy} \cap \chi^{-1}(c)) \leq \delta \cdot \len_{\chi}(c) \right\} \cap (\chi(P_x) \triangle \chi(P_y))\,.
\end{equation*}
This is the set of colors $c$ which occur in only one of $P_x$ and $P_y$,
and for which the contribution to $P_{xy}$ is significantly smaller than $\len_{\chi}(c)$.
We also put,
\begin{equation}\label{eq:rho}
\rho_{\chi}(x,y;\delta) = \len_{\chi}(C(x,y;\delta))\,.
\end{equation}

We now state a key theorem that will be proved in Section \ref{sec:embedding}.

\begin{theorem}\label{thm:allbut}
For every $\varepsilon, \delta > 0$, there is a value $C(\varepsilon,\delta) = O((\frac{1}{\varepsilon} +\log \log \frac{1}{\delta})^3 \log \frac{1}{\e})$
such that the following holds.
For any metric tree $T=(V,E)$ and any monotone coloring $\chi : E \to \mathbb N$, there
exists a mapping $F : V \to \ell_1^{C(\varepsilon,\delta) (\log n + M(\chi))}$, such that
for all $x,y \in V$,
\begin{equation}\label{eq:allbut}
(1 - \e) \,d_T(x,y) - \delta\, \rho_{\chi}(x,y;\delta) \leq \|F(x)-F(y)\|_1 \leq d_T(x,y)\,.
\end{equation}
\end{theorem}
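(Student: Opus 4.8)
The plan is to reduce to the case of monotone colorings in which every color class has a very special structure, and then apply a re-randomization argument along the ``color tree.'' First I would build an auxiliary tree $\hat T$ whose vertices are the color classes of $\chi$ (with $c$ a child of $c'$ when $\chi^{-1}(c)$ attaches below $\chi^{-1}(c')$); since $\chi$ is monotone, each root-leaf path in $\hat T$ has length at most $M(\chi)$, and by Lemma~\ref{lem:monotone} we may also assume (after composing with one more monotone coloring of $\hat T$ itself, at the cost of a factor $\log n$ in a later concatenation) that $\hat T$ has depth $O(\log n + M(\chi))$ in the relevant topological sense. The first genuine step is then a preprocessing move, announced in the outline: embed $T$ into a product of $O(1)$-many trees, each of which admits a monotone coloring whose color classes are ``clean'' --- meaning each internal vertex of a color class sits at a controlled multiplicative position, so that the troublesome mid-class vertices can be handled. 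This reduces the theorem to producing, for each factor tree, a map $F$ satisfying \eqref{eq:allbut} with the loss term $\delta\,\rho_\chi(x,y;\delta)$ accounting precisely for colors that $P_{xy}$ barely touches.

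Next I would set up the embedding itself in the canonical form \eqref{eq:canonical}: to each color class $c$ I assign a block of coordinates and a ``scale selector'' $\kappa(c) \in \mathbb N$ (deferred to Section~\ref{sec:assignment}), and I label the edges of $\chi^{-1}(c)$ so that the partial sums of $\lambda$ along that class move through $\frac{1}{\kappa(c)}\{0,1\}^{\kappa(c)}$-type increments, chosen i.i.d.\ as in the warm-up \eqref{eq:def:lambda} but now with the ``touched at most once per scale'' invariant from the outline. The contraction analysis proceeds exactly as in Lemma~\ref{lem:kary:con}: for a pair $x,y$ with lowest common color-ancestor, write $\|F(x)-F(y)\|_1 = d_T(x,y) - \sum X_{ij}$ where $X_{ij}$ are the usual triangle-inequality defects, bound $\E[\sum X_{ij}]$ and $\Var(\sum X_{ij})$ by the number of ``active'' colors, and apply Theorem~\ref{thm:azuma}. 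The point where $\rho_\chi(x,y;\delta)$ enters is that a color $c$ contributing very little to $P_{xy}$ (i.e.\ $c \in C_\chi(x,y;\delta)$) contributes an essentially uncontrolled defect of size up to $\len_\chi(c)$ to $\sum X_{ij}$; we simply absorb the total such contribution, which is $\rho_\chi(x,y;\delta)$, into the additive error term, and run the concentration bound only over the remaining colors, where each contributes defect at most a $\delta$-fraction is avoided and the expected defect is $O(\e)$ times its length.

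Then comes the re-randomization / Local-Lemma step to make the bound hold simultaneously for all pairs. Working up the color tree $\hat T$ from the leaves, at a color $c$ with children $c_1,\dots,c_\ell$ I would take the inductively-constructed good labelings on the subtrees and apply an independent random coordinate-wise isometry (an analogue of the maps $\pi_i$ from the outline) to all but one of them before gluing; the Chernoff/Azuma estimate above controls each newly-created cross pair, and either the Lov\'asz Local Lemma (as in Section~\ref{sec:local}) or a direct union bound over the $O(\log n + M(\chi))$ levels controls the union, with the per-color dimension $\kappa(c)$ chosen $\asymp \frac{1}{\e} + \log\log\frac1\delta$ and squared-or-cubed by the need to fit $O((\frac1\e + \log\log\frac1\delta))$ scales and the topological-depth factor, giving the stated $C(\e,\delta) = O((\frac1\e + \log\log\frac1\delta)^3 \log\frac1\e)$ against the $C(\e,\delta)(\log n + M(\chi))$ total dimension.

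The main obstacle --- and the reason this is deferred to Section~\ref{sec:embedding} --- is the scale-selector assignment together with the mid-class vertex problem. Unlike the complete $k$-ary case, where every edge gets the same $\kappa$, here $\kappa(c)$ must encode volume growth at many scales simultaneously and cannot be read off locally from the subtree below $c$; designing it so that (a) the total dimension telescopes to $O(\log n + M(\chi))$ rather than blowing up multiplicatively (the ``star-of-stars'' obstruction in the outline), and (b) the per-pair expected contraction is still $O(\e)\,d_T(x,y)$ once we've thrown away the $\rho_\chi$ colors, is the technical heart. Handling vertices lying in the interior of a color class --- where the path $P_{xy}$ can enter and exit a class at a non-endpoint --- is what forces the preprocessing into $O(1)$ product factors, and verifying that this preprocessing does not destroy the $\rho_\chi(x,y;\delta)$ accounting is the second delicate point.
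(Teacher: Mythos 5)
There is a genuine gap here, on two fronts. First, your opening reduction is circular and misplaced: the product-of-trees preprocessing is Lemma~\ref{lem:folding}, and in the paper it is used to deduce Theorem~\ref{thm:main} \emph{from} Theorem~\ref{thm:allbut}; Theorem~\ref{thm:allbut} itself must be proved for an arbitrary metric tree with an arbitrary monotone coloring, and after your preprocessing you are left having to prove exactly the statement \eqref{eq:allbut} for each factor tree, so the step buys nothing. Relatedly, your accounting of the error term is off by the crucial factor $\delta$: you propose to ``absorb'' a defect of size up to $\len_{\chi}(c)$ per color, i.e.\ a total of $\rho_{\chi}(x,y;\delta)$, but the theorem only tolerates $\delta\,\rho_{\chi}(x,y;\delta)$. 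In the paper the $\delta\rho$ term is not an uncontrolled defect at all: it arises because the probabilistic events are only enforced at a set $R_c(c_i)$ of $\lceil \log_2(1/\delta)\rceil$ representatives per color class (Lemma~\ref{obv:rcdelta}), spaced geometrically down to resolution $\delta\,\len(\gamma_{c_i})$, and the unapproximated remainder is charged to $\delta\rho_{\chi}$; this is precisely what makes the dimension depend on $\log\log(1/\delta)$ rather than $\log(1/\delta)$.

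Second, the two ingredients that constitute the actual proof are absent. (i) The scale selector is not a single number $\kappa(c)$ per color with i.i.d.\ labels as in the warm-up: the construction needs a per-vertex, per-scale profile $\tau_i(v)$, $i\in\mathbb Z$, satisfying the global budget $\tau_i(v)+\sum_{c'\in\chi(E(P_v))}\tau_i(v_{c'})\le \varphi(c)\le M(\chi)+\log_2|E|$ at \emph{every} scale (part (B) of \eqref{def:tau}), together with the structural lemmas (Lemmas~\ref{obv:tau}--\ref{lem:div}) that make the later analysis possible; you explicitly defer this design as ``the technical heart,'' so the construction carrying the whole dimension bound is missing. (ii) The simultaneity argument is not the Local Lemma and not ``a direct union bound over $O(\log n+M(\chi))$ levels'': the paper abandons the LLL after the $k$-ary warm-up precisely because it does not extend to non-uniform trees, and what closes the union bound is the $\varphi$-weighted accounting — each event for a color $c_i$ fails with probability $\exp(-\Omega(\varphi(c_i)-\varphi(c)))$ (Lemma~\ref{lem:con}, applied at the correct scale via the $\Gamma_c$ maps and Lemma~\ref{lem:triangle}), matched against the counting bound $2^{\varphi(c_i)-\varphi(c)}$ of Lemma~\ref{lem:count}/Corollary~\ref{cor:expsum}, restricted to the $\log(1/\delta)$ representatives per class. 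None of this machinery, nor the final collapse of the infinitely many scales $i\in\mathbb Z$ into $O(\log\frac1\e)$ coordinate groups with the $\zeta$-error verification (Lemma~\ref{lem:scales}/Corollary~\ref{col:scales}), which is where the $\log\frac{1}{\e}$ factor in $C(\e,\delta)$ comes from, appears in your sketch; as written the proposal restates the paper's outline but does not supply a proof.
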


The problem one now confronts is whether the loss in the $\rho_{\chi}(x,y; \delta)$ term
can be tolerated.  In general, we do not have a way to do this,
so we first embed our tree into a product of a small number of trees
in a way that allows us to control the corresponding $\rho$-terms.

\begin{lemma}\label{lem:folding}
For every $\eps \in (0,1)$, there is a number $k \asymp \frac{1}{\e}$
such that the following holds.
For every metric tree $T=(V,E)$ and monotone coloring $\chi : E \to \mathbb N$,
there exist
$k$ metric trees $T_1, T_2, \ldots, T_k$ with monotone
colorings $\{\chi_i : E(T_i) \to \mathbb N\}_{i=1}^k$
and mappings $\{f_i : V \to V(T_i)\}_{i=1}^k$
such that $M(\chi_i) \leq M(\chi)$, and $|V(T_i)|\leq |V|$ for all $i \in [k]$, and
the following conditions hold for all $x,y \in V:$
\begin{enumerate}
\item[(a)] We have,
\begin{equation}\label{eq:lowerb}
\frac{1}{k} \sum_{i=1}^k d_{T_i}(f_i(x),f_i(y)) \geq (1-\eps)\,d_T(x,y)\,.
\end{equation}
\item[(b)] For all $i \in [k]$, we have
\begin{equation}\label{eq:foldlipschitz}
d_{T_i}(f_i(x),f_i(y)) \leq (1+\eps)\,d_T(x,y)\,.
\end{equation}
\item[(c)] There exists a number $j \in [k]$ such that
\begin{equation}\label{eq:rholoss}
\eps \,d_T(x,y)\geq \frac{2^{-(k+1)}}{k} \mathop{\sum_{i=1}^k}_{i \neq j} \rho_{\chi_i}(f_i(x),f_i(y);2^{-(k+1)})
\end{equation}
\end{enumerate}
\end{lemma}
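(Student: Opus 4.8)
The goal is to split the single color structure $\chi$ into $k \asymp 1/\e$ pieces in such a way that, on each pair $x,y$, all but one of the copies $T_i$ has its $\rho_{\chi_i}$-term dominated by $\e\, d_T(x,y)$. The key observation is that the colors $c$ contributing to $\rho_\chi(x,y;\delta)$ are exactly those that appear in only one of $P_x,P_y$ but whose overlap with $P_{xy}$ is tiny — in particular, such a color $c$ has its ``middle portion'' entirely off the path $P_{xy}$. The danger color is the (at most one) color $c^*$ that contains the lowest common ancestor of $x$ and $y$ in its interior: for this color, $P_{xy}$ can cut through a vanishing fraction of $\chi^{-1}(c^*)$ while $\len_\chi(c^*)$ is arbitrarily large compared to $d_T(x,y)$. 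Every \emph{other} color in $\chi(P_x)\triangle\chi(P_y)$ either lies strictly above the LCA (so it is fully contained in both $P_x\cap P_{xy}$ or neither — handled by the symmetric difference condition) or lies strictly below a split point on one side, where it contributes its full length to $P_{xy}$ and hence is excluded by the $\delta$-threshold once $\delta$ is small. So in fact for a \emph{fixed} coloring $\chi$ there is essentially only the one bad color per pair; the reason we need $k$ copies rather than one is subtler — it's that ``bad'' is relative to where we place color boundaries, and a color that is bad for one pair may be internal-at-the-LCA for many pairs simultaneously.

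First I would recall the standard ``caterpillar unfolding'' trick: given a color class $\chi^{-1}(c)$, which is a path segment, one can break it into $k$ consecutive sub-segments of (roughly) geometrically increasing or equal lengths, and route the $k$ copies so that $T_i$ ``short-circuits'' all but the $i$-th type of sub-segment. More precisely, I would parametrize each color class by arclength, choose $k$ shifted dyadic-type partitions of $[0,\len_\chi(c)]$ (offsets $0, 1/k, 2/k, \dots$ of the scale), and in $T_i$ collapse each color class to a tree that preserves distances up to $(1+\e)$ but has the property that for any subpath, at most one ``boundary region'' is straddled. The maps $f_i$ are the induced vertex maps; since we only ever collapse or lightly stretch within color classes and never merge vertices from different classes, $|V(T_i)|\le|V|$ and $M(\chi_i)\le M(\chi)$ follow immediately, and (b) is the statement that each $f_i$ is a $(1+\e)$-embedding, which holds by construction of the local gadgets. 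For (a), averaging over the $k$ shifts recovers $(1-\e)\,d_T(x,y)$: any given color class is ``distorted downward'' in at most a $1/k$ fraction of the copies at any scale, and a geometric sum over scales gives the bound — this is exactly the kind of averaging-over-offsets argument used in padded decompositions.

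The real content is (c). Here I would argue as follows. Fix $x,y\in V$ and let $w$ be their LCA in $T$ (with respect to $\chi$, i.e.\ the deepest vertex on $P_x\cap P_y$). Consider the color $c^*=\chi$ of the edge of $P_x$ just below $w$ (if $w$ is internal to a color class; otherwise there is no issue at all). A color $c\ne c^*$ in $\chi(P_x)\triangle\chi(P_y)$ contributing to $\rho_{\chi_i}$ must lie entirely on one side below the split, hence contributes $\ge$ its whole $\chi_i$-length minus at most the one straddled boundary region to $P_{xy}$; by the construction of the gadgets, this forces $\len(P_{xy}\cap\chi_i^{-1}(c)) > 2^{-(k+1)}\len_{\chi_i}(c)$ for \emph{every} $i$ except possibly the single $i$ where the boundary region of $c$ at the relevant scale is straddled — and since the shifts are distinct, for each such color the exceptional index is a fixed $j(c)$. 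The claim would then be that we can choose the global offset index $j\in[k]$ (depending on $x,y$) to simultaneously be the exceptional index for $c^*$ \emph{and} for every other problematic color — which works because, by the nesting/laminarity of color classes along a root-leaf path and the geometric scale separation, all the problematic colors for the pair $(x,y)$ live at comparable ``positions'' and share a single exceptional offset. Summing the genuinely-contributing $\rho$-terms over $i\ne j$ then telescopes against $\e\,d_T(x,y)$ because each such color contributes at most $2^{-(k+1)}k\,\len_{\chi_i}(c) \le 2^{-(k+1)}k\cdot(1+\e)\cdot\big(\text{length it puts on }P_{xy}\big)$ and $\sum(\text{lengths on }P_{xy})\le (1+\e)d_T(x,y)$, so $\tfrac{2^{-(k+1)}}{k}\sum_{i\ne j}\rho_{\chi_i} \le 2^{-(k+1)}k\cdot\text{(stuff)}$ — wait, I would need to recheck the exact power of $2$ and factor of $k$ here, choosing the gadget threshold so that the constant comes out to exactly $\e$; this is a routine calibration.

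\textbf{Main obstacle.} The hard part will be making precise the claim in (c) that a \emph{single} offset index $j$ can be chosen to absorb \emph{all} the problematic colors for a given pair at once, rather than one $j$ per bad color. This requires a careful combinatorial lemma about how, along the two root-leaf paths $P_x$ and $P_y$, the color classes in the symmetric difference that have tiny overlap with $P_{xy}$ must all ``hang off'' the path in a nested fashion near a single vertex (the LCA), so that at the relevant scale they are all cut in the same offset-region. Getting the gadget construction, the scale/offset bookkeeping, and this nesting lemma to line up — so that ``bad for the pair'' really does reduce to ``the $j$-th offset straddles a single shared boundary'' — is where essentially all the work of this lemma lives; everything else (conditions on $|V(T_i)|$, $M(\chi_i)$, parts (a) and (b), and the final arithmetic of (c)) is bookkeeping on top of standard caterpillar-unfolding and offset-averaging.
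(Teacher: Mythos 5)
There is a genuine gap, and it sits exactly where you flagged it, but the problem is worse than a missing technical lemma: the structural claim your part~(c) rests on is false. You assert that the colors of $\chi(P_x)\triangle\chi(P_y)$ with small overlap with $P_{xy}$ are essentially only the single color containing the LCA, the others "contributing their full length to $P_{xy}$", and hence that all problematic colors nest near one vertex and can be absorbed by one shared offset index $j$. But a color class $\gamma_c$ is a sub-path of \emph{some} root--leaf path, not of the one through $x$: the path $P_x$ typically enters $\gamma_c$ at $v_c$ and branches off after traversing only a tiny fraction of it. Every vertex of $P_{xy}$ lying in the middle of a color class produces such a color, so there can be up to $M(\chi)$ contributing colors on each side of the LCA, at unrelated positions and scales along $P_{xy}$. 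Consequently there is no single "shared boundary region", and no choice of one offset per pair can make the $\rho$-terms of all these colors vanish in the other $k-1$ copies; the combinatorial nesting lemma you hope for is not just unproved, it is unavailable. A secondary issue: your gadget subdivides a color class into consecutive sub-segments carrying distinct colors, which makes a single root--leaf path meet several colors of one old class and already violates $M(\chi_i)\le M(\chi)$ (or, if you keep one color per class, the shifting does nothing to the $\rho$-denominators $\len_{\chi_i}(c)$).

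It is worth seeing how the paper escapes this, because the mechanism is different from the one you are aiming at. Condition (c) does not require the non-exceptional $\rho$-terms to vanish; because of the prefactor $2^{-(k+1)}/k$, each copy $i\neq j$ may contribute up to order $2^{k}\,d_T(x,y)$. The construction peels off one bottom color class at a time (induction on $|V|$, with the strengthened invariant $|\chi_i(P_{f_i(v)})|\le|\chi(P_v)|$) and replaces it in $T_i$ by an $\mathbb{R}$-star whose branches have geometrically growing lengths $2^{i-1+k(j+1)}$, the class being mapped out-and-back along the branches (Lemma~\ref{lem:fold:path}); each branch is one new color, so any root--leaf path meets at most one new color per old class (this is what preserves $M(\chi_i)\le M(\chi)$), and each image point sits on a branch whose length is between half and $2^{k-1}$ times its distance to the top of the class. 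As a result a new color can have nonzero $\rho_{\chi_i}(\cdot\,;2^{-(k+1)})$ in only $O(1)$ of the $k$ copies (condition (v)), and for all but a single exceptional index its contribution is at most $2^{k+1}d_T(x,y)$ (conditions (iv) and (vi)); the exceptional $j$ for a general pair is inherited through the induction from the one class where the pair splits, not chosen to cancel all bad colors simultaneously. Averaging the $O(1)$ bounded contributions over $k\asymp 1/\e$ copies and multiplying by $2^{-(k+1)}$ yields (c). If you try to make your "shifted partition plus collapse" scheme deliver the two properties that actually matter --- one new color per root--leaf path per old class, and new-color length comparable (within $2^{O(k)}$) to each point's depth except in $O(1)$ copies --- you will be led essentially to the paper's star construction; the linear-offset averaging you describe suffices for (a) and (b) but not for (c).
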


Using Lemma \ref{lem:folding} in conjunction with Theorem \ref{thm:allbut}, we can now
prove the main theorem (Theorem \ref{thm:main}).

\begin{proof}[Proof of Theorem \ref{thm:main}]
Let $\e > 0$ be given,
let $T=(V,E)$ be an $n$-vertex metric tree.
Let $\chi : E \to \mathbb N$ be a monotone coloring
with $M(\chi) \leq O(\log n)$, which exists by Lemma \ref{lem:monotone}.
Apply Lemma \ref{lem:folding} to obtain
metric trees $T_1, \ldots, T_k$ with corresponding monotone colorings $\chi_1, \ldots, \chi_k$
and a mappings $f_i : V \to V(T_i)$.
Observe that $M(\chi_i) \leq O(\log n)$ for each $i \in [k]$.

Let $F_i : V(T_i) \to \ell_1^{C(\e) \log n}$ be the mapping
obtained by applying Theorem \ref{thm:allbut} to $T_i$ and $\chi_i$, for each $i \in [k]$,
with $\delta = 2^{-(k+1)}$, where $C(\e) = O(\frac{1}{\e^3} (\log \frac{1}{\e}))$.
Finally, we put $$F = \frac{1}{k} \left((F_1 \circ f_1) \oplus (F_2 \circ f_2) \oplus \cdots \oplus (F_k \circ f_k)\right)$$ so that
$F : V \to \ell^{O((\frac{1}{\e})^4 \log \frac{1}{\e} \cdot \log n)}$.
We will prove that $F$ is a $(1+O(\e))$-embedding, completing the proof.

\medskip

First, observe that each $F_i$ is $1$-Lipschitz (Theorem \ref{thm:allbut}).  In conjunction with condition~(b) of Lemma \ref{lem:folding}
which says that $\|f_i\|_{\Lip} \leq 1+\e$ for each $i \in [k]$, we have $\|F\|_{\Lip} \leq 1+\e$.

For the other side, fix $x,y \in V$ and let $j \in [k]$ be the number guaranteed in condition~(c) of Lemma \ref{lem:folding}.
Then we have,
\begin{eqnarray*}
\|F(x)-F(y)\|_1 &=& \frac{1}{k} \sum_{i=1}^k \|(F_i \circ f_i)(x) - (F_i \circ f_i)(y)\|_1 \\
& \overset{\eqref{eq:allbut}}{\geq} &
\frac{1}{k} \sum_{i \neq j} \left((1-\e)\,d_{T_i}(f_i(x),f_i(y)) - 2^{-(k+1)} \rho_{\chi_i}(f_i(x),f_i(y); 2^{-(k+1)})\right) \\
& \overset{\eqref{eq:rholoss}}{\geq} &
\left(\frac{1}{k} \sum_{i \neq j} (1-\e)\,d_{T_i}(f_i(x),f_i(y))\right) - \e\, d_T(x,y) \\
& \geq &
\left(\frac{1}{k} \sum_{i=1}^k (1-\e)\,d_{T_i}(f_i(x),f_i(y))\right) - \frac{1}{k} \, d_{T_j}(f_j(x), f_j(y)) - \e\, d_T(x,y)  \\
& \overset{\eqref{eq:foldlipschitz}}{\geq} &
\left(\frac{1}{k} \sum_{i=1}^k (1-\e)\,d_{T_i}(f_i(x),f_i(y))\right) - \frac{1+\e}{k} \, d_T(x,y) - \e\, d_T(x,y)  \\
& \overset{\eqref{eq:lowerb}}{\geq} &
(1-\e)^2 \,d_T(x,y) - \frac{1+\e}{k} \, d_T(x,y) - \e\, d_T(x,y) \\
& \geq &
(1-O(\e)) \,d_T(x,y)\,
\end{eqnarray*}
where in the final line we have used $k \asymp \frac{1}{\e}$, completing the proof.
\end{proof}

We now move on to the proof of Lemma \ref{lem:folding}.
We begin by proving an analogous statement for the half line $[0,\infty)$.
An {\em $\mathbb R$-star} is a metric space formed as follows:
Given a sequence $\{a_i\}_{i=1}^{\infty}$ of positive numbers,
one takes the disjoint union of the intervals $\{[0,a_1], [0,a_2], \ldots\}$,
and then identifies the 0 point in each, which is canonically called the {\em root of the $\mathbb R$-star.}
An $\mathbb R$-star $S$ carries the natural induced length metric $d_S$.  We refer
to the associated intervals as {\em branches}, and the {\em length of a branch} is the associated number $a_i$.
Finally, if $S$ is an $\mathbb R$-star, and $x \in S \setminus \{0\}$, we use $\ell(x)$ to denote
the length of the branch containing $x$.  We put $\ell(0)=0$.

\begin{lemma}\label{lem:fold:path}
For every $k \in \mathbb N$ with $k \geq 2$, there exist
$\mathbb R$-stars $S_1, \ldots, S_k$ with mappings
$$f_i : [0,\infty) \to S_i$$ such that the following conditions hold:
\begin{enumerate}
\item For each $i \in [k]$, $f_i(0)$ is the root of $S_i$.
\item For all $x,y \in [0,\infty)$, $\frac{1}{k} \sum_{i=1}^k d_{S_i}(f_i(x),f_i(y)) \geq \left(1-\frac{7}{k}\right) |x-y|\,.$
\item For each $i \in [k]$, $f_i$ is $(1+2^{-k+1})$-Lipschitz.
\item For $x \in [0,\infty)$, we have $\ell(f_i(x)) \leq 2^{k-1} x.$
\item For $x\in [0,\infty)$, there are at most two values of $i \in [k]$ such that
\[
d_{S_i}(f_i(0),f_i(x)) \leq  2^{-k} \,\ell(f_i(x))\,.
\]

\item For all $x,y \in [0,\infty)$, there is at most one value of $i \in [k]$ such that
$f_i(x)$ and $f_i(y)$ are in different branches of $S_i$ and
\[
2^{-k} \left(\ell(f_i(x)) + \ell(f_i(y))\right) \leq 2\, |x-y|\,.
\]
\end{enumerate}
\end{lemma}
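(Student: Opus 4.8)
The plan is to build the maps $f_i$ from a single combinatorial "digit" construction on the half-line, much as in the re-randomization warm-up but now deterministic. Fix $k \geq 2$. Partition $[0,\infty)$ into scales $I_\ell = [2^\ell, 2^{\ell+1})$ for $\ell \in \mathbb Z$, and assign to scale $\ell$ a "color" $c(\ell) = \ell \bmod k \in \{0,1,\ldots,k-1\}$. The star $S_i$ will be responsible for carrying (roughly) all the length at scales $\ell$ with $c(\ell) = i-1$, consolidated onto a single branch, while at all other scales $f_i$ is constant. Concretely, for a point $x$ write its dyadic "scale decomposition" $x = \sum_\ell x_\ell$ where $x_\ell \in [0,2^\ell)$ is the amount of $x$ lying in $I_\ell$ when one sweeps $[0,x]$ from $0$ upward (so $x_\ell = |[0,x] \cap I_\ell|$). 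Then define $f_i(x)$ to lie at distance $g_i(x) := \sum_{\ell : c(\ell)=i-1} x_\ell$ along a branch of $S_i$; which branch is determined by the highest scale $\ell^*(x)$ with $c(\ell^*) = i-1$ that $[0,x]$ fully crosses, so that $\ell(f_i(x)) \asymp 2^{\ell^*(x)}$ — this is what makes branches of $S_i$ split exactly when the responsible scale for coordinate $i$ changes. The lengths $a_i$ of the branches of $S_i$ are chosen to be $2^{\ell+1}$ over all $\ell$ with $c(\ell)=i-1$ (one branch per such scale, suitably nested/concatenated so distinct $\ell^*$ give distinct branches).

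The six conditions are then checked in order. Condition 1 is immediate since $g_i(0)=0$. For condition 3 (upper Lipschitz): for $x < y$, $g_i(y)-g_i(x) \leq \sum_{\ell : c(\ell)=i-1} |[x,y]\cap I_\ell| \leq |y-x|$ on the nose, and the only slack is a possible "merge cost" of $O(2^{-k})|y-x|$ when $f_i(x), f_i(y)$ lie on different branches whose common prefix must be accounted — the nesting of branches of $S_i$ costs at most the length of the smaller branch, which is at most $2^{\ell^*+1} \leq 2 \cdot 2^{-( k-1)} \cdot (\text{gap})$ roughly, giving $(1+2^{-k+1})$. For condition 2 (lower bound on the average): $\sum_{i=1}^k d_{S_i}(f_i(x),f_i(y)) \geq \sum_{i=1}^k (g_i(y)-g_i(x)) - (\text{merge corrections})$, and since every scale $\ell$ has exactly one color, $\sum_i (g_i(y)-g_i(x)) = |y-x|$ exactly; the total merge correction is bounded by $\sum_i 2^{\ell_i^*+1}$ where the $\ell_i^*$ are distinct (different colors, hence different scales), so this geometric sum is $O(2^{\ell_{\max}}) = O(|y-x|)$ — pinning the constant to $7$ requires care but is a finite computation. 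Condition 4: $\ell(f_i(x)) \asymp 2^{\ell^*(x)}$ and $2^{\ell^*(x)} \leq x$ up to the factor $2^{k-1}$ coming from the fact that the responsible color $i-1$ may force $\ell^*(x)$ to be as much as $k-1$ scales below $\log_2 x$.

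Conditions 5 and 6 are the arithmetic heart and I expect them to be the main obstacle — they are the precise quantitative statements that make the $\rho$-loss controllable downstream. For condition 5: $d_{S_i}(f_i(0),f_i(x)) = g_i(x) \geq x_{\ell^*(x)} = |[0,x]\cap I_{\ell^*}| $, which is $\geq$ a constant times $2^{\ell^*} \asymp \ell(f_i(x))$ unless $x$ only barely enters scale $I_{\ell^*}$; the point is that "barely entering" the highest $i$-colored scale can happen for at most the one or two colors whose responsible scale sits right at the top of $[0,x]$, so at most two indices $i$ are "bad". For condition 6: if $f_i(x), f_i(y)$ are in different branches, then some $i$-colored scale is fully crossed by exactly one of $[0,x], [0,y]$, forcing $\ell^*(x) \neq \ell^*(y)$ and hence $|x-y| \gtrsim 2^{\min(\ell^*(x),\ell^*(y))}$ roughly; conversely the hypothesis $2^{-k}(\ell(f_i(x))+\ell(f_i(y))) \leq 2|x-y|$ says $|x-y|$ is not too small relative to those branch lengths, and the two inequalities can be simultaneously satisfied for at most one $i$ because distinct $i$ correspond to branch-length scales separated by a factor $\geq 2$, so a single interval $[x,y]$ can be "comparable" to at most one of them within the $2^{-k}$ tolerance. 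The bookkeeping here — making sure the constants $2^{-k}$, $2^{k-1}$, and the "at most one/two" counts all line up — is where I'd spend the real effort; everything else is a direct translation of the dyadic picture.
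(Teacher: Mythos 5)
There is a genuine gap, and it is fatal to the whole construction rather than to the bookkeeping you flagged. In your scheme each $f_i$ accumulates length only on the dyadic scales colored $i-1$: the quantities $g_i$ satisfy $\sum_{i=1}^k \bigl(g_i(y)-g_i(x)\bigr)=|x-y|$, and (as you need for condition (iii)) each individual map moves points by at most about $|x-y|$. But condition (ii) asks for $\frac{1}{k}\sum_{i=1}^k d_{S_i}(f_i(x),f_i(y)) \geq (1-\tfrac{7}{k})\,|x-y|$, i.e.\ $\sum_i d_{S_i}(f_i(x),f_i(y)) \geq (k-7)\,|x-y|$. Your verification of condition 2 bounds the sum below by $|x-y|$ minus merge corrections, which is off by a factor of roughly $k$ from what is required; and no repair is possible within your construction, since splitting the mass of $[0,x]$ among the $k$ stars forces the average displacement to be about $|x-y|/k$ (for instance, if $x<y$ lie in a single scale $I_\ell$, only one coordinate moves at all). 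The point of the lemma is that \emph{almost every one of the $k$ maps must individually nearly preserve $|x-y|$ from below}, while simultaneously being $\bigl(1+2^{-k+1}\bigr)$-Lipschitz; this is incompatible with a "one color per scale" decomposition.

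The paper's construction achieves this by making each $f_i$ a near-isometric \emph{folding} of the half-line onto its star, not a selector of scales: on the range $2^{-i}x\in[2^{kj},2^{k(j+1)-1})$ the point moves away from the root along branch $j$ at speed $(1-2^{1-k})^{-1}$, and on the final dyadic scale $2^{-i}x\in[2^{k(j+1)-1},2^{k(j+1)})$ it retreats to the root at unit speed, arriving exactly when the next period begins. Thus every $f_i$ is $(1+2^{-k+1})$-Lipschitz and $d_{S_i}(f_i(x),0)\geq x-2^{\lfloor\log_2 x\rfloor-j}$ when $i\equiv\lfloor\log_2 x\rfloor-j \pmod k$; since the retreat phases are staggered in $i$, only a bounded number of indices are far from isometric at any scale, which is what yields the $(1-7/k)$ average lower bound, and the long branches (of length up to $2^{k-1}x$, matching condition (iv)) together with the phase structure give conditions (v) and (vi). Your intuition that (v) and (vi) are the delicate quantitative statements is reasonable in the abstract, but in your construction they become trivial precisely because the branches are short — a symptom of the same design choice that destroys condition (ii).
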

\begin{proof}
Assume that $k \geq 2$.
We first construct $\mathbb R$-stars $S_1, \ldots, S_k$.
We will index the branches of each star by $\mathbb Z$.
For $i \in [k]$, $S_i$ is a star whose $j$th branch, for $j \in \mathbb Z$, has length $2^{i-1+k(j+1)}$.
We will use the notation $(i,j,d)$ to denote the point at distance $d$
from the root on the $j$th branch of $S_i$.
Observe that $(i,j,0)$ and $(i,j',0)$ describe
the same point (the root of $S_i$) for all $j,j' \in \mathbb N$.

Now, we define for every $i \in [k]$,
a function $f_i : [0,\infty) \to S_i$ as follows:
\[
f_i(x)=\left\{
\begin{array}{ll}
\big(i,j,(x-2^{i+kj})/(1-2^{1-k})\big) &\textrm{for $2^{-i} x\in [2^{kj},2^{k(j+1)-1})$},\\
\big(i,j,2^{i+k(j+1)}-x\big)&\textrm{for $2^{-i} x \in [2^{k(j+1)-1},2^{k(j+1)})$}.
\end{array}
\right.
\]
Condition~(i) is immediate.
It is also straightforward to verify that
\begin{equation}\label{eq:fi:lip}
\|f_i\|_{\Lip} \leq (1-2^{1-k})^{-1} \leq 1+2^{-k+1}\,
\end{equation}
yielding condition~(iii).

Toward verifying condition~(ii), observe that for every $x \in [0,\infty)$ and $j\in \{0,1, \ldots, k-2\}$ we have $$d_{S_i}(f_i(x),0)\geq {\left(x-2^{\lfloor\log_2 x\rfloor-j}\right)/ (1-2^{1-k})}\geq x-2^{\lfloor\log_2 x\rfloor-j},$$
 when $i = (\lfloor\log_2 x\rfloor-j) \bmod k$. Using this, we can write

\begin{align}\label{eq:fold0}
\sum_{i=1}^{k} d_{S_i}(f_i(x),f_i(0))&\geq
\sum_{j=\lfloor \log_2 x\rfloor -k+2}^{\lfloor \log_2 x \rfloor} x-2^j&\nonumber\\
&=
(k-1)x-\sum_{j=\lfloor \log_2 x \rfloor-k+2}^{\lfloor \log_2 x \rfloor} 2^{j}\nonumber\\
&\geq
(k-1)x-2^{\lfloor \log_2 x \rfloor+1}\nonumber \\&\geq (k-3)x.
\end{align}

Now fix $x,y \in [0,\infty)$ with $x \leq y$.
If $x\leq y/2$, then we can use the triangle inequality, together with \eqref{eq:fi:lip} and
\eqref{eq:fold0} to write,
\begin{eqnarray*}
\frac{1}{k} \sum_{i=1}^k d_{S_i}(f_i(x),f_i(y)) &\geq &
\frac{1}{k} \sum_{i=1}^k \left(\vphantom{\bigoplus} d_{S_i}(f_i(y),f_i(0)) - d_{S_i}(f_i(x),f_i(0))\right) \\
&\geq& (1-3/k)y-(1+2^{1-k})x\\&\geq& (1-3/k)y-(1+1/k)x\\& \geq& (1-7/k)(y-x)+ 4y/k-8x/k\\&\geq&(1-7/k)(y-x).
\end{eqnarray*}
In the case that ${y\over 2} \leq x\leq y$, for $j\in \{0,1,\ldots, k-3\}$, we have
$$d_{S_i}(f_i(x),f_i(y))\geq (y-x)/(1-2^{1-k})\geq y-x,$$
when $i=(\lfloor\log_2 x\rfloor-j) \bmod k$. From this, we conclude that
\begin{align}\label{eq:fold}
{1\over k}\sum_{i=1}^{k} d_{S_i}(f_i(x),f_i(y))&\geq
{1\over k}\sum_{j=0}^{k-3}(y-x)\geq{ k-2 \over k}(y-x),
\end{align}
yielding condition~(ii).

It is also straightforward to check that $$\ell(f_i(x))\leq2^{\lfloor \log_2 x\rfloor +k-1}\leq 2^{k-1}x,$$
which verifies condition~(iv).

To verify condition~(v), note that for $x\in [0,\infty)$, the inequality $d_{S_i}(f_i(x),f_i(0))\leq x/2$ can only hold for
$i \bmod k \in \{\lfloor \log_2 x\rfloor, \lfloor \log_2 x \rfloor + 1\}$, hence condition~(iv) implies condition~(v).

Finally we verify condition~(vi). We divide the problem into two cases. If $x<y/2$, then by condition~(iv),
\[
\ell(f_i(x)) + \ell(f_i(y)) \leq 2^{k-1} (x+y)\leq 2^{k-1} (2y)\leq 2^{k+1}(y-x)\,.
\]
In the case that $y/2< x\leq y$, $f_i(x)$ and $f_i(y)$ can be mapped to different branches of $S_i$ only for $i \equiv \lfloor \log_2 y\rfloor~(\bmod~k)$, yielding condition~(vi).
\end{proof}

Finally, we move onto the proof of Lemma \ref{lem:folding}.

\begin{proof}[Proof of Lemma~\ref{lem:folding}]
We put $k = \lceil 7/\varepsilon\rceil$ and
prove the following stronger statement by induction on $|V|$:
There exist metric trees $T_1, T_2, \ldots, T_k$ and monotone colorings
$\chi_i : E(T_i) \to \mathbb N$, along with mappings $f_i : V \to V(T_i)$
satisfying the conditions of the lemma.  Furthermore, each coloring $\chi_i$
satisfies the stronger condition for all $v \in V$,
\begin{equation}\label{eq:ind-cond}
|\chi_i(P_{f_i(v)})| \leq |\chi(P_v)|\,.
\end{equation}

The statement is trivial for the tree containing only a single vertex.
Now suppose that we have a tree $T$ and coloring $\chi : E \to \mathbb N$.
Since $T$ is connected,
it is easy to see that there exists a color class $c \in \chi(E)$ with
the following property.  Let $\gamma_c$ be the path whose edges are colored $c$,
and let $v_c$ be the vertex of $\gamma_c$ closest to the root.
Then the induced tree $T'$ on the vertex set $(V \setminus V(\gamma_c)) \cup \{v_c\}$
is connected.

Applying the inductive hypothesis to $T'$ and $\chi|_{E(T')}$ yields
metric trees $T_1', T_2', \ldots, T_k'$ with colorings $\chi_i' : E(T_i') \to \mathbb N$
and mappings $f'_i : V(T') \to V(T_i')$.

\medskip

Now, let $S_1, \ldots, S_k$ and $\{ g_i : [0,\infty) \to S_i\}$ be the
$\mathbb R$-stars and mappings guaranteed by Lemma \ref{lem:fold:path}.
For each $i \in [k]$,
let $S_i'$ be the induced subgraph of $S_i$ on the set $\{g_i(d_T(v,v_c)) : v \in V(\gamma_c)\}$,
and make $S_i'$ into a metric tree rooted at $g_i(0)$, with the length
structure inherited from $S_i$.
We now construct $T_i$ by attaching $S'_i$ to $T'_i$ with the root
of $S'_i$ identified with the node $f_i'(v_c)$.
The coloring $\chi_i'$ is extended to $T_i$ by assigning
to each root-leaf path in $S'_i$ a new color.
Finally, we specify functions $f_i : V \to V(T_i)$ via
$$
f_i(v) = \begin{cases}
f'_i(v) & v \in V(T') \\
g_i(d_T(v_c, v)) & v \in V \setminus V(T')\,.
\end{cases}
$$

It is straight forward to verify that \eqref{eq:ind-cond} holds for the colorings $\{\chi_i\}$ and every vertex $v \in V$.
In addition, using the inductive hypothesis, we have $|V(T_i)| \leq |V|$ and $M(\chi) \leq M(\chi_i)$ for every $i \in [k]$,
with the latter condition following immediately from \eqref{eq:ind-cond} and the structure of the mappings $\{f_i\}$.

We now verify that conditions (a), (b), and (c) hold.
For $x,y \in V(T')$, the induction hypothesis guarantees all three conditions.
If both $x,y \in V(\gamma_c)$, then
conditions (a) and (b) follow directly from conditions (ii) and (iii) of Lemma \ref{lem:fold:path}
applied to the maps $\{g_i\}$.  To verify condition (c), let $j \in [k]$ be the single bad
index from (vi).
We have for all $i\neq j$,
$$
 \rho_{\chi_i}(f_i(x), f_i(y); 2^{-(k+1)}) \leq  2^{k+1}d_T(x,y).
$$
Since there are at most two colors on the path between $x$ and $y$ in any $T_i$, by condition (v) of Lemma \ref{lem:fold:path}, there are at most four values of $i \in [k] \setminus \{j\}$ such that
$$\rho_{\chi_i}(f_i(x), f_i(y); 2^{-(k+1)})\neq 0,$$
hence
$${1\over k}\sum_{i \neq j} \rho_{\chi_i}(f_i(x), f_i(y); 2^{-(k+1)}) \leq  {4\cdot 2^{k+1}\over k}\,d_T(x,y)\leq \eps 2^{k+1}d_T(x,y).
$$

Since $\|f_i\|_{\Lip}$ is determined on edges $(x,y) \in E$, and each such edge has $x,y \in V(\gamma_c)$ or $x,y \in V(T')$,
we have already verified condition (b) for all $i \in [k]$ and $x,y \in V$.
Finally, we verify (a) and (c) for pairs with $x\in V(T')$ and $y\in V(\gamma_c)$.
We can check condition (a) using the previous two cases,
\begin{align*}
\frac{1}{k} \sum_{i=1}^k d_{T_i}(f_i(x),f_i(y))&=\frac{1}{k} \sum_{i=1}^k \left(\vphantom{\bigoplus} d_{T_i}(f_i(x),f_i(v_c))+d_{T_i}(f_i(y),f_i(v_c))\right)\\
&\geq (1-\eps)d_T(y,v_c)+(1-\eps)d_T(x,v_c)\\
&\geq (1-\eps)d_T(x,y).
\end{align*}

Towards verifying condition (c), note that by condition (v) from Lemma~\ref{lem:fold:path}, there are at most two values of $i$, such that
$$\rho_{\chi_i}(f_i(x),f_i(y);2^{-(k+1)})- \rho_{\chi_i}(f_i(x),f_i(v_c);2^{-(k+1)})=\rho_{\chi_i}(f_i(y),f_i(v_c);2^{-(k+1)})\neq 0.$$
By the induction hypothesis, there exists a number $j\in[k]$ such that
\[
\eps\, d_T(x,v_c)\leq \frac{2^{-(k+1)}}{k} \mathop{\sum}_{i \neq j} \rho_{\chi_i}(f_i(v_c),f_i(x);2^{-(k+1)}).
\]
Now we use condition (iv) from Lemma~\ref{lem:fold:path} to conclude,
\begin{align*}
\frac{2^{-(k+1)}}{k} \mathop{\sum}_{i \neq j} \rho_{\chi_i}(f_i(x),f_i(y);2^{-k})
&\leq \frac{2^{-(k+1)}}{k} \mathop{\sum}_{i \neq j}\left( \rho_{\chi_i}(f_i(x),f_i(v_c);2^{-k})+\rho_{\chi_i}(f_i(y),f_i(v_c);2^{-k})\right)
\\
&\leq \eps d_T(x,v_c)+\left(2^{-(k+1)}\over k\right)\,( 2^{k-1} d_T(y,v_c))\\
&\leq \eps \,d_T(x,v_c)+\eps \,d_T(v_c,y)\\
&= \eps \,d_T(x,y)\,,
\end{align*}
completing the proof.
\end{proof}

\subsection{Multi-scale embeddings}
\label{sec:overscales}

We now present the basics of our multi-scale embedding approach.
The next lemma is devoted to combining scales together without using too many dimensions,
while controlling the distortion of the resulting map.

\begin{lemma} \label{lem:scales}
For every $\e \in (0,1)$, the following holds.
Let $(X,d)$ be an arbitrary metric space, and
consider a family of functions $\{f_i  : X \to [0,1]\}_{i \in \mathbb Z}$
such that for all $x,y \in X$, we have
\begin{equation}\label{eq:converge}
\sum_{i \in \mathbb Z} 2^i |f_i(x)-f_i(y)| < \infty\,.
\end{equation}
Then there is a mapping $F : V \to \ell_1^{2+\lceil \log \frac{1}{\e}\rceil}$ such that
for all $x,y \in X$,
\[
(1-\eps)\sum_{i\in \mathbb Z}2^{i} |f_i(x)-f_i(y)|- 2\,\zeta(x,y)  \leq \|F(x)-F(y)\|_1\leq \sum_{i\in \mathbb Z} 2^{i}|f_i(x)-f_i(y)|,
\]
where $$\zeta(x,y)=\sum_{\substack{i:\exists j<i \\f_{j}(x)-f_{j}(y)\neq0}}2^i \left(|f_{i}(x)-f_{i}(y)|-\lfloor| f_{i}(x)-f_{i}(y)|\rfloor\right)\,.$$
\end{lemma}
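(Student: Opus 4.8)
The plan is to split the index set $\mathbb Z$ of scales into $K := 2+\lceil\log\frac1\e\rceil$ residue classes modulo $K$ and devote one coordinate of $F$ to each class, so that $F:X\to\ell_1^{K}=\ell_1^{2+\lceil\log(1/\e)\rceil}$. Fix a basepoint $x_0\in X$, and for $r\in\{0,1,\dots,K-1\}$ define
$$
F_r(x)\;=\;\sum_{i\,\equiv\, r\ (\mathrm{mod}\ K)} 2^i\bigl(f_i(x)-f_i(x_0)\bigr),
$$
which converges absolutely by \eqref{eq:converge} applied with $y=x_0$; note that $F_r(x)-F_r(y)=\sum_{i\equiv r}2^i(f_i(x)-f_i(y))$ converges absolutely as well. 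The upper bound in the lemma is then immediate from the triangle inequality, applied coordinatewise:
$$
\|F(x)-F(y)\|_1=\sum_{r=0}^{K-1}\Bigl|\sum_{i\equiv r}2^i(f_i(x)-f_i(y))\Bigr|\le\sum_{i\in\mathbb Z}2^i|f_i(x)-f_i(y)|.
$$

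For the lower bound, fix $x,y$, write $\delta_i=f_i(x)-f_i(y)$ and $D=\sum_i2^i|\delta_i|$. For each $r$ let $P_r$ (resp.\ $N_r$) be the sum of $2^i|\delta_i|$ over the scales $i\equiv r$ with $\delta_i>0$ (resp.\ $\delta_i<0$); then $|F_r(x)-F_r(y)|=|P_r-N_r|$ while $\sum_{i\equiv r}2^i|\delta_i|=P_r+N_r$, so the defect of coordinate $r$ equals $(P_r+N_r)-|P_r-N_r|=2\min(P_r,N_r)$. Summing, $D-\|F(x)-F(y)\|_1=2\sum_r\min(P_r,N_r)$, so it suffices to prove $\sum_r\min(P_r,N_r)\le\tfrac\e2 D+\zeta(x,y)$, which gives exactly $\|F(x)-F(y)\|_1\ge(1-\e)D-2\zeta(x,y)$. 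I will use two facts. First, consecutive scales of a residue class differ by the factor $2^K\ge 4/\e$; hence if $i_r^{\max}$ is the largest scale $i\equiv r$ with $\delta_i\neq 0$, the scales $i\equiv r$ with $i<i_r^{\max}$ contribute at most $\sum_{i\equiv r,\,i<i_r^{\max}}2^i\le 2^{\,i_r^{\max}-K+1}\le\tfrac\e2\,2^{\,i_r^{\max}}$ in total. Second, $\zeta(x,y)$ contains the term $2^i|\delta_i|$ for every scale $i$ with $0<|\delta_i|<1$ that possesses a strictly smaller differing scale.

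Now $\min(P_r,N_r)$ is the total weight of the ``minority side'' of class $r$, all of whose scales lie strictly below $i_r^{\max}$ and are therefore dominated, within their own class, by a larger differing scale; consequently each such scale — with the sole exception of the globally smallest differing scale — has a strictly smaller differing scale somewhere, so by the second fact its weight $2^i|\delta_i|$ (when $|\delta_i|<1$) is counted in $\zeta$. Since minority scales of distinct classes are distinct, the grand total of these contributions is at most $\zeta(x,y)$. What remains is the ``$\zeta$-invisible'' loss: (a) the one globally minimal differing scale, if it sits on a minority side, and (b) ``saturated'' minority scales, those with $|\delta_i|=1$. Both are controlled by the first fact: a saturated minority scale $i$ forces the opposite side of its class to have weight at least $2^i$, and since a saturated scale $j$ has $2^j|\delta_j|=2^j$ present in $D$, the geometric separation forces $2^i$ to be an $O(2^{-K})=O(\e)$ fraction of $D$; the globally minimal scale is handled identically, with the majority side of its residue class playing the role that $\supp\zeta$ played above. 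Summing these $O(\e)$ fractions gives the claimed $\tfrac\e2 D$.

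The part I expect to be the real work is exactly this last piece of bookkeeping — proving that the loss the term $\zeta$ cannot see (the unique globally minimal differing scale, and the exactly-saturated scales) is always an $O(\e)$-fraction of $D$. The delicacy is that a residue class may have a very large dominant scale $i_r^{\max}$ carrying only a tiny difference $|\delta_{i_r^{\max}}|$, so that $2^{\,i_r^{\max}}$ itself need not be comparable to $D$; one must therefore never charge against a bare $2^i$ but always against the genuine weight $2^j|\delta_j|$ of a nearby differing scale (routed through $\zeta$ or through $D$), exploiting the $2^K\ge 4/\e$ separation to absorb the discrepancy, and noting that whenever $|\delta_{i_r^{\max}}|$ is tiny the minority weight of that class is comparably tiny too.
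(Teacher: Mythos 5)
Your setup is sound and matches the paper's: split the scales into residue classes modulo $K=2+\lceil\log\frac1\e\rceil$, one coordinate per class, get the upper bound from the triangle inequality, and reduce the lower bound to $\sum_r\min(P_r,N_r)\le\frac{\e}{2}D+\zeta(x,y)$ (this is exactly the summed form of the per-coordinate inequality the paper proves). The charging of a fractional minority scale that has a strictly smaller differing scale to its own term of $\zeta$ is also fine. The genuine gap is in the piece you yourself flag as ``the real work'': the exceptional scales. Your claim that a saturated minority scale $i$ has weight at most an $O(2^{-K})=O(\e)$ fraction of $D$ is false. Take a single class $r$ containing only the scales $i$ and $i+K$, with $\delta_i=+1$ and $\delta_{i+K}=-1.01\cdot 2^{-K}$. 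Then $P_r=2^i$, $N_r=1.01\cdot 2^i$, so the saturated scale $i$ sits on the minority side with weight $2^i\approx D/2$, nowhere near $\e D$; the forced ``opposite side of weight at least $2^i$'' can be carried entirely by a fractional coefficient at scale $i+K$, so no term $2^j$ with $|\delta_j|=1$ need appear in $D$ above scale $i$. The same problem afflicts your treatment of the globally minimal differing scale, and the phrase ``handled identically, with the majority side \dots playing the role that $\supp\zeta$ played above'' does not specify a valid charge.

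In these bad cases the inequality is rescued not by $\e D$ but by $\zeta$-mass sitting on the \emph{majority} side: any differing scale of the class lying above the exceptional scale is either saturated (and then its weight $\ge 2^{i+K}$ really does make $2^i\le\e D/4$) or fractional with a smaller differing scale, hence counted in $\zeta$, and its $\zeta$-weight dominates, up to a $1+\e/2$ factor, everything below it in the class. Your scheme never taps majority-side $\zeta$-terms, so it cannot close this case. This is precisely how the paper organizes the per-class argument: it sets $c$ to be the largest \emph{saturated} scale of the class when one exists (splitting the class into the part below $c$, absorbed by the geometric factor $2^{1-K}\le\e/2$, and the fractional part above $c$, absorbed into $\zeta_r$), and when no saturated scale exists it works from the smallest differing scale instead, with the degenerate no-minimum case being vacuous. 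If you restructure your bookkeeping around the largest saturated scale (or smallest differing scale) of each class rather than around the minority/majority split, the argument goes through; as written, the step bounding the exceptional loss by $\frac\e2 D$ is incorrect.
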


\begin{proof}
Let $k=2+\lceil \log 1/\eps \rceil$,
and fix some $x_0 \in X$.
For $i \in [k]$, define $F_i : X \to \mathbb R$ by,
\begin{equation}\label{eq:Fi}
F_i(x)=\sum_{j\in \mathbb Z} 2^{jk+i} (f_{jk+i}(x)-f_{jk+i}(x_0))\,.
\end{equation}
It is easy to see that \eqref{eq:converge} implies absolute convergence
of the preceding sum.
We will consider the map $F = F_1 \oplus F_2 \oplus \cdots \oplus F_{k} : X \to \ell_1^k$.
It is straightforward to verify that for every $x,y \in X$,
$$
\|F(x)-F(y)\|_1\leq \sum_{i\in \mathbb Z} 2^i |f_i(x)-f_i(y)|.
 $$

Now, for $i \in [k]$, define
 $$\zeta_i(x,y)=\sum_{\substack{j:\exists \ell<j \\f_{\ell k+i}(x)-f_{\ell k+i}(y)\neq0}}2^{jk+i}( \left|f_{jk+i}(x)-f_{jk+i}(y)|-\lfloor |f_{jk+i}(x)-f_{jk+i}(y)|\rfloor\right)\,.$$
One can easily check that $\sum_{i=1}^k \zeta_i(x,y)\leq \zeta(x,y)$,
thus showing the following for $i \in [k]$ will complete our proof of the lemma,
\begin{equation}\label{eq:fi}
|F_i(x)-F_i(y)| \geq (1-\eps)\sum_{j\in \mathbb Z} \left(2^{jk+i} |f_{jk+i}(x)-f_{jk+i}(y)|\right)-2\zeta_i(x,y).
\end{equation}

Toward this end, fix $i \in [k]$ and $x,y \in X$.
Let $S = \{ j \in \mathbb  Z : |f_{jk+i}(x)-f_{jk+i}(y)| = 1\}$,
and $T = \{ j \in \mathbb Z : 0 < |f_{jk+i}(x)-f_{jk+i}(y)| < 1\}$.
Clearly we then have,
\begin{align*}
\label{eq:Fiequal}
|F_i(x)-F_i(y)|=\left|\sum_{j\in S}2^{jk+i}(f_{jk+i}(x)-f_{jk+i}(y)) +\sum_{j\in T}2^{jk+i}(f_{jk+i}(x)-f_{jk+i}(y))\right|\,.
\end{align*}
If $S \cup T=\emptyset$, then \eqref{eq:fi} is immediate.
Now, suppose that $S \neq \emptyset$, and let
$c = i + k \cdot \max(S)$.  Observe that $\max(S)$ exists by \eqref{eq:converge}.

We then have,
\begin{align*}
\sum_{j\in \mathbb Z}  2^{jk+i} |f_{jk+i}(x)-f_{jk+i}(y)|
&\leq 2^{c}+\mathop{\sum_{j\in S \cup T}}_{j<\max S}  2^{kj+i}+\mathop{\sum_{j\in T}}_{j > \max S}  2^{kj+i}|f_{kj+i}(x)-f_{kj+i}(y)|\\
&\leq 2^{c}+\sum_{j<\max S}  2^{kj+i}+\zeta_i(x,y)\\
&\leq 2^{c}+2\cdot 2^{k(\max S-1)+i}+\zeta_i(x,y)\\
&\leq 2^{c}(1+2^{1-k})+\zeta_i(x,y)\\
&\leq (1+\eps/2)2^{c}+\zeta_i(x,y).
\end{align*}
On the other hand,
\begin{align*}
|F_{i}(x)-F_{i}(y)|&=
\left|\sum_{j\in \mathbb Z} 2^{kj+i}( f_{jk+i}(x)-f_{jk+i}(y))\right|\\
&\geq 2^{c}-\mathop{\sum_{j\in S\cup T}}_{j<\max S}  2^{kj+i}-\mathop{\sum_{j\in T}}_{j > \max S}  2^{kj+i}|f_{kj+i}(x)-f_{kj+i}(y)|\\
&\geq 2^{c}-\sum_{j<\max S}  2^{kj+i}-\zeta_i(x,y)\\
&\geq 2^{c}-2\cdot 2^{k(\max S-1)+i}-\zeta_i(x,y)\\
&\geq 2^{c}(1-2^{1-k})-\zeta_i(x,y)\\
&\geq (1-\eps/2)2^{c}-\zeta_i(x,y).
\end{align*}
Therefore,
\begin{align*}
(1-\eps)\sum_{j\in \mathbb Z}  2^{kj+i}|f_{jk+i}(x)-f_{jk+i}(y)| &\leq
(1-\eps)((1+\eps/2)2^c+\zeta_i(x,y))\\
&\leq {(1-\eps/2)2^c+\zeta_i(x,y)}\\
&\leq |F_{i}(x)-F_{i}(y)|+2\zeta_i(x,y),
\end{align*}
completing the verification of \eqref{eq:fi} in the case when $S \neq \emptyset$.

In the remaining case when $S=\emptyset$ and $T \neq \emptyset$,
if the set $T$ does not have a minimum element, then
$$
\sum_{j\in T} 2^{kj+i}|f_{kj+i}(x)-f_{kj+i}(y)| = \zeta_i(x,y),
$$
making \eqref{eq:fi} vacuous since the right-hand side is non-positive.

Otherwise,
let $\ell=\min(T)$,
and write
\begin{eqnarray*}
|F_i(x)-F_i(y)|&=& \left|\sum_{j\in T} 2^{kj+i} (f_{kj+i}(x)-f_{kj+i}(y))\right |\\
 &\geq& 2^{\ell k+i}|f_{\ell k+i}(x)-f_{\ell k+i}(y)|-\left|\sum_{j\in T,j>\ell} 2^{kj+i} (f_{kj+i}(x)-f_{kj+i}(y))\right |\\
&\geq&2^{\ell k+i} |f_{\ell k+i}(x)-f_{\ell k+i}(y)|-\zeta_i(x,y)\\
&=&\sum_{j\in \mathbb Z} 2^{kj+i} |f_{kj+i}(x)-f_{kj+i}(y)|-2\,\zeta_i(x,y)\,.
\end{eqnarray*}
This completes the proof.
\end{proof}


In Section \ref{sec:embedding}, we will require the following straightforward corollary.

\begin{corollary}\label{col:scales}
For every $\e \in (0,1)$ and $m \in \mathbb N$, the following holds.
Let $(X,d)$ be a metric space, and suppose we have a
family of functions $\{f_i:X\to [0,1]^m\}_{i \in \mathbb Z}$ such that for all $x,y\in X$,
$$\sum_{i \in \mathbb Z} 2^i \|f_i(x)-f_i(y)\|_1 < \infty\,.$$
Then there exists a mapping $F : V \to \ell_1^{m(2+\lceil \log \frac{1}{\e} \rceil)}$ such that
for all  $x,y\in X$,
\[
(1-\eps)\sum_{i \in \mathbb Z} \left(2^{i} \|f_i(x)-f_i(y)\|_1\right)- 2\,\zeta(x,y)  \leq \|F(x)-F(y)\|_1\leq \sum_{i \in \mathbb Z} 2^{i}\|f_i(x)-f_i(y)\|_1,
\]
where
\begin{equation}\label{eq:zetadef}
\zeta(x,y)=\sum_{k=1}^m\sum_{\substack{i:\exists j<i \\f_{j}(x)_k-f_{j}(y)_k\neq0}}2^i(|f_{i}(x)_k-f_{i}(y)_k|-\lfloor |f_{i}(x)_k-f_{i}(y)_k|\rfloor),
\end{equation}
and we have used the notation $x_k$ for the $k$th coordinate of $x \in \mathbb R^m$.
\end{corollary}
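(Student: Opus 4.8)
The plan is to reduce Corollary \ref{col:scales} directly to Lemma \ref{lem:scales} by applying it coordinate-by-coordinate and then concatenating the resulting maps, so essentially no new work is required beyond bookkeeping. First I would fix $k \in [m]$ and consider the family of scalar functions $\{(f_i)_k : X \to [0,1]\}_{i \in \mathbb Z}$, where $(f_i)_k$ denotes the $k$th coordinate of $f_i$. The hypothesis $\sum_{i} 2^i \|f_i(x)-f_i(y)\|_1 < \infty$ immediately implies $\sum_i 2^i |(f_i(x))_k - (f_i(y))_k| < \infty$ for each $k$, since each term of the latter sum is dominated by the corresponding term of the former. Thus Lemma \ref{lem:scales} applies and produces a map $G^{(k)} : X \to \ell_1^{2 + \lceil \log \frac{1}{\e}\rceil}$ with
\[
(1-\e)\sum_{i \in \mathbb Z} 2^i |(f_i(x))_k - (f_i(y))_k| - 2\,\zeta_k(x,y) \leq \|G^{(k)}(x) - G^{(k)}(y)\|_1 \leq \sum_{i \in \mathbb Z} 2^i |(f_i(x))_k - (f_i(y))_k|,
\]
where $\zeta_k(x,y) = \sum_{i : \exists j < i,\ (f_j(x))_k - (f_j(y))_k \neq 0} 2^i \big(|(f_i(x))_k - (f_i(y))_k| - \lfloor |(f_i(x))_k - (f_i(y))_k|\rfloor\big)$, which is exactly the $k$th inner summand of $\zeta(x,y)$ in \eqref{eq:zetadef}.

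Next I would set $F = G^{(1)} \oplus G^{(2)} \oplus \cdots \oplus G^{(m)} : X \to \ell_1^{m(2 + \lceil \log \frac{1}{\e}\rceil)}$, so that $\|F(x) - F(y)\|_1 = \sum_{k=1}^m \|G^{(k)}(x) - G^{(k)}(y)\|_1$. Summing the upper bounds over $k$ gives $\|F(x) - F(y)\|_1 \leq \sum_{k=1}^m \sum_i 2^i |(f_i(x))_k - (f_i(y))_k| = \sum_i 2^i \|f_i(x) - f_i(y)\|_1$, which is the desired right-hand inequality. Summing the lower bounds over $k$ gives $\|F(x)-F(y)\|_1 \geq (1-\e)\sum_{k=1}^m \sum_i 2^i |(f_i(x))_k - (f_i(y))_k| - 2\sum_{k=1}^m \zeta_k(x,y) = (1-\e)\sum_i 2^i \|f_i(x) - f_i(y)\|_1 - 2\,\zeta(x,y)$, using that $\zeta(x,y) = \sum_{k=1}^m \zeta_k(x,y)$ by definition \eqref{eq:zetadef}. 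That completes the proof.

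There is essentially no obstacle here; the only point requiring a moment's care is the identification of $\zeta(x,y)$ with $\sum_k \zeta_k(x,y)$, which is immediate from comparing \eqref{eq:zetadef} with the scalar $\zeta$ appearing in Lemma \ref{lem:scales} applied to the $k$th coordinate. One should also note that the dimension count is exact rather than asymptotic: each $G^{(k)}$ lands in $\ell_1^{2+\lceil\log\frac1\e\rceil}$ and there are $m$ of them, so $F$ lands in $\ell_1^{m(2+\lceil\log\frac1\e\rceil)}$ as claimed. If one prefers, the whole argument can be phrased as ``apply Lemma \ref{lem:scales} $m$ times and take a direct sum,'' with the two displayed inequalities obtained by termwise summation over the coordinates.
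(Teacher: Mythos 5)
Your proof is correct and is exactly the ``straightforward'' route the paper intends: the corollary is obtained from Lemma~\ref{lem:scales} by applying it to each coordinate family $\{(f_i)_k\}_{i\in\mathbb Z}$ and taking the $\ell_1$ direct sum, with $\zeta(x,y)=\sum_{k=1}^m\zeta_k(x,y)$ matching \eqref{eq:zetadef} termwise. Nothing further is needed.
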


\section{Scale assignment}\label{sec:assignments}
\label{sec:assignment}

Let $T=(V,E)$ be a metric tree with root $r \in V$, equipped with a monotone coloring $\chi : E \to \mathbb N$.
We will now describe a way of assigning ``scales'' to the vertices of $T$.
These scale values will be used in Section \ref{sec:embedding} to guide our
eventual embedding.
The scales of a vertex will describe, roughly, the subset and magnitude
of coordinates that should differ between the vertex and its neighbors.
First, we fix some notation.

For every $c \in \chi(E)$, we use $\gamma_c$ to denote the path in $T$ colored $c$,
and we use $v_c$ to denote the vertex of $\gamma_c$ which is closest to the root.
We will also use the notation $T(c)$ to denote the
subtree of $T$ under the color $c$; formally, $T(c)$ is the induced (rooted)
subtree on $\{v_c\} \cup V(T_u)$ where $u \in V$ is the child of $v_c$
such that $\chi(v_c,u)=c$, and $T_u$ is the subtree rooted at $u$.

We will write $p(v)$ for the parent of a vertex $v \in V$, and $p(r)=r$.
Furthermore, we define the ``parent color'' of a color class by
$\rho(c) = \chi(v_c, p(v_c))$ with the convention that $\chi(r,r)=c_0$,
where $c_0 \in \mathbb N \setminus \chi(E)$ is some fixed element.
Finally, we put $T(c_0)=T$.

\subsection{Scale selectors}

We start by defining a function $\kappa:\chi(E) \cup \{c_0\} \to \mathbb N$ which describes the ``branching factor'' for each color class,
\begin{equation}
\kappa(c)=\left\lfloor\log_2 {|E(T(\rho(c)))|\over |E(T(c))|}\right\rfloor+1.
\end{equation}
Moreover, we define $\varphi : \chi(E) \cup \{c_0\} \to \mathbb N \cup \{0\}$ inductively
by setting $\varphi(c_0)=0$, and
\begin{equation}\label{def:phi}
\varphi(c)=\kappa(c)+\varphi(\rho(c)),
\end{equation}
for $c\in \chi(E)$.

Observe that for every color $c \in \chi(E)$, we have,
\begin{align}\label{eq:kappa}
\varphi(c)= \sum_{c'\in \chi(E(P_{v_c}))\cup \{c\}}\hspace{-5mm} \kappa(c') \leq \sum_{c'\in \chi(E(P_{v_c}))\cup \{c\}} \left(1+\log_2 {|E(T(\rho(c')))|\over |E(T(c'))|}\right)\leq M(\chi)+\log_2 |E|.
\end{align}

Next, we use $\varphi$ to inductively define our scale selectors.
Let $$m(T)=\min\{ \len(e): e\in E \textrm{ and } \len(e)>0\}.$$
We now define a family of functions $\{\tau_i : V \to \mathbb N \cup \{0\}\}_{i \in \mathbb Z}$.

For $v\in V$, let $c=\chi(v,p(v))$, and
put $\tau_i(v)=0$ for $i<\left \lfloor \log_2 \left(m(T)\over M(\chi)+\log_2 |E|\right )\right\rfloor$, and otherwise,
\begin{align}
\tau_i(v)=
\min\Bigg(&\underbrace{\left\lceil d_T(v,v_c)-\min\left(d_T(v,v_c), \sum_{j=-\infty}^{i-1} 2^{j}\tau_j(v)\right)\over 2^i\right\rceil}_{(A)}, \underbrace{\varphi(c)-\sum_{c'\in\col(E(P_{v}))} \tau_i(v_{c'}) }_{(B)}\label{def:tau}\Bigg )\,.
\end{align}

The value of $\tau_i(v)$ will be used in Section~\ref{sec:embedding} to determine how many coordinates
 of magnitude $\asymp 2^i$ change as the embedding proceeds from $v_c$ to $v$.
In this definition, we try to cover the distance from root to $v$ with the smallest scales possible while satisfying the inequality
$$\varphi(c)\geq \tau_i(v)+\sum_{c'\in\chi(E(P_{v}))} \tau_i(v_{c'}).
$$

For $v \in V\setminus \{r\}$, let $c=\col(v,p(v))$, for each $i \in \mathbb Z$, part (B) of \eqref{def:tau} for $\tau_i(v_c)$ implies that
$$\tau_i(v_c)\leq \varphi(\rho(c))-\sum_{c'\in\col(E(P_{v_c}))} \tau_i(v_{c'}).$$
Hence,
\begin{eqnarray}
\varphi(c)-\sum_{c'\in\col(E(P_{v}))} \tau_i(v_{c'})
&=& \varphi(c)-\tau_i(v_c)-\sum_{c'\in\chi(E(P_{v_c}))} \tau_i(v_{c'})\nonumber\\
&\geq&\varphi(c)-\varphi(\rho(c))\nonumber\\ &=& \kappa(c)\nonumber\\
&\geq& 1\label{eq:tau:b}.
\end{eqnarray}
Therefore, part $(B)$ of \eqref{def:tau} is always positive, so if $\tau_k(v)=0$ for some $k\geq \left \lfloor \log_2 \left(m(T)\over M(\chi)+\log_2 |E|\right )\right\rfloor$, then $\tau_k(v)$ is defined by part $(A)$ of (18). Hence $\sum_{j=-\infty}^{i-1} 2^{j}\tau_j(v) \geq d_T(v,v_c)$ and the following observation is immediate.

\begin{observation}\label{obv:tau:zero}
For $v\in V$ and $k\geq \left \lfloor \log_2 \left(m(T)\over M(\chi)+\log_2 |E|\right )\right\rfloor$, if $\tau_k(v)=0$ then for all $i\geq k$, $\tau_i(v)=0$.
\end{observation}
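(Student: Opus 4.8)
The plan is to derive the observation directly from the structure of \eqref{def:tau} together with the already-established bound \eqref{eq:tau:b}; the argument is short. For $v = r$ the claim is trivial, since $r$ has no incident edge above it and $\tau_i(r) = 0$ for every $i$, so I would assume $v \in V \setminus \{r\}$ and set $c = \chi(v,p(v))$.

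First I would recall that \eqref{eq:tau:b} shows part $(B)$ of \eqref{def:tau} is at least $\kappa(c) \geq 1$ for every $i \in \mathbb Z$. Hence, if $\tau_k(v) = 0$ for some $k$ in the stated range, the minimum defining $\tau_k(v)$ cannot be attained by part $(B)$, so it is attained by part $(A)$; that is,
\[
\left\lceil \frac{d_T(v,v_c) - \min\bigl(d_T(v,v_c),\, \sum_{j=-\infty}^{k-1} 2^j \tau_j(v)\bigr)}{2^k} \right\rceil = 0.
\]
Since each $\tau_j(v)$ is a nonnegative integer, the numerator of this expression is nonnegative, so a vanishing ceiling forces the numerator itself to be $0$; equivalently, $\sum_{j=-\infty}^{k-1} 2^j \tau_j(v) \geq d_T(v,v_c)$.

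Then I would fix any $i \geq k$ and observe that, again because every $\tau_j(v) \geq 0$, the partial sums $\sum_{j=-\infty}^{i-1} 2^j \tau_j(v)$ are nondecreasing in $i$, so $\sum_{j=-\infty}^{i-1} 2^j \tau_j(v) \geq \sum_{j=-\infty}^{k-1} 2^j \tau_j(v) \geq d_T(v,v_c)$. Consequently the numerator of part $(A)$ of \eqref{def:tau} for $\tau_i(v)$ vanishes, so part $(A)$ equals $0$; since $\tau_i(v)$ is the minimum of parts $(A)$ and $(B)$ and is itself nonnegative, $\tau_i(v) = 0$. As $i \geq k$ was arbitrary, this proves the observation.

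There is no real obstacle here; the statement is a structural consequence of the definition. The only minor points to get right are that part $(A)$ is genuinely nonnegative (which is exactly why the inner $\min(d_T(v,v_c),\cdot)$ appears in \eqref{def:tau}) so that a zero ceiling forces a zero argument, and that no circularity arises, since $\tau_i(v)$ depends only on the values $\{\tau_j(v): j < i\}$ and $\{\tau_i(v_{c'}): c' \in \chi(E(P_v))\}$, not on $\tau_i(v)$ itself.
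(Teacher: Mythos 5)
Your proof is correct and follows essentially the same route as the paper: the text preceding the observation uses \eqref{eq:tau:b} to argue that part (B) of \eqref{def:tau} is always positive, so $\tau_k(v)=0$ forces part (A) to vanish, giving $\sum_{j=-\infty}^{k-1}2^j\tau_j(v)\geq d_T(v,v_c)$, and monotonicity of the partial sums then kills part (A) for every $i\geq k$. Your extra remarks (nonnegativity of the ceiling's argument via the inner $\min$, the case $v=r$, absence of circularity) are consistent with the paper and do not change the argument.
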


Comparing part $(A)$ of \eqref{def:tau} for $\tau_i(v)$ and $\tau_{i+1}(v)$ also allows us to observe the following.

\begin{observation}\label{obv:tau:zero2}
For $v\in V$ and $k\geq \left \lfloor \log_2 \left(m(T)\over M(\chi)+\log_2 |E|\right )\right\rfloor$, if part (A) in \eqref{def:tau} for $\tau_k(v)$ is less than or equal to part (B) then for all $i> k$, $\tau_i(v)=0$.
\end{observation}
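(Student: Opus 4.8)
The plan is to show that, under the stated hypothesis, the partial coverage $\sum_{j=-\infty}^{k}2^{j}\tau_j(v)$ already reaches $d_T(v,v_c)$, so that part (A) of \eqref{def:tau} vanishes at every scale $i>k$; Observation~\ref{obv:tau:zero} then gives the conclusion.

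Concretely, fix $v\in V$, set $c=\col(v,p(v))$, write $D=d_T(v,v_c)$, and abbreviate $P_i=\sum_{j=-\infty}^{i-1}2^{j}\tau_j(v)$, so that part (A) of \eqref{def:tau} at scale $i$ is $\lceil (D-\min(D,P_i))/2^{i}\rceil$ and $P_{i+1}=P_i+2^{i}\tau_i(v)$. Since by assumption part (A) is at most part (B) at scale $k$, we have $\tau_k(v)=\lceil (D-\min(D,P_k))/2^{k}\rceil$. First I would use $\lceil x\rceil\ge x$ to estimate
$$
2^{k}\tau_k(v)=2^{k}\left\lceil\frac{D-\min(D,P_k)}{2^{k}}\right\rceil\ \ge\ D-\min(D,P_k),
$$
and hence $P_{k+1}=P_k+2^{k}\tau_k(v)\ge P_k+D-\min(D,P_k)=\max(D,P_k)\ge D$.

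Now $\min(D,P_{k+1})=D$, so part (A) of \eqref{def:tau} at scale $k+1$ equals $\lceil 0\rceil=0$, and since part (B) is always at least $1$ by \eqref{eq:tau:b}, this forces $\tau_{k+1}(v)=0$. Because $k+1\ge\left\lfloor\log_2\left(m(T)/(M(\chi)+\log_2|E|)\right)\right\rfloor$, applying Observation~\ref{obv:tau:zero} at scale $k+1$ yields $\tau_i(v)=0$ for all $i\ge k+1$, which is exactly the claim. There is no serious obstacle here; the only points that require care are invoking \eqref{eq:tau:b} to rule out a negative value of part (B) when evaluating the minimum at scale $k+1$, and checking that Observation~\ref{obv:tau:zero} may be applied at the shifted index $k+1$, which still lies above the required threshold because $k$ does. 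The argument also transparently covers the degenerate case $\tau_k(v)=0$: then $P_k\ge D$ and the same chain of inequalities goes through unchanged.
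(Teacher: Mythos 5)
Your proof is correct and fleshes out exactly the argument the paper leaves implicit: once part (A) specifies $\tau_k(v)$, the ceiling bound forces $\sum_{j\le k}2^{j}\tau_j(v)\ge d_T(v,v_c)$, so part (A) vanishes at scale $k+1$ and Observation~\ref{obv:tau:zero} (applied at $k+1$, which is still above the threshold) gives $\tau_i(v)=0$ for all $i>k$. This matches the paper's intended comparison of part (A) at consecutive scales, so there is nothing to add.
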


\subsection{Properties of the scale selector maps}

We now prove some key properties of the maps $\kappa, \varphi$, and $\{\tau_i\}$.

\begin{lemma}\label{obv:tau}
For every vertex $v\in V$ with $c=\col(v,p(v))$, the following holds.  For all ${i}\in \mathbb Z$ with ${{d_T(v,v_c)}\over \kappa(c)} \leq 2^{i-1}$, we have
$\tau_i(v)=0.$

\end{lemma}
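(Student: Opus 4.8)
The plan is to analyze part $(A)$ of the definition \eqref{def:tau} directly and show that, once $2^{i-1} \geq d_T(v,v_c)/\kappa(c)$, the running partial sum $\sum_{j=-\infty}^{i-1} 2^j \tau_j(v)$ has already caught up to $d_T(v,v_c)$, so that the numerator in $(A)$ vanishes and hence $\tau_i(v) = 0$ (since $\tau_i(v)$ is the minimum of $(A)$ and the always-positive quantity $(B)$). By Observation~\ref{obv:tau:zero} and Observation~\ref{obv:tau:zero2} it actually suffices to establish this at the \emph{smallest} index $i$ with $2^{i-1} \geq d_T(v,v_c)/\kappa(c)$, but the cleanest route is to prove the quantitative claim below for all such $i$ at once.

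First I would record the basic ``greedy covering'' estimate: for every $i$ in the range where $\tau_i(v)$ is given by the full formula \eqref{def:tau}, part $(A)$ is at most $\lceil d_T(v,v_c)/2^i \rceil$, and more usefully, whenever $(A)$ is the active term (i.e.\ $\tau_i(v)$ equals part $(A)$ and $(A)\geq 1$), we have $2^i \tau_i(v) \leq d_T(v,v_c) - \min(d_T(v,v_c), \sum_{j<i}2^j\tau_j(v)) + 2^i$. Summing a consecutive block of such indices telescopes: if $i_0$ is the first index with $\tau_{i_0}(v)$ defined by the formula, then for any $i \geq i_0$ either the partial sum $\sum_{j=-\infty}^{i}2^j\tau_j(v)$ already equals or exceeds $d_T(v,v_c)$, or else every $\tau_j(v)$ with $i_0 \le j \le i$ was defined by part $(A)$ and was positive, in which case $\sum_{j=i_0}^{i} 2^j \tau_j(v) \geq d_T(v,v_c) - 2^{i_0} - (\text{geometric tail})$ — more precisely one gets $\sum_{j=-\infty}^{i} 2^j \tau_j(v) \geq d_T(v,v_c) - 2^{i+1}$ by a short induction on $i$. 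Thus the ``deficit'' $d_T(v,v_c) - \min(d_T(v,v_c), \sum_{j<i}2^j\tau_j(v))$ is at most $2^{i+1}$... but wait, that only gives $\tau_i(v) \le \lceil 2 \rceil$, which is not yet $0$; I need to exploit part $(B)$.

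This is where $\kappa(c)$ enters, and it is the main obstacle. The point is that part $(B)$ caps each $\tau_j(v)$ by $\varphi(c) - \sum_{c'\in\chi(E(P_v))}\tau_j(v_{c'})$, which by the computation leading to \eqref{eq:tau:b} is at least $\kappa(c)$ but — crucially — the \emph{slack} over $\kappa(c)$ is what limits how fast the partial sums can grow. Concretely, I would argue that for indices $j$ below the threshold, part $(A)$ already forces $2^j\tau_j(v) \le d_T(v,v_c)$-ish contributions, and by the time $2^{i-1} \ge d_T(v,v_c)/\kappa(c)$, i.e.\ $d_T(v,v_c) \le 2^{i-1}\kappa(c)$, the total distance to be covered, $d_T(v,v_c)$, is at most $\kappa(c) \cdot 2^{i-1}$; but the sum $\sum_{j<i} 2^j\tau_j(v)$ can be shown (via the $(B)$-cap $\tau_j(v) \le \varphi(c) \le M(\chi) + \log_2|E|$ near the bottom scale, together with the lower cutoff $i \geq \lfloor\log_2(m(T)/(M(\chi)+\log_2|E|))\rfloor$ which makes the geometric series starting there sum to roughly $m(T) \le d_T(v,v_c)$) to have reached $d_T(v,v_c)$ already. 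I would split into the case $d_T(v,v_c)=0$ (then $(A)=0$ immediately for all $i$ above the cutoff, done), and $d_T(v,v_c)>0$ (so $d_T(v,v_c)\ge m(T)$), and in the latter case run the induction: define $i^\star$ as the least $i$ with $\sum_{j\le i}2^j\tau_j(v) \ge d_T(v,v_c)$ — this exists and, using that each positive $\tau_j(v)$ from part $(A)$ contributes a full $\approx d_T(v,v_c)$-sized chunk relative to the remaining deficit while the $(B)$-cap of $\approx\kappa(c)$ means we need at most $\approx\log_2(d_T(v,v_c)\kappa(c)^{-1}\cdot 2^{-i_0})$ doublings — one checks $2^{i^\star-1} < d_T(v,v_c)/\kappa(c)$ is forced (otherwise the cap $(B)$ would have been hit earlier, contradicting minimality or the structure of $(A)$), hence $i^\star \le i$ whenever $2^{i-1}\ge d_T(v,v_c)/\kappa(c)$, and then $\tau_i(v)=0$ by Observation~\ref{obv:tau:zero} applied at $i^\star$ (noting $\tau_{i^\star}(v)$ is defined by $(A)$ and the partial sum through $i^\star$ already covers $d_T(v,v_c)$, so $\tau_{i^\star+1}(v)=0$ and monotonically $\tau_i(v)=0$). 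The delicate bookkeeping is matching the two caps $(A)$ and $(B)$ so that the number of active scales is $O(\log\kappa(c)) = O(\log(M(\chi)+\log|E|))$ below threshold, which is exactly what the bottom cutoff on $i$ was designed to accommodate; I expect the write-up to be a careful induction rather than a slick one-liner.
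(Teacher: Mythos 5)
Your high-level plan (show that by the threshold scale the partial sums $\sum_{j<i}2^j\tau_j(v)$ have already covered $d_T(v,v_c)$, then finish with Observations~\ref{obv:tau:zero} and~\ref{obv:tau:zero2}) is the right one, but the step that carries all the load is never actually established, and the intermediate claims offered in its place are faulty. The telescoping bound ``deficit $\leq 2^{i+1}$'' rests on a false dichotomy: you allow only the cases ``partial sum already $\geq d_T(v,v_c)$'' or ``every $\tau_j(v)$ in the block is given by part (A) and is positive,'' omitting the main case in which part (B) of \eqref{def:tau} is the binding term at many consecutive scales; there part (B) only guarantees $\tau_j(v)\geq\kappa(c)$, so the partial sum through scale $i$ need only be about $\kappa(c)\,2^{i+1}$ and the deficit can far exceed $2^{i+1}$ below the threshold. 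Likewise, the assertion that $2^{i^\star-1}<d_T(v,v_c)/\kappa(c)$ ``is forced (otherwise the cap (B) would have been hit earlier\ldots)'' is a restatement of what must be proved, not an argument, and the geometric-series/$m(T)$ discussion near the bottom cutoff bounds the partial sums from \emph{above}, which is the wrong direction. The remark that ``the slack over $\kappa(c)$ limits how fast the partial sums can grow'' also has the roles reversed: what is needed from part (B) is a \emph{lower} bound on $\tau_k(v)$.

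The correct argument is a one-scale dichotomy at $k=\lceil\log_2(d_T(v,v_c)/\kappa(c))\rceil$ (after disposing of $d_T(v,v_c)=0$, and checking via $d_T(v,v_c)\geq m(T)$ and $\kappa(c)\leq 1+\log_2|E|$ that $k$ is at least the cutoff, so that \eqref{def:tau} indeed defines $\tau_k(v)$ -- a check your write-up uses only for the misdirected geometric-series step). If $\tau_k(v)=0$, Observation~\ref{obv:tau:zero} finishes. If $\tau_k(v)$ is given by part (A), Observation~\ref{obv:tau:zero2} finishes. If $\tau_k(v)$ is given by part (B), then exactly as in \eqref{eq:tau:b} one gets $\tau_k(v)\geq\varphi(c)-\varphi(\rho(c))=\kappa(c)$, hence $\sum_{j\leq k}2^j\tau_j(v)\geq 2^k\kappa(c)\geq d_T(v,v_c)$, so part (A) vanishes at scale $k+1$ and Observation~\ref{obv:tau:zero} gives $\tau_i(v)=0$ for all $i>k$, which is precisely the set of $i$ with $2^{i-1}\geq d_T(v,v_c)/\kappa(c)$. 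This use of $\kappa(c)$ -- a single scale at which part (B) being binding forces the whole distance to be covered in one step -- is the piece your proposal gestures at but never proves.
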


\begin{proof}
If $d_T(v,v_c)=0$, the lemma is vacuous.
Suppose now that $d_T(v,v_c)>0$, and let $k=\left\lceil \log_2\left({{d_T(v,v_c)}\over \kappa(c)}\right)\right\rceil.$ We have $d_T(v,v_c)\geq m(T)$ and $\kappa(c)\leq \log_2 |E|+1$, therefore
$$k\geq \left \lfloor \log_2 \left(m(T)\over M(\chi)+\log_2 |E|\right )\right\rfloor.$$
It follows that for $i \geq k$, $\tau_i(v)$ is given by \eqref{def:tau}.

If $\tau_k(v)=0$, then by Observation~\ref{obv:tau:zero},  for all $i\geq k$, $\tau_i(v)=0$.

On the other hand if $\tau_k(v)\neq 0$ then either it is determined by part (B) of \eqref{def:tau}, in which case
 $$\tau_k(v)=\varphi(c)-\sum_{c'\in\col(E(P_{v}))} \tau_k(v_{c'})
= \varphi(c)-\tau_k(v_c)-\sum_{c'\in\chi(E(P_{v_c}))} \tau_k(v_{c'})
\geq\varphi(c)-\varphi(\rho(c)) = \kappa(c),
$$
 implying that
$$
\sum_{j=-\infty}^{k} 2^j\tau_j(v)\geq \kappa(c)2^k\geq d_T(v,v_c)\,.
$$
Examining part (A) of \eqref{def:tau}, we see that $\tau_{k+1}(v)=0$, and by Observation~\ref{obv:tau:zero}, $\tau_i(v)=0$ for $i > k$.
Alternately, $\tau_k(v)$ is determined by part (A) of \eqref{def:tau}, and by Observation~\ref{obv:tau:zero2} $\tau_{i}(v)=0$ for $i>k$, completing the proof.

\end{proof}

The next lemma shows how the values $\{\tau_i(v)\}$ track the distance from $v_c$ to $v$.

\begin{lemma}\label{obv:tau2}
For any vertex $v\in V$ with $c=\col(v,p(v))$, we have
$$d_T(v,v_c)\leq \sum_{i=-\infty}^\infty 2^i\tau_i(v) \leq 3\,d_T(v,v_c).$$
\end{lemma}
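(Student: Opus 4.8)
The plan is to track the partial sums $s_i := \sum_{j<i} 2^j \tau_j(v)$ across the band of scales on which $\tau(v)$ is nonzero. Write $D = d_T(v,v_c)$ and $c = \col(v,p(v))$, and let $i_0 = \left\lfloor \log_2\!\left(m(T)/(M(\chi)+\log_2|E|)\right)\right\rfloor$ be the threshold below which $\tau_i(v)\equiv 0$. If $D=0$ the statement is trivial: part (A) of \eqref{def:tau} then vanishes at every $i\ge i_0$, so $\tau_i(v)=0$ for all $i$ and the middle sum is $0$. So I assume $D>0$. Then part (A) at $i_0$ equals $\lceil D/2^{i_0}\rceil\ge 1$, hence $\tau_{i_0}(v)\ge 1$, while Lemma~\ref{obv:tau} guarantees $\tau_i(v)=0$ as soon as $2^{i-1}\kappa(c)\ge D$. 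Therefore there is a well-defined smallest index $k^*>i_0$ with $\tau_{k^*}(v)=0$; by Observation~\ref{obv:tau:zero} we get $\tau_i(v)=0$ for all $i\ge k^*$, and $\tau_i(v)\ge 1$ for $i_0\le i<k^*$. Consequently $\sum_{i\in\mathbb Z} 2^i\tau_i(v) = s_{k^*}$, and the lemma reduces to proving $D\le s_{k^*}< 3D$.

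For the lower bound I use that part (B) of \eqref{def:tau} is always bounded below by $\kappa(c)\ge 1$ (this is \eqref{eq:tau:b}). Since $\tau_{k^*}(v)=0=\min(\text{(A)},\text{(B)})$ and part (B) $\ge 1$, part (A) at $k^*$ must equal $0$, i.e.\ $\lceil (D-\min(D,s_{k^*}))/2^{k^*}\rceil = 0$; as the numerator is nonnegative this forces $\min(D,s_{k^*})=D$, i.e.\ $s_{k^*}\ge D$.

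For the upper bound the key claim is that $s_i < D$ for every $i_0\le i\le k^*-1$, proved by induction on $i$. The base case $s_{i_0}=0<D$ is clear. For the step, when $i\le k^*-2$ we have $\tau_{i+1}(v)\ne 0$, so Observation~\ref{obv:tau:zero2} forbids part (A) at scale $i$ from being at most part (B); hence part (A) strictly exceeds part (B), so $\tau_i(v)$ equals part (B) and, since the inductive hypothesis gives $\min(D,s_i)=s_i$, part (A) $=\lceil (D-s_i)/2^i\rceil\ge \tau_i(v)+1$. This yields $(D-s_i)/2^i > \tau_i(v)$, i.e.\ $s_{i+1}=s_i+2^i\tau_i(v)<D$, completing the induction. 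Armed with $s_{k^*-1}<D$, I bound the final term: $\tau_{k^*-1}(v)\le \lceil (D-s_{k^*-1})/2^{k^*-1}\rceil < (D-s_{k^*-1})/2^{k^*-1}+1$, so $s_{k^*}=s_{k^*-1}+2^{k^*-1}\tau_{k^*-1}(v) < D + 2^{k^*-1}$. Finally, since $\tau_{k^*-1}(v)\ne 0$, the contrapositive of Lemma~\ref{obv:tau} gives $2^{(k^*-1)-1} < D/\kappa(c)\le D$, hence $2^{k^*-1} < 2D$ and $s_{k^*} < 3D$, as desired.

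The main obstacle is exactly the last step, the bound $2^{k^*-1}<2D$: this is where one must invoke Lemma~\ref{obv:tau} to certify that the top scale still carrying mass in $\tau(v)$ is comparable to $D$ — the recursive structure of \eqref{def:tau} alone only delivers $s_{k^*} < D + 2^{k^*-1}$ with an a priori uncontrolled second term. Secondary points of care are the bookkeeping around the boundary index $i_0$ and the degenerate case $k^*=i_0+1$ (where the inductive range in the key claim is empty, but $s_{k^*-1}=s_{i_0}=0<D$ still holds), and checking that every application of Observations~\ref{obv:tau:zero} and~\ref{obv:tau:zero2} and of \eqref{eq:tau:b} is made at a legitimate index $\ge i_0$ with $v\ne r$ (which is automatic once $D>0$).
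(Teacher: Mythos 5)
Your proof is correct and follows essentially the same route as the paper: the lower bound comes from \eqref{eq:tau:b} forcing part (A) of \eqref{def:tau} to vanish at the first zero scale, and the upper bound comes from bounding the top-scale term via the ceiling in part (A) together with Lemma~\ref{obv:tau}, which gives $2^{k^*-1}<2\,d_T(v,v_c)$. The only difference is cosmetic: your induction showing $s_i<D$ up to $k^*-1$ can be replaced by the paper's one-line observation that $\tau_{k^*-1}(v)>0$ already forces $\sum_{j<k^*-1}2^j\tau_j(v)<d_T(v,v_c)$ via part (A).
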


\begin{proof}
If $d_T(v,v_c)=0$, the lemma is vacuous. Suppose now that $d_T(v,v_c)>0$, and let $$k=\max \{i:\tau_i(v)\neq 0\}.$$ By Lemma~\ref{obv:tau}, the maximum exists.

We have $\tau_{k+1}(v)=0$, and thus inequality \eqref{eq:tau:b} implies that
part (A) of \eqref{def:tau} specifies $\tau_{k+1}(v)$, yielding
$$d_T(v,v_c)\leq \sum_{i=-\infty}^{k} 2^i\tau_i(v) = \sum_{i=-\infty}^{\infty} 2^i\tau_i(v).$$

On the other hand, since $\tau_k(v)>0$, we must have $d_T(v,v_c)>\sum_{i=-\infty}^{k-1} 2^i\tau_i(v),$ and Lemma~\ref{obv:tau} implies that $2^k< 2\,d_T(v,v_c),$ hence,
\begin{align*}
\sum_{i=-\infty}^k 2^i\tau_i(v)
&\leq \sum_{i=-\infty}^{k-1} 2^i\tau_i(v)+2^k\left \lceil d_T(v,v_c)-\sum_{i=-\infty}^{k-1} 2^i\tau_i(v)\over 2^k\right\rceil\\
&< \sum_{i=-\infty}^{k-1} 2^i\tau_i(v) +2^k\left ( {d_T(v,v_c)-\sum_{i=-\infty}^{k-1} 2^i\tau_i(v)\over 2^k}+1\right)\\
&= \sum_{i=-\infty}^{k-1} 2^i\tau_i(v) +2^k+\left ( d_T(v,v_c)-\sum_{i=-\infty}^{k-1} 2^i\tau_i(v)\right)\\
&\leq d_T(v,v_c)+2^k\\
&< 3\,d_T(v,v_c).
\end{align*}
\end{proof}
The following lemma shows that for any color $c\in \chi(E)$ the value of $\tau_i$ does not decrease as we move further from $v_c$ in $\gamma_c$.
\begin{lemma}\label{lem:incd}
Let $u,w\in V$ be such that $c=\col(w,p(w))=\col(u,p(u))$, and $d_T(w,v_c)\leq d_T(u,v_c)$.   Then for all $i\in \mathbb Z$, we have
$$\tau_i(w)\leq \tau_i(u).$$
\end{lemma}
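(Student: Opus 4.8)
**The plan is to prove Lemma~\ref{lem:incd} by induction on $i$ starting from $i = -\infty$ (i.e., from the smallest scale that can be nonzero), using the two-term minimum structure of \eqref{def:tau} together with the already-established monotonicity-type facts.**

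First I would record what needs to be controlled. Fix $c = \col(w,p(w)) = \col(u,p(u))$ with $d_T(w,v_c) \le d_T(u,v_c)$. Since $u$ and $w$ both lie on the path $\gamma_c$, they have exactly the same set of ancestor colors: $\chi(E(P_w)) = \chi(E(P_u)) = \chi(E(P_{v_c})) \cup \{c\}$, and moreover $v_{c'}$ is the same vertex for both when $c' \ne c$, while $v_c$ itself is common. Consequently the term $(B)$ in \eqref{def:tau},
\[
\varphi(c) - \sum_{c' \in \col(E(P_w))} \tau_i(v_{c'}) \quad\text{vs.}\quad \varphi(c) - \sum_{c' \in \col(E(P_u))} \tau_i(v_{c'}),
\]
differs between $w$ and $u$ only through the summand $\tau_i(v_c)$, which is identical. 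Hence term $(B)$ is \emph{exactly the same} for $w$ and $u$ at every scale $i$. So the entire content of the lemma is about term $(A)$.

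Now I would set $B_i := \varphi(c) - \sum_{c' \in \col(E(P_w))} \tau_i(v_{c'})$ (the common $(B)$-value) and let $A_i(x) := \lceil (d_T(x,v_c) - \min(d_T(x,v_c), \sum_{j<i} 2^j \tau_j(x)))/2^i \rceil$ for $x \in \{w,u\}$. Writing $D(x) = d_T(x,v_c)$ and $\sigma_i(x) = \sum_{j < i} 2^j \tau_j(x)$, we have $\tau_i(x) = \min(A_i(x), B_i)$, and $A_i(x) = \lceil (D(x) - \min(D(x),\sigma_i(x)))/2^i\rceil = \lceil \max(0, D(x)-\sigma_i(x))/2^i \rceil$. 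I claim by induction on $i$ that $\sigma_i(w) \le \sigma_i(u)$ \emph{and} $D(w) - \sigma_i(w) \le D(u) - \sigma_i(u)$. The base case (small $i$) is clear since both $\sigma_i$'s are $0$ and $D(w) \le D(u)$ by hypothesis. For the inductive step: $D(x) - \sigma_i(x) \le D(u) - \sigma_i(u)$ gives $\max(0, D(w)-\sigma_i(w)) \le \max(0, D(u)-\sigma_i(u))$, hence $A_i(w) \le A_i(u)$, hence $\tau_i(w) = \min(A_i(w),B_i) \le \min(A_i(u),B_i) = \tau_i(u)$ — this is the desired conclusion at scale $i$. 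It remains to propagate the two invariants to $i+1$: $\sigma_{i+1}(x) = \sigma_i(x) + 2^i \tau_i(x)$, so $\sigma_{i+1}(w) \le \sigma_{i+1}(u)$ follows from $\sigma_i(w)\le\sigma_i(u)$ and $\tau_i(w)\le\tau_i(u)$; and $D(w)-\sigma_{i+1}(w) \le D(u) - \sigma_{i+1}(u)$ needs $D(w) - \sigma_i(w) - 2^i\tau_i(w) \le D(u) - \sigma_i(u) - 2^i \tau_i(u)$, which does \emph{not} follow formally from the separate inequalities $D(w)-\sigma_i(w) \le D(u)-\sigma_i(u)$ and $\tau_i(w) \le \tau_i(u)$ (they point the wrong way when combined).

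**That last point is the main obstacle, and it is where the ceiling and the specific form of $A_i$ do the work.** The resolution: if $D(w) - \sigma_i(w) \le 0$ then $\tau_i(w)$ is forced by $(A)$ to be $0$ (since $A_i(w)=0$, assuming $i$ is past the trivial range — one must also rule out that a later scale revives $\tau$, but Observation~\ref{obv:tau:zero} / Observation~\ref{obv:tau:zero2} handle exactly this), and then $D(w) - \sigma_{i+1}(w) = D(w) - \sigma_i(w) \le 0 \le D(u) - \sigma_{i+1}(u)$ provided $D(u) - \sigma_{i+1}(u) \ge 0$; but $\sigma_{i+1}(u) = \sigma_i(u) + 2^i\tau_i(u) \le \sigma_i(u) + 2^i A_i(u) = \sigma_i(u) + 2^i\lceil(D(u)-\sigma_i(u))/2^i\rceil$ and the ceiling overshoots by at most $2^i - $ nothing below zero, so... actually the clean statement is $\sigma_i(u) + 2^i A_i(u) \le \sigma_i(u) + (D(u)-\sigma_i(u)) + 2^i$, giving $D(u) - \sigma_{i+1}(u) \ge -2^i$, not $\ge 0$. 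So I would instead carry the invariant as $D(w) - \sigma_i(w) \le \max(0,\, D(u)-\sigma_i(u))$, which is what both the $\tau_i(w)\le\tau_i(u)$ deduction and the propagation actually need, and verify: when $D(u) - \sigma_{i+1}(u) \ge 0$ we use that $2^i\tau_i(u) \le D(u) - \sigma_i(u)$ is false in general but $2^i(\tau_i(u)-1) < D(u)-\sigma_i(u)$ (from $A_i(u) = \lceil \cdot \rceil$, so $\tau_i(u) \le A_i(u)$ and $2^i(A_i(u)-1) < D(u)-\sigma_i(u)$), together with $2^i\tau_i(w) \ge 2^i\tau_i(u) - 2^i$... and chase this through. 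The arithmetic is routine once the right invariant is pinned down; the conceptual content is (i) term $(B)$ is identical for $u$ and $w$, (ii) the residual-distance quantity $\max(0, D(x) - \sigma_i(x))$ is the correct monotone-tracked quantity, and (iii) Observations~\ref{obv:tau:zero} and~\ref{obv:tau:zero2} guarantee that once the residual hits zero, $\tau$ stays zero, so no later scale breaks monotonicity. I expect the final writeup to be a clean one-paragraph induction after isolating these three facts.
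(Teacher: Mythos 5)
You correctly isolate the two structural facts that drive the paper's argument: part (B) of \eqref{def:tau} is literally the same quantity for $u$ and $w$ (same ancestor colors, same vertices $v_{c'}$), and the right quantity to track is the residual $\max\bigl(0, d_T(x,v_c)-\sum_{j<i}2^j\tau_j(x)\bigr)$. You also correctly diagnose the obstacle: the naive invariant does not propagate because subtracting $2^i\tau_i$ pushes the inequality the wrong way. But the proposal does not actually close that step, and the arithmetic you sketch for closing it would fail. The inequality $2^i\tau_i(w)\ge 2^i\tau_i(u)-2^i$ is false in general (e.g.\ when $w$'s residual is already exhausted, $\tau_i(w)=0$ while $\tau_i(u)$ can equal a large part-(B) value), and even where it holds, combining it with the ceiling bound $2^i(\tau_i(u)-1)<d_T(u,v_c)-\sum_{j<i}2^j\tau_j(u)$ leaves you off by an additive $2^i$, so the residual comparison at scale $i+1$ does not follow. ``Chase this through'' is precisely the point at which the proof is missing.

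The missing idea is a dichotomy that the paper makes explicit. Let $k$ be the first scale at which part (A) of \eqref{def:tau} for $w$ is at most part (B). For $i<k$, part (B) strictly dominates part (A) for $w$; since the partial sums for $u$ and $w$ agree by induction and are at most $d_T(w,v_c)\le d_T(u,v_c)$, part (A) for $u$ is at least part (A) for $w$, so \emph{both} $\tau_i(w)$ and $\tau_i(u)$ are given by the common part (B) and are therefore \emph{exactly equal} -- not merely ordered. This exact equality is what lets the residuals decrease by the same amount and keeps the induction alive. At $i=k$ one gets $\tau_k(w)\le\tau_k(u)$ by comparing the (A)-terms, and for $i>k$ Observation~\ref{obv:tau:zero2} gives $\tau_i(w)=0$, so the inequality is trivial. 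Equivalently, in your framework: whenever $\tau_i(w)<\tau_i(u)$, necessarily $\tau_i(w)$ is specified by part (A), in which case $w$'s residual becomes nonpositive and stays so forever, making your $\max(0,\cdot)$ invariant hold vacuously at all later scales; whenever both are specified by part (B) the decrements coincide. With that case analysis your invariant does propagate, but without it the proof is incomplete, and this dichotomy (equality below the critical scale) is the substance of the paper's proof rather than a routine calculation.
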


\begin{proof}

First let $k$ be the smallest integer for which,
$$\left\lceil d_T(w,v_c)-\min\left(d_T(w,v_c), \sum_{j=-\infty}^{k-1} 2^{j}\tau_j(w)\right)\over 2^k\right\rceil\leq \varphi(c)-\sum_{c'\in\col(E(P_{w}))} \tau_k(v_{c'}).$$

This $k$ exists since, by \eqref{eq:tau:b}, the right hand side is always positive, while by Lemma~\ref{obv:tau}, the left hand side must be zero for some $k \in \mathbb{Z}$.

For $i>k$, by Observation~\ref{obv:tau:zero2} we have, $\tau_i(w)=0$. Therefore, for $i>k$, we have $\tau_i(u)\geq \tau_i(w)$. We now use induction on $i$ to show that for $i< k$,  $\tau_i(u)=\tau_i(w)$, and for  $i=k$, $\tau_k(u)\geq \tau_k(w)$. Recall that, for $i<\left \lfloor \log_2 \left(m(T)\over M(\chi)+\log_2 |E|\right )\right\rfloor$, we have $\tau_i(w)=\tau_i(u)=0$, which gives us the base case of the induction.

Now, by definition of $k$,
part (B) of \eqref{def:tau} for $\tau_{k-1}(w)$ is an integer strictly less than part (A), hence
\begin{align}
 \sum_{j=-\infty}^{k-1} 2^{j}\tau_j(w)
&= 2^{k-1}\tau_{k-1}(w) + \sum_{j=-\infty}^{k-2} 2^{j}\tau_j(w) \nonumber \\
&\leq 2^{k-1}\left(\left\lceil d_T(w,v_c)- \sum_{j=-\infty}^{k-2} 2^{j}\tau_j(w)\over 2^{k-1}\right\rceil-1\right) + \sum_{j=-\infty}^{k-2} 2^{j}\tau_j(w) \nonumber \\
&<  2^{k-1}\left( d_T(w,v_c)- \sum_{j=-\infty}^{k-2} 2^{j}\tau_j(w)\over 2^{k-1}\right) + \sum_{j=-\infty}^{k-2} 2^{j}\tau_j(w) \nonumber \\
&\leq d_T(w,v_c)\,. \label{eq:incd:tmp}
\end{align}
For $ \left \lfloor \log_2 \left(m(T)\over M(\chi)+\log_2 |E|\right )\right\rfloor\leq i\leq k$, by \eqref{eq:incd:tmp}, and as $d_T(u,v_c) \geq d_T(w,v_c)$, we have

\begin{align}
&\min\bigg(d_T(w,v_c), \sum_{j=-\infty}^{i-1} 2^{j}\tau_j(w)\bigg)= \sum_{j=-\infty}^{i-1} 2^{j}\tau_j(w)= \min\bigg(d_T(u,v_c), \sum_{j=-\infty}^{i-1} 2^{j}\tau_j(w)\bigg).\label{eq:uw}
\end{align}
By our induction hypothesis for all $j<i$, $\tau_j(w)=\tau_j(u)$, so using $\eqref{eq:uw}$ we can write,
\begin{align}
& d_T(w,v_c)-\min\bigg(d_T(w,v_c), \sum_{j=-\infty}^{i-1} 2^{j}\tau_j(w)\bigg)\leq  d_T(u,v_c)-\min\bigg(d_T(u,v_c), \sum_{j=-\infty}^{i-1} 2^{j}\tau_j(u)\bigg).\label{eq:dist:tau}
\end{align}
 Since $\col(w,p(w))=\col(u,p(u))$, for all $i\in \mathbb Z$ part (B) of \eqref{def:tau} is identical for $\tau_i(u)$ and $\tau_i(w)$. Therefore, using \eqref{eq:dist:tau}, and the definition of $k$, for all $\left \lfloor \log_2 \left(m(T)\over M(\chi)+\log_2 |E|\right )\right\rfloor \leq i<k$, part (B) of \eqref{def:tau} specifies $\tau_i(u)$ and $\tau_i(w)$, hence
$$ \tau_i(u)=\tau_i(w)=\varphi(c)-\sum_{c'\in\col(E(P_{w}))} \tau_i(v_{c'}).$$

For the case that $i=k$, part (B) of  \eqref{def:tau} is identical for $\tau_k(u)$ and $\tau_k(w)$, and inequality \eqref{eq:dist:tau} implies that part (A) of  \eqref{def:tau} for $\tau_k(u)$ is at least as large as part (A) of \eqref{def:tau} for $\tau_k(w)$, completing the proof.
\end{proof}

The next lemma bounds the distance between two vertices in the graph based on $\{\tau_i\}$.
\begin{lemma}\label{lem:tau3}
Let $k> \left \lfloor \log_2 \left(m(T)\over M(\chi)+\log_2 |E|\right )\right\rfloor$ be an integer. For any two vertices  $w$  and $u$ such that $\tau_k(u)\neq 0$, $\tau_{k-1}(w)=0$ and $\col(w,p(w))=\col(u,p(u))$, we have
$$d_T(u,w)> 2^{k-1}.$$
\end{lemma}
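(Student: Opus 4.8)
The strategy is to show first that $w$ lies on the portion of the monochromatic path $\gamma_c$ between $v_c$ and $u$ — so that $d_T(u,w)=d_T(u,v_c)-d_T(w,v_c)$ — and then to bound $d_T(u,v_c)-d_T(w,v_c)$ from below using the partial sums $\sum_{j\le i}2^j\tau_j(\cdot)$ appearing in clause (A) of \eqref{def:tau}. For the first part: since $k>\lfloor\log_2(m(T)/(M(\chi)+\log_2|E|))\rfloor$ we have $k-1\ge\lfloor\log_2(m(T)/(M(\chi)+\log_2|E|))\rfloor$, so Observation~\ref{obv:tau:zero} applies at level $k-1$ to both $u$ and $w$, and in particular $\tau_{k-1}(u),\tau_{k-1}(w)$ and $\tau_k(u)$ are all governed by \eqref{def:tau}. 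The contrapositive of Observation~\ref{obv:tau:zero}, applied to $u$ together with the hypothesis $\tau_k(u)\neq0$, forces $\tau_{k-1}(u)\ge1$. Since $\tau_{k-1}(w)=0<\tau_{k-1}(u)$ and $\col(w,p(w))=\col(u,p(u))=c$, Lemma~\ref{lem:incd} rules out $d_T(u,v_c)\le d_T(w,v_c)$, so $d_T(w,v_c)<d_T(u,v_c)$; as $u$ and $w$ are both vertices of $\gamma_c$, which is linearly ordered by distance from $v_c$, this gives the identity $d_T(u,w)=d_T(u,v_c)-d_T(w,v_c)$, and also $\tau_j(w)\le\tau_j(u)$ for every $j$ by Lemma~\ref{lem:incd}.

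Next I extract the two inequalities to be combined. The hypothesis $\tau_{k-1}(w)=0$, together with the fact (from \eqref{eq:tau:b}) that clause (B) of \eqref{def:tau} is always at least $1$, forces clause (A) for $\tau_{k-1}(w)$ to equal $0$; since clause (A) is the ceiling of a nonnegative quantity, that quantity is $0$, i.e. $\sum_{j=-\infty}^{k-2}2^j\tau_j(w)\ge d_T(w,v_c)$. Dually, the hypothesis $\tau_k(u)\neq0$ with clause (B) for $\tau_k(u)$ positive forces clause (A) for $\tau_k(u)$ to be nonzero, which is only possible if $\sum_{j=-\infty}^{k-1}2^j\tau_j(u)<d_T(u,v_c)$.

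To finish, split $\sum_{j=-\infty}^{k-1}2^j\tau_j(u)=\sum_{j=-\infty}^{k-2}2^j\tau_j(u)+2^{k-1}\tau_{k-1}(u)$. By $\tau_j(w)\le\tau_j(u)$ and the first inequality of the previous paragraph, $\sum_{j=-\infty}^{k-2}2^j\tau_j(u)\ge\sum_{j=-\infty}^{k-2}2^j\tau_j(w)\ge d_T(w,v_c)$; and $\tau_{k-1}(u)\ge1$. Hence $d_T(u,v_c)>\sum_{j=-\infty}^{k-1}2^j\tau_j(u)\ge d_T(w,v_c)+2^{k-1}$, and combined with the identity from the first paragraph, $d_T(u,w)>2^{k-1}$. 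The only delicate point is the repeated reading of the structure of \eqref{def:tau}: each time a $\tau$-value is known to vanish one must invoke \eqref{eq:tau:b} to conclude it is clause (A) that is responsible, and each time clause (A) vanishes (or fails to vanish) one reads off the corresponding inequality between $d_T(\cdot,v_c)$ and a partial sum of the $2^j\tau_j$ terms; the rest is bookkeeping with Observation~\ref{obv:tau:zero} and Lemma~\ref{lem:incd}.
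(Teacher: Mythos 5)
Your proof is correct and follows essentially the same route as the paper's: both use Observation~\ref{obv:tau:zero} to get $\tau_{k-1}(u)\geq 1$, Lemma~\ref{lem:incd} to order $d_T(w,v_c)< d_T(u,v_c)$ and to get $\tau_j(w)\leq\tau_j(u)$, and then read off $d_T(w,v_c)\leq\sum_{j\leq k-2}2^j\tau_j(w)$ and $d_T(u,v_c)>\sum_{j\leq k-1}2^j\tau_j(u)$ from clause (A) of \eqref{def:tau} (via \eqref{eq:tau:b}), splitting off the $2^{k-1}\tau_{k-1}(u)$ term exactly as in the paper. Your justification of the ordering $d_T(w,v_c)<d_T(u,v_c)$ by the contrapositive of Lemma~\ref{lem:incd} is in fact slightly more explicit than the paper's.
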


\begin{proof}
By Observation~\ref{obv:tau:zero}, $\tau_k(w)=0$.  Letting $c=\col(u,p(u))$, by Lemma~\ref{lem:incd} we have $d_T(v_c,u)\geq  d_T(v_c,w)$. Using Lemma~\ref{lem:incd} again, we can conclude that  for all $i\in \mathbb Z$, $\tau_i(u)\geq \tau_i(w)$.
Since $\tau_{k-1}(w)=0$, inequality \eqref{eq:tau:b} implies that part (A) of  \eqref{def:tau} specifies $\tau_{k-1}(w)$. Therefore,
\begin{align}
d_T(w,v_c)&\leq \sum_{i=-\infty}^{k-2} 2^{i}\tau_i(w) \nonumber \\
&\leq \sum_{i=-\infty}^{k-2} 2^{i}\tau_i(u) \nonumber \\
&= \left(\sum_{i=-\infty}^{k-1} 2^{i}\tau_i(u)\right)- 2^{k-1}\tau_{k-1}(u). \label{eq:461}
\end{align}
Since $\tau_k(u)>0 $, using part $(A)$ of \eqref{def:tau}, we can write
\begin{equation}
\label{eq:462}
d_T(u,v_c)> \sum_{i=-\infty}^{k-1} 2^{i}\tau_i(u).
\end{equation}
Observation~\ref{obv:tau:zero} implies that $\tau_{k-1}(u)\neq 0$, thus $\tau_{k-1}(u) \geq 1$, and using \eqref{eq:461} and \eqref{eq:462}, we have
\[
d_T(w,u)= d_T(u,v_c)-d_T(w,v_c)>2^{k-1 },
\]
completing the proof.
\end{proof}
The next lemma and the following two corollaries bound the number of colors $c$ in the tree which have a small value of $\varphi(c)$.
\begin{lemma}\label{lem:count}
For any  $k\in \mathbb N\cup \{0\}$, and any color $c\in \mathcal \chi(E)$, we have
\[
\#\{c'\in\chi(E(T(c))): \varphi(c')-\varphi(c)=k\}  \leq  2^{k}\,.
\]
\end{lemma}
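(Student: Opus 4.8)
The plan is to extract a volume-doubling-type inequality from the definition of $\kappa$ and combine it with an edge-disjoint packing of subtrees. Throughout, think of the colors $\chi(E)$ as the nodes of a rooted tree (the ``color tree'') in which the parent of a color $c'$ is $\rho(c')$. Two elementary structural facts will be used, both immediate from the fact that each color class is a connected subset of a root--leaf path together with the definition of $T(\cdot)$: (i) for any color $c$, the colors occurring in $T(c)$ are precisely $c$ together with its descendants in the color tree, and moreover $T(c') \subseteq T(c)$ whenever $c'$ is such a descendant; (ii) if $c'$ and $c''$ are incomparable in the color tree, then $E(T(c')) \cap E(T(c'')) = \emptyset$, since by (i) these edge sets carry disjoint sets of colors.

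Fix $c \in \chi(E)$ and $k$. If $k=0$ then, since every downward step in the color tree strictly increases $\varphi$ (because $\kappa \ge 1$), the condition $\varphi(c') = \varphi(c)$ forces $c' = c$, so the set in question is $\{c\}$ and has size $1 = 2^0$. Assume now $k \ge 1$ and let $\mathcal C = \{ c' \in \chi(E(T(c))) : \varphi(c') - \varphi(c) = k\}$. Every $c' \in \mathcal C$ is a strict descendant of $c$, and no two elements of $\mathcal C$ are comparable in the color tree (a strict ancestor--descendant pair has $\varphi$-values differing by at least $1$). Hence $\mathcal C$ is an antichain, so by (i) and (ii) the subtrees $\{T(c')\}_{c' \in \mathcal C}$ are pairwise edge-disjoint and all contained in $T(c)$; therefore $\sum_{c' \in \mathcal C} |E(T(c'))| \le |E(T(c))|$.

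The crux is the bound $|E(T(c'))| > 2^{-k}\,|E(T(c))|$ for each $c' \in \mathcal C$. Walk down the color tree from $c = d_0$ to $c' = d_m$, so that $\rho(d_j) = d_{j-1}$ and $\sum_{j=1}^m \kappa(d_j) = \varphi(c') - \varphi(c) = k$. Writing $a_j = |E(T(d_j))| \ge 1$, we have $T(d_j) \subseteq T(d_{j-1})$, and from $\kappa(d_j) = \lfloor \log_2(a_{j-1}/a_j)\rfloor + 1$ together with $\lfloor t \rfloor > t - 1$ we get $2^{\kappa(d_j)} > a_{j-1}/a_j$, i.e. $a_j > 2^{-\kappa(d_j)} a_{j-1}$. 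Chaining these $m$ strict inequalities (each step multiplies by a positive quantity) gives $a_m > 2^{-k} a_0$, which is the claim. Plugging into the packing bound of the previous paragraph yields $|\mathcal C| \cdot 2^{-k} |E(T(c))| < |E(T(c))|$, and since $|E(T(c))| \ge 1$ this gives $|\mathcal C| < 2^k$, hence $|\mathcal C| \le 2^k$, as desired.

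The only delicate points are the direction and strictness of the floor estimate for $\kappa$ --- it must produce a lower bound on subtree sizes, and the strict inequality that lets us pass from $<2^k$ to $\le 2^k$ comes from the ``$+1$'' in the definition of $\kappa$ --- together with the verification of facts (i)--(ii), which is where the combinatorics of monotone colorings enters. An alternative but essentially equivalent route is induction on $k$: partitioning $\mathcal C$ according to the child color $d$ of $c$ ($\rho(d)=c$) through which $c'$ descends gives $|\mathcal C| \le \sum_{d : \rho(d) = c} 2^{k - \kappa(d)}$, and the same floor estimate shows $\sum_{d:\rho(d)=c} 2^{-\kappa(d)} \le \sum_{d:\rho(d)=c} |E(T(d))| / |E(T(c))| \le 1$.
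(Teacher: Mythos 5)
Your proof is correct and takes essentially the same route as the paper's: you chain the definition of $\kappa$ along the color-tree path from $c'$ up to $c$ to get $|E(T(c'))|\gtrsim 2^{-k}|E(T(c))|$, and then pack the pairwise edge-disjoint subtrees $T(c')$ inside $T(c)$ (the paper phrases the disjointness via equal $\varphi$-values rather than via an antichain, which amounts to the same thing). The strict floor estimate even yields the marginally stronger bound $<2^k$ for $k\geq 1$, but that is only a cosmetic difference.
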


\begin{proof}
We start the proof by comparing the size of the subtrees $T(c')$ and $T(c)$ for $c'\in \col(E(T(c)))$.

For a given color $c'\in \col(E(T(c)))$, we define the sequence $\{c_i\}_{i\in \mathbb N}$ as follows. We put $c_1=c'$ and for   $i>1$ we put $c_i=\rho(c_{i-1})$. Suppose now that $c_m=c$, we have
\begin{align}\label{eq:colorbound}
\varphi(c_m)-\varphi(c_1)&=\sum_{i=1}^{m-1} \kappa(c_i)\nonumber\\
&\geq \sum_{i=1}^{m-1}\log_2\left( {|E(T({c_{i+1}))}|\over |E(T({c_{i}))}|}\right)\nonumber\\
&\geq \log_2\left( {|E(T(c))|\over |E(T({c'}))|}\right).
\end{align}
This inequality implies that
$$|E(T(c))|\leq 2^{\varphi(c')-\varphi(c)}|E(T(c'))|.$$
It is easy to check that for colors $a,b\in \chi(E(T(c)))$ such that $\varphi(a)=\varphi(b)$, subtrees $T(a)$ and $T(b)$ are edge disjoint. Therefore, for $k \in \mathbb N\cup \{0\}$, summing over all the colors $c'$ such that $\varphi(c')-\varphi(c)=k$ gives

\[ \#\{c'\in\chi(E(T(c))): \varphi(c')-\varphi(c) = k\}\, \leq \!\!\sum_{\substack{c' \in \chi(E(T(c)))\\ \varphi(c')-\varphi(c)=k}} \,{2^k\,|E(T(c'))|\over |E(T(c))|}=2^k\!\!\!\!\sum_{\substack{c' \in \chi(E(T(c)))\\ \varphi(c')-\varphi(c)=k}} \,{|E(T(c'))|\over |E(T(c))|} \leq 2^k \,. \]
\end{proof}
The following two corollaries are immediate from Lemma~\ref{lem:count}.
\begin{corollary}\label{cor:count}
For any  $k\in \mathbb N$, and any color $c\in \mathcal \chi(E)$, we have
\[
\#\{c'\in\chi(E(T(c))): \varphi(c')-\varphi(c)\leq k\} \quad < \quad 2^{k+1}.
\]
\end{corollary}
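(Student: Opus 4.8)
The plan is to reduce \textbf{Corollary~\ref{cor:count}} to \textbf{Lemma~\ref{lem:count}} by a one-line geometric summation, after recording a small monotonicity fact about $\varphi$. First I would observe that for every $c' \in \chi(E(T(c)))$ we have $\varphi(c') - \varphi(c) \geq 0$, and in fact this difference is a non-negative integer. Indeed, since $c'$ occurs in the subtree $T(c)$, the chain of parent colors $c', \rho(c'), \rho(\rho(c')), \ldots$ eventually reaches $c$; and because $\kappa(\cdot) \geq 1$ everywhere, the recursion $\varphi(a) = \kappa(a) + \varphi(\rho(a))$ from \eqref{def:phi} shows that $\varphi$ strictly increases along this chain. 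Hence the set whose size we must bound decomposes as a disjoint union over the possible values of the difference,
\[
\{c' \in \chi(E(T(c))) : \varphi(c') - \varphi(c) \leq k\} \;=\; \bigcup_{j=0}^{k} \{c' \in \chi(E(T(c))) : \varphi(c') - \varphi(c) = j\}\,.
\]

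Then I would simply sum the per-level bound supplied by Lemma~\ref{lem:count}, namely $\#\{c' : \varphi(c') - \varphi(c) = j\} \leq 2^j$, over $j \in \{0,1,\ldots,k\}$, to get
\[
\#\{c' \in \chi(E(T(c))) : \varphi(c') - \varphi(c) \leq k\} \;\leq\; \sum_{j=0}^{k} 2^j \;=\; 2^{k+1} - 1 \;<\; 2^{k+1}\,,
\]
which is exactly the claimed inequality. There is no genuine obstacle here; the only point requiring a word of justification is that the union runs over non-negative integers $j$ only (so the geometric series is finite and starts at $j=0$), and this is precisely what the strict monotonicity of $\varphi$ down the tree guarantees. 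The companion corollary referenced alongside would follow in the same way, summing the tail of the series instead.
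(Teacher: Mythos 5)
Your proof is correct and is exactly the argument the paper intends: the paper states Corollary~\ref{cor:count} is ``immediate'' from Lemma~\ref{lem:count}, and the evident route is precisely your level-by-level decomposition (justified by $\kappa \geq 1$ making $\varphi(c')-\varphi(c)$ a non-negative integer) followed by summing $\sum_{j=0}^{k} 2^j = 2^{k+1}-1 < 2^{k+1}$. No gaps.
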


\begin{corollary}\label{cor:expsum}
For any color $c\in \mathcal \chi(E)$, and constant $C\geq 2$, we have
\[
\sum_{c'\in \chi(E(T(c)))\setminus \{c\}} 2^{-C(\varphi(c')-\varphi(c))}< 2^{2-C}.
\]
\end{corollary}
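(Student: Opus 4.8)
The plan is to collapse the sum into a geometric series via the counting bound of Lemma~\ref{lem:count}, and then to handle the borderline value $C=2$ by exploiting finiteness of the tree. Fix a color $c\in\chi(E)$ and, for each integer $k\ge 0$, put
$$
a_k \;=\; \#\{c'\in\chi(E(T(c)))\setminus\{c\}\;:\;\varphi(c')-\varphi(c)=k\}.
$$
The first step is to observe that $a_0=0$: for any $c'\in\chi(E(T(c)))$ the chain of $\rho$-ancestors of $c'$ passes through $c$ — this is exactly the configuration used in the proof of Lemma~\ref{lem:count} — so $\varphi(c')-\varphi(c)=\sum_i\kappa(c_i)\ge\kappa(c')\ge 1$ whenever $c'\neq c$. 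Hence the quantity to bound is $S:=\sum_{k\ge 1}a_k\,2^{-Ck}$. Since $T$ is finite, $\chi(E(T(c)))$ is finite, so only finitely many $a_k$ are nonzero; if $\chi(E(T(c)))=\{c\}$ then $S=0<2^{2-C}$ and there is nothing to prove, so I may assume $K:=\max\{k:a_k>0\}$ exists.

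Next I would invoke Lemma~\ref{lem:count}, which gives $a_k\le 2^k$ for every $k$, so that
$$
S\;\le\;\sum_{k=1}^{K}2^k\,2^{-Ck}\;=\;\sum_{k=1}^{K}2^{(1-C)k}.
$$
For $C>2$ this is bounded by the full geometric series $\sum_{k\ge 1}2^{(1-C)k}=\frac{2^{1-C}}{1-2^{1-C}}$; since $C>2$ forces $2^{1-C}<\tfrac12$ and hence $1-2^{1-C}>\tfrac12$, the sum is strictly less than $2\cdot 2^{1-C}=2^{2-C}$. In the remaining case $C=2$ the finiteness of the tree is precisely what yields strictness: $S\le\sum_{k=1}^{K}2^{-k}=1-2^{-K}<1=2^{2-C}$.

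I expect no real obstacle here; the only point requiring a moment's care is the strictness of the stated inequality. For $C>2$ it is automatic from the geometric-series estimate, while for $C=2$ it follows from the fact that $\chi(E(T(c)))$ is a finite set, so the series terminates at a finite $K$ and its sum stays strictly below $1$. Everything else reduces to Lemma~\ref{lem:count} together with the strict monotonicity $\varphi(c')>\varphi(c)$ for colors $c'$ lying strictly inside $T(c)$, which was already established in the course of proving that lemma.
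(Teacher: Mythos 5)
Your proof is correct and follows the route the paper intends: the paper declares Corollary~\ref{cor:expsum} immediate from Lemma~\ref{lem:count}, and your argument is exactly that instantiation, namely $a_k\le 2^k$, the observation that $\varphi(c')-\varphi(c)\ge 1$ for $c'\neq c$ (since every $\kappa$ value is at least $1$), and a geometric series bound. Your extra care about strictness at $C=2$ (via finiteness of $\chi(E(T(c)))$) is a legitimate detail the paper leaves implicit, and your handling of it is fine.
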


The next lemma is similer to Lemma~\ref{lem:tau3}.
The assumption is more general, and the conclusion is correspondingly weaker.
This result is used primarily to enable the proof of Lemma~\ref{lem:triangle}.

\begin{lemma}\label{lem:div}
Let $u\in V$ and $w\in V(P_u)$ be such that $\varphi(\chi(u,p(u)))>\varphi(\chi(w,p(w)))$. For all vertices $x\in V(T_u)$, and $k\in \mathbb Z$ with
\begin{equation}\label{eq:lem:div}
2^{k}>\left({6\,d_T(x,w)\over \varphi(\col(u,p(u)))-\varphi(\col(w,p(w)))}\right),
\end{equation}
we have
$
\tau_k(x)=0.
$
\end{lemma}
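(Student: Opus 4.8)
The plan is to argue by contradiction: assuming $\tau_k(x)\neq 0$, I would derive $2^k<6\,d_T(x,w)/D$ with $D:=\varphi(\col(u,p(u)))-\varphi(\col(w,p(w)))$, contradicting \eqref{eq:lem:div}. Write $c_x=\col(x,p(x))$, $c_u=\col(u,p(u))$, $c_w=\col(w,p(w))$, and $j_0=\lfloor\log_2(m(T)/(M(\chi)+\log_2|E|))\rfloor$. First I would clear the trivial cases. If $d_T(x,v_{c_x})=0$ then $\tau_k(x)=0$ by Lemma~\ref{obv:tau}, so assume $d_T(x,v_{c_x})>0$ (hence $d_T(x,w)\geq d_T(x,v_{c_x})\geq m(T)$). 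For $k<j_0$ the conclusion holds by definition of $\tau$. For $k=j_0$ one has $\tau_{j_0}(x)=\min(\lceil d_T(x,v_{c_x})/2^{j_0}\rceil,\beta_{j_0})\ne 0$, so I must instead check that the hypothesis \eqref{eq:lem:div} cannot hold: using $D\leq\varphi(c_u)\leq M(\chi)+\log_2|E|$ (by \eqref{eq:kappa}) and $2^{j_0}\leq m(T)/(M(\chi)+\log_2|E|)$ gives $6\,d_T(x,w)/D\geq 6\cdot 2^{j_0}>2^{j_0}$. So from now on assume $k>j_0$.

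Next I would record the combinatorial picture along $P_x$. Since $w$ is an ancestor of $u$ and $u$ of $x$, the colors $c_w,c_u,c_x$ all occur on $P_x$, whose colors form a chain under $\rho$ along which $\varphi$ is strictly increasing; thus $c_w$ lies strictly above $c_u$, which lies at or above $c_x$. Listing the colors of $P_x$ from $c_w$ to $c_x$ as $c_w=c^{(0)},c^{(1)},\dots,c^{(q)}=c_x$ gives $\rho(c^{(i)})=c^{(i-1)}$ and $\sum_{i=1}^{q}\kappa(c^{(i)})=\varphi(c_x)-\varphi(c_w)\geq\varphi(c_u)-\varphi(c_w)=D$. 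The geometric facts I will use are that $w$ lies at or above $v_{c^{(1)}}$ on $P_x$ while $v_{c_x}$ lies at or below it, so that $\sum_{i=2}^{q}d_T(v_{c^{(i)}},v_{c^{(i-1)}})=d_T(v_{c_x},v_{c^{(1)}})\leq d_T(v_{c_x},w)\leq d_T(x,w)$, together with the identity $d_T(x,v_{c_x})=d_T(x,w)-d_T(v_{c_x},w)$.

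The engine of the argument is a budget recursion for part $(B)$ of \eqref{def:tau}. For $c\in\col(E)$ and $j\in\mathbb Z$, part $(B)$ of \eqref{def:tau} for $\tau_j(v)$ takes the same value $\beta_j(c)$ for every $v\in\gamma_c\setminus\{v_c\}$ (it depends on $v$ only through $\col(E(P_v))=\{c\}\cup\col(E(P_{v_c}))$), and $\beta_j(c)\geq\kappa(c)\geq 1$ by \eqref{eq:tau:b}. Since a color class never shares its top vertex with that of its parent color, $v_c\in\gamma_{\rho(c)}\setminus\{v_{\rho(c)}\}$ always; expanding $\col(E(P_{v_c}))$ and using $\varphi(c)=\kappa(c)+\varphi(\rho(c))$ yields $\beta_j(c)=\beta_j(\rho(c))+\kappa(c)-\tau_j(v_c)$, which telescoped along $c^{(0)},\dots,c^{(q)}$ gives $\beta_j(c_x)=\beta_j(c_w)+\sum_{i=1}^{q}(\kappa(c^{(i)})-\tau_j(v_{c^{(i)}}))$. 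Then I would run the greedy step exactly as in the proof of Lemma~\ref{lem:tau3}: if for some $j$ with $j_0\leq j<k$ part $(A)$ of \eqref{def:tau} for $\tau_j(x)$ were $\leq$ part $(B)$, then $\tau_j(x)$ would equal part $(A)$ and already cover $d_T(x,v_{c_x})$, forcing part $(A)$ for $\tau_{j+1}(x)$ to vanish and hence $\tau_k(x)=0$ by Observation~\ref{obv:tau:zero}. So part $(B)$ binds for all such $j$, giving $\tau_j(x)=\beta_j(c_x)$ and, by reading off the residual distance, $d_T(x,v_{c_x})>\sum_{j=j_0}^{k-1}2^j\beta_j(c_x)$.

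Finally I would insert the telescoped recursion into this lower bound. The crucial move is to cancel the $\beta_j(c_w)$ terms using $\tau_j(v_{c^{(1)}})\leq\beta_j(c_w)$ (valid since $v_{c^{(1)}}\in\gamma_{c_w}\setminus\{v_{c_w}\}$); this is precisely what prevents the length of $\gamma_{c_w}$ \emph{above} $w$ from entering the estimate. Bounding the remaining $\sum_{j\in\mathbb Z}2^j\tau_j(v_{c^{(i)}})\leq 3\,d_T(v_{c^{(i)}},v_{c^{(i-1)}})$ for $i\geq 2$ via Lemma~\ref{obv:tau2}, and using $\sum_{i=1}^q\kappa(c^{(i)})\geq D$ and $\sum_{j=j_0}^{k-1}2^j=2^k-2^{j_0}\geq 2^{k-1}$ (as $k>j_0$), I get $\sum_{j=j_0}^{k-1}2^j\beta_j(c_x)\geq D\,2^{k-1}-3\,d_T(v_{c_x},w)$; combining with $d_T(x,v_{c_x})=d_T(x,w)-d_T(v_{c_x},w)$ and $d_T(v_{c_x},w)\leq d_T(x,w)$ yields $3\,d_T(x,w)>D\,2^{k-1}$, i.e.\ $2^k<6\,d_T(x,w)/D$, the desired contradiction. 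I expect the main obstacle to be the bookkeeping in the last two paragraphs: pinning down the placement of each $v_{c^{(i)}}$ relative to $w$ and $x$ (including the degenerate cases $q=1$ and $w=r$), verifying the budget recursion carefully, and tracking constants so that exactly the factor $6$ — rather than $8$ — comes out, which is why one must keep $d_T(v_{c_x},w)$ explicit until the very last line instead of bounding it by $d_T(x,w)$ earlier.
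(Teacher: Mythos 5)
Your proposal is correct, and it ultimately runs on the same engine as the paper's proof: the chain of colors between $w$ and $x$, the bound $\sum_{j}2^j\tau_j(v_{c^{(i)}})\leq 3\,d_T(v_{c^{(i)}},v_{c^{(i-1)}})$ from Lemma~\ref{obv:tau2}, the part-(B) budget in \eqref{def:tau} together with \eqref{eq:tau:b}, and Observations~\ref{obv:tau:zero} and \ref{obv:tau:zero2}; the constant $6$ emerges the same way. Where you differ is in the assembly. The paper argues directly and at the single scale $k-1$: it substitutes part (B) of \eqref{def:tau} exactly once, for the top vertex of the color block immediately below $w$, and concludes that $2^{k-1}$ times part (B) for $\tau_{k-1}(x)$ already covers $d_T(x,v_{c_x})$, so part (A) at scale $k$ vanishes. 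You instead argue by contradiction, note that part (B) must be the binding term of \eqref{def:tau} for $\tau_j(x)$ at \emph{every} scale $j_0\leq j<k$ (else Observation~\ref{obv:tau:zero2} already gives $\tau_k(x)=0$), telescope your identity $\beta_j(c)=\beta_j(\rho(c))+\kappa(c)-\tau_j(v_c)$ down the whole chain, and sum over all scales below $k$. Your route is somewhat longer but makes the budget mechanism explicit and isolates neatly why the portion of $\gamma_{c_w}$ above $w$ never enters (the cancellation $\tau_j(v_{c^{(1)}})\leq\beta_j(c_w)$), whereas the paper's one-shot substitution at scale $k-1$ is more economical. The bookkeeping you flag does check out: $v_{c^{(1)}}$ and $v_{c_x}$ lie on $P_{xw}$ so the distance identities hold, the case $k=j_0$ is vacuous exactly as you argue, the constant is $6$ as claimed, and the $w=r$ case only requires the (implicit) convention $\tau_j(r)=0$, which is also needed to read \eqref{def:tau} in the paper itself.
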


\begin{proof}
In the case that $d_T(x,w)=0$,  this lemma is vacuous. Suppose now that $d_T(x,w)>0$.
Let $c_1,\ldots, c_m$ be the set of colors that appear on the path $P_{x\,p(w)}$, in order from $x$ to $p(w)$, and for $i\in[m]$, let $y_i=v_{c_i}$. We prove this lemma by showing that if,
\begin{equation}\label{eq:tmp:div}
k\geq\log_2 \left({6\,d_T(x,w)\over \varphi(\col(u,p(u)))-\varphi(\col(w,p(w)))}\right),
\end{equation}
then part $(A)$ of \eqref{def:tau}  for $\tau_{k}(x)$ is zero.

First note that, $\varphi(\col(u,p(u)))-\varphi(\col(w,p(w)))\leq M(\chi)+\log_2 |E|$ and $d_T(x,w)\geq m(T)$, hence \eqref{eq:tmp:div} implies
$$
k \geq\left \lfloor \log_2 \left(m(T)\over M(\chi)+\log_2 |E|\right )\right\rfloor.
$$
By Lemma~\ref{obv:tau2}, we have
\begin{equation}\label{eq:obv:tau2}
\sum_{i=1}^{m-2}  2^{k-1}\tau_{k-1}(y_{i})\leq \sum_{i=1}^{m-2} \sum_{j=-\infty}^{\infty} 2^{j}\tau_j(y_{i})\leq \sum_{i=1}^{m-2} 3\,d_T(y_{i},y_{{i+1}})= 3\,d_T(y_{1},y_{{m-1}}).
\end{equation}
Now, using \eqref{eq:lem:div} gives
\begin{align}
\varphi(c_1)-\varphi(c_{m})&\geq  \varphi(\col(u,p(u)))-\varphi(\col(w,p(w)))\nonumber\\
&\geq {6\,d_T(x,w)\over 2^{k}}\nonumber\\
&\geq {6\,d_T(x,y_{{m-1}})\over 2^{k}}.\label{lem:phi:tmp}
\end{align}
Using the above inequality and  \eqref{eq:obv:tau2}, we can write
\begin{align*}
d_T(x,y_{1})&= d_T(x,y_{m-1})-d_T(y_{1},y_{{m-1}})\\
&\leq{2^{k-1}\over 3}\left( \varphi(c_1)-\varphi(c_{m})-\sum_{i=1}^{m-2}  \tau_{k-1}(y_{i})\right).
\end{align*}
First, note that $c_m=\chi(y_{m-1},p(y_{m-1}))$. Now, we use part $(B)$ of \eqref{def:tau} for $\tau_k(y_{m-1})$ to write
\begin{align}
d_T(x,y_{1})&\leq{2^{k-1}\over 3}\left( \varphi(c_1)-\left(\tau_{k-1}(y_{m-1})+\sum_{c'\in \chi(E(P_{y_{{m-1}}}))}  \tau_{k-1}(v_{c'})\right)-\sum_{i=1}^{m-2}  \tau_{k-1}(y_{i})\right)\nonumber\\
&\leq{2^{k-1}\over 3}\left( \varphi(c_1)-\sum_{c'\in \col(E(P_{x}))}  \tau_{k-1}(v_{c'})\right)\nonumber\\
&\leq{2^{k-1}}\left( \varphi(\col({x},p(x)))-\sum_{c'\in \col(E(P_{x}))}  \tau_{k-1}(v_{c'})\right).\label{eq:lem:div:last}
\end{align}
Therefore, either part (A) of \eqref{def:tau} specifies  $\tau_{k-1}(x)$ in which case by Observation~\ref{obv:tau:zero2},
$\tau_{i}(v)=0$ for $i\geq k$, or part (B) of \eqref{def:tau} specifies $\tau_{k-1}(x)$ in which case by \eqref{eq:lem:div:last} we have,
$$\tau_{k-1}(x)2^{k-1}\geq  d_T(x,y_{1}),$$
and part (A) of \eqref{def:tau} is zero for $i\geq k$.
\end{proof}
In Section~\ref{sec:embedding}, we give the description of our embedding and analyze its distortion. In the analysis of embedding, for a given pair of vertices $x,y\in V$, we divide the path between $x$ and $y$ into subpaths and for each subpath we show that either the contribution of that subpath to the distance between $x$ and $y$ in the embedding is ``large'' through a concentration of measure argument, or we use the following lemma to show that the length of the subpath is ``small,'' compared to the distance between $x$ and $y$. The complete argument is somewhat more delicate and one can find the details of how Lemma~\ref{lem:triangle} is used in the proof of
Lemma~\ref{lem:sec5:main:induction}.

\begin{lemma}\label{lem:triangle}
There exists a constant $C > 0$ such that the following holds.
For any $c \in \mathbb \chi(E)$ and $v\in V(T({c}))$, and for any $\varepsilon\in(0,\frac 1 2]$,
there are vertices $u,u' \in V$ with $u \neq u'$ and $d_T(u,v) \leq \eps\,d_T(u,u')$, and such that,
\begin{eqnarray*}
u,u'&\in & \{v_a:a\in \chi(E(P_{v \,v_c}))\}\cup\{v\} .
\end{eqnarray*}
Furthermore,
for all vertices $x\in V(P_{u'u})\setminus \{u'\}$, for all $k \in \mathbb Z$,
\[
\tau_k(x) \neq 0 \implies
2^k < \left({C{d_T(u,u')} \over \eps(\varphi(\chi(u,p(u)))-\varphi(\chi(v_c,p(v_c))))}\right).
\]
\end{lemma}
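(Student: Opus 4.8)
The idea is to translate the statement into a purely combinatorial claim about the color classes met along $P_{v\,v_c}$ and then prove that claim by an extremal/inductive argument, with Lemma~\ref{lem:div} doing the work of bounding scales.

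\emph{Setup and reduction of the metric conditions.} Let $a_1=c,a_2,\dots,a_m$ be the colors of $\chi$ occurring on $P_{v\,v_c}$, listed from $v_c$ down toward $v$, so that $\rho(a_j)=a_{j-1}$ for $2\le j\le m$ and $\rho(a_1)=\rho(c)$. Put $w_0=v_c$, $w_j=v_{a_{j+1}}$ for $1\le j\le m-1$, and $w_m=v$; these are exactly the elements of $\{v_a:a\in\chi(E(P_{v\,v_c}))\}\cup\{v\}$, and one checks they are pairwise distinct. Write $\ell_j=d_T(w_{j-1},w_j)$ (the length of color class $a_j$ along the path, with $\ell_m=d_T(v_{a_m},v)$), $D_{a,b}=\sum_{a\le j\le b}\ell_j$, $\kappa_j=\kappa(a_j)$, and $K_b=\sum_{j\le b}\kappa_j$. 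Iterating \eqref{def:phi} (cf.\ \eqref{eq:kappa}) gives $\varphi(a_i)-\varphi(\rho(c))=K_i$, and one has $d_T(w_i,v)=D_{i+1,m}$, $d_T(w_{i'},w_i)=D_{i'+1,i}$ for $i'<i$, $\varphi(\chi(w_i,p(w_i)))-\varphi(\chi(v_c,p(v_c)))=K_i$, and that $w_i$ is the deepest vertex of $P_{v\,v_c}$ whose parent edge is colored $a_i$. If $d_T(v,v_c)=0$ then Lemma~\ref{lem:div} (with ``$u$''$=x$, ``$w$''$=v_c$) forces $\tau_k(x)=0$ for every $k$ and every $x$ on $P_{v\,v_c}$, so $u=v,u'=v_c$ works; assume henceforth $m\ge1$ and $d_T(v,v_c)>0$. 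Setting $u=w_i$, $u'=w_{i'}$ with $0\le i'<i\le m$, the requirement $d_T(u,v)\le\eps\,d_T(u,u')$ becomes $D_{i+1,m}\le\eps\,D_{i'+1,i}$.

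\emph{Bounding the scales.} Fix $l\in[i'+1,i]$, a vertex $x$ on $P_{w_{i'}w_i}$ with $\chi(x,p(x))=a_l$, and any $0\le j<l$. Applying Lemma~\ref{lem:div} with its ``$u$'' equal to $x$ and its ``$w$'' equal to $w_j$ is legitimate: $w_j\in V(P_x)$, $\varphi(\chi(w_j,p(w_j)))=\varphi(a_j)<\varphi(a_l)=\varphi(\chi(x,p(x)))$, $d_T(x,w_j)\le D_{j+1,l}$, and $\varphi(a_l)-\varphi(a_j)=K_l-K_j$; hence $\tau_k(x)=0$ whenever $2^k>6D_{j+1,l}/(K_l-K_j)$. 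Minimizing over $j$ yields $\tau_k(x)\neq0\Rightarrow 2^k\le 6\,m_l$, where $m_l:=\min_{0\le j<l}D_{j+1,l}/(K_l-K_j)$. Since every $x\in V(P_{u'u})\setminus\{u'\}$ has $\chi(x,p(x))=a_l$ for some $l\in[i'+1,i]$ (the case $x=u=w_i$ being $l=i$), it suffices to produce $0\le i'<i\le m$ with (a) $D_{i+1,m}\le\eps\,D_{i'+1,i}$ and (b) $6\,m_l<C\,D_{i'+1,i}/(\eps\,K_i)$ for all $l\in[i'+1,i]$; the lemma then holds with that $C$, uniformly over $\eps\in(0,\tfrac12]$.

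\emph{The combinatorial core.} Write $P_j=(K_j,D_{1,j})\in\mathbb R^2$. Then $m_l$ is the least slope of a segment from an earlier $P_j$ to $P_l$, so $m_l\le\min(\ell_l/\kappa_l,\ D_{1,l}/K_l)$, and $m_l$ is large only when color $a_l$ is both long and of large rate $\ell_l/\kappa_l$ while the colors before it carry little $\varphi$-weight. The plan is to choose $i$ so that almost all of $L=D_{1,m}$ lies in colors $1,\dots,i$ --- which makes (a) cheap, e.g.\ with $i'=0$ once $D_{1,i}\ge L/(1+\eps)$ --- and to choose $i'$ so that the window $[i'+1,i]$ keeps out of its \emph{interior} every color whose $m_l$ exceeds $\sim D_{i'+1,i}/(\eps K_i)$, pushing such ``steep'' colors either to the lower endpoint $l=i$ (where (b) reads $6m_i<C D_{i'+1,i}/(\eps K_i)$ and is handled by $m_i\le D_{1,i}/K_i$) or entirely below $i'$. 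Concretely I would induct on $m$: for $m=1$ take $(i',i)=(0,1)$, where (b) is $6\ell_1/\kappa_1<C\ell_1/(\eps\kappa_1)$, true once $C>6$; for $m\ge2$, if the last color is \emph{heavy}, $\ell_m\ge(\eps/c_0)L$, take $(i',i)=(m-1,m)$, so (a) is automatic and (b) --- needed only for $l=m$ --- follows from $m_m\le L/K_m$ once $C>6c_0$; and if $\ell_m$ is \emph{light}, almost all of $L$ sits in the prefix $[1,m-1]$ and one recurses there, the delicate point being to absorb the light tail color into bound (a) for the recursively produced window without degrading the constant in (b), which I would manage using the lower-convex-hull description of $\{m_l\}$ (strengthening the inductive statement to control the window's length, and choosing the recursion so that the $\varphi$-weight it discards is a controlled fraction of $K_i$).

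\emph{Main obstacle.} This last recursive step is where the real work lies. Conditions (a) and (b) pull against each other --- shrinking the tail $D_{i+1,m}$ forces $i$, hence $K_i$, up, which weakens (b) --- and a naive induction on the prefix loses a constant factor in (b) at every level; keeping the bound in (b) an absolute constant, uniformly over all $\eps\in(0,\tfrac12]$, is the crux, and the convex-hull picture of the $P_j$ is the tool I would use to make either a loss-free induction or a direct extremal choice of $(i',i)$ go through. (This lemma then feeds the subpath analysis in the proof of Lemma~\ref{lem:sec5:main:induction}, where the distance between two vertices is split into subpaths that are either long in the embedding or, by this lemma, short.)
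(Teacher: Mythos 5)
Your reduction is sound as far as it goes: translating the statement into the data $(\ell_j,\kappa_j)$ along $P_{v\,v_c}$, using Lemma~\ref{lem:div} (with its ``$u$'' taken to be $x$ itself and its ``$w$'' a higher breakpoint $w_j$) to get $\tau_k(x)\neq 0\Rightarrow 2^k\le 6\,m_l$ with $m_l=\min_{j<l}D_{j+1,l}/(K_l-K_j)$, and reformulating the lemma as: find a window $[i'+1,i]$ with (a) $D_{i+1,m}\le\eps\,D_{i'+1,i}$ and (b) $6\,m_l< C\,D_{i'+1,i}/(\eps K_i)$ for all $l$ in the window. That matches what the paper's proof must ultimately deliver. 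But the proof of this combinatorial claim is exactly the content of the lemma, and you have not supplied it: your induction on $m$ handles only the base case and the ``heavy last color'' case, and in the ``light tail'' case you concede that the recursion loses a constant factor at each level because the window $D_{i'+1,i}$ returned by the prefix may be a vanishing fraction of the prefix length, so a tail that is light relative to $L$ need not be light relative to $D_{i'+1,i}$. You name the convex-hull picture as the tool you would use, but no loss-free induction or extremal choice is actually exhibited, so the claim with an absolute constant, uniform in $\eps\in(0,\tfrac12]$, remains unproved. (Minor further points: condition (b) must be verified with $K_i=\varphi(\chi(u,p(u)))-\varphi(\chi(v_c,p(v_c)))$ measured at the \emph{lower} endpoint $u$ of the window, which is what makes shrinking the tail costly, exactly the tension you flag; and the degenerate case $d_T(v,v_c)=0$ is not covered by Lemma~\ref{lem:div} as stated, since the paper treats $d_T(x,w)=0$ as outside its scope.)

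For comparison, the paper closes precisely this gap by an iterative construction rather than an induction on the number of colors: it builds pairs $(a_i,b_i)$ of indices moving up the path while maintaining the invariant that the $\varphi$-gap across the current window dominates the entire $\varphi$-weight below it (condition (i): $\varphi(c_{b_i+1})-\varphi(c_{a_i+1})\ge\varphi(c_{a_i+1})-\varphi(\chi(r',p(r')))$), together with the length condition (ii) $d_T(y_{b_i},v)\ge\eps\,d_T(y_{b_i},y_{a_i})$. At each step one of three cases occurs: either the $\varphi$-gap can be at least doubled and the process continues (Case I), or a single color with very large $\kappa$ is found and Lemma~\ref{obv:tau} alone certifies the scale bound (Case II), or the $\varphi$-weights are comparable across the window and Lemma~\ref{lem:div} terminates the process (Case III), yielding the absolute constant ($C=96$) without any per-level loss. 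This doubling invariant is the mechanism you were missing; without it (or an equivalent extremal argument), your outline does not constitute a proof.
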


\begin{proof}
Let $r'=v_c$, and let $c_1,\ldots, c_m$ be the set of colors that appear on the path $P_{vr'}$ in order from $v$ to $r'$, and put $c_{m+1}=\col(r',p(r'))$. We define $y_0=v$, and for $i\in[m]$, $y_i=v_{c_i}$. Note that
$\{y_0,\ldots, y_m\}=\{v\}\cup\{v_a:a\in \chi(E(P_{v \,v_c}))\}$, and for $i\leq m$, $\chi(y_i,p(y_i))=c_{i+1}$.
We give a constructive proof for the lemma.

For $i\in \mathbb N$, we construct a sequence $(a_i,b_i)\in{\mathbb N}\times \mathbb N$, the idea being that $P_{y_{a_i},y_{b_i}}$ is a nonempty subpath $P_{vr'}$ such that for different values of $i$, these subpaths are edge disjoint. At each step of construction either we can use $(a_i,b_i)$ to find $u$ and $u'$ such that they satisfy the properties of this lemma, or we find $(a_{i+1},b_{i+1})$ such that $b_{i+1}<b_i$. The last condition guarantees that we can always find $u$ and $u'$ that satisfy conditions of this lemma.

We start with $a_1=m$ and $b_1=m-1$. If
$d_T(v,y_{b_1})\leq \eps d_T(y_{a_1},y_{b_1})$ then
\[
 \left({2{d_T(y_m,y_{m-1})} \over \varphi(\chi(y_{m-1},p(y_{m-1})))-\varphi(\chi(r',p(r')))}\right) = {{2d_T(y_{a_1},y_{b_1})}\over \kappa(c)}
\]
and  by Lemma~\ref{obv:tau}  the assignment $u'=y_{a_1}$ and $u=y_{b_1}$ satisfies the conditions of this lemma if $C \geq \frac{1}{2}$.
Otherwise, for $i \geq 1$, we choose $(a_{i+1},b_{i+1})$ based on $(a_i,b_i)$, and construct the rest of the sequence preserving the following three properties:
\begin{enumerate}
\item  $\varphi(c_{b_i+1})-\varphi(c_{a_i+1})\geq \varphi(c_{a_i+1} )-\varphi(\chi(r',p(r')))$;
\item  $d_T(y_{b_i},v) \geq \eps d_T(y_{b_i},y_{a_i})$;
\item $a_i>b_i$.
\end{enumerate}
Let ${j}\in \{0,\ldots, m\}$ be the maximum integer such that $\eps d_T(y_{j},y_{b_i})\geq d_T(v,y_{j})$. Note that $j<b_i$, and the maximum always exists because $y_0 = v$.
We will now split the proof into three cases.

\medskip
\noindent
{\bf Case I:  $\varphi(c_{j+2})-\varphi(c_{b_i+1})\geq 2(\varphi(c_{b_i+1})-\varphi(c_{a_{i}+1})).$}

\medskip

In this case by condition (iii), $\varphi(c_{b_i+1})-\varphi(c_{a_{i}+1})>0$. Hence $j+1<b_i$, and we can preserve conditions (i), (ii) and (iii) with $$(a_{i+1},b_{i+1})=(b_i, {j+1}).$$

\medskip
\noindent
{\bf Case II: $\varphi(c_{j+2})-\varphi(c_{b_i+1})< 2(\varphi(c_{b_i+1})-\varphi(c_{a_{i}+1}))$ and
$\varphi(c_{j+1})-\varphi(c_{b_i+1})\geq 6(\varphi(c_{b_i+1})-\varphi(c_{a_i+1}))$.}

\medskip

In this case by \eqref{def:phi} we have,
\[\kappa(c_{j+1})= \varphi(c_{j+1})-\varphi(c_{{j+2}})=(\varphi(c_{j+1})-\varphi(c_{b_i+1}))-(\varphi(c_{j+2})-\varphi(c_{b_i+1})).\]
Using the conditions of this case, we write
\begin{align*}
\kappa(c_{j+1})
&= (\varphi(c_{j+1})-\varphi(c_{b_i+1}))-(\varphi(c_{j+2})-\varphi(c_{b_i+1}))\\
&\geq 6(\varphi(c_{b_i+1})-\varphi(c_{a_{i}+1}))-(\varphi(c_{j+2})-\varphi(c_{b_i+1}))\\
&= \Big(2(\varphi(c_{b_i+1})-\varphi(c_{a_{i}+1})) +4(\varphi(c_{b_i+1})-\varphi(c_{a_i+1}))\Big)-\Big(\varphi(c_{j+2})-\varphi(c_{b_i+1})\Big)\\
&> \Big(2(\varphi(c_{b_i+1})-\varphi(c_{a_{i}+1})) +2(\varphi(c_{j+2})-\varphi(c_{b_i+1}))\Big)-\Big(\varphi(c_{j+2})-\varphi(c_{b_i+1})\Big),
 \end{align*}
and by condition (i),
 \begin{align}\label{eq:triangle:kappa}
 \kappa(c_{j+1})
&> \Big(\big(\varphi(c_{b_i+1})-\varphi(c_{a_{i}+1})\big)+\big(\varphi(c_{a_i+1})-\varphi(\chi(r',p(r'))\big)
+2(\varphi(c_{j+2})-\varphi(c_{b_i+1}))\Big)\nonumber
\\&\hspace{2.0cm}-\Big(\varphi(c_{j+2})-\varphi(c_{b_i+1})\Big)\nonumber\\
&= \varphi(c_{j+2})-\varphi(\chi(r',p(r'))).
\end{align}

    Thus if $ d_T(y_{{j+1}},v) \geq \eps\, d_T(y_j,y_{{j+1}})$, then  $(a_{i+1},b_{i+1})=({j+1},j)$, satisfies condition (i) by \eqref{eq:triangle:kappa}, and it is also easy to verify that it satisfies conditions (ii) and (iii).
  If $ d_T(y_{{j+1}},v) < \eps\, d_T(y_{j},y_{{j+1}})$, then by  \eqref{def:phi},
\[  \varphi(\chi(y_{j},p(y_{j})))=\varphi(c_{j+1})=\kappa(c_{j+1})+\varphi(c_{j+2})\]
 and by \eqref{eq:triangle:kappa},
\begin{align*}
 \left({2{d_T(y_j,y_{j+1})} \over (\varphi(\chi(y_{j},p(y_{j})))-\varphi(\chi(r',p(r'))))}\right) &=
  \left({2{d_T(y_j,y_{j+1})} \over  \kappa(c_{j+1})+\varphi(c_{j+2})-\varphi(\chi(r',p(r')))}\right)\\
 &>
 {{d_T(y_{j},y_{{j+1}})}\over  \kappa(c_{j+1})}.
\end{align*}
Hence Lemma~\ref{obv:tau} implies that the assignment $u'=y_{{j+1}}$ and $u=y_{j}$ satisfies the conditions of this lemma if $C \geq \frac{1}{2}$.

\medskip
\noindent
{\bf Case III: $\varphi(c_{j+1})-\varphi(c_{b_i+1})< 6(\varphi(c_{b_i+1})-\varphi(c_{a_i+1}))$.}

\medskip

In this case we use
Lemma~\ref{lem:div} to show that the assignment $u=y_{j}$ and $u'=y_{b_i}$ satisfies the conditions of the lemma.
We have
\begin{align*}
\varphi(\chi(y_j,p(y_j)))-\varphi(\chi(r',p(r')))
&=
\varphi(c_{j+1})-\varphi(\chi(r',p(r')))\\
&= (\varphi(c_{j+1}-\varphi(c_{{b_i}+1}))+(\varphi(c_{b_i+1})-\varphi(c_{a_i+1}))\\&\qquad+(\varphi(c_{a_i+1})-\varphi(\chi(r',p(r'))))\\
&< 6(\varphi(c_{b_i+1})-\varphi(c_{{a_i}+1}))+(\varphi(c_{b_i+1})-\varphi(c_{a_i+1}))\\&\qquad+(\varphi(c_{a_i+1})-\varphi(\chi(r',p(r')))),
\end{align*}
and by condition (i),
\[\varphi(\chi(y_j,p(y_j)))-\varphi(\chi(r',p(r'))) < 8(\varphi(c_{b_i+1})-\varphi(c_{a_i+1})).\]

Condition (ii) and the definition of $y_j$ imply that,
$$d_T(y_{j},y_{b_i})\geq {(1-\eps)d_T(v,y_{b_i})}\geq \eps{(1-\eps)d_T(y_{a_i},y_{b_i})}\geq {\eps\over 2}\,d_T(y_{a_i},y_{b_i}).$$
Hence,
$$ \left({6({2\over \eps}){d_T(y_j,y_{b_{i}})} \over {1\over 8}(\varphi(\chi(y_{j},p(y_{j})))-\varphi(\chi(r',p(r'))))}\right)\geq
\left({6d_T(y_{b_i},y_{a_i})\over \varphi(c_{b_i+1})-\varphi(c_{a_i+1})}\right),$$
and by applying Lemma~\ref{lem:div} with $u=y_{b_i}$ and $w=y_{a_i}$, we can conclude that the assignment $u=y_{j}$ and $u'=y_{b_i}$ satisfies the conditions of this lemma with $C=96$.
\end{proof}

\section{The embedding}\label{sec:embedding}

   We now present a proof of Theorem~\ref{thm:allbut}, thereby completing the proof of Theorem \ref{thm:main}.
   We first introduce a random embedding of the tree $T$ into $\ell_1$, and then show that, for a suitable choice of parameters,
   with non-zero probability our construction satisfies the conditions of the theorem.

\medskip
\noindent
{\bf Notation:}
We use the notations and definitions introduced in Section~\ref{sec:assignments}.
Moreover, in this section, for $c\in \chi(E)\cup \{\col(r,p(r))\}$, we use $\rho^{-1}(c)$ to denote the set of colors $c'\in \chi(E)$ such that $\rho(c')=c$, i.e. the colors of the ``children'' of $c$.
For $m,n\in \mathbb N$, and $A\in \mathbb R^{m\times n}$, we use the notation $A[i]$ to refer to the $i$th row of $A$ and $A[i,j]$ to refer to the $j$th element in the $i$th row.

\subsection{The construction}

Fix $\delta, \eps\in (0,{1\over2}]$, and let
\begin{equation}\label{eq:t}
t=\lceil \eps^{-1}+\log \lceil\log_2 1/\delta\rceil \rceil,
\end{equation}
and
\begin{equation}\label{eq:m}
m=\lceil t^2( M(\chi)+\log_2 |E|) \rceil.
\end{equation} (See Lemma~\ref{lem:sec5:main:induction} for the relation between $\eps$ and $\delta$, and the parameters of Theorem~\ref{thm:allbut}).
For $i\in \mathbb Z$, we first define the map $\Delta_{i}:V\to \mathbb R^{m \times t}$, and then we use it to construct our final embedding.

For a vertex $v\in V$ and $c=\col(v,p(v))$, let
$\alpha=\sum_{c'\in\col(E(P_{v}))} t^2\tau_i(v_{c'})$, and $$\beta=\alpha+\min\left(t^2\tau_i(v),\left\lfloor
{d_T(v_c,v)-\sum_{\ell=-\infty}^{i-1} 2^\ell\tau_\ell(v)\over 2^i/{t^2}}
\right\rfloor\right).$$

Note that $\beta \leq m$ since
$$\tau_i(v) + \sum_{c' \in \chi(E(P_v))} \tau_i(v_c') \leq \varphi(c) \leq M(\chi) + \log_2 |E|\,.$$
For $j \in [m]$, we define,
\begin{equation}\label{eq:delta}
\Delta_{i}(v)[j]=
\left\{
\begin{array}{ll}
\left({2^i\over {t^2}},{0,0\ldots, 0}\right)&\textrm{if $\alpha<{j} \leq \beta$,}\\
\left(d_T(v_c,v)-\left(\left(\sum_{\ell=-\infty}^{i-1} 2^\ell \tau_\ell(v)\right)+(\beta-\alpha){2^i\over {t^2}}\right),{0,0\ldots, 0}\right)&\textrm{if $j=\beta+1$ and $\beta-\alpha< t^2\tau_i(v)$,}\\
({0,0\ldots, 0})&\textrm{otherwise.}
\end{array}
\right.
\end{equation}

\medskip

Observe that the scale selector $\tau_i$ chooses the scales in this definition, and for $v\in V$ and $i\in \mathbb Z$, $\Delta_i(v)=0$ when $\tau_i(v)=0$. Also note that the second case in the definition only occurs when $\tau_i(v)$ is specified by part (A) of \eqref{def:tau}, and in that case $\sum_{\ell \leq i}2^\ell \tau_\ell (v) > d(v,v_c)$.

Now, we present some key properties of the map $\Delta_i(v)$. The following two observations follow
 immediately from the definitions.

 \begin{observation}\label{obv:delta2}
For  $v\in V$ and $i\in \mathbb Z$, each row in $\Delta_i(v)$ has at most one non-zero coordinate.
 \end{observation}

 \begin{observation}\label{obv:delta3}
For $v\in V$ and $i\in \mathbb Z$, let $\alpha=\sum_{c'\in\col(E(P_{v}))} t^2\tau_i(v_{c'})$.
For $j\notin (\alpha ,\alpha+t^2\tau_i(v)]$, we have
$$\Delta_{i}(v)[j]=({0,\ldots,0}) .$$
 \end{observation}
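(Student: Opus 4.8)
The plan is a direct inspection of the definition \eqref{eq:delta}: I will check that for every row index $j$ at which $\Delta_i(v)[j]$ is not the zero vector, one has $\alpha < j \le \alpha + t^2\tau_i(v)$, and then pass to the contrapositive.

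First I would record the elementary bookkeeping facts about $\beta$. By its definition, $\beta - \alpha = \min\!\left(t^2\tau_i(v),\, \big\lfloor \tfrac{d_T(v_c,v)-\sum_{\ell<i} 2^\ell\tau_\ell(v)}{2^i/t^2} \big\rfloor\right)$, so in particular $\beta - \alpha \le t^2\tau_i(v)$, and $\beta - \alpha$ is an integer since $t \in \mathbb N$ makes $t^2\tau_i(v)$ an integer. If $\tau_i(v) = 0$ there is nothing to prove, since then $\Delta_i(v)$ is identically zero (as already noted immediately after \eqref{eq:delta}); so assume $\tau_i(v) \ge 1$. Then part (A) of \eqref{def:tau} for $\tau_i(v)$ is positive, which forces $\sum_{\ell<i} 2^\ell\tau_\ell(v) < d_T(v,v_c)$, so the floor defining $\beta$ is nonnegative and hence $0 \le \beta - \alpha \le t^2\tau_i(v)$.

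Now run through the three cases of \eqref{eq:delta}. In the first case, $\alpha < j \le \beta \le \alpha + t^2\tau_i(v)$, so $j$ lies in the claimed interval. In the second case, $j = \beta + 1$ with $\beta - \alpha < t^2\tau_i(v)$; since both sides are integers this gives $\beta - \alpha + 1 \le t^2\tau_i(v)$, so $\alpha < \beta + 1 = j \le \alpha + t^2\tau_i(v)$, again in the claimed interval (here we use $\beta - \alpha \ge 0$ for the left inequality). In the third case $\Delta_i(v)[j] = (0,\dots,0)$ outright. Consequently, if $j \notin (\alpha, \alpha + t^2\tau_i(v)]$ then neither of the first two cases can fire, and $\Delta_i(v)[j] = (0,\dots,0)$, as claimed.

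This is essentially a definition check with no real obstacle; the only steps that require a moment's attention are (i) deducing $\sum_{\ell<i} 2^\ell\tau_\ell(v) < d_T(v,v_c)$ — equivalently, nonnegativity of the floor in the definition of $\beta$ — from the positivity of $\tau_i(v)$ via part (A) of \eqref{def:tau}, and (ii) sharpening the strict inequality $\beta - \alpha < t^2\tau_i(v)$ in the second branch to $\beta - \alpha + 1 \le t^2\tau_i(v)$ using integrality.
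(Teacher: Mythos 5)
Your verification is correct and takes exactly the route the paper intends: the paper gives no argument at all, stating only that this observation ``follows immediately from the definitions,'' i.e.\ from a direct inspection of \eqref{eq:delta}. Your case analysis — invoking the paper's remark that $\Delta_i(v)=0$ when $\tau_i(v)=0$, deducing $\sum_{\ell<i}2^{\ell}\tau_\ell(v)<d_T(v,v_c)$ (hence $0\le\beta-\alpha\le t^2\tau_i(v)$) from $\tau_i(v)\ge 1$ via part (A) of \eqref{def:tau}, and using integrality to place $j=\beta+1$ inside $(\alpha,\alpha+t^2\tau_i(v)]$ in the second branch — simply fills in the routine details of that same check.
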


Proofs of the next four lemmas
 will be presented in Section~\ref{subsec:proofs}.

 \begin{lemma}\label{obv:partial}
For  $v\in V$, there is at most one $i\in \mathbb Z$ and at most one couple $(j,k)\in [m]\times [t]$ such that  $\Delta_i(v)[j,k]\notin\{0,{2^i\over t^2}\}$.
 \end{lemma}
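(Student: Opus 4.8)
The plan is to unwind the definition \eqref{eq:delta} of $\Delta_i(v)$ and track exactly when a coordinate $\Delta_i(v)[j,k]$ can take a value outside $\{0, 2^i/t^2\}$. By construction the second coordinate block of every row is all zeros, so the only possibly-``bad'' value occurs in the first coordinate $k=1$ of a row $j$, and only in the middle case of \eqref{eq:delta}: namely when $j = \beta+1$ and $\beta - \alpha < t^2 \tau_i(v)$, in which case
$$\Delta_i(v)[j,1] = d_T(v_c,v) - \left(\left(\sum_{\ell=-\infty}^{i-1} 2^\ell \tau_\ell(v)\right) + (\beta-\alpha)\frac{2^i}{t^2}\right),$$
which lies in $[0, 2^i/t^2)$ but need not equal $0$ or $2^i/t^2$. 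So the claim reduces to showing that there is at most one scale $i \in \mathbb Z$ for which this middle case is actually triggered (and for that $i$ it is triggered in exactly one row, with $k=1$ forced).

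First I would observe that the middle case of \eqref{eq:delta} for scale $i$ can only occur when $\tau_i(v)$ is specified by part (A) of \eqref{def:tau} rather than part (B): this is exactly the remark made right after \eqref{eq:delta}, since when part (B) is active we have $\sum_{\ell \le i} 2^\ell \tau_\ell(v) \le d_T(v_c,v)$ forces $\beta - \alpha = t^2 \tau_i(v)$, killing the middle case. Conversely, when part (A) specifies $\tau_i(v)$ and the strict inequality $\sum_{\ell \le i} 2^\ell \tau_\ell(v) > d_T(v_c,v)$ holds, the middle row appears. So it suffices to show there is at most one $i$ at which part (A) of \eqref{def:tau} both specifies $\tau_i(v)$ and produces this strict overshoot.

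The key step is then a monotonicity/telescoping argument on the partial sums $\sigma_{i-1} := \sum_{\ell=-\infty}^{i-1} 2^\ell \tau_\ell(v)$. I would argue: if part (A) specifies $\tau_k(v)$ at some scale $k$ with $\sigma_k > d_T(v_c,v)$, then by the ceiling formula in part (A), $\tau_{k+1}(v)$ and hence (by Observation~\ref{obv:tau:zero}) all $\tau_i(v)$ for $i > k$ vanish, so no larger scale can trigger the middle case. For scales $i < k$: while $\sigma_{i-1} < d_T(v_c,v)$ (which holds for all such $i$, else the overshoot would already have occurred at a smaller scale — here I'd invoke Observation~\ref{obv:tau:zero2} or a direct argument that once $\sigma$ reaches $d_T(v_c,v)$ the subsequent $\tau$'s are zero), the argument of the floor in the $\beta$ expression is $\lfloor (d_T(v_c,v) - \sigma_{i-1})/(2^i/t^2)\rfloor$, which by comparison with the part-(A) ceiling $\lceil (d_T(v_c,v)-\sigma_{i-1})/2^i\rceil \cdot t^2$ (times the $t^2$-refinement) can be shown to equal $t^2\tau_i(v)$ whenever part (A) is active but $\sigma_i \le d_T(v_c,v)$; so the middle case is not triggered at scale $i$. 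Thus the unique candidate is the single smallest scale $k$ at which $\sigma_k$ first exceeds $d_T(v_c,v)$ via part (A), and at that scale exactly one row $j = \beta+1$ is affected, in coordinate $k=1$.

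The main obstacle I anticipate is the bookkeeping around the extra factor of $t^2$ that refines the scale-$2^i$ partition of \eqref{def:tau} into a scale-$(2^i/t^2)$ partition in \eqref{eq:delta}: one has to check carefully that $\beta$ as defined is consistent with $\tau_i(v)$ (so that $\beta - \alpha \le t^2 \tau_i(v)$ always, and equals it precisely in the non-overshoot cases), and that the ``at most one $i$'' conclusion is not broken by the floor/ceiling mismatch between the two granularities. This is routine but needs to be done with care; the cleanest route is probably to first prove the helper fact that $\sum_{\ell \le i} 2^\ell \tau_\ell(v) \ge d_T(v_c,v)$ implies $\beta - \alpha = t^2 \tau_i(v)$ unless $i$ is the unique ``transition'' scale, and then everything else follows by invoking Observations~\ref{obv:tau:zero} and \ref{obv:tau:zero2}.
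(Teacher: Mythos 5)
Your proposal is correct and takes essentially the same route as the paper's proof: both reduce to the second case of \eqref{eq:delta}, use the integrality of $t^2\tau_i(v)$ against the floor to deduce that a fractional entry at scale $i$ forces $\sum_{\ell\leq i}2^\ell\tau_\ell(v)\geq d_T(v_c,v)$, and then invoke part (A) of \eqref{def:tau} (via Observation~\ref{obv:tau:zero}) to conclude $\tau_{i'}(v)=0$, hence $\Delta_{i'}(v)=0$, for all $i'>i$, with at most one exceptional coordinate per scale by inspection of \eqref{eq:delta}. Your ``unique first-overshoot scale'' framing is just a repackaging of the paper's ``a fractional entry at scale $i$ kills every larger scale'' argument.
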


\begin{lemma}\label{lem:delta:inc}
Let $c\in \chi(E)$, and $u,w\in V(\gamma_c)\backslash\{v_c\}$ be such that $d_T(w,v_c)\leq d_T(u,v_c)$. For all $i\in \mathbb Z$ and $(j,k)\in [m]\times [t]$, we have
$$\Delta_i(w)[j,k]\leq \Delta_i(u)[j,k].$$
\end{lemma}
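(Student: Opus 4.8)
The plan is to fix $c\in\chi(E)$ and $u,w\in V(\gamma_c)\setminus\{v_c\}$ with $d_T(w,v_c)\le d_T(u,v_c)$, fix a scale $i\in\mathbb Z$, and compare $\Delta_i(w)$ and $\Delta_i(u)$ row by row. The first observation is that since $u$ and $w$ lie on the same color path $\gamma_c$, we have $\chi(w,p(w))=\chi(u,p(u))=c$ and, crucially, $\chi(E(P_w))=\chi(E(P_u))$ (the set of colors above a vertex on $\gamma_c$ depends only on $c$, not on the position within $\gamma_c$). Hence the quantity $\alpha=\sum_{c'\in\chi(E(P_v))}t^2\tau_i(v_{c'})$ appearing in \eqref{eq:delta} is \emph{the same value} $\alpha$ for $v=w$ and $v=u$; call it $\alpha$. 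So both $\Delta_i(w)$ and $\Delta_i(u)$ have all their (at most) nonzero entries confined to rows $j\in(\alpha,\alpha+t^2\tau_i(\cdot)]$ by Observation~\ref{obv:delta3}, sharing the same left endpoint $\alpha$.

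Next I would bring in the monotonicity already established: by Lemma~\ref{lem:incd}, $\tau_i(w)\le\tau_i(u)$ for all $i$ (this is exactly the hypothesis of that lemma, with $\col(w,p(w))=\col(u,p(u))$ and $d_T(w,v_c)\le d_T(u,v_c)$). Also $d_T(w,v_c)\le d_T(u,v_c)$. Now define, for $v\in\{w,u\}$, the ``filled length'' $L_i(v)=d_T(v_c,v)-\sum_{\ell=-\infty}^{i-1}2^\ell\tau_\ell(v)$ (the numerator of the floor in the definition of $\beta$). Using $\tau_\ell(w)=\tau_\ell(u)$ for $\ell<i$ (again Lemma~\ref{lem:incd}, or rather the equality part of its proof, which gives $\tau_\ell(w)=\tau_\ell(u)$ for $\ell$ below the top active scale — more carefully, Lemma~\ref{lem:incd}'s proof shows $\tau_j(u)=\tau_j(w)$ for $j<k$) together with $d_T(w,v_c)\le d_T(u,v_c)$, one gets $L_i(w)\le L_i(u)$, hence
\[
\min\!\left(t^2\tau_i(w),\left\lfloor\frac{L_i(w)}{2^i/t^2}\right\rfloor\right)\le\min\!\left(t^2\tau_i(u),\left\lfloor\frac{L_i(u)}{2^i/t^2}\right\rfloor\right),
\]
i.e. $\beta_w-\alpha\le\beta_u-\alpha$, so $\beta_w\le\beta_u$. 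Thus the block of rows $j$ with $\alpha<j\le\beta_w$ carrying the ``full'' value $2^i/t^2$ for $w$ is a subset of the corresponding block for $u$, and on those rows $\Delta_i(w)[j,k]=\Delta_i(u)[j,k]=\tfrac{2^i}{t^2}$ or both are $0$ in coordinate $k$; in either case the inequality holds coordinatewise (note that by Observation~\ref{obv:delta2} only the first coordinate is ever nonzero).

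It remains to handle the single ``partial'' row $j=\beta_v+1$ where $\Delta_i(v)[j,1]$ can be a value strictly between $0$ and $2^i/t^2$. There are a few cases depending on how $\beta_w+1$ and $\beta_u+1$ compare. If $\beta_w<\beta_u$, then row $\beta_w+1\le\beta_u$, so $\Delta_i(u)[\beta_w+1,1]\in\{0,2^i/t^2\}$, and since it lies in $(\alpha,\beta_u]$ it actually equals $2^i/t^2\ge\Delta_i(w)[\beta_w+1,1]$; meanwhile for rows $j>\beta_w+1$ we have $\Delta_i(w)[j]=0$, so nothing to check. If $\beta_w=\beta_u$ then the partial rows coincide and one must show $\Delta_i(w)[\beta_w+1,1]\le\Delta_i(u)[\beta_u+1,1]$, i.e. $L_i(w)-(\beta_w-\alpha)\tfrac{2^i}{t^2}\le L_i(u)-(\beta_u-\alpha)\tfrac{2^i}{t^2}$, which follows from $L_i(w)\le L_i(u)$ and $\beta_w=\beta_u$ — but one must also check the side-condition $\beta_v-\alpha<t^2\tau_i(v)$ under which the partial row is actually present; if it is present for $u$ but not $w$ (or vice versa) the comparison is immediate since the absent one contributes $0$. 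The main obstacle, and the only place needing care, is precisely this bookkeeping of the partial row together with verifying $L_i(w)\le L_i(u)$ from the scale-monotonicity of Lemma~\ref{lem:incd}; everything else is an unwinding of definitions and Observations~\ref{obv:delta2}–\ref{obv:delta3}.
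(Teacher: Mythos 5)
Your overall plan (same $\alpha$ for $u$ and $w$, $\tau_i(w)\le\tau_i(u)$ from Lemma~\ref{lem:incd}, then a row-by-row comparison of the block of full rows and the single partial row) is the right idea and is essentially what the paper does at its middle scale, but as written the argument has a hole at large scales. Your key inequality $L_i(w)\le L_i(u)$ is deduced from ``$\tau_\ell(w)=\tau_\ell(u)$ for all $\ell<i$,'' and, as you yourself concede, the proof of Lemma~\ref{lem:incd} only gives this equality for $\ell$ below the threshold scale $\nu$ at which part (A) of \eqref{def:tau} first becomes the minimum for $w$. For $i\le\nu$ your chain works, but for $i>\nu$ it does not: at $\ell=\nu$ one may have $\tau_\nu(w)<\tau_\nu(u)$, and in fact $L_i(w)\le L_i(u)$ can fail outright, because when part (A) (a ceiling) determines $\tau_\nu$, the quantity $\sum_{\ell\le\nu}2^\ell\tau_\ell(\cdot)$ overshoots $d_T(\cdot,v_c)$ by the fractional defect of the ceiling, which is not monotone in the distance; e.g.\ defects $0.5\cdot 2^{\nu}$ for $w$ versus $0.6\cdot2^{\nu}$ for $u$ give $L_{\nu+1}(w)>L_{\nu+1}(u)$. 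The lemma still holds at those scales, but for a reason your proof never invokes: for $i>\nu$ Observation~\ref{obv:tau:zero2} gives $\tau_i(w)=0$, hence $\Delta_i(w)=0$ (the remark following \eqref{eq:delta}), and there is nothing to compare. So you need the same case split on $i$ relative to this threshold that the paper's proof performs; your attempt at a uniform treatment of all $i$ is incomplete above it.

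There is also a slip in the partial-row bookkeeping when $\beta_w=\beta_u$: you dismiss the sub-case ``partial row present for $w$ but absent for $u$'' as immediate ``since the absent one contributes $0$,'' which is backwards --- if $w$ had a strictly positive entry at row $\beta+1$ while $u$'s entry there were $0$, the desired inequality $\Delta_i(w)[\beta+1,k]\le\Delta_i(u)[\beta+1,k]$ would be violated. What you must (and can easily) show is that this sub-case cannot occur: absence for $u$ forces $\beta_u-\alpha=t^2\tau_i(u)$, while presence for $w$ forces $\beta_w-\alpha<t^2\tau_i(w)\le t^2\tau_i(u)$, contradicting $\beta_w=\beta_u$. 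With that argument supplied, and with the scales $i>\nu$ dispatched via $\tau_i(w)=0\Rightarrow\Delta_i(w)=0$, your row-by-row comparison matches the paper's proof (which handles $i<\nu$ by showing $\Delta_i(u)=\Delta_i(w)$ exactly, $i=\nu$ by the comparison you spell out, and $i>\nu$ trivially).
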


\begin{lemma}\label{lem:delta}
For $c\in \chi(E)$, and  $u,w\in V(\gamma_c)\setminus \{v_c\}$, we have
\begin{equation} \label{eq:emb:iso1}
d_T(w,u)=\sum_{i\in \mathbb Z} \|\Delta_i(u)-\Delta_i(w)\|_1,
\end{equation}
and
\begin{equation} \label{eq:emb:iso2}
d_T(v_c,u)=\sum_{i\in \mathbb Z} \|\Delta_i(u)\|_1.
\end{equation}
\end{lemma}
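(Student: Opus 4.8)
The plan is to prove the second identity \eqref{eq:emb:iso2} first, which asserts that the family $\{\Delta_i\}$ records the distance from $v_c$ to any vertex of $\gamma_c$ exactly, and then to deduce \eqref{eq:emb:iso1} from it together with the coordinatewise monotonicity supplied by Lemma~\ref{lem:delta:inc}.

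For \eqref{eq:emb:iso2}, fix $v\in V(\gamma_c)\setminus\{v_c\}$; we may assume $d_T(v_c,v)>0$, since otherwise all $\Delta_i(v)$ vanish and both sides are $0$. Set $i_0=\lfloor\log_2(m(T)/(M(\chi)+\log_2|E|))\rfloor$ and $K=\max\{i:\tau_i(v)\neq0\}$, which exists by Lemma~\ref{obv:tau}. Since $\Delta_i(v)=0$ whenever $\tau_i(v)=0$, and since $\tau_i(v)=0$ for $i<i_0$ while $\{i\ge i_0:\tau_i(v)\neq0\}=[i_0,K]$ by Observation~\ref{obv:tau:zero}, only the scales $i\in[i_0,K]$ contribute. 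Write $R_i=d_T(v_c,v)-\sum_{\ell<i}2^\ell\tau_\ell(v)$. Reading off \eqref{eq:delta}, $\|\Delta_i(v)\|_1$ equals $(\beta-\alpha)\,2^i/t^2$ plus the value of the single ``partial'' row $\beta+1$ when one is present; since that partial row exactly contributes the remainder $R_i-(\beta-\alpha)2^i/t^2$, one gets $\|\Delta_i(v)\|_1=\min(2^i\tau_i(v),R_i)$ whenever $\tau_i(v)\neq0$ (here one uses that the partial row is present precisely when $R_i<2^i\tau_i(v)$, equivalently when part~(A) of \eqref{def:tau} specifies $\tau_i(v)$, as noted after \eqref{eq:delta}). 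Now for $i_0\le i<K$: because $\tau_K(v)\neq0$ and $K>i$, the contrapositive of Observation~\ref{obv:tau:zero2} forces part~(A) of \eqref{def:tau} for $\tau_i(v)$ to exceed part~(B), so $\tau_i(v)$ equals part~(B) and $\tau_i(v)<\lceil R_i/2^i\rceil$, whence $2^i\tau_i(v)\le2^i(\lceil R_i/2^i\rceil-1)<R_i$ and $\|\Delta_i(v)\|_1=2^i\tau_i(v)$. For $i=K$, part~(B) cannot be the specifier: if it were, then $\tau_K(v)\le\lceil R_K/2^K\rceil-1$ would give $2^K\tau_K(v)<R_K$, i.e.\ $\sum_{\ell\le K}2^\ell\tau_\ell(v)<d_T(v_c,v)$, contradicting that $\tau_{K+1}(v)=0$ forces — via \eqref{eq:tau:b}, which makes part~(B) positive — part~(A) of \eqref{def:tau} for $\tau_{K+1}(v)$ to vanish, i.e.\ $\sum_{\ell\le K}2^\ell\tau_\ell(v)\ge d_T(v_c,v)$. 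Hence $\tau_K(v)=\lceil R_K/2^K\rceil$, so $2^K\tau_K(v)\ge R_K$ and $\|\Delta_K(v)\|_1=R_K$. Summing gives
\[
\sum_{i\in\mathbb Z}\|\Delta_i(v)\|_1=\sum_{i=i_0}^{K-1}2^i\tau_i(v)+R_K=\sum_{i=i_0}^{K-1}2^i\tau_i(v)+d_T(v_c,v)-\sum_{\ell=i_0}^{K-1}2^\ell\tau_\ell(v)=d_T(v_c,v),
\]
which is \eqref{eq:emb:iso2}.

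For \eqref{eq:emb:iso1}, by symmetry assume $d_T(v_c,w)\le d_T(v_c,u)$. Since $\gamma_c$ is a path whose endpoint $v_c$ is closest to the root, $w$ lies on $P_{v_c\,u}$, so $d_T(w,u)=d_T(v_c,u)-d_T(v_c,w)$. By Lemma~\ref{lem:delta:inc} we have $0\le\Delta_i(w)[j,k]\le\Delta_i(u)[j,k]$ for all $i,j,k$, hence $\|\Delta_i(u)-\Delta_i(w)\|_1=\|\Delta_i(u)\|_1-\|\Delta_i(w)\|_1$ for each $i$; as only finitely many scales contribute to either vertex, we may sum over $i$ and invoke \eqref{eq:emb:iso2} to obtain $\sum_i\|\Delta_i(u)-\Delta_i(w)\|_1=d_T(v_c,u)-d_T(v_c,w)=d_T(w,u)$.

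The only delicate point is the scale-by-scale evaluation of $\|\Delta_i(v)\|_1$ in the second paragraph: one has to handle the floor in the definition of $\beta$ together with the partial row, and identify at each scale which of part~(A) or part~(B) of \eqref{def:tau} is active — the structural fact driving everything being that part~(B) is active at every scale strictly below $K$ (so those scales contribute their full budget $2^i\tau_i(v)$) while part~(A) is active at scale $K$ (so that scale contributes exactly the leftover $R_K$, making the telescoping exact). Once \eqref{eq:emb:iso2} is established, \eqref{eq:emb:iso1} follows formally from the coordinatewise monotonicity of the $\Delta_i$ along $\gamma_c$.
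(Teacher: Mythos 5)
Your proof is correct and follows essentially the same route as the paper: evaluate $\|\Delta_i(v)\|_1=\min\bigl(2^i\tau_i(v),\,d_T(v_c,v)-\sum_{\ell<i}2^\ell\tau_\ell(v)\bigr)$ scale by scale, telescope to get \eqref{eq:emb:iso2}, and then deduce \eqref{eq:emb:iso1} from the coordinatewise monotonicity of Lemma~\ref{lem:delta:inc}. The only difference is presentational: the paper compresses the telescoping step by citing Lemma~\ref{obv:tau2} (that $\sum_i 2^i\tau_i(v)\geq d_T(v_c,v)$), whereas you spell out explicitly which part of \eqref{def:tau} is active at each scale, which is a fine (indeed more detailed) verification of the same argument.
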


\begin{lemma}\label{lem:new}
For $c\in \chi(E)$,  $u,w\in V(\gamma_c)\setminus \{v_c\}$, $i> j$ and $k \in [m]$, if both $\|\Delta_{i}(u)[k]-\Delta_i(w)[k]\|_1\neq 0$, and $\|\Delta_j(u)[k]-\Delta_j(w)[k]\|_1\neq 0$, then $d_T(u,w)\geq {2^{j-1}}$.
\end{lemma}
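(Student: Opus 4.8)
The plan is to compare the row $\Delta_i(\cdot)[k]$ with the row $\Delta_j(\cdot)[k]$ for a single coordinate index $k$, and deduce that the hypothesis "$k$ is a nonzero row for both scales $i$ and $j$ on the segment from $w$ to $u$" forces the distance $d_T(u,w)$ to be large. First, without loss of generality assume $d_T(w,v_c) \le d_T(u,v_c)$, so that by Lemma~\ref{lem:incd} (monotonicity of $\tau$ along $\gamma_c$) and Lemma~\ref{lem:delta:inc} (monotonicity of $\Delta$ along $\gamma_c$) every coordinate of $\Delta_\ell(w)$ is dominated by the corresponding coordinate of $\Delta_\ell(u)$, for every $\ell$. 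Recall from Observation~\ref{obv:delta3} that the index $k$ is a nonzero row of $\Delta_\ell(v)$ only when $k \in (\alpha_\ell(v), \alpha_\ell(v) + t^2\tau_\ell(v)]$, where $\alpha_\ell(v) = \sum_{c' \in \chi(E(P_v))} t^2 \tau_\ell(v_{c'})$. Since $u,w \in V(\gamma_c)$, we have $\chi(E(P_u)) = \chi(E(P_w)) = \chi(E(P_{v_c})) \cup \{c\}$ as color sets, but the value $\tau_\ell(v_c)$ is evaluated at the \emph{vertex} $v_c$ in both cases, and the only color class contributing differently is $c$ itself via $\tau_\ell(u)$ versus $\tau_\ell(w)$; wait — more carefully, $\alpha_\ell(v)$ sums $\tau_\ell(v_{c'})$ over $c' \in \chi(E(P_v))$, and $v_{c'}$ is the top vertex of color $c'$, which does not depend on whether we are at $u$ or $w$. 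Hence $\alpha_\ell(u) = \alpha_\ell(w) =: \alpha_\ell$ for all $\ell$, and the interval of nonzero rows at scale $\ell$ is $(\alpha_\ell, \alpha_\ell + t^2\tau_\ell(u)]$ for $u$ and the subinterval $(\alpha_\ell, \alpha_\ell + t^2\tau_\ell(w)]$ for $w$.

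Next I would extract the key geometric fact. The hypothesis says $\|\Delta_i(u)[k] - \Delta_i(w)[k]\|_1 \ne 0$ and $\|\Delta_j(u)[k] - \Delta_j(w)[k]\|_1 \ne 0$ for some $i > j$. By the domination above, this means $\Delta_i(u)[k] \ne \Delta_i(w)[k]$ and $\Delta_j(u)[k] \ne \Delta_j(w)[k]$ coordinatewise; in particular $\Delta_i(u)[k]$ is nonzero (its single nonzero entry is either $2^i/t^2$ or the fractional remainder term, which is positive), so $k \le \alpha_i + t^2\tau_i(u)$, and similarly $k \le \alpha_j + t^2 \tau_j(u)$. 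Moreover $\Delta_j(u)[k] \ne \Delta_j(w)[k]$ forces that, at scale $j$, index $k$ is "past" whatever $w$ has filled in: either $\Delta_j(w)[k] = 0$ while $\Delta_j(u)[k] \ne 0$, meaning $k > \alpha_j + t^2\tau_j(w)$, or both are nonzero but $w$'s is the (smaller) remainder term. In either case, the definition of $\Delta_j$ in \eqref{eq:delta} pins down that the prefix of full blocks $2^j/t^2$ that $w$ has placed at scale $j$ is strictly shorter than what $u$ has, i.e. either $\tau_j(w) < \tau_j(u)$, or $\tau_j(w) = \tau_j(u)$ but the relevant $\beta$-cutoffs differ. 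Translating via $\beta_j(v) - \alpha_j = \min\big(t^2\tau_j(v), \lfloor (d_T(v_c,v) - \sum_{\ell < j} 2^\ell\tau_\ell(v)) / (2^j/t^2)\rfloor\big)$, the inequality $\beta_j(w) < \beta_j(u)$ (or the remainder-term discrepancy) combined with the analogous nonzero condition at the larger scale $i$ will yield a lower bound on $d_T(v_c, u) - d_T(v_c, w) = d_T(u,w)$.

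The quantitative heart of the argument: at scale $j$, the fact that $u$ has a nonzero $k$-th row that $w$ lacks (or that exceeds $w$'s) means $u$'s "progress at scales $\le j$" along $\gamma_c$ strictly exceeds $w$'s by at least one block's worth at scale $j$, roughly $2^j/t^2 \cdot (\text{one unit})$ — but one unit of $k$-index corresponds to a $2^j/t^2$ chunk of distance only if that block is a full block; I would instead argue that since index $k$ is \emph{also} nonzero at the strictly larger scale $i$ for $u$ (where $u$ is placing value $2^i/t^2 \gg 2^j/t^2$), and scale $i$ rows only appear after the distance unaccounted for by scales $< i$ is at least... this is where Observation~\ref{obv:tau:zero2} / Lemma~\ref{obv:tau} enter: $\tau_i(u) \ne 0$ forces, via part (A) of \eqref{def:tau}, that $\sum_{\ell < i} 2^\ell \tau_\ell(u) < d_T(u,v_c)$, while $\tau_j(w) < \tau_j(u)$ (hence by Observation~\ref{obv:tau:zero} and Lemma~\ref{obv:tau2}) gives $\sum_{\ell} 2^\ell \tau_\ell(w) \le 3 d_T(w,v_c)$ but more usefully $\sum_{\ell \le j} 2^\ell\tau_\ell(w) \le d_T(w,v_c)$ when $\tau_j(w)$ is cut off by part (A). I expect the main obstacle to be exactly this bookkeeping: carefully showing that "coordinate $k$ nonzero at both scales $j$ and $i>j$, differing between $u$ and $w$" forces $u$ to have accumulated at least a full scale-$j$ block (i.e. distance $\ge 2^{j}/t^2 \cdot$ something, and summing the $t^2$ possible blocks at that scale, or using that $i \ge j+1$ so $2^i \ge 2^{j+1}$) beyond $w$, landing the clean bound $d_T(u,w) \ge 2^{j-1}$. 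This parallels Lemma~\ref{lem:tau3}, whose proof structure (comparing $\sum_{\ell \le k-2} 2^\ell\tau_\ell(w)$ against $\sum_{\ell \le k-1}2^\ell\tau_\ell(u)$ and using $\tau_{k-1}(u) \ge 1$) I would imitate, with the index shift accounting for the factor $t^2$ blocks per scale and the two-scales condition replacing the single-scale vanishing condition there.
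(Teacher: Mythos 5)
Your setup (the WLOG assumption $d_T(w,v_c)\leq d_T(u,v_c)$, monotonicity via Lemmas~\ref{lem:incd} and~\ref{lem:delta:inc}, and the observation that the offsets $\alpha$ in \eqref{eq:delta} agree for $u$ and $w$) matches the paper's proof, but the plan stops exactly where the real work begins, and the bookkeeping you defer is not routine. Two ingredients are missing. First, you never establish that $\Delta_i(w)=0$. The paper gets this from a dichotomy on which part of \eqref{def:tau} determines $\tau_j(w)$: if part (B) did, then all rows in $(\alpha_j,t^2\varphi(c)]$ would be full at scale $j$ for $w$, forcing $\Delta_j(u)=\Delta_j(w)$ and contradicting the hypothesis at scale $j$; hence part (A) determines $\tau_j(w)$, and Observation~\ref{obv:tau:zero2} then gives $\tau_i(w)=0$, hence $\Delta_i(w)=0$. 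Without this, the scale-$i$ rows you want to charge to $u$ may be matched by $w$ and contribute nothing to $\|\Delta_i(u)-\Delta_i(w)\|_1$. (Relatedly, your claim that $\Delta_j(w)[k]=0$ ``means $k>\alpha_j+t^2\tau_j(w)$'' is false: that row can vanish because the distance-based term defining $\beta$ in \eqref{eq:delta} is the smaller one, not because $\tau_j(w)$ is too small.)

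Second, and more importantly, the quantitative step you flag as the ``main obstacle'' is the heart of the proof, and your proposed fixes do not reach $2^{j-1}$: a one-row discrepancy at scale $j$ is worth only $2^j/t^2$, and the idea of using $2^i\geq 2^{j+1}$ fails when $k$ happens to be the first row of $u$'s scale-$i$ block and carries only a tiny partial value. The paper's mechanism is a row count inside the $t^2$-aligned block of rows containing $k$, exploiting that every $\alpha$ is a multiple of $t^2$: (a) since $\Delta_i(u)[k]\neq 0$, every row of that block preceding $k$ is a full row of $\Delta_i(u)$, worth $2^i/t^2\geq 2^j/t^2$, while $\Delta_i(w)=0$ there; (b) since $\tau_i(u)\neq 0$ with $i>j$, Observation~\ref{obv:tau:zero2} forces part (A) of \eqref{def:tau} at scale $j$ for $u$ to strictly exceed part (B), so $\Delta_j(u)$ consists only of full rows and its support ends at a multiple of $t^2$; hence every row of the block following $k$ is full for $\Delta_j(u)$, while it is zero for $\Delta_j(w)$ because row $k$ of $\Delta_j(w)$ is not full. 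These are $t^2-1$ rows in total, each contributing at least $2^j/t^2$, and Lemma~\ref{lem:delta} converts this into distance: $d_T(u,w)=\sum_{h}\|\Delta_h(u)-\Delta_h(w)\|_1\geq \|\Delta_i(u)-\Delta_i(w)\|_1+\|\Delta_j(u)-\Delta_j(w)\|_1\geq (t^2-1)2^j/t^2\geq 2^{j-1}$. Your alternative of imitating Lemma~\ref{lem:tau3} through sums $\sum_{\ell}2^{\ell}\tau_{\ell}$ does not apply directly, because the hypothesis here concerns a single common row index $k$ at two scales rather than the vanishing of some $\tau_{\ell}(w)$; turning that row information into a distance bound is precisely what the alignment count together with Lemma~\ref{lem:delta} accomplishes.
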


\medskip
\noindent
{\bf Re-randomization.}
For $t\in \mathbb N$, let $\pi_t:\mathbb R^t\to \mathbb R^t$ be a random mapping
obtained by uniformly permuting the coordinates in $\mathbb R^t$.
Let $\{\sigma_i\}_{i \in [m]}$ be a sequence of i.i.d. random variables with the same distribution as $\pi_t$.
We define the random variable $\pi_{t,m}:\mathbb R^{m\times t} \to \mathbb R^{m\times t}$ as follows,
\[
\mathcal \pi_{t,m}\left(
\begin{array}{c}
r_1\\ \vdots\\ r_m
\end{array}
\right)
=
\left(
\begin{array}{c}
\sigma_1(r_1)\\
 \vdots\\
 \sigma_m(r_m)
 \end{array}
\right).
\]

\medskip
\noindent
{\bf The construction.}
We now use re-randomization to construct our final embedding. For $c\in \chi(E)$, and $i\in \mathbb Z$,
the map $f_{i,c}: V(T({c}))\to \mathbb R^{m\times t}$ will represent an
embedding of the subtree $T({c})$ at scale $2^i/t^2$.
Recall that,
$$V(T(c))=V(\gamma_c)\cup\left(\bigcup_{c'\in\rho^{-1}(c)}V(T(c'))\setminus\{v_{c'}\} \right).$$

Let $\{\Pi_{i,c'} : i \in \mathbb Z, c' \in \rho^{-1}(c) \}$ be a sequence of i.i.d. random variables which each have the distribution of $\pi_{t,m}$.
We define $f_{i,c}:V(T({c}))\to \mathbb R^{m\times t}$ as follows,
\begin{equation}\label{def:fic}
f_{i,c}(x)=
\left\{
\begin{array}{ll}
0 & \textrm{ if $x=v_c$},\\
\Delta_i(x) & \textrm{ if $x\in V(\gamma_c)\setminus \{v_c\}$},\\
{\Delta_i{(v_{c'})}}+\Pi_{i,c'}( f_{i,c'}(x)) & \textrm{ if $x\in V(T({c'}))\setminus\{v_{c'}\}$ for some $c'\in\rho^{-1}(c)$}.
\end{array}
\right.
\end{equation}

Re-randomization permutes the elements within each row, and the permutations are independent for different subtrees, scales, and rows.
Finally, we define $f_i=f_{i,c_0}$, where $c_0=\chi(r,p(r))$. We use the following lemma to prove Theorem~\ref{thm:allbut}.
\begin{lemma}\label{lem:sec5:main}
There exists a universal constant $C$ such that the following
holds with non-zero probability:
For all $x,y \in V$,
\begin{equation}\label{eq:allbut2}
(1 -C\e) \,d_T(x,y) - \delta\, \rho_{\chi}(x,y;\delta) \leq \sum_{i\in \mathbb Z}\|f_i(x)-f_i(y)\|_1 \leq d_T(x,y)\,.
\end{equation}
\end{lemma}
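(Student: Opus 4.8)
\medskip
\noindent\emph{Upper bound (deterministic).} First I would observe that the upper bound in \eqref{eq:allbut2} holds for every outcome of the re-randomization. For an edge $(u,v)\in E$ with $c=\chi(u,v)$, unwinding \eqref{def:fic} shows that $f_i(u)$ and $f_i(v)$ are obtained from $\Delta_i(u),\Delta_i(v)$ (when $u,v\in V(\gamma_c)\setminus\{v_c\}$) or from $\Delta_i(v)$ (when $u=v_c$) by applying one and the same composition of the maps $\pi_{t,m}$; since each $\pi_{t,m}$ permutes the $t$ entries within each of the $m$ rows, it is an $\ell_1$-isometry of $\mathbb R^{m\times t}$, so $\sum_i\|f_i(u)-f_i(v)\|_1$ equals $\sum_i\|\Delta_i(u)-\Delta_i(v)\|_1$ or $\sum_i\|\Delta_i(v)\|_1$, which by Lemma~\ref{lem:delta} is exactly $\len(u,v)$. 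Summing this identity over $e\in E(P_{xy})$ and using the triangle inequality (with $\rho_\chi\ge 0$) yields the upper bound for all $x,y$.

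\medskip
\noindent\emph{Lower bound: the inductive statement.} For the lower bound I would prove a stronger assertion by induction over the colors — this is the content of Lemma~\ref{lem:sec5:main:induction}: for every $c\in\chi(E)\cup\{c_0\}$, conditioned on the embeddings of the child subtrees $\{T(c'):c'\in\rho^{-1}(c)\}$ already obeying the analogous inequality, the probability, over the fresh permutations $\{\Pi_{i,c'}:i\in\mathbb Z,\ c'\in\rho^{-1}(c)\}$, that some pair $x,y\in V(T(c))$ fails
\[
\sum_{i\in\mathbb Z}\|f_{i,c}(x)-f_{i,c}(y)\|_1\ \ge\ (1-C\e)\,d_T(x,y)-\delta\,\rho_\chi(x,y;\delta)
\]
is strictly less than $1$, for a suitable universal constant $C$. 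Since there are finitely many colors and the permutation families attached to distinct colors are mutually independent, processing the colors from the leaves toward $c_0$ makes the intersection of all these ``good'' events hold with positive probability; the case $c=c_0$ is exactly \eqref{eq:allbut2}, since $f_i=f_{i,c_0}$ and $T(c_0)=T$.

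\medskip
\noindent\emph{The inductive step.} Fix $c$ and $x,y\in V(T(c))$. If $x,y\in V(\gamma_c)$, the bound is the exact identity \eqref{eq:emb:iso1}--\eqref{eq:emb:iso2}; if $x,y$ lie in the same $V(T(c'))\setminus\{v_{c'}\}$, it is immediate from the hypothesis for $c'$ and the fact that $f_{i,c}$ differs from $f_{i,c'}$ only by the isometry $\Pi_{i,c'}$. The substantive case is $x\in V(T(c'))$, $y\in V(T(c''))$ with $c'\neq c''$ (the case $x\in V(\gamma_c)$ being a degeneration). Here $v_{c'},v_{c''}\in V(\gamma_c)$ are comparable, say $v_{c'}\preceq v_{c''}$, so $v_{c'}$ is the lowest common ancestor of $x,y$ and $d_T(x,y)=d_T(x,v_{c'})+d_T(v_{c'},v_{c''})+d_T(v_{c''},y)$, while by \eqref{def:fic},
\[
f_{i,c}(x)-f_{i,c}(y)=\big(\Delta_i(v_{c'})-\Delta_i(v_{c''})\big)+\Pi_{i,c'}(f_{i,c'}(x))-\Pi_{i,c''}(f_{i,c''}(y)).
\]
By Observations~\ref{obv:delta2}--\ref{obv:delta3} and Lemma~\ref{obv:partial}, each of these vectors has at most one nonzero entry per row, essentially always equal to $2^i/t^2$, and conditioned on everything but $\Pi_{i,c'},\Pi_{i,c''}$, the columns of the nonzero entries contributed by the two subtrees in a given row are independent and uniform in $[t]$. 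As in the proof of Lemma~\ref{lem:kary:con}, the ``lost mass'' $X_{ij}$ in row $j$ at scale $i$ (the deficit in the triangle inequality confined to that row, cf.\ \eqref{eq:Xij}) is then nonzero with probability $1/t$ and at most $O(2^i/t^2)$, so $\E[\sum_{ij}X_{ij}]=O(d_T(x,y)/t)=O(\e\,d_T(x,y))$ since $t\asymp\e^{-1}$, and Theorem~\ref{thm:azuma} bounds $\pr[\sum_{ij}X_{ij}>C\e\,d_T(x,y)+\delta\,\rho_\chi(x,y;\delta)]$ by an exponential whose exponent is proportional to $C\,d_T(x,y)$ divided by the largest active scale. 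Two points remain: the colors $a$ on $P_{xy}$ with $\len(P_{xy}\cap\chi^{-1}(a))\le\delta\,\len_\chi(a)$ contribute total mass at most $\delta\,\rho_\chi(x,y;\delta)$ and are simply discarded into the slack; and for the remaining part of $P_{xy}$ the largest active scale must be shown to be $\lesssim d_T(x,y)/(\e(\varphi(\chi(x,p(x)))-\varphi(c)))$, so that the exponent above is $\gtrsim C\cdot(\varphi(\chi(x,p(x)))-\varphi(c))$ — this is exactly what Lemma~\ref{lem:triangle} delivers after decomposing $P_{x\,v_{c'}}$ into subpaths (the negligibly short leftovers again absorbed into the $O(\e\,d_T)$ slack), while Lemma~\ref{lem:new} prevents a coordinate being charged at two scales. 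Finally, a union bound over all such branching pairs, organized by the $\varphi$-gap just as $\sum_m 2^{2(m-1)}e^{-m\kappa/8}<1$ in the warm-up — Corollary~\ref{cor:count} bounding the number of descendant colors at a given gap and Corollary~\ref{cor:expsum} summing the resulting geometric series — is $<1$ once $C$ is a sufficiently large universal constant.

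\medskip
\noindent\textbf{Main obstacle.} The delicate part is the scale bookkeeping in the substantive case: one must certify that near every branch point of $P_{xy}$, and on each relevant subpath, enough \emph{independent} random rows of the correct magnitude $2^i$ are active for the concentration bound to beat the number of pairs branching there. This is precisely the role of the scale selectors $\{\tau_i\}$ and of Lemmas~\ref{obv:tau}--\ref{lem:triangle}, and orchestrating them consistently inside the induction is the technical core of the argument (and the real content of Lemma~\ref{lem:sec5:main:induction}).
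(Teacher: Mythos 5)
Your proposal follows essentially the same route as the paper: the upper bound is deterministic (the permutations are $\ell_1$-isometries, so each edge increment has norm exactly its length), and the lower bound is precisely the induction over colors supplied by Lemma~\ref{lem:sec5:main:induction} together with the independence of the permutation families attached to disjoint sibling subtrees — the paper merely packages your leaves-to-root induction as a maximality-plus-contradiction argument over the color with largest $\varphi$ for which the claim fails. One caveat: your supplementary sketch of the inductive step itself elides what actually makes Lemma~\ref{lem:sec5:main:induction} work — a union bound over all vertex pairs is not affordable since a color class may contain arbitrarily many vertices, which is why the paper introduces the representative sets $R_c(c_i)$ of size $\lceil\log_2(1/\delta)\rceil$, states Lemma~\ref{lem:con} as a uniform ``exists $x$ on the path'' event (via the monotonicity Lemma~\ref{lem:delta:inc} and Lemma~\ref{lem:correlation}), and runs the induction on the strengthened events $X_i$, $Y_i$ carrying the extra $\eps_2\, d_T(\Gamma_c(x),\Gamma_c(y))$ slack so that errors telescope along the $\Gamma_c$-chain — but since you invoke that lemma as proved and explicitly flag this orchestration as its content rather than claiming to reprove it, your argument for the stated lemma is correct.
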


We will prove Lemma~\ref{lem:sec5:main} in Section~\ref{subsec:embedding}. We first make
two observations, and then use them to prove Theorem~\ref{thm:allbut}.
Our first observation is immediate from Observation~\ref{obv:delta2} and Observation~\ref{obv:delta3}, since in the third case of \eqref{def:fic},  by Observation~\ref{obv:delta3}$,\Delta_i(v_c')$ and $\Pi_{i,c'}( f_{i,c'}(x))$
must be supported on disjoint sets of rows.
\begin{observation}\label{obv:rlpath}
For any $v\in V$ and for any row $j\in[m]$, there is at most one non-zero coordinate in $f_i(v)[j]$.
\end{observation}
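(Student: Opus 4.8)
The plan is to unwind the recursive definition \eqref{def:fic} and keep track of which rows of $\mathbb R^{m\times t}$ each resulting summand occupies. The two structural inputs are Observation~\ref{obv:delta2} (each $\Delta_i(\cdot)$ has at most one nonzero entry in each row) and Observation~\ref{obv:delta3} (the nonzero rows of $\Delta_i(\cdot)$ lie in a prescribed interval), together with the elementary remark that the re-randomization $\Pi_{i,c'}$, being a coordinate permutation applied separately within each row, is linear and preserves both the set of rows a matrix occupies and the number of nonzero entries in each row. In particular the statement will hold deterministically, for every realization of the re-randomizations.

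Fix $v\in V$; I may assume $v\neq r$, since $f_i(r)=0$. Let $c_1,c_2,\dots,c_L$ be the colors met on the path from the root to $v$, ordered by $\rho(c_1)=c_0$, $\rho(c_{s+1})=c_s$, and $c_L=\chi(v,p(v))$. Iterating the third case of \eqref{def:fic} along $c_0\to c_1\to\cdots\to c_L$ until the second case applies writes $f_i(v)$ as a sum of $L+1$ matrices, each obtained by applying a composition of re-randomizations to one of $\Delta_i(v_{c_1}),\Delta_i(v_{c_2}),\dots,\Delta_i(v_{c_L}),\Delta_i(v)$. By the remark above it therefore suffices to show that these $L+1$ matrices occupy pairwise disjoint sets of rows: then each row of $f_i(v)$ receives a contribution from at most one of them, hence at most one nonzero entry.

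For the disjointness, write $\alpha(w)=\sum_{c'\in\chi(E(P_w))}t^2\tau_i(v_{c'})$, so that by Observation~\ref{obv:delta3} the occupied rows of $\Delta_i(w)$ lie in the interval $I(w):=(\alpha(w),\,\alpha(w)+t^2\tau_i(w)]$. Along our chain one has $\chi(E(P_{v_{c_{s+1}}}))=\chi(E(P_{v_{c_s}}))\cup\{c_s\}$ and $\chi(E(P_v))=\chi(E(P_{v_{c_L}}))\cup\{c_L\}$, and since every $\tau_i$ is nonnegative this gives $\alpha(v_{c_{s+1}})=\alpha(v_{c_s})+t^2\tau_i(v_{c_s})$ and $\alpha(v)=\alpha(v_{c_L})+t^2\tau_i(v_{c_L})$. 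Hence in the list $I(v_{c_1}),I(v_{c_2}),\dots,I(v_{c_L}),I(v)$ each interval begins exactly where the previous one ends, so they are pairwise disjoint (some are empty, namely when the corresponding $\tau_i$ vanishes); and they all sit inside $(0,m]$ by the bound $\tau_i(w)+\sum_{c'\in\chi(E(P_w))}\tau_i(v_{c'})\le M(\chi)+\log_2|E|$ recorded just before \eqref{eq:delta}. This proves the claim.

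The same computation is perhaps most cleanly organized as an induction on the recursion: one shows simultaneously, for every color $c$ and every $x\in V(T(c))$, that $f_{i,c}(x)$ has at most one nonzero entry per row and occupies only rows strictly above $\alpha(v_c)+t^2\tau_i(v_c)$; the cases $x=v_c$ and $x\in V(\gamma_c)\setminus\{v_c\}$ are immediate from Observations~\ref{obv:delta2} and~\ref{obv:delta3}, while in the step $x\in V(T(c'))\setminus\{v_{c'}\}$ the term $\Delta_i(v_{c'})$ occupies rows in $I(v_{c'})$ and the inductive hypothesis places $\Pi_{i,c'}(f_{i,c'}(x))$ above $\sup I(v_{c'})$, so the two summands are supported on disjoint rows; taking $c=c_0$ yields the statement. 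I do not anticipate a real obstacle: the only point requiring care is the bookkeeping that $c'\notin\chi(E(P_{v_{c'}}))$, so that the offset $\alpha$ increases by exactly $t^2\tau_i(v_{c'})$ as one passes from $\gamma_c$ into the child subtree $T(c')$; everything else is a direct reading of Observations~\ref{obv:delta2} and~\ref{obv:delta3}.
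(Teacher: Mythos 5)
Your argument is correct and matches the paper's (one-line) proof in substance: the paper also derives the claim directly from Observations~\ref{obv:delta2} and~\ref{obv:delta3}, noting that in the third case of \eqref{def:fic} the summands $\Delta_i(v_{c'})$ and $\Pi_{i,c'}(f_{i,c'}(x))$ occupy disjoint sets of rows, with the row-wise permutations preserving row supports. Your unwinding/induction along the colors of $P_v$ is just the explicit bookkeeping behind that remark, so there is nothing genuinely different here.
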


Observation~\ref{obv:delta3} and Lemma~\ref{lem:delta} also imply the following.
\begin{observation}\label{lem:rlpath}
For any $v\in V$ and $u\in P_v$, we have
$d_T(u,v)=\sum_{i\in \mathbb Z} \|f_i(u)-f_i(v)\|_1$.
\end{observation}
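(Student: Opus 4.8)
The plan is to unwind the recursive definition \eqref{def:fic} of $f_i = f_{i,c_0}$ along $P_v$, using two structural facts repeatedly. First, each re-randomization operator $\Pi_{i,c'}$ only permutes coordinates within each row, so it — and any composition of such operators — is a linear $\ell_1$-isometry. Second, by Observation~\ref{obv:delta3} (as already noted just before Observation~\ref{obv:rlpath}), whenever $x\in V(T(c''))$ with $x\neq v_{c''}$ the matrix $f_{i,c''}(x)$ is supported on rows strictly above every row on which $\Delta_i(v_{c''})$ is supported; this is an easy induction on depth, using that $\col(E(P_{x}))$ contains $c''$ for any $x$ strictly below $v_{c''}$.

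Concretely, I would prove the following claim by induction on $N = |\col(E(P_{v_c x}))|$: for every $c\in\col(E)\cup\{c_0\}$, every $x\in V(T(c))$, and every $w$ lying on the path from $v_c$ to $x$, one has $\sum_{i\in\mathbb Z}\|f_{i,c}(x)-f_{i,c}(w)\|_1 = d_T(w,x)$. The Observation is the special case $c=c_0$, since then $f_{i,c_0}=f_i$, $v_{c_0}=r$, $V(T(c_0))=V$, $\Delta_i(r)=0$, and ``$w$ on the path from $r$ to $x$'' is exactly $w\in P_x$.

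For the base case $N\le 1$ we have $x\in V(\gamma_c)$ (hence $w\in V(\gamma_c)$), so $f_{i,c}(x)=\Delta_i(x)$, $f_{i,c}(w)=\Delta_i(w)$ when $w\neq v_c$ (and $f_{i,c}(v_c)=0$), and the claim is exactly equation \eqref{eq:emb:iso1} of Lemma~\ref{lem:delta} (or \eqref{eq:emb:iso2} when $w=v_c$). For the inductive step, let $c'\in\rho^{-1}(c)$ be the child color with $x\in V(T(c'))\setminus\{v_{c'}\}$, so $f_{i,c}(x)=\Delta_i(v_{c'})+\Pi_{i,c'}(f_{i,c'}(x))$. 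If $w\in V(T(c'))\setminus\{v_{c'}\}$ lies on the path from $v_{c'}$ to $x$, then $f_{i,c}(w)$ has the same form, so $f_{i,c}(x)-f_{i,c}(w)=\Pi_{i,c'}(f_{i,c'}(x)-f_{i,c'}(w))$ and $\|f_{i,c}(x)-f_{i,c}(w)\|_1=\|f_{i,c'}(x)-f_{i,c'}(w)\|_1$; the induction hypothesis with color $c'$ finishes this case, as $\col(E(P_{v_c x}))=\{c\}\cup\col(E(P_{v_{c'} x}))$ so $N$ drops by one. Otherwise $w$ lies on the segment of $\gamma_c$ between $v_c$ and $v_{c'}$, so $f_{i,c}(w)$ is $0$ (if $w=v_c$) or $\Delta_i(w)$ with $\supp\Delta_i(w)\subseteq\supp\Delta_i(v_{c'})$ by Lemma~\ref{lem:delta:inc} (if $w\neq v_c$). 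Since $\Pi_{i,c'}(f_{i,c'}(x))$ is supported on rows strictly above those of $\Delta_i(v_{c'})$ (and of $\Delta_i(w)$), there is no cancellation and
\[
\|f_{i,c}(x)-f_{i,c}(w)\|_1 = \|\Delta_i(v_{c'})-f_{i,c}(w)\|_1 + \|\Pi_{i,c'}(f_{i,c'}(x))\|_1 = \|\Delta_i(v_{c'})-f_{i,c}(w)\|_1 + \|f_{i,c'}(x)\|_1.
\]
Summing over $i\in\mathbb Z$, the first term gives $d_T(w,v_{c'})$ by \eqref{eq:emb:iso1} (or \eqref{eq:emb:iso2} when $w=v_c$), and the second gives $\sum_i\|f_{i,c'}(x)-f_{i,c'}(v_{c'})\|_1 = d_T(v_{c'},x)$ by the induction hypothesis; since $v_{c'}$ lies on $P_{wx}$, the total is $d_T(w,x)$.

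I expect the only real work to be bookkeeping: verifying the row-support claims from Observation~\ref{obv:delta3} and Lemma~\ref{lem:delta:inc}, and checking that the boundary vertices $v_c$, $v_{c'}$ and the fictitious color $c_0$ (where $v_{c_0}=r$, $\Delta_i(r)=0$, and $\rho^{-1}(c_0)$ is exactly the set of colors whose path starts at the root) are handled consistently in each case. No step poses a genuine difficulty, since the isometry property of the re-randomization operators means the identity holds for every realization of the randomness.
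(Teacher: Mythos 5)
Your proposal is correct and matches the paper's (unstated, one-line) argument: the paper derives Observation~\ref{lem:rlpath} directly from Observation~\ref{obv:delta3} (disjoint row supports in the recursion \eqref{def:fic}) together with Lemma~\ref{lem:delta}, and your induction along the color decomposition, using that each $\Pi_{i,c'}$ is a row-wise permutation and hence an $\ell_1$-isometry, is exactly the routine verification of that claim. The only loose end is the one you already flag — the fictitious color $c_0$ makes the first recursion step not decrease $|\chi(E(P_{v_cx}))|$, so the induction measure needs a trivial adjustment there — which is harmless bookkeeping.
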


Using these, together with Corollary~\ref{col:scales}, we now prove Theorem~\ref{thm:allbut}.

\begin{proof}[Proof of Theorem~\ref{thm:allbut}]
By Lemma~\ref{lem:sec5:main}, there exists a choice of mappings $\{g_i\}_{i\in \mathbb Z}$ such that for all $x,y \in V$,
$$d_T(x,y)\geq \sum_{i \in \mathbb Z} \|{g_i(x)-g_i(y)}\|\geq (1-O(\eps))d_T(x,y)-\delta \rho_{\col}(x,y;\delta)\,.$$

We will apply Corollary~\ref{col:scales} to the family given by $\left\{f_i = {t^2g_i\over 2^i}\right\}_{i\in \mathbb Z}$ to arrive at an embedding
$F : V \to \ell_1^{tm\left({2+\left \lceil \log{1\over \eps}\right\rceil}\right)}$ such that $G = F/t^2$ satisfies,
\begin{equation}\label{eq:needthis}
d_T(x,y)\geq \|{G(x)-G(y)}\|_1\geq (1-O(\eps))d_T(x,y)-\delta \rho_{\col}(x,y;\delta).
\end{equation}
  Observe that the codomain of $f_i$ is $\mathbb R^{m\times t}$, where $mt=\Theta( (\frac 1 \eps+\log\log(\frac 1 \delta))^{3}\log n),$ and the codomain of $G$ is $\mathbb R^d$, where $d={\Theta( \log {1\over \eps}(\frac 1 \eps+\log\log(\frac 1 \delta))^{3}\log n)}$.

To achieve \eqref{eq:needthis}, we need only show that for every $x,y \in V$, we have $\zeta(x,y) \lesssim \e d_T(x,y)$,
where $\zeta(x,y)$ is defined in \eqref{eq:zetadef}.
Recalling this definition, we now restate $\zeta$ in terms of our explicit family $\left\{f_i = {t^2 g_i \over 2^i}\right\}_{i \in \mathbb Z}$.
We have,
\begin{equation}\label{eq:zeta:rewrite}
\zeta(x,y)=\hspace{-0.5cm}\sum_{(k_1,k_2)\in [m]\times [t]}\hspace{-0.7cm}\sum_{\substack{i:\exists j<i \\g_{j}(x)[k_1,k_2]\neq g_{j}(y)[k_1,k_2]}} h_i(x,y;k_1,k_2)\,,
\end{equation}
where,
$$h_i(x,y;k_1,k_2)={2^i\over t^2}\left({t^2\over 2^i}\left|g_{i}(x)[k_1,k_2]-g_{i}(y)[k_1,k_2]\right|-\left\lfloor \left|{t^2\over 2^i}g_{i}(x)[k_1,k_2]-{t^2\over 2^i}g_{i}(y)[k_1,k_2]\right|\right\rfloor\right).$$

Fix $x,y \in V$.
For $c\in \chi(E(P_{xy}))$, let $\lambda_{c}$ be the induced subgraph on  $V(P_{xy})\cap V(\gamma_{c})$, i.e. the subpath of $P_{xy}$ where all edges are colored by color $c$.
We have,
\begin{equation}\label{eq:dist:partial}
d_T(x,y)=\sum_{c\in \chi(E(P_{xy}))}\len(E(\lambda_c)).
\end{equation}

If we look at a single term in \eqref{eq:zeta:rewrite}, we have
\begin{align}\label{eq:zeta:term}
h_i(x,y;k_1,k_2)< {2^i\over t^2}.
\end{align}
For $u,v\in P_{xy}$, let $$S_i(u,v)=\{(k_1,k_2)\in [m]\times [t]:h_i(u,v;k_1,k_2)\neq 0 \textrm{ and } \exists j<i: g_{j}(x)[k_1,k_2]\neq g_{j}(y)[k_1,k_2]\}.$$
Now, notice that if $\frac{t^2}{2^i} (g_i(x)[k_1,k_2]-g_i(y)[k_1,k_2])$ is fractional, then there must exist a subpath $\lambda_c$, for a color $c\in \chi (E(P_{xy}))$, with endpoints $u_c$ and $v_c$ such that $\frac{t^2}{2^i} (g_i(u_c)[k_1,k_2]-g_i(v_c)[k_1,k_2])$ is fractional too. Hence we have
$$\zeta(x,y)<\sum_{c\in \chi(E(P_{xy}))} \sum_{i\in \mathbb Z}{2^i|S_i(u_c,v_c)|\over t^2}.$$
We call $\sum_{i\in \mathbb Z} {2^i|S_i(u_c,v_c)|\over t^2}$ the contribution of $\lambda_c$, for each color $c\in \chi (E(P_{xy}))$.


\medskip

We divide the analysis of the paths $\lambda_c$ for ${c\in \chi(E(P_{xy}))}$ into two cases.
For $c\in \chi(E(P_x))\triangle \chi (E(P_y))$, the vertex $v_{c}$  is one endpoint of the path $\lambda_{c}$. Let  $u_{c}$ be the other.
By Lemma~\ref{obv:partial}, there is at most one $i\in \mathbb Z$ and $(k_1,k_2)\in [m]\times [t]$ such that $h_i(u_c,v_c;k_1,k_2)\neq 0,$ and
$$
\left|\bigcup_{i\in \mathbb Z} S_i(u_c,v_c)\right|\leq1
$$
By Lemma~\ref{obv:tau}, for all ${i}\in \mathbb Z$ with ${{d_T(u_{c},v_{c})}} \leq 2^{i-1}$, we have
$\tau_i(u_{c})=0$, and
\begin{equation}\label{sec5:eq:cor3}
\|\Delta_i(u_{c})\|_1=\|g_i(u_{c})-g_i(v_{c})\|_1=0.
\end{equation}
For $i<1+\log_2({{d_T(u_{c},v_{c})}} )$, by \eqref{eq:zeta:term} and Lemma~\ref{obv:partial}
we can bound the contribution of $\lambda_c$ to $\zeta(x,y)$ by,
\begin{equation}\label{eq:tmp:easy}
\sum_{j\in \mathbb Z} {2^j|S_j(u_c,v_c)|\over t^2}< {2^i\over t^2}< {2d_T(u_c,v_c)\over t^2}\leq \eps d_T(u_c,v_c).
\end{equation}

In the case that $c\notin\chi(E(P_x))\triangle \chi(E(P_y))$, note that there is at most one color in $\chi(E(P_{xy}))\setminus(\chi(E(P_x))\triangle \chi(E(P_y)))$. If no such color exists, then by \eqref{eq:tmp:easy},
\[
\zeta(x,y)<  \sum_{c\in \chi(E(P_{xy}))}\eps\len(E(\lambda_c))\overset{\eqref{eq:dist:partial}}{\leq}\eps d_T(x,y).
\]
Suppose now that $\{c\}=\chi(E(P_{xy}))\setminus(\chi(E(P_x))\triangle \chi(E(P_y)))$. Let $u,w \in V(\lambda_c)$ be the closest vertices to $x$
and $y$, respectively. For $i\in \mathbb Z$ we will show that if $h_i(u,w;k_1,k_2)\neq 0$, then either $d_T(x,y)\geq 2^{i-2},$
or for all $j<i$, we have $(g_j(x)-g_j(y))[k_1,k_2]=0$.
 Then, by Observation~\ref{obv:partial}, there are at most two elements in $g_i(u)-g_i(w)$ that are not  in $\{0,{2^i\over t^2},-{2^i\over t^2}\}$, therefore we can conclude
\begin{align*}
\zeta(x,y)&<  \sum_{i\in \mathbb Z}{2^i|S_i(u,w)|\over t^2}+\sum_{c\in  \chi(E(P_x))\triangle \chi (E(P_y))}\sum_{i\in \mathbb Z} {2^i|S_i(u_c,v_c)|\over t^2}\\
&\overset{\eqref{eq:dist:partial}}\leq 4 \eps d_T(x,y)+\sum_{c\in \chi(E(P_x))\triangle \chi (E(P_y))}\eps\,\len(\lambda_c)\\
&\leq 5\eps d_T(x,y).
\end{align*}

Without loss of generality suppose that $d_T(u,v_c)\leq d_T(w,v_c)$. If $d_T(w,v_c)=0$ then the contribution of $\lambda_c$ to $\zeta(x,y)$ is zero. Suppose now that $d_T(w,v_c)>0$, and let $m_w=\max\{i:\tau_i(w)\neq 0\}$. By Lemma~\ref{obv:tau} the maximum always exists.

We will now split the rest of the proof into two cases.

\medskip
\noindent
{\bf Case 1:  $\tau_{m_w-1}(u)=0.$}

\medskip

In this case by Lemma~\ref{lem:tau3} we have $d_T(u,w)> 2^{m_w-1}$. For $(k_1,k_2)\in [m]\times [t]$, if $h_i(u,w;k_1,k_2)\neq 0$ then by \eqref{eq:delta}, $i\leq m_w$ and
\[
{2^{i}\over t^2} \leq {2^{m_w}\over t^2} < {2d_T(u,w)\over t^2}\leq {2d_T(x,y)\over t^2} \leq \e d_T(x,y)\,.
\]

\medskip
\noindent
{\bf Case 2:  $\tau_{m_w-1}(u)\neq 0.$}

\medskip

Let $m_u=\max\{i:\tau_i(u)\neq 0\}$. By Lemma~\ref{lem:incd} and as $\tau_{m_w-1}(u)\neq 0$, we have
$
m_u\leq m_w\leq m_u+1.
$
Observation~\ref{obv:tau:zero2}, implies that for all $j<m_u$,
\begin{equation*}\label{eq:tmp:phi}
\tau_j(u)+\sum_{c'\in \chi(E(P_u))}\tau_j(v_{c'})=\varphi(c).
\end{equation*}
We have $m_w\geq m_u$, and by Observation~\ref{obv:tau:zero2},
\begin{equation}\label{eq:tmp:phi2}
\tau_j(w)+\sum_{c'\in \chi(E(P_w))}\tau_j(v_{c'})=\tau_j(u)+\sum_{c'\in \chi(E(P_u))}\tau_j(v_{c'})=\varphi(c).
\end{equation}
therefore, by Observation~\ref{obv:delta3} for $j<m_u$ and $k\in[t^2\varphi(c)]$
\begin{equation}\label{eq:main:e1}
\|(g_j(x)-g_j(u))[k]\|_1=\|(g_j(y)-g_j(w))[k]\|_1=0,
\end{equation}
and by Observation~\ref{obv:delta3} and part (B) of \eqref{def:tau}, for all $i\in \mathbb Z$, all the non-zero elements of $g_i(u)-g_i(w)$ are in the first $t^2\varphi(c)$ rows.

Suppose that there exists $k\in[m]$ such that $\|(g_i(u)-g_i(w))[k]\|_1\neq 0$. Now, we divide the proof into two cases again.

\medskip
\noindent
{\bf Case 2.1:} There exists a  $j<i$, such that $\|(g_j(x)-g_j(u))[k]\|_1+\|(g_j(y)-g_j(w))[k]\|_1\neq 0.$

\medskip

In this case,  there must exist some $c'\in  \chi(E(P_x))\triangle \chi (E(P_y))$, such that $$\|(g_j(v_{c'})-g_j(u_{c'}))[k]\|_1\neq 0.$$
 By \eqref{def:fic} and \eqref{eq:delta}, we have $\tau_j(u_{c'})\neq 0$. Inequality \eqref{eq:main:e1} implies $j\geq m_u$, and finally by Lemma~\ref{obv:tau},
\begin{equation}\label{eq:tmp:c1}
d_T(x,y)\geq d_T(u_{c'},v_{c'})>  2^{j-1}\geq 2^{m_u-1} \geq 2^{m_w-2}\geq 2^{i-2}.
\end{equation}

\medskip
\noindent
{\bf Case 2.2:} $\|(g_j(x)-g_j(u))[k]\|_1+\|(g_j(y)-g_j(w))[k]\|_1= 0$ for all $j<i$.

\medskip

In this case, either for all $j<i$, $\|g_j(x)[k]-g_j(y)[k]\|_1=0$ which implies that for $k'\in [t]$, $(k,k')\notin S_i(u,w)$, or $\|g_j(u)[k]-g_j(w)[k]\|_1\neq 0$ for some $j<i$. If $\|g_j(u)[k]-g_j(w)[k]\|_1\neq 0$ for some $j<i$ then by Lemma~\ref{lem:new},
\begin{equation}\label{eq:tmp:c2}
d_T(x,y)\geq d_T(u,w)\geq  2^{m_u-1}\geq 2^{m_w-2}\geq 2^{i-2}.
\end{equation}

For $i>m_w$ we have $\|g_i(u)-g_i(w)\|_1=0$,
therefore in both cases if $h_i(x,y;k_1,k_2)\neq 0$ either for all $j<i$, $\|g_j(x)[k]-g_j(y)[k]\|_1=0$ or
$${2^i\over t^2}\leq {4d_T(x,y)\over t^2}\leq 2\eps d_T(x,y).$$
\end{proof}

\subsection{Properties of the $\Delta_i$ maps}\label{subsec:proofs}

We now present proofs of Lemmas \ref{obv:partial}--\ref{lem:new}.

\begin{proof}[Proof of Lemma~\ref{obv:partial}]
For a fixed $i\in \mathbb Z$, by \eqref{eq:delta} there is at most one element in $\Delta_i(v)$ that takes a value other than $\{0,{2^i\over t^2}\}$.

We prove this lemma by showing that if for some $i\in \mathbb Z$, and $(j,k)\in [m]\times [t]$, $$\Delta_i(v)[j,k]\notin
\left\{0,{2^i\over t^2}\right\},$$ then for all $i'>i$ and $(j',k')\in [m]\times [t]$, we have $\Delta_{i'}(v)[j',k']=0$.
Let $c=\col(v,p(v))$. Using \eqref{eq:delta}, we can conclude that
$$t^2\tau_i(v)>\left\lfloor
{d_T(v_c,v)-\sum_{\ell=-\infty}^{i-1} 2^\ell\tau_\ell(v)\over 2^i/{t^2}}\right\rfloor.
$$
Since the left hand side is an integer,
\[t^2\tau_i(v)\geq
{d_T(v_c,v)-\sum_{\ell=-\infty}^{i-1} 2^\ell\tau_\ell(v)\over 2^i/{t^2}},
\]
and
\begin{align*}
\sum_{\ell\leq i}2^\ell \tau_\ell(v)
&=2^i\tau_k(v)+\sum_{\ell< i}2^\ell \tau_\ell(v)\\
&\geq 2^i\left({d_T(v_c,v)-\sum_{\ell<i} 2^\ell\tau_\ell(v)\over 2^i}\right)+\sum_{\ell< i}2^\ell \tau_\ell(v)\\
&\geq d_T(v_c,v).
\end{align*}
By part (A) of \eqref{def:tau}, for $i'>i$ we have $\tau_{i'}(v)=0$, thus $\|\Delta_{i'}(v)\|_1=0$ and the proof is complete.
\end{proof}

\begin{proof}[Proof of Lemma~\ref{lem:delta:inc}]
 For  $i< \left \lfloor \log_2 \left(m(T)\over M(\chi)+\log_2 |E|\right )\right\rfloor$ we have $\|\Delta_k(u)\|=\|\Delta_k(w)\|_1=0$.

 Let $\nu$ be the minimum integer such that part (A) of \eqref{def:tau} for $\tau_\nu(w)$ is less that or equal to part (B). This $\nu$ exists since, by \eqref{eq:tau:b}, part (B) of \eqref{def:tau} is always positive, while by Lemma~\ref{obv:tau}, part (A) of \eqref{def:tau} must be zero for some $\nu \in \mathbb{Z}$. First we analyze the case when $i<\nu$.

Observation~\ref{obv:tau:zero2} implies that part (B) of \eqref{def:tau} specifies the value of $\tau_i(w)$. By Lemma~\ref{lem:incd} $\tau_i(u)\geq \tau_i(w)$, but the part (B) for $\tau_i(u)$ is the same as for $\tau_i(w)$, so we must have $ \tau_i(u)=\tau_i(w),$ and the same reasoning holds for $\tau_{\ell}(w)$ for $\ell<i$. Using this and the fact that part (A) does not define $\tau_i(w)$, we have
 $$
 2^i\tau_i(w)+\sum _{\ell<i}2^\ell\tau_\ell(w)= 2^i\tau_i(u)+\sum _{\ell<i}2^\ell\tau_\ell(u)< d_T(v_c,w) < d_T(v_c,u).
 $$
Therefore, the second case in \eqref{eq:delta} happens neither for $u$ nor for $w$, and for $i<\nu$ we have $\Delta_i(u)=\Delta_i(w)$.

We now consider the case $i=\nu$. We have already shown that for $\ell<i$,  $\tau_\ell(u)=\tau_\ell(w),$ and using \eqref{eq:delta}, it is easy to verify that for all $(j,k)\in [m]\times [t]$,
$$\Delta_i(u)[j,k]\geq \Delta_i(w)[j,k].$$

Finally, in the case that $i>\nu$, by Observation~\ref{obv:tau:zero2}, we have $\tau_i(w)=0$, and $\Delta_i(w)[j,k]=0$.
\end{proof}

\begin{proof}[Proof of Lemma~\ref{lem:delta}]
For all $i\in \mathbb Z$, recalling the definition $\alpha$ and $\beta$ in \eqref{eq:delta} for  $\Delta_i(u)$, we have
$$\beta-\alpha=\min\left(t^2\tau_i(v),\left\lfloor
{d_T(v_c,v)-\sum_{\ell=-\infty}^{i-1} 2^\ell\tau_\ell(v)\over 2^i/{t^2}}
\right\rfloor\right).$$
and by definition of $\Delta_i(u)$ we have,
$$\|\Delta_i(u)\|_1=\min\left({2^i}\tau_i(u), d_T(u,v_c)-\sum_{j<i} 2^j\tau_j(u)\right).$$
By Lemma~\ref{obv:tau2}, we have $\sum_{i\in \mathbb Z} 2^i\tau_i(u)\geq d_T(u,v_c)$, therefore  $d_T(v_c,u)=\sum_{i\in \mathbb Z} \|\Delta_i(u)\|_1.$
The same argument also implies that $d_T(w,v_c)=\sum_{i\in \mathbb Z} \|\Delta_i(w)\|_1$.

Now, suppose that $d_T(u,v_c)\geq d(w,v_c)$. Then Lemma~\ref{lem:delta:inc} implies that, $$\|\Delta_i(u)-\Delta_i(w)\|_1=\|\Delta_i(u)\|_1-\|\Delta_i(w)\|_1=d_T(v_c,u)-d_T(v_c,w)=d_T(w,u).$$
\end{proof}

\begin{proof}[Proof of Lemma~\ref{lem:new}]
Without loss of generality suppose that $d_T(v_c,u)\geq d_T(v_c,w)$. We have,
\begin{align}
d_T(u,w)&= \sum_{h\in \mathbb Z} \|\Delta_h(u)-\Delta_h(w)\|_1\nonumber \\
&\geq  \sum_{h=j}^i \|\Delta_h(u)-\Delta_h(w)\|_1\nonumber \\
&\geq  \|\Delta_i(u)-\Delta_i(w)\|_1+\|\Delta_j(u)-\Delta_j(w)\|_1\,.\label{eq:neweq}
\end{align}
By Lemma~\ref{lem:incd} we have $\tau_j(w)\leq \tau_j(u)$.
If part (B) of \eqref{def:tau} is less than part (A), then by \eqref{eq:delta},
for all $h$ such that
$$\sum_{c'\in\chi(E(P_v))} t^2 \tau_j(v_{c'})<h \leq t^2{\varphi(c)},
$$
we have $\|\Delta_{j}(w)[h]\|_1={2^i\over t^2}$.
And, by Lemma~\ref{lem:delta:inc}, and Observation~\ref{obv:delta3} for $k\in \mathbb Z$, $\Delta_{j}(w)=\Delta_{j}(u)$. Hence, part (A) of \eqref{def:tau} must specify the value of $\tau_j(w)$. Observation~\ref{obv:tau:zero2} implies that $\tau_i(w)=0$ and by \eqref{eq:delta}, we have $\|\Delta_i(w)\|_1=0$.

By \eqref{eq:delta}, since $\|\Delta_{i}(u)[k]\|_1>0$, and $\alpha$ from \eqref{eq:delta} is a multiple of $t^2$, for all $t^2 \lfloor{k\over t^2}\rfloor<h<k$ we have $\|\Delta_{i}(u)[h]\|_1={2^i\over t^2}$.
This implies that,
$$\|\Delta_i(u)-\Delta_i(w)\|_1\geq {2^i\over t^2} \left(k-1-t^2\left\lfloor k\over {t^2}\right\rfloor\right)\geq {2^j\over t^2}\left( k-1-t^2\left\lfloor k \over {t^2}\right\rfloor\right).$$
 Moreover, $\|\Delta_{j}(w)[k]\|_1<{2^j\over t^2}$, and \eqref{eq:delta} implies that for all $k<h\leq t^2 \lfloor{1+{k\over t^2}}\rfloor$, we have $\|\Delta_{j}(w)[h]\|_1=0$. The same argument also shows that,
$$\|\Delta_j(u)-\Delta_j(w)\|_1\geq {2^j\over t^2}\left( t^2\left\lfloor1+{k\over {t^2}}\right\rfloor-k\right).$$
Hence by \eqref{eq:neweq},
$$d_T(u,w)\geq {t^2-1\over t^2}2^j\geq 2^{j-1}.$$
\end{proof}

\subsection{The probabilistic analysis}\label{subsec:embedding}

We are thus left to prove Lemma \ref{lem:sec5:main}.
For $c\in \chi(E)$, we analyze the embedding for $T(c)$ by going through all  $c'\in \chi(E(T(c)))$ one by one in increasing order of $\varphi(c')$.
Our first lemma bounds the probability of a bad event, i.e. of a subpath not contributing enough to the distance
in the embedding.

\begin{lemma}\label{lem:con}
For any $C \geq 8$, the following holds.
Consider three colors $a \in \chi(E)$, $b \in \rho^{-1}(a)$, and $c \in \chi(E(P_{u\, v_b}))$
for some $u \in V(T(b))$.  Then for every $w \in V(T(a)) \setminus V(T(b))$, we have
\begin{align}\label{eq:con2}
&\pr \left[ \exists\,{x\in V(P_{w\,v_{a}})} :\sum_{i\in \mathbb Z}\|f_{i,a}(x)-f_{i,a}(u)\|_1 \leq \left(1-{C \eps}\right)\,d_T(u, v_c)+\sum_{i\in \mathbb Z}\|f_{i,a}(v_c)-f_{i,a}(x)\|_1 \mid  \{f_{i,c'}\}_{c'\in \rho^{-1}(a)} \right]\nonumber
\\&
\hspace{3.2in} \leq  {1\over \left\lceil\log_2 1/\delta\right\rceil}  \exp\left(-(C/(\eps 2^{\beta+2}))\, d_T(u,v_c)\right),
\end{align}
where $\beta=\max\{i: \exists y \in P_{u\,v_c} \backslash \{v_c\}, \tau_i(y)\neq 0\}$. (See Figure~\ref{fig:con} for position of vertices in the tree.)
\end{lemma}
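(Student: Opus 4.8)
The plan is to expose, conditionally on the data $\{f_{i,c'}\}_{c'\in\rho^{-1}(a)}$, only the randomness of the re-randomization $\Pi_{i,b}$ that glues $T(b)$ into $T(a)$; in fact it is convenient to further condition on all the other top-level gluing permutations $\{\Pi_{i,b'}:b'\in\rho^{-1}(a),\ b'\neq b\}$ as well, since the bound need only hold after averaging these out. After this conditioning, $f_{i,a}(u)=\Delta_i(v_b)+\Pi_{i,b}(f_{i,b}(u))$ and $f_{i,a}(v_c)=\Delta_i(v_b)+\Pi_{i,b}(f_{i,b}(v_c))$ (here $v_c\in V(T(b))$ because $c\in\chi(E(P_{u\,v_b}))$), whereas every $f_{i,a}(x)$ with $x\in V(P_{w\,v_a})$ is a deterministic vector, since $x$ lies outside $T(b)$. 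I would then record the two structural facts that drive the argument: (a) by Lemma~\ref{lem:delta} one has $\sum_i\|f_{i,b}(u)-f_{i,b}(v_c)\|_1=d_T(u,v_c)$, and by Observations~\ref{obv:delta2} and \ref{obv:delta3} together with Lemma~\ref{lem:delta:inc}, for each scale $i$ and row $j$ the increment $f_{i,a}(u)[j]-f_{i,a}(v_c)[j]$ is supported on a single coordinate --- the image under $\Pi_{i,b}$ of a fixed ``internal'' coordinate $k_{ij}$ --- of magnitude $\Delta_{ij}\le 2^i/t^2$, with $\sum_{i,j}\Delta_{ij}=d_T(u,v_c)$; and (b) by Observation~\ref{obv:rlpath} and the fact that a given row is ``owned'' by a single color class at a given scale (so that along $P_{w\,v_a}$ the relevant $\Delta$-values vary monotonically within that class), the unique nonzero coordinate of $f_{i,a}(x)[j]$ sits in a position $q_{ij}$ that \emph{does not depend on} $x$.

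Next I would rewrite the target inequality as $\mathrm{loss}\le C\e\,d_T(u,v_c)$, where $\mathrm{loss}:=d_T(u,v_c)-\big(\sum_i\|f_{i,a}(x)-f_{i,a}(u)\|_1-\sum_i\|f_{i,a}(v_c)-f_{i,a}(x)\|_1\big)$, and bound the loss row by row and coordinate by coordinate. Using $\sum_i\|f_{i,a}(u)-f_{i,a}(v_c)\|_1=d_T(u,v_c)$ and the triangle inequality, one checks that the contribution of row $j$ at scale $i$ to $\mathrm{loss}$ vanishes unless $\Pi_{i,b}$ sends $k_{ij}$ to $q_{ij}$, and in that event it is at most $2\Delta_{ij}$. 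Since $q_{ij}$ is independent of $x$, the random variable $L:=\sum_{i,j}2\Delta_{ij}\,\mathbf 1[\,\Pi_{i,b}\text{ sends }k_{ij}\text{ to }q_{ij}\,]$ dominates $\mathrm{loss}$ \emph{simultaneously for every} $x\in V(P_{w\,v_a})$; this is exactly what lets us dispose of the existential quantifier in \eqref{eq:con2} with no union bound over $x$.

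Finally I would apply Theorem~\ref{thm:azuma} to $L$. The indicator events are independent over $(i,j)$ --- distinct rows use independent permutations inside $\pi_{t,m}$ and distinct scales use independent copies of $\Pi_{i,b}$ --- and each has probability $1/t$, so $\E[L]=\tfrac2t d_T(u,v_c)\le 2\e\,d_T(u,v_c)$, $\Var(L)\lesssim \tfrac{2^{\beta}}{t^{3}}d_T(u,v_c)$ (using $\Delta_{ij}\le 2^{\beta}/t^{2}$, with $2^\beta$ the largest scale occurring along $P_{u\,v_c}$), and each summand is at most $2\cdot 2^{\beta}/t^{2}$. Taking $\lambda=(C-2)\e\,d_T(u,v_c)$ and invoking $t\ge\e^{-1}$ together with $t\ge\log\lceil\log_2(1/\delta)\rceil$ (both from \eqref{eq:t}) yields $\pr[L\ge C\e\,d_T(u,v_c)]\le\tfrac{1}{\lceil\log_2(1/\delta)\rceil}\exp\!\big(-\Omega(C\,d_T(u,v_c)/(\e 2^{\beta}))\big)$; and since $d_T(u,v_c)>2^{\beta-1}$ --- which follows from Lemma~\ref{obv:tau} applied to a vertex of $P_{u\,v_c}\setminus\{v_c\}$ realizing the maximum in the definition of $\beta$, noting that along $P_{u\,v_c}$ the color-class tops all lie on $P_{u\,v_c}$ --- the exponent and the prefactor come out in the claimed form \eqref{eq:con2}.

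\textbf{Main obstacle.} The substantive work is in the two structural claims that collapse the ``$\exists x$'': that the $v_c$-to-$u$ increment occupies, at each scale and row, a single coordinate which $\Pi_{i,b}$ sends to a uniformly random position, independently across $(i,j)$; and that $f_{i,a}(x)[j]$'s nonzero coordinate is the same for all $x$ on $P_{w\,v_a}$. Both hinge on the monotonicity and block-disjointness of the maps $\Delta_i$ (Lemmas~\ref{lem:delta:inc} and \ref{lem:delta}, Observations~\ref{obv:delta2}, \ref{obv:delta3} and \ref{obv:partial}) and on each row being controlled by one color class per scale. A secondary but delicate point is calibrating the constants in the concentration step so that $2^{\beta}$ appears in the denominator of the exponent and the $1/\lceil\log_2(1/\delta)\rceil$ prefactor appears exactly; this is where the inequalities $d_T(u,v_c)\gtrsim 2^{\beta}$ and $t\gtrsim\e^{-1}+\log\log(1/\delta)$ are used.
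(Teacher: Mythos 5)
Your proposal follows essentially the same route as the paper's proof: you rewrite the event as a triangle-inequality defect summed over rows and scales, use the coordinatewise monotonicity/block-disjointness of the construction (Lemma~\ref{lem:delta:inc}, Observations~\ref{obv:delta2}, \ref{obv:delta3}, \ref{obv:rlpath}) to dominate the defect simultaneously for all $x\in V(P_{w\,v_a})$ by independent collision-indicator variables with collision probability at most $1/t$ (the paper phrases this via $s_{ij}(z)\le X_{ij}$ keyed to $w$, you via the fixed position $q_{ij}$, and your extra conditioning on the other $\Pi_{i,b'}$ is only a presentational variant), and then apply Theorem~\ref{thm:azuma} and calibrate with $t\ge\e^{-1}+\log\lceil\log_2(1/\delta)\rceil$ and $d_T(u,v_c)>2^{\beta-1}$ exactly as in the paper. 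The argument is correct and matches the paper's proof in all essentials.
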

\begin{figure}[htbp]
\begin{center}
  \includegraphics[width=7cm]{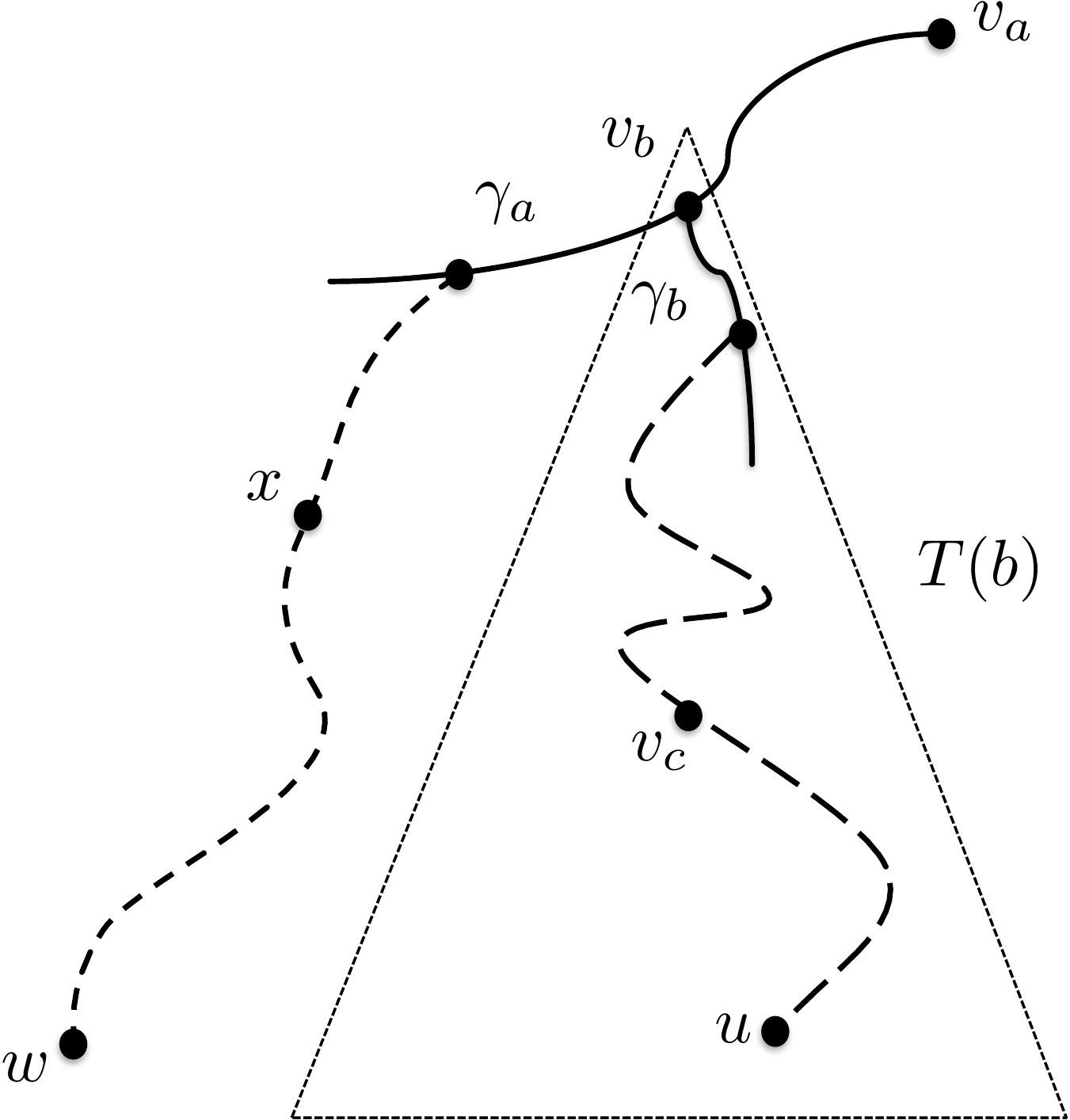}
  \caption{{ Position of vertices corresponding to the statement of Lemma~\ref{lem:con}. }}
  \label{fig:con}
\end{center}
\end{figure}

\begin{proof}
Recall that $\mathbb R^{m\times t}$ is the codomain of $f_{i,a}$.
For $i\in \mathbb Z$, and $j\in[m]$, and $z\in V(P_{w\,v_{a}})$, let
$$s_{ij}(z)=\left\|\vphantom{\bigoplus} f_{i,a}(z)[j]-f_{i,a}(v_c)[j]\right\|_1+\left\|\vphantom{\bigoplus} f_{i,a}(v_c)[j]-f_{i,a}(u)[j]\right\|_1-\left\|\vphantom{\bigoplus}  f_{i,a}(z)[j]-f_{i,a}(u)[j]\right\|_1.$$
We have,
\begin{equation*}
\sum_{i\in \mathbb Z} \|f_{i,a}(u)-f_{i,a}(v_c)\|_1+\sum_{i\in \mathbb Z} \|f_{i,a}(v_c)-f_{i,a}(z)\|_1=
\sum_{i\in \mathbb Z} \|f_{i,a}(z)-f_{i,a}(u)\|_1+\sum_{i\in \mathbb Z,j\in [m]} s_{ij}(z).
\end{equation*}
By Observation~\ref{lem:rlpath}, we have $d_T(u,v_c)=\sum_{i\in \mathbb Z} \|f_{i,a}(u)-f_{i,a}(v_c)\|_1$, therefore
\begin{equation}\label{eq:endproof}
d_T(u,v_c)-\sum_{i\in \mathbb Z,j\in [m]} s_{ij}(z)=
\sum_{i\in \mathbb Z} \|f_{i,a}(z)-f_{i,a}(u)\|_1-\sum_{i\in \mathbb Z} \|f_{i,a}(z)-f_{i,a}(v_c)\|_1.
\end{equation}
Let $\mathcal E = \{f_{i,c'} : c' \in \rho^{-1}(a)\}$.  We define $\pr_{\mathcal E}[\cdot] = \pr[\cdot\mid\mathcal E].$
In order to prove this theorem, we bound
$${\pr}_{\mathcal E}\left[\exists\, {x\in V(P_{w\,v_{a}})} : \sum_{i\in \mathbb Z,j\in [m]} s_{ij}(x)\geq C\eps d_T(u,v_c)\right].$$
We start by bounding the maximum of the random variables $s_{ij}$.

For $i>\beta$ we have $\Delta_i(u)=\Delta_i(v_c)$, hence $f_{i,a}(u)=f_{i,a}(v_c)$. Using the triangle inequality for all  for all $i\in Z$, $j\in [m]$ and $z\in P_{w\,v_a}$,
\begin{equation}\label{tmp:tau:max}
s_{ij}(z)\leq 2 \|f_{i,a}(v_c)[j]-f_{i,a}(u)[j]\|_1,
\end{equation}
Hence for all $i\in Z$ and $j\in [m]$ by Observation~\ref{obv:rlpath},
\begin{equation}\label{tmp:tau:max2}
s_{ij}(z)\leq 2 \|f_{i,a}(v_c)[j]-f_{i,a}(u)[j]\|_1\leq {2^{\beta+1}\over t^2}.
\end{equation}
First note that, if $z$ is on the path between $v_{b}$ and $v_a$ then by Observation~\ref{lem:rlpath}, $s_{ij}(z)=0$.
Observation~\ref{obv:delta3} and \eqref{eq:delta} imply that if $\|f_{i,a}(u)[j]-f_{i,a}(v_c)[j]\|_1\neq 0$ then $\|f_{i,a}(v_c)[j]\|_1= 0$.
 From this, we can conclude that $s_{ij}(z)\neq 0$ if and only if there exists a $k\in [t]$ such that both $f_{i,a}(u)[j,k]-f_{i,a}(v_c)[j,k]\neq 0$ and $f_{i,a}(z)[j,k]\neq0$. Since by Lemma~\ref{lem:delta:inc}, for all $i\in \mathbb Z$, $j\in [m]$ and $k\in [t]$, we have $f_{i,a}(w)[j,k]\geq f_{i,a}(z)[j,k]$, we conclude that for $z\in P_{w\,v_{a}}$ if $s_{ij}(z)\neq 0$ then $s_{ij}(w)\neq 0$.

Now, for $i\in \mathbb Z$ and $j\in [m]$, we define a random variable
\begin{equation}
X_{ij}=\begin{cases}
0& \textrm{if $s_{ij}(w)=0$,}\\
2\|f_{i,a}(u)[j]-f_{i,a}(v_c)[j]\|_1& \textrm{if $s_{ij}(w)\neq 0$.}
\end{cases}
\end{equation}
Note that since the re-randomization in \eqref{def:fic} is performed independently on each row and at each scale,
the random variables $\left\{X_{ij}: i\in \mathbb Z, j\in [m]\right\}$ are mutually independent. By \eqref{tmp:tau:max}, for all $z\in P_{w\,v_a}$, we have
$s_{ij}(z)\leq X_{ij}$, and thus
\begin{equation}\label{eq:con2:sleqx}
{\pr}_{\mathcal E}\left[\exists\, {x\in V(P_{w\,v_{a}})} : \sum_{i\in \mathbb Z,j\in [m]} s_{ij}(x)\geq C\eps d_T(u,v_c)\right]\leq {\pr}_{\mathcal E}\left[\sum_{i\in \mathbb Z,j\in [m]} X_{ij}\geq C\eps d_T(u,v_c)\right].
\end{equation}

As before, for $X_{ij}$ to be non-zero, it must be that $k\in [t]$ is such that $f_{i,a}(w)[j,k] \neq 0$ and $f_{i,a}(u)[j,k]-f_{i,a}(v_c)[j,k]\neq 0$. Since $w\notin V(T(b))$ with the re-randomization in \eqref{def:fic} and Observation~\ref{obv:rlpath}, this happens at most with probability $\frac{1}{t}$, hence for $j\in [m]$, and $i\in \mathbb Z$,
\begin{align*}{\pr}_{\mathcal E}[ & X_{ij}\neq 0] \\
&={\pr}_{\mathcal E}\big[\|f_{i,a}(w)[j]-f_{i,a}(v_c)[j]\|_1+\|f_{i,a}(v_c)[j]-f_{i,a}(u)[j]\|_1-\|f_{i,a}(w)[j]-f_{i,a}(u)[j]\|_1\neq 0\big] \\
&\leq {1\over t}.
\end{align*}
This yields,
\begin{equation}\label{eq:ij:expected}
\E[X_{ij}\mid \mathcal E]\leq {1\over t}\left(2\|f_{i,a}(u)[j]-f_{i,a}(v_c)[j]\|_1\right).
\end{equation}
Now we use \eqref{tmp:tau:max2} to write
\[
\Var(X_{ij}\mid \mathcal E)\leq  {1\over t}{\left(2\|f_{i,a}(u)[j]-f_{i,a}(v_c)[j]\|_1\right)^2}\leq {2^{\beta+2}\over t^3}\|f_{i,a}(u)[j]-f_{i,a}(v_c)[j]\|_1,
\]
and use Observation~\ref{lem:rlpath} in conjunction with \eqref{eq:ij:expected} to conclude that
\begin{equation}\label{eq:con2:exp}
\E\left[\sum_{i\in \mathbb Z,j\in [m]} X_{ij}\mid \mathcal E\right]\leq \sum_{i\in \mathbb Z,j\in[m]} {2\over t}\, \|f_i(v_c)[j]-f_i(u)[j]\|_1={2\over t}\, d_T(v_c,u),
\end{equation}
and
\begin{equation}\label{eq:con2:var}
\sum_{i\in \mathbb Z,j\in [m]}\Var(X_{ij}\mid \mathcal E)\leq  \sum_{i\in \mathbb Z,j\in[m]}{2^{\beta+2}\over t^3}\|f_i(v_c)[j]-f_i(u)[j]\|_1={2^{\beta+2}\over t^3}d_T(v_c,u).
\end{equation}
Define $M = \max \{ X_{ij} - \E[X_{ij} \mid \mathcal E] : i\in\mathbb Z, j\in [m] \}.$ We now apply Theorem~\ref{thm:azuma} to complete the proof:
\begin{align*}
{\pr}_{\mathcal E}\Bigg[\sum_{i\in \mathbb Z,j\in [m]} X_{ij}&\geq C\left({d_T(u,v_c)\over t}\right)\Bigg]\\
&={\pr}_{\mathcal E}\Bigg[\sum_{i\in \mathbb Z,j\in [m]} X_{ij}-{2d_T(u,v_c)\over t}\geq (C-2)\left({d_T(u,v_c)\over t}\right)\Bigg]\\
&\overset{\eqref{eq:con2:exp}}{\leq}
{\pr}_{\mathcal E}\left[\sum_{i\in \mathbb Z,j\in [m]} {X_{ij} }-\mathbb E\left[\sum_{i\in \mathbb Z,j\in [m]} X_{ij}\mid \mathcal E\right] \geq (C-2)\left(d_T(u,v_c)\over t\right)\right]\\
&\leq\exp\left({-((C-2)d_T(u,v_c)/t)^2\over 2\left(\sum_{i\in \mathbb Z,j\in [m]} \Var(X_{ij}\mid\mathcal E)+ (C-2)(d_T(u,v_c)/t) M/3\right)}\right).
\end{align*}
Since $\E[X_{ij}\mid\mathcal E]\geq 0$, \eqref{tmp:tau:max2} implies $M\leq {2^{\beta+1}\over t^2}$. Now, we can plug in this bound and \eqref{eq:con2:var} to write,
\begin{align*}
{\pr}_{\mathcal E}\Bigg[\sum_{i\in \mathbb Z,j\in [m]} X_{ij}&\geq C\left({d_T(u,v_c)\over t}\right)\Bigg]\\
&\leq
\exp\left({-((C-2)d_T(u,v_c)/t)^2\over2\left({2^{\beta+2}\over t^3}d_T(u,v_c)+ (C-2)(d_T(u,v_c)/t) (2^{\beta+1}/t^2)/3\right)}\right)\\
&=\exp\left({-t(C-2)^2d_T(u,v_c)\over{2\left(2^{\beta+2}+ (C-2)(2^{\beta+1})/3\right)}}\right)\\
&=\exp\left({-(C-2)^2 \over {(C-2)/3+2}}\left({td_T(u,v_c)\over 2^{\beta+2}}\right)\right).
\end{align*}

An elementary calculation shows that for $C\geq 8$, ${(C-2)^2 \over {(C-2)/3+2}}> C,$ hence
\begin{align*}
{\pr}_{\mathcal E}\Bigg[\sum_{i\in \mathbb Z,j\in [m]} X_{ij}\geq C\left({d_T(u,v_c)\over t}\right)\Bigg]&<  \exp\left(-(Ct/ 2^{\beta+2})\, d_T(u,v_c)\right)\\
&\overset{\eqref{eq:t}}{\leq}\exp\left(-C\left({1\over \eps}+\log\left\lceil\log_2 {1\over \delta}\right\rceil\right)\left({1\over  2^{\beta+2}}\right)\, d_T(u,v_c)\right)\\
&=  \left({1\over \left\lceil\log_2(1/\delta)\right\rceil}\right)^{{Cd_T(u,v_c)\over  2^{\beta+2}}} \cdot \exp\left(-C\left({1\over \eps}\right)\left({1\over  2^{\beta+2}}\right)\, d_T(u,v_c)\right).
\end{align*}

 Since  there exists a $y \in P_{u\,v_c} \backslash \{v_c\}$ such that $\tau_\beta(y) \neq 0$, and for all $c'\in \chi(E)$, $\kappa(c')\geq 1$, Lemma~\ref{obv:tau} implies that $d_T(u,v_c)>  2^{\beta-1}$, and for $C\geq 8$, we have ${Cd_T(u,v_c)\over  2^{\beta+2}}> 1$.  Therefore,
 \begin{align*}
 &\pr_{\mathcal E} \left[ \exists\,{x\in V(P_{w\,v_{a}})} :\sum_{i\in \mathbb Z}\|f_{i,a}(x)-f_{i,a}(u)\|_1 \leq \left(1-{C \eps}\right)\,d_T(u, v_c)+\sum_{i\in \mathbb Z}\|f_{i,a}(v_c)-f_{i,a}(x)\|_1 \right]
  \\&\hspace{3in}\overset{\eqref{eq:endproof}}{\leq}
 {\pr}_{\mathcal E}\left[\exists\, {x\in V(P_{w\,v_{c}})} : \sum_{i\in \mathbb Z,j\in [m]} s_{ij}(x)\geq C\eps d_T(u,v_c)\right]
 \\&\hspace{3in}\overset{\eqref{eq:con2:sleqx}}{\leq}
{\pr}_{\mathcal E}\Bigg[\sum_{i\in \mathbb Z,j\in [m]} X_{ij}\geq C\eps \left({d_T(u,v_c)}\right)\Bigg]
\\&\hspace{3in}\overset{\eqref{eq:t}}{\leq}
{\pr}_{\mathcal E}\Bigg[\sum_{i\in \mathbb Z,j\in [m]} X_{ij}\geq C\left({d_T(u,v_c)\over t}\right)\Bigg]
\\&\hspace{3in}<   \left({1\over \left\lceil\log_2(1/\delta)\right\rceil}\right)\cdot \exp\left(-C\left({1\over \eps 2^{\beta+2}}\right)\, d_T(u,v_c)\right),
 \end{align*}
completing the proof.
 \end{proof}

\medskip
\noindent
{\bf The $\Gamma_a$ mappings.}
Before proving Lemma~\ref{lem:sec5:main}, we need some more definitions.
For a color $a\in \chi(E)$, we define a map $\Gamma_{a}:{V(T(a))}\to V(T(a))$ based on Lemma~\ref{lem:con}. For $u\in V(\gamma_a)$, we put $\Gamma_a(u)=u$.
For all other vertices $u\in V(T(a))\setminus V(\gamma_a)$, there exists a unique color $b\in \rho^{-1}(a)$ such that $u\in V(T(b))$.
We define $\Gamma_{a}(u)$ as the vertex $w \in V(P_{u v_b})$ which is closest to the root
among those vertices satisfying the following condition:
For all $v \in V(P_{u w}) \setminus \{w\}$ and $k \in \mathbb Z$, $\tau_k(v)\neq 0$ implies
\begin{equation}
2^k< {d_T(u,w)\over  \eps (\varphi(\chi(u,p(u)))-\varphi(a))}.
\label{def:gamma:last}
\end{equation}
Clearly such a vertex exists, because the conditions
are vacuously satisfied for $w=u$.
We now prove some properties of the map $\Gamma_a$.

\remove{
Later in the proof of Lemma~\ref{lem:sec5:main:induction}, we will apply Lemma~\ref{lem:con} with $\Gamma_a(u)$ as $v_c$, where $c$ is a color satisfying the conditions of  Lemma~\ref{lem:con}, i.e., $c\in \chi(E(P_{u\,v_a}))\setminus\{a\}$.
The following lemma explains why we are allowed to do that.}

\begin{lemma}\label{obv:gamma1}
Consider any $a\in \chi(E)$ and $u\in V(T(a))$ such that $\Gamma_{a}(u)\neq u$.
Then we have $\Gamma_{a}(u)=v_{c}$ for some $c\in \chi(E(P_{u v_a}))\setminus\{a\}$.
 \end{lemma}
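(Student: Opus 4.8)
The plan is to trace through the definition of $\Gamma_a(u)$ and argue that the distinguished vertex $w$ it produces must be of the form $v_c$ for a color $c$ that appears on the path from $u$ up to $v_a$, but is not $a$ itself. Fix $a \in \chi(E)$ and $u \in V(T(a))$ with $\Gamma_a(u) \neq u$. By definition, $u \notin V(\gamma_a)$ (otherwise $\Gamma_a(u)=u$), so there is a unique color $b \in \rho^{-1}(a)$ with $u \in V(T(b))$, and $w = \Gamma_a(u)$ lies on $P_{u\,v_b}$; moreover $w \neq u$ by assumption. The first observation is that $w \in V(P_{u\,v_a})$: indeed $v_b \in V(\gamma_b)$ and $v_b$ is joined to $v_a$ through the edge $(v_b, p(v_b))$ colored $\rho(b)$ — wait, more carefully, $v_b$ is a vertex of $\gamma_a$ when $\rho(b)=a$, so $P_{u\,v_b}$ is a sub-path of $P_{u\,v_a}$, hence $w \in V(P_{u\,v_a})$.

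Next I would establish that $w$ is the ``topmost'' vertex of some color class, i.e.\ $w = v_c$ where $c = \chi(w, p(w))$. The key is the minimality built into the definition of $\Gamma_a(u)$: $w$ is the vertex closest to the root among those on $P_{u\,v_b}$ satisfying the condition that for all $v \in V(P_{uw}) \setminus \{w\}$ and all $k \in \mathbb Z$, $\tau_k(v) \neq 0 \implies 2^k < d_T(u,w)/(\e(\varphi(\chi(u,p(u))) - \varphi(a)))$. Suppose, for contradiction, that $w$ is \emph{not} the topmost vertex of its color class, i.e.\ $w \neq v_c$ where $c = \chi(w,p(w))$; then $p(w)$ also has $\chi(p(w), p(p(w))) = c$, so $p(w) \in V(\gamma_c) \subseteq V(P_{u\,v_b})$ and $p(w)$ is strictly closer to the root than $w$. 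Since $w$ was chosen to be the topmost vertex with the stated property, $p(w)$ must \emph{fail} the property: there is some $v \in V(P_{u\,p(w)}) \setminus \{p(w)\}$ and some $k$ with $\tau_k(v) \neq 0$ but $2^k \geq d_T(u, p(w))/(\e(\varphi(\chi(u,p(u))) - \varphi(a)))$. Now $V(P_{u\,p(w)}) \setminus \{p(w)\} = (V(P_{uw}) \setminus \{w\}) \cup \{w\}$. If $v \in V(P_{uw}) \setminus \{w\}$, then since $w$ has the property, $2^k < d_T(u,w)/(\cdots) \leq d_T(u,p(w))/(\cdots)$, a contradiction. So $v = w$, i.e.\ $\tau_k(w) \neq 0$ for some $k$ with $2^k \geq d_T(u,p(w))/(\e(\varphi(\chi(u,p(u)))-\varphi(a)))$. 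But here I want to invoke Lemma~\ref{lem:delta:inc} / Lemma~\ref{lem:incd}: since $w$ and $p(w)$ both lie in $\gamma_c$ with $d_T(p(w),v_c) < d_T(w,v_c)$, monotonicity of $\tau$ along a color class gives $\tau_k(w) \geq \tau_k(p(w))$, which is the wrong direction — so I should instead argue directly, perhaps using Lemma~\ref{obv:tau} applied with $c = \chi(w,p(w))$ to bound $\tau_k(w) \neq 0 \implies 2^{k-1} < d_T(w,v_c)/\kappa(c)$, and combine with $\varphi(\chi(u,p(u))) - \varphi(a) \geq \varphi(c) - \varphi(a) \geq \kappa(c)$ and $d_T(w,v_c) \leq d_T(u,w)$ to reach the contradiction. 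I expect this is the delicate point: reconciling the ``$v = w$'' case requires carefully relating the threshold at $w$ with the threshold at $p(w)$, using that $\varphi$ changes by at least $\kappa(c) \geq 1$ across the edge separating $\gamma_c$ from its parent color, which forces $w = v_c$.

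Finally I would record that $c = \chi(w, p(w))$ satisfies $c \in \chi(E(P_{u\,v_a})) \setminus \{a\}$: since $w \in V(P_{u\,v_a})$ and $w \neq u$ lies strictly below $v_a$ (as $w \in V(P_{u\,v_b})$ and $b \neq a$ while $b \in \rho^{-1}(a)$ means $V(T(b))$ sits strictly under $\gamma_a$), the edge $(w,p(w))$ is an edge of $P_{u\,v_a}$, so $c \in \chi(E(P_{u\,v_a}))$; and $c \neq a$ because $w \notin V(\gamma_a)$, hence $\chi(w,p(w)) \neq a$. Thus $\Gamma_a(u) = v_c$ for some $c \in \chi(E(P_{u\,v_a})) \setminus \{a\}$, completing the proof. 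The main obstacle, as noted, will be cleanly executing the contradiction in the middle paragraph — pinning down that the only way $w$ can be the extremal vertex with the $\Gamma$-property while $p(w)$ fails it is if $w$ is itself the top of a color class; this hinges on the fact that $\tau_k(w) \neq 0$ together with the $\kappa(c) \geq 1$ bound on the $\varphi$-gap already violates the threshold inequality at scale $k$, so $w$ could not have had the property either unless $w = v_c$, in which case $V(P_{uw})\setminus\{w\}$ does not include any vertex of $\gamma_c$ other than those strictly between, and the induction on the path structure closes.
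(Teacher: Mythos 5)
There is a genuine gap at exactly the point you flagged as delicate. In the bad case (where $\Gamma_a(u)$ is not the top of a color class), you correctly reduce to the sub-case where the failure witness for $p(\Gamma_a(u))$ is the vertex $\Gamma_a(u)$ itself, i.e.\ you must show that $\tau_k(\Gamma_a(u))\neq 0$ forces $2^k < d_T(u,p(\Gamma_a(u)))/\bigl(\e(\varphi(\chi(u,p(u)))-\varphi(a))\bigr)$. Your proposed substitute via Lemma~\ref{obv:tau} does not deliver this: it only gives $2^{k}<2\,d_T(\Gamma_a(u),v_c)/\kappa(c)$, and (i) the inequality $d_T(\Gamma_a(u),v_c)\leq d_T(u,\Gamma_a(u))$ you want to combine it with is unjustified and false in general (the color class $c$ above $\Gamma_a(u)$ can be long while $u$ sits just below $\Gamma_a(u)$), and (ii) even granting it, the comparison goes the wrong way: the threshold in \eqref{def:gamma:last} involves $\e(\varphi(\chi(u,p(u)))-\varphi(a))$, which can be much larger than $\kappa(c)$, so a bound of order $d_T(u,\Gamma_a(u))/\kappa(c)$ is far too weak. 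The paper's resolution is the observation you missed: in the bad case the edge \emph{below} $\Gamma_a(u)$ (toward $u$) has the same color as the edge above it, so $\Gamma_a(u)$ and its child $w'$ on $P_{u\,\Gamma_a(u)}$ lie in the same color class with $d_T(\Gamma_a(u),v_c)\leq d_T(w',v_c)$; Lemma~\ref{lem:incd} then gives $\tau_k(\Gamma_a(u))\leq\tau_k(w')$, so $\tau_k(\Gamma_a(u))\neq 0$ implies $\tau_k(w')\neq 0$, and since $w'\in V(P_{u\,\Gamma_a(u)})\setminus\{\Gamma_a(u)\}$ the already-known condition \eqref{def:gamma:last} at $w'$ yields the required bound (using $d_T(u,\Gamma_a(u))\leq d_T(u,p(\Gamma_a(u)))$). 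You noticed that comparing $\Gamma_a(u)$ with its \emph{parent} goes the wrong direction, but the correct comparison is with its \emph{child}, which is available precisely because of the case assumption.

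There is also a definitional slip that should be fixed: you set $c=\chi(\Gamma_a(u),p(\Gamma_a(u)))$ and aim to show $\Gamma_a(u)=v_c$, but with that choice this is never possible, since $p(\Gamma_a(u))$ is itself a vertex of $\gamma_c$ closer to the root. The color in the lemma is the color of the edge \emph{below} $\Gamma_a(u)$ on $P_{u\,\Gamma_a(u)}$, and the correct dichotomy is whether this color agrees with $\chi(\Gamma_a(u),p(\Gamma_a(u)))$: if not, $\Gamma_a(u)$ is the top of that lower color class (and that color lies in $\chi(E(P_{u\,v_a}))\setminus\{a\}$ because no edge of $P_{u\,v_b}$ is colored $a$); if so, one runs the minimality contradiction as above. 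With these two repairs your argument becomes the paper's proof; as written, the contradiction step does not go through.
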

 \begin{proof}
Let $w\in V(P_{u\,\Gamma_a(u)})$ be such that $\Gamma_a(u)=p(w)$.
The vertex $w$ always exists because $\Gamma_a(u)\in V(P_u)\setminus \{u\}$.
 If $\chi(w,\Gamma_a(u))\neq \chi(\Gamma_a(u),p(\Gamma_a(u)))$ then $\Gamma_a(u)$ is $v_{c}$ for some $c\in \chi(E(P_{u\, v_a}))\setminus \{a\}$.

Now, for the sake of contradiction suppose that $\chi(w,\Gamma_a(u))= \chi(\Gamma_a(u),p(\Gamma_a(u)))$. In this case, we show that for all $ v \in P_{u\, p(\Gamma_a(u))} \setminus \{p(\Gamma_a(u))\}$, and $k\in \mathbb Z$, $\tau_k(v)\neq 0$ implies
\begin{equation}\label{eq:obv:gamma1:tv1}
2^k< {d_T(u,p(\Gamma_a(u)))\over  \eps (\varphi(\chi(u,p(u)))-\varphi(a))}.
\end{equation}
This is a contradiction since by definition of $\Gamma_a$, it must be that $\Gamma_a(u)$ is the closest vertex to the root satisfying this condition,
 yet $p(\Gamma_a(u))$ is closer to root than $\Gamma_a(u)$.

Observe that,
$$V(P_{u\,p(\Gamma_a(u))})\setminus\{p(\Gamma_a(u))\}= V(P_{u\,\Gamma_a(u)})\,.$$
We first verify \eqref{eq:obv:gamma1:tv1} for $\Gamma_a(u)$ and $k\in \mathbb Z$ with $\tau_k(\Gamma_a(u))\neq 0$.
Since $\Gamma_a(u)\in V(P_u)$, we have
\begin{equation} \label{eq:gamma:trivial}
d_T(u,\Gamma_a(u))\leq d_T(u,p(\Gamma_a(u))).
\end{equation}
Recalling that $p(w)=\Gamma_a(u)$,
by Lemma~\ref{lem:incd} for all $k\in \mathbb Z$, $\tau_k(\Gamma_a(u))\leq \tau_k(w)$, therefore for all $k\in \mathbb Z$, with $\tau_k(\Gamma_a(u))\neq0$, we have
$\tau_k(w) \neq 0$ as well, hence \eqref{def:gamma:last} implies
\begin{equation}\label{eq:obv:gamma1:2}
2^k<{d_T(u,\Gamma_a(u))\over  \eps (\varphi(\chi(u,p(u)))-\varphi(a))}\overset{\eqref{eq:gamma:trivial}}{\leq} {d_T(u,p(\Gamma_a(u)))\over  \eps (\varphi(\chi(u,p(u))-\varphi(a))}.
\end{equation}

For all other vertices, $v \in V(P_{u\Gamma_a(u)})\setminus\{\Gamma_a(u)\}$, and $k\in \mathbb Z$ with $\tau_k(v)\neq 0$ by \eqref{def:gamma:last},
\begin{equation}
2^k< {d_T(u,\Gamma_a(u))\over  \eps (\varphi(\chi(u,p(u)))-\varphi(a))} \overset{\eqref{eq:gamma:trivial}}{\leq} {d_T(u,p(\Gamma_a(u)))\over  \eps (\varphi(\chi(u,p(u)))-\varphi(a))},
\end{equation}
completing the proof.
 \end{proof}

\begin{lemma}\label{obv:gamma}
Suppose that $a\in \chi(E)$ and $u\in V(T(a))$. For any  $w\in V(P_{u\,\Gamma_{a}(u)})$, such that $\col(u,p(u))=\col(w,p(w))$ we have $\Gamma_{a}(w)\in V(P_{u\,\Gamma_{a}(u)})$.
\end{lemma}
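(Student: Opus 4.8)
The plan is to argue directly from the definition of $\Gamma_a$ as ``the vertex of $V(P_{u v_b})$ closest to the root that is valid for $u$'', where I call a vertex $z$ \emph{valid for $u$} if for every $v\in V(P_{u z})\setminus\{z\}$ and $k\in\mathbb Z$, $\tau_k(v)\neq 0$ implies $2^k< d_T(u,z)/\bigl(\eps(\varphi(\chi(u,p(u)))-\varphi(a))\bigr)$. First I would dispose of the degenerate case $u\in V(\gamma_a)$: then $\Gamma_a(u)=u$, so $V(P_{u\,\Gamma_a(u)})=\{u\}$, forcing $w=u$ and the claim is trivial. Otherwise let $b\in\rho^{-1}(a)$ be the unique color with $u\in V(T(b))$. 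Since $w\in V(P_{u\,\Gamma_a(u)})$ and $\Gamma_a(u)\in V(P_{u v_b})$, we get $w\in V(P_{u v_b})$, so $w$ is an ancestor of $u$; moreover $w\neq v_b$, since $\chi(w,p(w))=\chi(u,p(u))\neq a=\rho(b)$ (as $u\notin V(\gamma_a)$). Hence $w\in V(T(b))\setminus\{v_b\}$, so $\Gamma_a(w)$ is also computed using the color $b$, and $\Gamma_a(w)\in V(P_{w v_b})\subseteq V(P_{u v_b})$ is again an ancestor of $u$. Writing $c=\chi(u,p(u))=\chi(w,p(w))$, the decisive structural point is that the thresholds defining $\Gamma_a(u)$ and $\Gamma_a(w)$ share the common denominator $D:=\eps(\varphi(c)-\varphi(a))$.

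Next I would argue by contradiction: suppose $z:=\Gamma_a(w)$ lies strictly above $z^{*}:=\Gamma_a(u)$ on the path $P_{u v_b}$; since $\Gamma_a(w)$ is an ancestor of $u$, this is the only way $\Gamma_a(w)\notin V(P_{u\,\Gamma_a(u)})$ could fail. Because $z^{*}$ is the topmost vertex of $V(P_{u v_b})$ valid for $u$ and $z$ lies above it, $z$ is \emph{not} valid for $u$, so there exist $v'\in V(P_{u z})\setminus\{z\}$ and $k'\in\mathbb Z$ with $\tau_{k'}(v')\neq 0$ and $2^{k'}\geq d_T(u,z)/D$. Since $w\in V(P_{u z})$, we have the partition $V(P_{u z})\setminus\{z\}=\bigl(V(P_{w z})\setminus\{z\}\bigr)\cup\bigl(V(P_{u w})\setminus\{w\}\bigr)$, giving two cases. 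If $v'\in V(P_{w z})\setminus\{z\}$, then validity of $z=\Gamma_a(w)$ for $w$ (using the same denominator $D$) forces $2^{k'}< d_T(w,z)/D\leq d_T(u,z)/D$, contradicting $2^{k'}\geq d_T(u,z)/D$. If $v'\in V(P_{u w})\setminus\{w\}$, then since $w\in V(P_{u z^{*}})$ we have $v'\in V(P_{u z^{*}})\setminus\{z^{*}\}$, so validity of $z^{*}=\Gamma_a(u)$ for $u$ forces $2^{k'}< d_T(u,z^{*})/D$; combined with $2^{k'}\geq d_T(u,z)/D$ this gives $d_T(u,z)< d_T(u,z^{*})$, which is impossible because $z$ lies strictly above $z^{*}$, which lies above $u$, whence $d_T(u,z)=d_T(u,z^{*})+d_T(z^{*},z)> d_T(u,z^{*})$. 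In either case we reach a contradiction, so $\Gamma_a(w)\in V(P_{u\,\Gamma_a(u)})$.

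The only place requiring genuine care — and hence the ``hard part'' — is the second case of the contradiction: one must notice that the hypothesis $\chi(u,p(u))=\chi(w,p(w))$ makes the two scale-thresholds comparable (same $D$), and that a bad witness $v'$ sitting between $u$ and $w$ can be reused as a bad witness on $P_{u\,\Gamma_a(u)}$, which is exactly what forbids $\Gamma_a(u)$ from being valid too far up and thereby pins $\Gamma_a(w)$ at or below it. Note that the monotonicity statement Lemma~\ref{lem:incd} is \emph{not} needed for this argument. The remaining points — the degenerate cases, the fact that $\Gamma_a(w)$ is an ancestor of $u$, and that the two path-segments exhaust $V(P_{u z})\setminus\{z\}$ — are routine.
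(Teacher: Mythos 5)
Your proof is correct and is essentially the paper's argument in contrapositive form: the paper assumes $\Gamma_a(w)$ lies strictly above $\Gamma_a(u)$ and shows it then satisfies the defining condition \eqref{def:gamma:last} for $u$ (splitting $P_{u\,\Gamma_a(w)}$ at $w$, using the common denominator $\eps(\varphi(\chi(u,p(u)))-\varphi(a))$ and the bounds $d_T(u,\Gamma_a(u))\leq d_T(u,\Gamma_a(w))$, $d_T(w,\Gamma_a(w))\leq d_T(u,\Gamma_a(w))$), contradicting that $\Gamma_a(u)$ is the closest valid vertex to the root, whereas you extract a violating witness and kill it by the same two-case split with the same inequalities. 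The minor quibble that $d_T(u,z)>d_T(u,z^*)$ need not be strict when zero-length edges are allowed is harmless, since your contradiction only needs $d_T(u,z)\geq d_T(u,z^*)$.
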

\begin{proof}
For the sake of contradiction, suppose that $\Gamma_{a}(w)\notin V(P_{u\,\Gamma_{a}(u)})$. Since $w\in V(P_u)$, and  $\Gamma_{a}(w)\notin V(P_{u\,\Gamma_{a}(u)})$, we have $\Gamma_a(w)\in V(P_{\Gamma_a(u)})$, and
\begin{equation} \label{eq:gamma:trivial2}
d_T(u,\Gamma_a(u))\leq d_T(u,\Gamma_a(w)).
\end{equation}
Since $w\in V(P_{u\,\Gamma_{a}(u)})$ by assumption, for all vertices, we have $V(P_{u\,w})\setminus \{w\} \subseteq V(P_{u\,\Gamma_{a}(u)})\setminus\{\Gamma_a(u)\}$.
Thus for all $v\in V(P_{u\,w})\setminus \{w\}$ and $k\in \mathbb Z$ with $\tau_k(v)\neq 0$ by \eqref{def:gamma:last},
\begin{equation}\label{eq:gamma:tv1}
2^k<  {d_T(u,\Gamma_a(u))\over  \eps (\varphi(\chi(u,p(u)))-\varphi(a))} \overset{\eqref{eq:gamma:trivial2}}\leq {d_T(u,\Gamma_a(w))\over  \eps (\varphi(\chi(u,p(u)))-\varphi(a))}.
\end{equation}

The fact that $w\in V(P_{u\,\Gamma_a(u)})$ also implies that  $d_T(w,\Gamma_a(w)))\leq d_T(u\,\Gamma_a(w)))$. Therefore, for all vertices
$v\in V(P_{w\,\Gamma_a(w)})\setminus \{\Gamma_a(w)\}$ and $k\in \mathbb Z$ with $\tau_k(v)\neq0$ by \eqref{def:gamma:last},
\begin{equation}\label{eq:gamma:tv2}
2^k< {d_T(w,\Gamma_a(w))\over  \eps (\varphi(\chi(w,p(w)))-\varphi(a))}\leq {d_T(u,\Gamma_a(w))\over  \eps (\varphi(\chi(w,p(w)))-\varphi(a))}= {d_T(u,\Gamma_a(w))\over  \eps (\varphi(\chi(u,p(u)))-\varphi(a))}.
\end{equation}

We have,
\[V(P_{u\,\Gamma_a(w)})=V(P_{u\,w}) \cup \left(V(P_{w\,\Gamma_a(w)})\setminus \{\Gamma_a(w)\}\right). \]
Hence, by \eqref{eq:gamma:tv1} and \eqref{eq:gamma:tv2}, for all $v\in V(P_{u\,\Gamma_a(w)})\setminus \{\Gamma_a(w)\}$ and $k\in \mathbb Z$, $\tau_k(v)\neq 0$ implies
\begin{equation}
2^k< {d_T(u,p(\Gamma_a(w)))\over  \eps (\varphi(\chi(u,p(u)))-\varphi(a))}.
\end{equation}
This is a contradiction to the definition of $\Gamma_a(u)$, since $\Gamma_a(u)$ must be the closest vertex to the root satisfying this condition,
yet $\Gamma_a(w)$ is closer to root than $\Gamma_a(u)$.
\end{proof}

\medskip
\noindent
{\bf Defining representatives for $\gamma_c$.}
Now, for each $c\in \chi(E)$, we define a small set of representatives for vertices in $\gamma_c$. Later, we use these sets to bound the contraction of pairs of vertices that have one endpoint in $\gamma_c$.

For $a\in \chi(E)$ and $c\in \chi(E(T(a)))\setminus \{a\}$, we define the set $R_{a}(c)\subseteq V(\gamma_c)$, the set of representatives for $\gamma_c$, as follows
\begin{align}
 R_{a}(c)=
 \bigcup_{i=0}^{ \lceil\log_2 {1\over \delta}\rceil-1} \Big\{u\in V(\gamma_c):\, & \textrm{$u$ is  the furthest vertex} \nonumber \\
 & \textrm{from $v_c$ s.t. $\Gamma_a(u)\neq u$ and $d(u,v_c)\leq 2^{-i}\,\len(\gamma_c)$}\Big\}. \label{eq:def:Ri}
\end{align}

The next lemma says when a vertex has a close representative.

\begin{lemma}\label{obv:rcdelta}
Consider $a\in \chi (E)$ and $c\in \chi(E(T(a)))\setminus \{a\}$.
For all vertices $u\in V(\gamma_c)$ with  $\Gamma_{a}(u)\neq u$ there exists a $w\in R_{a}(c)$ such that,
\[
d_T(u,v_{c})\leq d_T(w,v_{c})\leq 2\max\big(d_T(u,v_{c}), \delta\, \len(\gamma_c)\big).
\]
\end{lemma}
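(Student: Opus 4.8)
The plan is to split on whether $d_T(u,v_c)\le\delta\,\len(\gamma_c)$, and in each case to point to an explicit index $i$ in the range $\{0,1,\ldots,\lceil\log_2(1/\delta)\rceil-1\}$ whose associated representative in $R_a(c)$ — call it $w$, the furthest vertex of $\gamma_c$ from $v_c$ with $\Gamma_a(w)\neq w$ and $d_T(w,v_c)\le 2^{-i}\len(\gamma_c)$ — witnesses the lemma. For each candidate index $i$ there are only two things to check: that $u$ itself is admissible at level $i$ (it satisfies $\Gamma_a(u)\neq u$ by hypothesis, so we only need $d_T(u,v_c)\le 2^{-i}\len(\gamma_c)$), which forces the representative $w$ to exist and satisfy $d_T(w,v_c)\ge d_T(u,v_c)$; and that $2^{-i}\len(\gamma_c)$, which upper bounds $d_T(w,v_c)$, is at most $2\max\!\big(d_T(u,v_c),\delta\len(\gamma_c)\big)$. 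Throughout I will use that $\delta\in(0,\tfrac12]$, so $\lceil\log_2(1/\delta)\rceil\ge 1$ (hence the index range is nonempty) and, from $\log_2(1/\delta)\le\lceil\log_2(1/\delta)\rceil\le\log_2(1/\delta)+1$, that $\delta/2\le 2^{-\lceil\log_2(1/\delta)\rceil}\le\delta$.

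In the case $d_T(u,v_c)\le\delta\len(\gamma_c)$ (which also covers $d_T(u,v_c)=0$, e.g.\ when $\len(\gamma_c)=0$), I would take $i=\lceil\log_2(1/\delta)\rceil-1$. Then $2^{-i}\len(\gamma_c)=2^{-\lceil\log_2(1/\delta)\rceil+1}\len(\gamma_c)\ge\delta\len(\gamma_c)\ge d_T(u,v_c)$, so $u$ is admissible at level $i$ and the representative $w$ exists with $d_T(w,v_c)\ge d_T(u,v_c)$; meanwhile $d_T(w,v_c)\le 2^{-i}\len(\gamma_c)\le 2\delta\len(\gamma_c)=2\max(d_T(u,v_c),\delta\len(\gamma_c))$.

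In the case $d_T(u,v_c)>\delta\len(\gamma_c)$, I would let $i^\star$ be the largest index in $\{0,\ldots,\lceil\log_2(1/\delta)\rceil-1\}$ with $2^{-i^\star}\len(\gamma_c)\ge d_T(u,v_c)$; this is well defined because $i=0$ always qualifies, as $d_T(u,v_c)\le\len(\gamma_c)$ for $u\in V(\gamma_c)$. Maximality of $i^\star$ yields $2^{-i^\star}\len(\gamma_c)<2\,d_T(u,v_c)$: if $i^\star+1$ is still in the range it violates the defining inequality, giving $2^{-(i^\star+1)}\len(\gamma_c)<d_T(u,v_c)$; otherwise $i^\star=\lceil\log_2(1/\delta)\rceil-1$ and then $2^{-i^\star}\len(\gamma_c)\le 2\delta\len(\gamma_c)<2\,d_T(u,v_c)$ by the case hypothesis. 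Since $u$ is admissible at level $i^\star$, the representative $w\in R_a(c)$ satisfies $d_T(u,v_c)\le d_T(w,v_c)\le 2^{-i^\star}\len(\gamma_c)<2\,d_T(u,v_c)=2\max(d_T(u,v_c),\delta\len(\gamma_c))$, which finishes the proof.

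I do not anticipate a genuine obstacle here; the only delicate point is the bookkeeping with $\lceil\log_2(1/\delta)\rceil$ and keeping the chosen index inside $\{0,\ldots,\lceil\log_2(1/\delta)\rceil-1\}$, which is precisely why the two estimates $\delta/2\le 2^{-\lceil\log_2(1/\delta)\rceil}\le\delta$ are worth isolating at the start.
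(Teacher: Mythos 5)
Your proof is correct and follows essentially the same route as the paper's: both arguments pick an appropriate level $i$ in the definition of $R_a(c)$, observe that $u$ itself satisfies the admissibility condition at that level (so the furthest representative is at least as far from $v_c$ as $u$), and bound $d_T(w,v_c)$ by $2^{-i}\len(\gamma_c)$, which is at most $2\,d_T(u,v_c)$ or $2\delta\,\len(\gamma_c)$ depending on the case. The only cosmetic difference is that the paper indexes the case split by the dyadic scale of $d_T(u,v_c)/\len(\gamma_c)$ rather than by comparing $d_T(u,v_c)$ with $\delta\,\len(\gamma_c)$, and your version handles the degenerate case $d_T(u,v_c)=0$ slightly more explicitly.
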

\begin{proof}
Let $i\geq 0$ be such that $$\frac{d_T(u,v_{c})}{\len(\gamma_c)} \in \left(2^{-i-1},2^{-i}\right]\,.$$ If $i\leq  \lceil\log_2 {1\over \delta}\rceil-1$, then $\eqref{eq:def:Ri}$ implies that either $u\in R_a(c)$, or there exists a $w\in R_a(c)$ such that
$$d_T(u,v_{c})< d_T(w,v_{c})\leq {\len(\gamma_c)\over 2^i}\leq 2\,d_T(u,v_{c}).$$

On the other hand,
if $i>  \lceil\log_2 {1\over \delta}\rceil-1$,  then $\eqref{eq:def:Ri}$ implies that either $u\in R_a(c)$, or that there exists a $w\in R_a(c)$, such that
$$d_T(u,v_{c})< d_T(w,v_{c})\leq {\len(\gamma_c)\over 2^{\lceil \log_2 {1\over \delta}\rceil-1}} \leq 2\delta  \,\len(\gamma_c),$$
completing the proof.
\end{proof}

The following lemma, in conjunction with Lemma~\ref{obv:rcdelta}, reduces the number of vertices in $V(\gamma_c)$ that we need to analyze using Lemma~\ref{lem:con}.

\begin{lemma}\label{lem:correlation}
Let $(X,d)$ be a pseudometric,  and let  $f:V\to X$ be a $1$-Lipschitz map. For $x,y\in V$, and $x',y'\in V(P_{xy})$ and $h\geq 0$, if $d(f(x),f(y))\geq d_T(x,y)-h$ then $d(f(x'),f(y'))\geq d_T(x',y')-h$.
\end{lemma}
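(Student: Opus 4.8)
The plan is to exploit that $x',y'$ lie on the geodesic $P_{xy}$, so the path from $x$ to $y$ decomposes as $P_{xx'} \cup P_{x'y'} \cup P_{y'y}$ and hence $d_T(x,y) = d_T(x,x') + d_T(x',y') + d_T(y',y)$. The idea is to bound $d(f(x),f(y))$ from above by $d(f(x),f(x')) + d(f(x'),f(y')) + d(f(y'),f(y))$ using the triangle inequality in $(X,d)$, then use the $1$-Lipschitz property of $f$ on the two outer segments to control those terms, and finally rearrange using the hypothesis $d(f(x),f(y)) \geq d_T(x,y) - h$.

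Concretely, first I would write
$$
d_T(x,y) - h \leq d(f(x),f(y)) \leq d(f(x),f(x')) + d(f(x'),f(y')) + d(f(y'),f(y)).
$$
Next, since $f$ is $1$-Lipschitz, $d(f(x),f(x')) \leq d_T(x,x')$ and $d(f(y'),f(y)) \leq d_T(y',y)$, so
$$
d_T(x,y) - h \leq d_T(x,x') + d(f(x'),f(y')) + d_T(y',y).
$$
Now I would invoke the geodesic decomposition $d_T(x,y) = d_T(x,x') + d_T(x',y') + d_T(y',y)$, valid because $x',y' \in V(P_{xy})$ and $P_{xy}$ is the unique (simple) path in the tree, so that subpaths add lengths. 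Substituting and cancelling the common terms $d_T(x,x')$ and $d_T(y',y)$ yields $d_T(x',y') - h \leq d(f(x'),f(y'))$, which is exactly the claim.

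There is essentially no obstacle here: the only point requiring a word of care is the additivity of $d_T$ along a geodesic, i.e.\ that for $x',y'$ on $P_{xy}$ one has $d_T(x,y) = d_T(x,x') + d_T(x',y') + d_T(y',y)$; this holds because in a tree the path $P_{xy}$ is unique and any two of its vertices are joined within it by the corresponding subpath, whose length is the difference of the two partial lengths from $x$. Everything else is the triangle inequality and the Lipschitz bound, so the proof is a couple of lines.
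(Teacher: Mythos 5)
Your proposal is correct and follows essentially the same argument as the paper: triangle inequality in $(X,d)$, the $1$-Lipschitz bound on the two outer segments, and additivity of $d_T$ along the tree path (the paper handles the ordering of $x',y'$ on $P_{xy}$ by a harmless relabeling, which your decomposition implicitly assumes as well).
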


\begin{proof}
Suppose without loss of generality that $d_T(x',x)\leq d_T(y',x)$. Using the triangle inequality,
\begin{align*}
d(f(x'),f(y'))&\geq d(f(x),f(y))- d(f(x),f(x'))- d(f(y),f(y'))\\
&\geq (d_T(x,y)-h) - d(f(x),f(x'))- d(f(y),f(y'))\\
&\geq d_T(x,y)- d_T(x,x')- d_T(y,y')-h\\
&= d_T(x',y')-h\,.
\end{align*}
\end{proof}

The following lemma constitutes the inductive step of the proof of Lemma~\ref{lem:sec5:main}.

\begin{lemma}\label{lem:sec5:main:induction}
There exists a universal constant $C$, such that for any color $c\in \chi(E)\cup\{\chi(r,p(r))\}$, the following holds.
Suppose that, with non-zero probability, for all $c'\in \rho^{-1}(c)$, and for all pairs $x,y\in V(T(c'))$, we have
\begin{equation}\label{eq:15:hypo}
(1 -C\e) \,d_T(x,y) - \delta\, \rho_{\chi}(x,y;\delta) \leq \sum_{i\in \mathbb Z}\|f_{i,c'}(x)-f_{i,c'}(y)\|_1 \leq d_T(x,y)\,.
\end{equation}
Then with non-zero probability for all $x,y\in V(T(c))$, we have
\begin{equation}\label{eq:15:claim}
(1 -C\e) \,d_T(x,y) - \delta\, \rho_{\chi}(x,y;\delta) \leq \sum_{i\in \mathbb Z}\|f_{i,c}(x)-f_{i,c}(y)\|_1 \leq d_T(x,y)\,.
\end{equation}
\end{lemma}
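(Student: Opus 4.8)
The plan is to induct on $T(c)$ by combining the recursive hypothesis \eqref{eq:15:hypo} on each child subtree $T(c')$, $c' \in \rho^{-1}(c)$, with the concentration estimate of Lemma~\ref{lem:con} and a union bound driven by Corollary~\ref{cor:count} and Corollary~\ref{cor:expsum}. The upper bound in \eqref{eq:15:claim} is essentially free: by \eqref{def:fic}, for any $x,y \in V(T(c))$ the quantity $\sum_{i}\|f_{i,c}(x)-f_{i,c}(y)\|_1$ is a sum of contributions along the path $P_{xy}$, each of which is controlled by the isometry-type identities of Lemma~\ref{lem:delta} on monochromatic subpaths together with Observation~\ref{lem:rlpath}; since re-randomization is an isometry on each row and scale, telescoping along $P_{xy}$ gives $\sum_i\|f_{i,c}(x)-f_{i,c}(y)\|_1 \le d_T(x,y)$. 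So the work is entirely in the lower bound.

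For the lower bound, first I would reduce to a convenient family of pairs. Fix $x,y \in V(T(c))$ and let $z$ be the lowest common ancestor of $x,y$ in $T(c)$. As in the warm-up (Section~\ref{sec:warmup}), because $f_{i,c}$ restricted to a root-leaf path is an isometry (Observation~\ref{lem:rlpath}), it suffices to bound the contraction for pairs $x', y'$ with $x'$ an ancestor of $x$ and $y'$ an ancestor of $y$; by Lemma~\ref{lem:correlation} a good lower bound for a "wide" pair descends to all sub-pairs on the path. Next, decompose $P_{xy}$ into its monochromatic pieces $\lambda_{c_1}, \lambda_{c_2}, \dots$ Pieces lying entirely inside a single child subtree $T(c')$ are handled by the induction hypothesis \eqref{eq:15:hypo}. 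Pieces lying on $\gamma_c$ itself are handled by Lemma~\ref{lem:delta}, which gives an exact isometry there. The delicate pieces are those that cross from $\gamma_c$ into a child $T(b)$, or that span from deep in one child subtree up through $v_c$ and back down into another child: for these we invoke Lemma~\ref{lem:con} with $v_c$ replaced by the point $\Gamma_a(u)$ (which is legitimate by Lemma~\ref{obv:gamma1}), after first using Lemma~\ref{lem:triangle} to chop the crossing path into a short "discarded" part and a part on which Lemma~\ref{lem:con}'s hypotheses hold.

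The bad event from Lemma~\ref{lem:con} for a fixed target vertex $w$ and color $c$ has probability at most $\frac{1}{\lceil \log_2(1/\delta)\rceil}\exp(-C d_T(u,v_c)/(\eps 2^{\beta+2}))$, where $2^{\beta-1} < d_T(u,v_c)$; so the bound is at most $\frac{1}{\lceil\log_2(1/\delta)\rceil}\exp(-C/(8\eps))$, exponentially small in $1/\eps$. To finish by the probabilistic method I would union-bound this over all relevant $(w, c)$ pairs: restrict $w$ to range over the representative sets $R_a(c)$ (of size $\le \lceil \log_2(1/\delta)\rceil$ by construction \eqref{eq:def:Ri}, which exactly cancels the $1/\lceil\log_2(1/\delta)\rceil$ factor), using Lemma~\ref{obv:rcdelta} and Lemma~\ref{lem:correlation} to transfer the guarantee from a representative $w$ to an arbitrary vertex of $\gamma_c$ — the transfer costs at most an additive $2\delta\,\len(\gamma_c)$, which is absorbed into the $\delta\,\rho_\chi(x,y;\delta)$ slack. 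Then sum over colors $c \in \chi(E(T(a)))$ grouped by the value of $\varphi(c) - \varphi(a)$: Corollary~\ref{cor:count} bounds the number of colors at each level by $2^{k+1}$ while the failure probability for a color at level $k$ decays like $\exp(-C 2^{k}/\eps)$ or better (since $d_T(u,v_c)$ grows with the depth of the color), so Corollary~\ref{cor:expsum} makes the total geometric sum strictly less than $1$ once $C$ is a large enough absolute constant and $\eps \le \frac12$. Combining: with positive probability every crossing subpath $\lambda$ either contributes at least $(1-C\eps)\len(\lambda)$ to the embedded distance, or is itself short enough ($\len(\lambda) \lesssim \delta\,\rho_\chi + \eps\, d_T(x,y)$) that discarding it is harmless; adding the contributions of all pieces (monochromatic-on-$\gamma_c$ pieces contribute exactly their length, child-internal pieces contribute via \eqref{eq:15:hypo}) yields \eqref{eq:15:claim} after adjusting the constant $C$.

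The main obstacle I anticipate is the bookkeeping in the path decomposition: a general path $P_{xy}$ in $T(c)$ can weave between $\gamma_c$ and several children and back, and one must (i) correctly identify which single child subtree $T(b)$ contains the "problematic" stretch that requires Lemma~\ref{lem:triangle}/Lemma~\ref{lem:con}, (ii) verify that the $\Gamma_a$-surgery (Lemma~\ref{obv:gamma}, Lemma~\ref{obv:gamma1}) is consistent along the whole stretch so the same reference point $\Gamma_a(u)$ works, and (iii) make sure the lengths discarded by Lemma~\ref{lem:triangle} are accounted for by the $\rho_\chi(x,y;\delta)$ term rather than double-counted. Keeping the constant $C$ uniform through all three of {induction hypothesis, Lemma~\ref{lem:con}, Lemma~\ref{lem:triangle}} — each of which nominally inflates it — is the subtle quantitative point, and is why the statement is phrased with a single universal $C$ and why $C \ge 8$ (and $\eps \le \frac12$) appear in the hypotheses of the lemmas being invoked.
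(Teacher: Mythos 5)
Your proposal assembles most of the right ingredients (conditioning on the children, Lemma~\ref{lem:con}, the representative sets $R_c(c_i)$ cancelling the $\lceil\log_2(1/\delta)\rceil$ factor, Lemma~\ref{lem:correlation}, absorption of $\delta\,\len(\gamma_{c_i})$ into $\rho_\chi$, Lemma~\ref{lem:triangle}, and Corollaries~\ref{cor:count}--\ref{cor:expsum}), but there is a genuine gap in the probabilistic accounting. Your union bound needs the failure probability attached to a color $c_i$ to decay with its level $\varphi(c_i)-\varphi(c)$, because Corollary~\ref{cor:count} allows up to $2^{k+1}$ colors at level $k$ and $T(c)$ can contain up to $|E|$ colors in total; a per-event bound of the form $\exp(-\Theta(C/\eps))$, which is all Lemma~\ref{lem:con} gives by itself (Lemma~\ref{obv:tau} only guarantees $d_T(u,v_c)>2^{\beta-1}$, so the exponent is bounded below by a constant), is independent of $n$ and cannot be summed over that many events. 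Your justification for level-dependent decay --- that ``$d_T(u,v_c)$ grows with the depth of the color'' --- is false for weighted trees: edge lengths are arbitrary, and metric distance bears no relation to $\varphi$-depth. The paper's mechanism is different: the bad events are formulated for pairs $(x,\Gamma_c(x))$, and the defining property \eqref{def:gamma:last} of $\Gamma_c$ forces every scale appearing on $P_{x\,\Gamma_c(x)}$ to satisfy $2^k< d_T(x,\Gamma_c(x))/\bigl(\eps(\varphi(c_i)-\varphi(c))\bigr)$, so the exponent $d_T(x,\Gamma_c(x))/(\eps\,2^{\beta_x+2})$ in Lemma~\ref{lem:con} is at least $(\varphi(c_i)-\varphi(c))/4$ (this is \eqref{eq:betaphi}), yielding a failure probability $\exp(-3(\varphi(c_i)-\varphi(c)))$ per representative, which is what beats Corollary~\ref{cor:count} via Corollary~\ref{cor:expsum}. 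Your sketch never ties the scale selector to the union bound, and without that link the argument does not close.

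A second, related gap is error accumulation. Your plan treats each monochromatic piece of $P_{xy}$ separately (``either it contributes $(1-C\eps)\len$ or it is short''), but a path can cross up to $M(\chi)\asymp\log n$ color classes, so per-piece $\eps$-losses applied independently would compound; moreover, after you apply Lemma~\ref{lem:con} from $x$ down to $\Gamma_c(x)$ you still need a lower bound for the remaining pair $(\Gamma_c(x),y)$, and your proposal does not say where it comes from. The paper resolves both issues with a second induction inside $T(c)$: order the colors of $\chi(E(T(c)))$ by increasing $\varphi$, define events $X_i$ as in \eqref{eq:badevent2} whose invariant is strengthened by the extra error term $\eps_2\, d_T(\Gamma_c(x),\Gamma_c(y))$, prove the deterministic implication $X_1\wedge\cdots\wedge X_{i-1}\wedge Y_i\Rightarrow X_i$ (the $Y_i$ of \eqref{eq:goodevent} being the concentration events for the representatives, with the case $\Gamma_c(x)=x$ handled by Lemma~\ref{lem:triangle}), and union-bound only over the $Y_i$. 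The strengthened invariant is precisely what keeps the losses from accumulating as one chains from $\gamma_{c_i}$ through $\Gamma_c$-jumps to $\gamma_{c_j}$; without it, or some substitute with the same effect, the decomposition argument you outline yields a distortion degrading with the number of colors on the path rather than the claimed uniform $1-C\eps$.
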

\begin{proof}
Let $\mathcal E$ denote the event that,
for all $c'\in \rho^{-1}(c)$, and all $x,y \in V(T(c'))$, we have
\begin{equation}\label{eq:15:hypothesis}
d_T(x,y)\geq \sum_{i \in \mathbb Z} \|{f_{i,c'}(x)-f_{i,c'}(y)}\|\geq (1-C\eps)d_T(x,y)-\delta \rho_{\col}(x,y;\delta)\,.
\end{equation}
We will prove the lemma by showing that, conditioned on $\mathcal E$, \eqref{eq:15:claim} holds with non-zero probability.

For $x,y\in V(T(c))$ we define,
\[
\mu(x,y) = \max \{ \varphi(a) : a \in \chi(E) \textrm{ and } x,y \in V(T(a)) \}\,.
\]
Note that since $x,y\in V(T(c))$, we have
\begin{equation}\label{eq:mu:max}
\mu(x,y)\geq \varphi(c)\,.
\end{equation}

It is easy to see that if $\mu(x,y)> \varphi(c)$, then $x,y\in V(T(c'))$ for some $c'\in \rho^{-1}(c)$.
By construction, if $c' \in \rho^{-1}(c)$ and $x,y \in V(T(c'))$, then
$$
\|f_{i,c}(x)-f_{i,c}(y)\| = \|f_{i,c'}(x)-f_{i,c'}(y)\|,
$$
hence $\mathcal E$ implies that \eqref{eq:15:claim} holds for all such pairs.
Thus in the remainder of the proof, we need only handle pairs $x,y \in V(T(c))$ with $\mu(x,y)=\varphi(c)$.

\medskip

Write $\chi(E(T(c))) = \{c_1, c_2, \ldots, c_n\}$, where the colors are ordered so that $\varphi(c_j) \leq \varphi(c_{j+1})$ for $j=1,2,\ldots,n-1$.
Let $\eps_1=24 \eps$, where the constant $24$ comes from Lemma~\ref{lem:con}. And let $\eps_2=2 \cdot C' \eps$, where $C'$ is the constant from Lemma~\ref{lem:triangle}.

For $i\in[m]$, we define the event $X_i$ as follows:  For all $j \leq i$, and all $x \in V(\gamma_{c_i})$ and $y \in V(\gamma_{c_j})$ with $\mu(x,y)=\varphi(c)$, we have
\begin{align}
\sum_{k\in \mathbb Z}\|f_{k,c}(x)-f_{k,c}(y)\|_1\geq d_T(x,y)-\eps_1 d_T(x,y)
-\eps_2 d_T(\Gamma_c(x), \Gamma_c(y))
-\delta\rho_\chi(x,y;\delta).\label{eq:badevent2}
\end{align}
For all pairs $x\in V(\gamma_{c_i})$ and $y\in V(\gamma_{c_j})$,
the event $X_{\max(i,j)}$ implies,
$$\sum_{k\in \mathbb Z}\|f_{k,c}(x)-f_{k,c}(y)\|_1\geq d_T(x,y)-(\eps_1+\eps_2)d_T(x,y)-\delta\rho_\chi(x,y;\delta).$$
In particular this shows that for $C=2\cdot C'+24$, if the events $X_1, X_2,\ldots, X_n$ all occur, then $\eqref{eq:15:claim}$ holds for all pairs $x,y\in V(T(c))$.
Hence we are left to show that
$$
\pr[X_1 \wedge \cdots \wedge X_n\mid\mathcal E] > 0\,.
$$

To this end, we define new events $\{ Y_i : i \in [n] \}$ and we show that for every $i \in [n]$,
\begin{equation}\label{def:yi}
\pr_{\mathcal E}\left[{X_1}\wedge \cdots \wedge {X_{i}} \mid {X_1}\wedge\cdots \wedge {X_{i-1}}\wedge { Y_i}\right]=1\,,
\end{equation}
and then we bound the probability that $Y_i$ does not occur by,
\begin{equation}\label{eq:yi:last}
\pr_{\mathcal E}\left[\overline{ Y_i}\right]\leq  2^{-3(\varphi(c_i)-\varphi(c))+1}\,.
\end{equation}
By, Lemma~\ref{lem:delta} and the definition of $f_{k,c}$ \eqref{def:fic}, we have  $\pr_{\mathcal E}[X_1]=1$. Since for all $i\in\{2,\ldots n\}$, $c_i\in \chi(E(T(c)))\setminus\{c\}$, we have
\begin{align*}
\pr_{\mathcal E}[{X_1}\wedge \cdots \wedge {X_n}]&\geq 1-\sum_{i=2}^n \pr_{\mathcal E}\left[\overline{Y_i}\right] \\
& \overset{\eqref{eq:yi:last}}{\geq}1-\sum_{i=2}^n2^{-3(\varphi(c_i)-\varphi(c))+1}\\&\overset{\eqref{cor:expsum}}{>} 1-2\cdot2^{(2-3)}={0},
\end{align*}
which completes the proof.

\medskip

For each $i \in [n]$, we define the event $Y_i$ as follows:  For all $j < i$, and all vertices $x \in R_c(c_i)$ and $y \in V(\gamma_{c_j})$ with $\mu(x,y)=\varphi(c)$,
we have
\begin{equation}
\sum_{k\in \mathbb Z}\|f_{k,c}(x)-f_{k,c}(y)\|_1-\sum_{k\in \mathbb Z}\|f_{k,c}(\Gamma_c(x))-f_{k,c}(y)\|_1
\geq\label{eq:goodevent}
\,(1-\eps_1/2)\,d_T(x,\Gamma_c(x))\,.
\end{equation}

We now complete the proof of Lemma \ref{lem:sec5:main:induction} by proving \eqref{def:yi} and \eqref{eq:yi:last}.

\medskip
\noindent
{\bf Proof of \eqref{def:yi}.} Suppose that $X_1,\ldots,X_{i-1}$ and $Y_i$ hold.
We will show that $X_i$ holds as well.
First note for all vertices in $x,y\in V(\gamma_{c_i})$, by Lemma~\ref{lem:delta} and the definition of $f_{k,c_i}$ \eqref{def:fic}, we have
$$d_T(x,y)=\sum_{k\in \mathbb Z}\|f_{k,c_i}(x)-f_{k,c_i}(y)\|_1=\sum_{k\in \mathbb Z}\|f_{k,c}(x)-f_{k,c}(y)\|_1,$$
thus we only need to prove \eqref{eq:badevent2} for pairs $x\in V(\gamma_{c_i})$, and $y\in V(\gamma_{c_j})$ for with $j< i$ and $\mu(x,y)=\varphi(c)$.
We now divide the pairs with one endpoint in $\gamma_{c_i}$ into two cases based on $\Gamma_c$.

\medskip
\noindent
{\bf Case I: $x\in V(\gamma_{c_i})$ with $x\neq \Gamma_c(x)$, and $y\in V(\gamma_{c_j})$ for some $j< i$, and $\mu(x,y)=\varphi(c)$.}

\medskip
In this case, by Lemma~\ref{obv:rcdelta}, there exists a vertex $z\in R_{c}(c_i)$ such that
$$d(x,v_{c_i}) \leq d(z,v_{c_i})\leq 2\max\left(\delta\, \len (E(\gamma_{c_i})),d_T(x,v_{c_i})\right).$$
If $d(x,v_{c_i})\leq \delta\,\len (E(\gamma_{c_i}))$, then by \eqref{eq:rho}, we have $\len( E(\gamma_{c_i})) = \rho_\chi(x,v_{c_i};\delta)$, hence
\begin{align}
d_T(z,\Gamma_c(z))&\leq d_T(v_{c_i},\Gamma_c(z))+2\max( \delta \,\len(E(\gamma_{c_i})),d_T(x,v_{c_i}))\nonumber\\
 &\leq d_T(v_{c_i},\Gamma_c(z))+2\max( \delta \,\rho_\chi(x,v_{c_i};\delta),d_T(x,v_{c_i}))\nonumber\\
&\leq d_T(v_{c_i},\Gamma_c(z))+ 2\,\delta \,\rho_\chi(x,v_{c_i};\delta)+2\,d_T(x,v_{c_i})\nonumber\\
&\leq 2\delta \,\rho_\chi(x,v_{c_i};\delta)+2\,d_T(x,\Gamma_c(z)).\label{eq:rcdelta:last}
\end{align}

\begin{figure}[htbp]
\begin{center}
  \includegraphics[width=9cm]{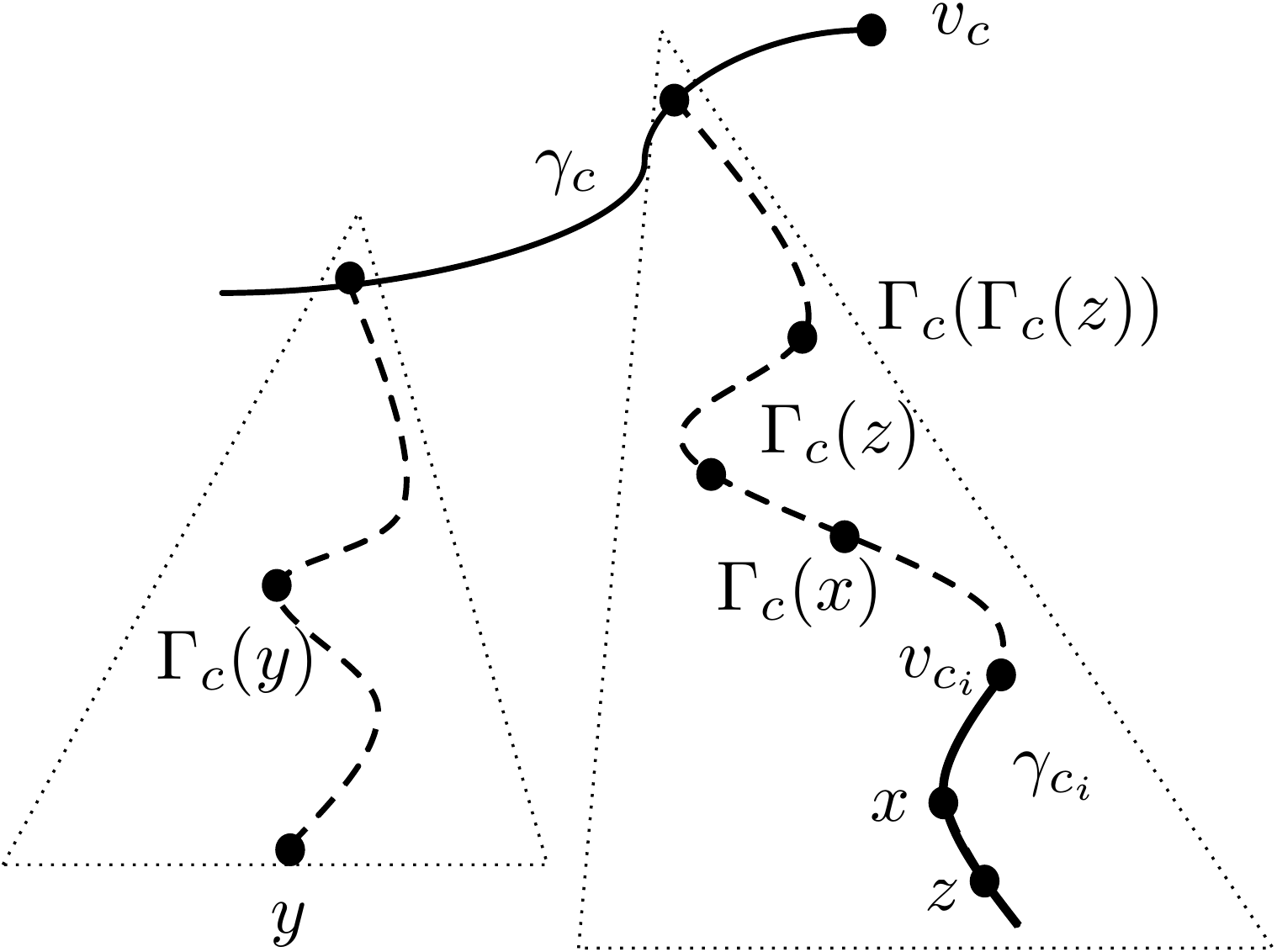}
  \caption{{Position of vertices in the subtree $T(c)$ for Case I.}}
  \label{fig:main:eq}
\end{center}
\end{figure}

Since $z \in R_c(c_i)$, by definition
we have $\Gamma_c(z)\neq z$, therefore by Lemma~\ref{obv:gamma1}, $\Gamma_c(z)=v_{c'}$ for some color $c'\in \chi(P_{z\,v_c})\setminus\{c\}$.
The function $\varphi$ is non-decreasing along any root leaf path, hence $\chi(\Gamma_c(z),p(\Gamma_c(z)))= c_{\ell}$ for some $\ell<i$.

We refer to Figure~\ref{fig:main:eq} for the relative position of the vertices referenced in the following inequalities.
Using our
assumption that $X_1,\ldots, X_{i-1}$ and $Y_i$ hold, we can write
\begin{align*}
\sum_{k\in \mathbb Z}\|f_{k,c}(z)-f_{k,c}(y)\|_1&\overset{Y_i}{\geq} d_T(\Gamma_c(z),z)-(\eps_1/2)\,d_T(z,\Gamma_c(z))+\sum_{k\in \mathbb Z}\|f_{k,c}(\Gamma_c(z))-f_{k,c}(y)\|_1\\
&\!\!\!\!\!\!\!\overset{X_{\max(\ell,j)}}{\geq} d_T(\Gamma_c(z),z)-(\eps_1/2)\,d_T(z,\Gamma_c(z))\\
&\qquad +d_T(\Gamma_c(z),y)-\eps_2\, d_T(\Gamma_c(\Gamma_c(z)),\Gamma_c(y))
-\eps_1\,d_T(\Gamma_c(z),y)
-\delta\,\rho_\chi(\Gamma_c(z),y;\delta)\\
&\geq d_T(y,z)-(\eps_1/2)\,d_T(z,\Gamma_c(z))\\
&\qquad -\eps_2\,d_T(\Gamma_c(\Gamma_c(z)),\Gamma_c(y))
-\eps_1\,d_T(\Gamma_c(z),y)
-\delta\,\rho_\chi(\Gamma_c(z),y;\delta)\,.
\end{align*}

\noindent
We may assume that $\e_1 < 1$, otherwise the statement of the lemma is vacuous.
Using the preceding inequality, and applying Lemma~\ref{lem:correlation} on pairs $(z,y)$ and $(x,y)$ implies that
\begin{align*}
\sum_{k\in \mathbb Z}\|f_{k,c}(x)-f_{k,c}(y)\|_1
&\geq d_T(x,y)-(\eps_1/2)\,d_T(z,\Gamma_c(z))\\
&\qquad -\eps_2\,d_T(\Gamma_c(\Gamma_c(z)),\Gamma_c(y))
-\eps_1\,d_T(\Gamma_c(z),y)
-\delta\,\rho_\chi(\Gamma_c(z),y;\delta)\\
&\overset{\eqref{eq:rcdelta:last}}{\geq}
d_T(x,y)-(\eps_1/2)\left(\vphantom{\bigoplus} 2\,d_T(x,\Gamma_c(z))+2\delta\, \rho_\chi(x,v_{c_i};\delta)\right)\\
&\qquad -\eps_2\,d_T(\Gamma_c(\Gamma_c(z)),\Gamma_c(y))
-\eps_1\,d_T(\Gamma_c(z),y)
-\delta\,\rho_\chi( \Gamma_c(z),y;\delta),
\end{align*}
where in the last line we have used the fact that $\eps_1 < 1$.

We have $\chi(x,p(x))=\chi(z,p(z))=c_i$. Moreover, since $\Gamma_c(z)\neq z$, using Lemma~\ref{obv:gamma1} it is easy to check that $x\in P_{z\,\Gamma_c(z)}$. Therefore, by Lemma~\ref{obv:gamma}, $ d_T(\Gamma(\Gamma_c(z)),y)\leq d_T(\Gamma_c(z),y)\leq  d_T(\Gamma_c(x),y),$ and combining this with the preceding inequality yields,
\begin{align*}
\sum_{k\in \mathbb Z}\|f_{k,c}(x)-f_{k,c}(y)\|_1
&\geq d_T(x,y)-(\eps_1/2)\left(\vphantom{\bigoplus} 2\,d_T(x,\Gamma_c(z))+2\delta\, \rho_\chi(x,v_{c_i};\delta)\right)\\
&\qquad -\eps_2\,d_T(\Gamma_c(x),\Gamma_c(y))
-\eps_1\,d_T(\Gamma_c(z),y)
-\delta\rho_\chi( \Gamma_c(z),y;\delta).
\end{align*}

Recall the definition of  $C(x,y;\delta)$ in \eqref{eq:rho}. Since by Lemma~\ref{obv:gamma1}, $\Gamma_c(z)=v_{c'}$ for some color $c'\in \chi(P_{z\,v_c})\setminus\{c\}$, we have
$C(\Gamma_c(z),y;\delta)\subseteq C(v_{c_i},y;\delta)$, hence $\rho_\chi(v_{c_i},y;\delta)\geq \rho_\chi(\Gamma_c(z),y;\delta)$ and thus,
\begin{align*}
\sum_{k\in \mathbb Z}\|f_{k,c}(x)-f_{k,c}(y)\|_1
&\geq d_T(x,y)-(\eps_1/2)\left(\vphantom{\bigoplus}2\,d_T(x,\Gamma_c(z))+2\delta \,\rho_\chi(x,v_{c_i};\delta)\right)\\
&\qquad -\eps_2\,d_T(\Gamma_c(x),\Gamma_c(y))
-\eps_1\, d_T(\Gamma_c(z),y)
-\delta\,\rho_\chi( v_{c_i},y;\delta) \\
&\geq d_T(x,y)-\eps_1\,d_T(x,\Gamma_c(z))
 -\eps_2\,d_T(\Gamma_c(x),\Gamma_c(y))
-\eps_1d_T(\Gamma_c(z),y)\\
&\qquad-\delta\big(\rho_\chi( v_{c_i},y;\delta)+\eps_1\rho_\chi( x,v_{c_i};\delta)\big)\\
&\geq d_T(x,y)-\eps_1\,d_T(x,\Gamma_c(z))
 -\eps_2\,d_T(\Gamma_c(x),\Gamma_c(y))
-\eps_1d_T(\Gamma_c(z),y)\\
&\qquad-\delta\big(\rho_\chi( x,v_{c_i};\delta)+\rho_\chi( v_{c_i},y;\delta)\big),
\end{align*}
where in the last line we have again used that $\eps_1 < 1$.

The set of colors that appear on the paths  $P_{x\,v_{c_i}}$ and $P_{v_{c_i} y}$ are disjoint, therefore $\rho_\chi( x,y;\delta)=\rho_\chi( x,v_{c_i};\delta)+\rho_\chi( v_{c_i},y;\delta)$,
and
\begin{align*}
\sum_{k\in \mathbb Z}\|f_{k,c}(x)-f_{k,c}(y)\|_1&\geq d_T(x,y)-\eps_1\,d_T(x,\Gamma_c(z))
 -\eps_2\,d_T(\Gamma_c(x),\Gamma_c(y))
-\eps_1\,d_T(\Gamma_c(z),y)
-\delta\rho_\chi( x,y;\delta)\\
&= d_T(x,y)
-\eps_1\,d_T(x,y)
 -\eps_2\, d_T(\Gamma_c(x),\Gamma_c(y))
-\delta\rho_\chi( x,y;\delta).
\end{align*}

\medskip
\noindent
{\bf Case II: $x\in V(\gamma_{c_i})$ with $x= \Gamma_c(x)$, and $y\in V(\gamma_{c_j})$ for some $j<i$, and $\mu(x,y)=\varphi(c)$.}

\medskip

In this case, if $x\in V(\gamma_c)$ then the event $X_j$ implies  \eqref{eq:badevent2}. On the other hand, suppose that $x\in V(T(c'))$ for some $c'\in \rho^{-1}(c)$.
Recall that ${\eps_2\over 2}=C'\eps$, where $C'$ is the constant from Lemma~\ref{lem:triangle}. By Lemma~\ref{lem:triangle} (with $c'$, $x$, and $\eps_2\over 2$ substituted for $c$, $v$, and $\eps$, respectively,
 in the statement of Lemma~\ref{lem:triangle}), there exist vertices $u,u'\in \{x\}\cup \{v_a:a\in\ \chi(E(P_{x\,v_{c'}}))\}$ such that
\begin{equation}\label{eq:xeps2}
d_T(x,u)\leq (\eps_2/2) \,d_T(u',u).
\end{equation}
and for all vertices $z\in V(P_{u'u})\setminus \{u'\}$ and for all $k \in \mathbb Z$,
\[
\tau_k(z) \neq 0 \implies
2^k < \left({{d_T(u,u')} \over \eps(\varphi(\chi(u,p(u)))-\varphi(\chi(v_{c'},p(v_{c'}))))}\right).
\]
We have $\chi(v_{c'},p(v_{c'}))=c$, and this condition is exactly the same condition as \eqref{def:gamma:last} for $\Gamma_c(u)$, therefore
\begin{equation}\label{eq:xeps21}
d_T(x,u)\leq (\eps_2/2)\, d_T(u',u)\leq (\eps_2/2)\, d_T(\Gamma_c(u),u).
\end{equation}
Note that, the assumption that $\Gamma_c(x)=x$ implies that, $u\neq x$ and $u=v_a$ for some $a\in \chi(E(P_{u,v_{c'}}))$.

We have,
\begin{align}
\sum_{k\in \mathbb Z}\|f_{k,c}(x)-f_{k,c}(y)\|_1-\sum_{k\in \mathbb Z}\|f_{k,c}(u)-f_{k,c}(y)\|_1
&\geq -\sum_{k\in \mathbb Z}\|f_{k,c}(x)-f_{k,c}(u)\|_1\nonumber\\
&\overset{\eqref{lem:rlpath}}{=}
-d_T(x,u)\nonumber\\
&\overset{\eqref{eq:xeps21}}{\geq} d_T(x,u)-\eps_2\,  d_T(u,\Gamma_c(u))\nonumber\\
&\geq d_T(x,u)-\eps_2\, d_T(x,\Gamma_c(u))\nonumber\\
&= d_T(x,u)-\eps_2\, d_T(\Gamma_c(x),\Gamma_c(u)).\label{eq:15:x1}
\end{align}
Since $u=v_a$ for some $a\in \chi(E(P_{u,v_{c'}}))$, $\chi(u,p(u))=c_\ell$, for some $\ell<i$, and $X_{\max(\ell,j)}$ implies that,
\begin{align}
\sum_{k\in \mathbb Z}\|f_{k,c}(u)-f_{k,c}(y)\|_1&\geq d_T(u,y)-
\eps_2\, d_T(\Gamma_c(u),\Gamma_c(y))
-\eps_1\, d_T(u,y)
-\delta\,\rho_\chi(u,y;\delta).\nonumber
\end{align}
Recall the definition of  $C(x,y;\delta)$ in \eqref{eq:rho}, We have $u=v_a$ for some $a\in \chi(E(P_{u,v_{c'}}))$, therefore $C(u,y;\delta)\subseteq C(x,y;\delta)$, and $\rho_\chi(u,y;\delta)\leq \rho_\chi(x,y;\delta)$. \
Now we can write,
\begin{align}
\sum_{k\in \mathbb Z}\|f_{k,c}(u)-f_{k,c}(y)\|_1&\geq d_T(u,y)-
\eps_2\, d_T(\Gamma_c(u),\Gamma_c(y))
-\eps_1\, d_T(u,y)
-\delta\, \rho_\chi(x,y;\delta).
\label{eq:15:x2}
\end{align}
Adding \eqref{eq:15:x1} and \eqref{eq:15:x2} we can conclude that
\begin{align*}
\sum_{k\in \mathbb Z}\|f_{k,c}(x)-f_{k,c}(y)\|_1&\geq d_T(u,y)+d_T(u,x)
-\eps_2\, d_T(\Gamma_c(x),\Gamma_c(u))+d_T(\Gamma_c(u),\Gamma_c(y))\\
&\qquad-\eps_1\,d_T(x,y)
-\delta\,\rho_\chi(x,y;\delta)\\
&\geq d_T(x,y)
-\eps_2 \, d_T(\Gamma_c(x),\Gamma_c(y))
-\eps_1\,d_T(x,y)
-\delta\,\rho_\chi(x,y;\delta),
\end{align*}
completing the proof of \eqref{def:yi}.

\medskip
\noindent
{\bf Proof of \eqref{eq:yi:last}.}
We prove this inequality by first bounding the probability that \eqref{eq:goodevent} holds for a fixed $x$ and all $y\in V(\gamma_{c_j})$ (for a fixed $j\in\{1,\ldots, i-1\}$) with $\mu(x,y)=\varphi(c)$.
Then we use a union bound to complete the proof.

We start the proof by giving some definitions.
For a vertex $x\in R_{c}(c_i)$, let
$$
S_x=\left\{\vphantom{\bigoplus}\textrm{$j\in\{1,\ldots,i-1\}$: there exists a $v\in V(\gamma_{c_j})$ such that $\mu(x,v)=\varphi(c)$}\right\}.
$$
And for $a\in S_x$, we define $w(x;a)$ as the vertex $v \in V(\gamma_a)$ which is furthest from the root among those satisfying $\mu(x,v)=\varphi(c)$.
Finally for $x\in R_{c}(c_i)$, we put
$$\beta_x=\max\left\{k\in \mathbb Z: \exists z\in P_{x\,\Gamma_c(x)}\setminus\{\Gamma_c(x)\},\,\tau_k(z)\neq 0\right\}\,.$$

Inequality \eqref{def:gamma:last} implies,
\begin{equation}\label{eq:betaphi}
2^{\beta_x}< {d_T(x,\Gamma_c(x))\over \eps(\varphi(c_i)-\varphi(c))}.
\end{equation}

By definition of $R_c$, for all elements $x\in R_c(c_i)$, we have $\Gamma_c(x)\neq x$. Moreover, by Lemma~\ref{obv:gamma1}, $\Gamma_c(x)=v_{c'}$ for some $c'\in \chi(E(P_{x\,v_c}))\setminus \{c\}$.
Now, for $x\in R_c(c_i)$ and $a\in S_x$ we apply Lemma~\ref{lem:con} with $\eps_1/2=12\eps$ to write
\begin{align}\nonumber
&\pr_{\mathcal E}\left[\exists y\in P_{w(x;a),v_c}:\sum_{k\in \mathbb Z}\|f_{k,c}(x)-f_{k,c}(y)\|_1\leq (1-\eps_1/2)d_T(x,\Gamma_c(x))+
\sum_{k\in \mathbb Z}\|f_{k,c}(y)-f_{k,c}(\Gamma_c(x))\|_1\right]\\
&\hspace{4.2in}\leq  {1\over \lceil \log_2 1/\delta \rceil}{\exp\left(-12\frac{d_T(x,\Gamma_c(x))}{2^{\beta_x+2}\eps}\right)}\,\nonumber\\
&\hspace{4.2in}\overset{\eqref{eq:betaphi}}{\leq}  \frac{\exp(-3(\varphi(c_i)-\varphi(c)))}{\lceil \log_2 1/\delta \rceil}\,.\label{eq:cic0}
\end{align}
Note that, for all $y\in V(\gamma_{c_a})$ with $\mu(x,y)=\varphi(c)$,  we have $y\in P_{w(x;a),v_c}$.

By definition of $R_c(c_i)$, $|R_{c}(c_i)|\leq \lceil\log_2\delta^{-1}\rceil$. We also have $\varphi(c_j)\leq \varphi(c_i)$ for $j<i$, and by Corollary~\ref{cor:count}, $|S_x| \leq i < 2^{\varphi{(c_i)}-\varphi(c)+1}$.
Taking a union bound over all $x\in R_c(c_i)$ and $a\in S_x$ implies,
\begin{align*}
\pr_{\mathcal E}[\overline{Y_i}]&\overset{\eqref{eq:cic0}}{\leq}
\sum_{x\in R_c(c_i)} |S_x|\left( {1\over \lceil\log_2\delta^{-1}\rceil} \exp(-3(\varphi(c_i)-\varphi(c)))\right)\\
&<
\left(\lceil\log_2\delta^{-1}\rceil 2^{\varphi{(c_i)}-\varphi(c)+1}\right)\left( {1\over \lceil\log_2\delta^{-1}\rceil} \exp(-3(\varphi(c_i)-\varphi(c)))\right)\\
&= 2^{\varphi{(c_i)}-\varphi(c)+1}  \exp(-3(\varphi(c_i)-\varphi(c)))\,.
\end{align*}
Since $\varphi(c_i)\geq\varphi(c)$, by an elementary calculation we conclude that
\[\pr_{\mathcal E}[\overline{Y_i}] < 2\cdot 2^{-3(\varphi(c_i)-\varphi(c))}\,,\]
which completes the proof of  \eqref{eq:yi:last}.
\end{proof}

Finally, we present the proof of Lemma~\ref{lem:sec5:main}.

\begin{proof}[Proof of Lemma~\ref{lem:sec5:main}]
Let $C$ be the same constant as the constant in  Lemma~\ref{lem:sec5:main:induction}. For the sake of contradiction, suppose that
\begin{equation*}
\pr\left[\forall x,y \in V,\,\, (1 -C\e) \,d_T(x,y) - \delta\, \rho_{\chi}(x,y;\delta) \leq \sum_{i\in \mathbb Z}\|f_i(x)-f_i(y)\|_1 \leq d_T(x,y)\right]=0\,.
\end{equation*}
Now let $c\in \chi(E)\cup\{\col(r,p(r))\}$ be a color with a maximal value of $\varphi(c)$ such that,
\begin{equation}\label{eq:finalcon}
\pr\left[\forall x,y \in V(T(c)),\,\,(1 -C\e) \,d_T(x,y) - \delta\, \rho_{\chi}(x,y;\delta) \leq \sum_{i\in \mathbb Z}\|f_{i,c}(x)-f_{i,c}(y)\|_1 \leq d_T(x,y)\right]=0\,.
\end{equation}

For $a\in \chi(E)$,  $\kappa(a)>0$. Hence, for all $c'\in \rho^{-1}(c),$ by \eqref{def:phi}, $\varphi(c')>\varphi(c)$, and by maximality of $c$,
for all $c'\in \rho^{-1}(c),$ we have
\begin{equation*}
\pr\left[x,y \in V(T(c')),\,\,(1 -C\e) \,d_T(x,y) - \delta\, \rho_{\chi}(x,y;\delta) \leq \sum_{i\in \mathbb Z}\|f_{i,c'}(x)-f_{i,c'}(y)\|_1 \leq d_T(x,y)\right] > 0\,.
\end{equation*}
But now applying Lemma~\ref{lem:sec5:main:induction} contradicts \eqref{eq:finalcon}, completing the proof.
\end{proof}

\bibliographystyle{alpha}
\bibliography{socg}

\def\cprime{$'$} \def\cprime{$'$}
\begin{thebibliography}{ACNN11}

\bibitem[ACNN11]{ACNN11}
A.~Andoni, M.~Charikar, O.~Neiman, and H.~L. Nguyen.
\newblock Near linear lower bound for dimension reduction in ${L}_1$.
\newblock To appear, {\em Proceedings of the 52nd Annual IEEE Conference on
  Foundations of Computer Science}, 2011.

\bibitem[BC05]{BC05}
Bo~Brinkman and Moses Charikar.
\newblock On the impossibility of dimension reduction in {$\ell\sb 1$}.
\newblock {\em J. ACM}, 52(5):766--788, 2005.

\bibitem[BLM89]{BLM89}
J.~Bourgain, J.~Lindenstrauss, and V.~Milman.
\newblock Approximation of zonoids by zonotopes.
\newblock {\em Acta Math.}, 162(1-2):73--141, 1989.

\bibitem[BSS09]{BSS09}
Joshua~D. Batson, Daniel~A. Spielman, and Nikhil Srivastava.
\newblock Twice-ramanujan sparsifiers.
\newblock In {\em Proceedings of the 41st Annual ACM Symposium on Theory of
  Computing}, pages 255--262, 2009.

\bibitem[CS02]{CS02}
Moses Charikar and Amit Sahai.
\newblock Dimension reduction in the $\ell_1$ norm.
\newblock In {\em 43rd Annual Symposium on Foundations of Computer Science},
  2002.

\bibitem[EL75]{EL75}
P.~Erd{\H{o}}s and L.~Lov{\'a}sz.
\newblock Problems and results on {$3$}-chromatic hypergraphs and some related
  questions.
\newblock In {\em Infinite and finite sets ({C}olloq., {K}eszthely, 1973;
  dedicated to {P}. {E}rd{\H o}s on his 60th birthday), {V}ol. {II}}, pages
  609--627. Colloq. Math. Soc. J\'anos Bolyai, Vol. 10. North-Holland,
  Amsterdam, 1975.

\bibitem[GKL03]{GKL03}
Anupam Gupta, Robert Krauthgamer, and James~R. Lee.
\newblock Bounded geometries, fractals, and low-distortion embeddings.
\newblock In {\em 44th Symposium on Foundations of Computer Science}, pages
  534--543, 2003.

\bibitem[JL84]{JL84}
William~B. Johnson and Joram Lindenstrauss.
\newblock Extensions of {L}ipschitz mappings into a {H}ilbert space.
\newblock In {\em Conference in modern analysis and probability ({N}ew {H}aven,
  {C}onn., 1982)}, volume~26 of {\em Contemp. Math.}, pages 189--206. Amer.
  Math. Soc., Providence, RI, 1984.

\bibitem[LN04]{LN-diamond}
J.~R. Lee and A.~Naor.
\newblock Embedding the diamond graph in {$L_p$} and dimension reduction in
  {$L_1$}.
\newblock {\em Geom. Funct. Anal.}, 14(4):745--747, 2004.

\bibitem[LNP09]{LNP09}
James~R. Lee, Assaf Naor, and Yuval Peres.
\newblock Trees and {M}arkov convexity.
\newblock {\em Geom. Funct. Anal.}, 18(5):1609--1659, 2009.

\bibitem[Mat]{MatOpen}
J.~Matou{\v{s}}ek.
\newblock Open problems on low-distortion embeddings of finite metric spaces.
\newblock Online: {\texttt http://kam.mff.cuni.cz/$\sim$matousek/metrop.ps}.

\bibitem[Mat99]{Matousek99}
J.~Matou{\v{s}}ek.
\newblock On embedding trees into uniformly convex {B}anach spaces.
\newblock {\em Israel J. Math.}, 114:221--237, 1999.

\bibitem[McD98]{McD98}
Colin McDiarmid.
\newblock Concentration.
\newblock In {\em Probabilistic methods for algorithmic discrete mathematics},
  volume~16 of {\em Algorithms Combin.}, pages 195--248. Springer, Berlin,
  1998.

\bibitem[NR10]{NR10}
Ilan Newman and Yuri Rabinovich.
\newblock On cut dimension of $\ell_1$ metrics and volumes, and related
  sparsification techniques.
\newblock {\em CoRR}, abs/1002.3541, 2010.

\bibitem[Reg11]{Regev11}
Oded Regev.
\newblock Entropy-based bounds on dimension reduction in {$L_1$}.
\newblock arXiv:1108.1283, 2011.

\bibitem[Sch87]{Sch87}
Gideon Schechtman.
\newblock More on embedding subspaces of {$L_p$} in {$l^n_r$}.
\newblock {\em Compositio Math.}, 61(2):159--169, 1987.

\bibitem[Sch96]{Schulman96}
Leonard~J. Schulman.
\newblock Coding for interactive communication.
\newblock {\em IEEE Trans. Inform. Theory}, 42(6, part 1):1745--1756, 1996.
\newblock Codes and complexity.

\bibitem[Tal90]{Tal90}
Michel Talagrand.
\newblock Embedding subspaces of {$L_1$} into {$l^N_1$}.
\newblock {\em Proc. Amer. Math. Soc.}, 108(2):363--369, 1990.

\end{thebibliography}

\end{document}